\definecolor{darkblue}{RGB}{0,0,160}
\def\e{{\rm e}}
\def\eps{\varepsilon}
\def\d{{\rm d}}
\def\dist{{\rm dist}}
\def\R {\mathbb{R}}
\def\Nn {\mathcal{N}}
\def\K {{\mathcal K}}
\def\Q {{\mathcal Q}}
\def\M {{\mathrm M}}
\def\1 {{\mbox{\boldmath 1}}}
\def \l {\langle}
\def \r {\rangle}
\def \and{\quad\text{and}\quad}
\def\ind{\cic{1}}
\newcommand{\cic}{\bm}
\newcommand{\sh}{\mathsf{sh}}
\newcommand{\mass}{\mathsf{mass}}
\newcommand{\pl}{\mathsf{af}}
\newcommand{\pr}{\Pi}
\def\Grdn {{\mathrm{Gr}(d,n)}}
 \def\Xint#1{\mathchoice
	   {\XXint\displaystyle\textstyle{#1}}%
	   {\XXint\textstyle\scriptstyle{#1}}%
	   {\XXint\scriptstyle\scriptscriptstyle{#1}}%
	   {\XXint\scriptscriptstyle\scriptscriptstyle{#1}}%
	   \!\int}
	 \def\XXint#1#2#3{{\setbox0=\hbox{$#1{#2#3}{\int}$}
	     \vcenter{\hbox{$#2#3$}}\kern-.5\wd0}}
	 \def\avgint{\Xint-}
\def \no#1#2#3 {{\bf #1} (#3), #2.}
\def \eds#1#2#3 {#1, #2, #3.}
\newcounter{counter}
\numberwithin{equation}{section}
\numberwithin{counter2}{section}
\newtheorem{proposition}[subsection]{Proposition}
\newtheorem{theorem}[counter]{Theorem}
\newtheorem{corollary}{Corollary}
\newtheorem{lemma}[subsection]{Lemma} 
\newtheorem{conjecture}[counter]{Conjecture}
\theoremstyle{definition}
\newtheorem{definition}{Definition}
\newtheorem*{remark*}{Remark}
\newtheorem*{warn*}{A word of warning}
\newtheorem{remark}[subsection]{Remark} 
\theoremstyle{plain}
\numberwithin{corollary}{counter}
\numberwithin{figure}{section}
\let\save@mathaccent\mathaccent
\newcommand*\if@single[3]{%
  \setbox0\hbox{${\mathaccent"0362{#1}}^H$}%
  \setbox2\hbox{${\mathaccent"0362{\kern0pt#1}}^H$}%
  \ifdim\ht0=\ht2 #3\else #2\fi
  }
\newcommand*\rel@kern[1]{\kern#1\dimexpr\macc@kerna}
\newcommand*\widebar[1]{\@ifnextchar^{{\wide@bar{#1}{0}}}{\wide@bar{#1}{1}}}
\newcommand*\wide@bar[2]{\if@single{#1}{\wide@bar@{#1}{#2}{1}}{\wide@bar@{#1}{#2}{2}}}
\newcommand*\wide@bar@[3]{%
  \begingroup
  \def\mathaccent##1##2{%
    \let\mathaccent\save@mathaccent
    \if#32 \let\macc@nucleus\first@char \fi
    \setbox\z@\hbox{$\macc@style{\macc@nucleus}_{}$}%
    \setbox\tw@\hbox{$\macc@style{\macc@nucleus}{}_{}$}%
    \dimen@\wd\tw@
    \advance\dimen@-\wd\z@
    \divide\dimen@ 3
    \@tempdima\wd\tw@
    \advance\@tempdima-\scriptspace
    \divide\@tempdima 10
    \advance\dimen@-\@tempdima
    \ifdim\dimen@>\z@ \dimen@0pt\fi
    \rel@kern{0.6}\kern-\dimen@
    \if#31
      \overline{\rel@kern{-0.6}\kern\dimen@\macc@nucleus\rel@kern{0.4}\kern\dimen@}%
      \advance\dimen@0.4\dimexpr\macc@kerna
      \let\final@kern#2%
      \ifdim\dimen@<\z@ \let\final@kern1\fi
      \if\final@kern1 \kern-\dimen@\fi
    \else
      \overline{\rel@kern{-0.6}\kern\dimen@#1}%
    \fi
  }%
  \macc@depth\@ne
  \let\math@bgroup\@empty \let\math@egroup\macc@set@skewchar
  \mathsurround\z@ \frozen@everymath{\mathgroup\macc@group\relax}%
  \macc@set@skewchar\relax
  \let\mathaccentV\macc@nested@a
  \if#31
    \macc@nested@a\relax111{#1}%
  \else
    \def\gobble@till@marker##1\endmarker{}%
    \futurelet\first@char\gobble@till@marker#1\endmarker
    \ifcat\noexpand\first@char A\else
      \def\first@char{}%
    \fi
    \macc@nested@a\relax111{\first@char}%
  \fi
  \endgroup
}
\begin{document}

\title[Maximal subspace averages]{Maximal subspace averages} 

\author[F. Di Plinio]{Francesco Di Plinio} \address{\noindent Department of Mathematics and Statistics, Washington University in St. Louis}
  \email{\href{mailto:francesco.diplinio@wustl.edu}{\textnormal{francesco.diplinio@wustl.edu}}}
\thanks{F. Di Plinio is partially supported by the National Science Foundation under the grants   NSF-DMS-2000510, NSF-DMS-2054863}

\author[I. Parissis]{Ioannis Parissis}
\address{Departamento de Matem\'aticas, Universidad del Pais Vasco, Aptdo. 644, 48080 Bilbao, Spain and Ikerbasque, Basque Foundation for Science, Bilbao, Spain}

\email{\href{mailto:ioannis.parissis@ehu.es}{\textnormal{ioannis.parissis@ehu.es}}}
\thanks{I. Parissis is partially supported by the project PGC2018-094528-B-I00 (AEI/FEDER, UE) with acronym ``IHAIP'', grant T1247-19 of the Basque Government and IKERBASQUE.}

\subjclass[2010]{Primary: 42B25. Secondary: 42B20}
\keywords{Directional operators, Nikodym sets, Kakeya problem, Zygmund's conjecture}

\begin{abstract} We study  maximal  operators
associated to  singular averages along finite subsets $\Sigma$ of the  Grassmannian $\Grdn$ of $d$-dimensional subspaces of $\R^n$. 
The  well studied $d=1$ case corresponds to the the directional maximal function with respect to arbitrary finite subsets of $\mathrm{Gr}(1,n)=\mathbb S^{n-1}$.  
We provide a systematic study of all cases $1\leq d<n$ and  prove essentially  sharp $L^2(\R^n)$ bounds for the maximal subspace averaging operator in terms of the cardinality of $\Sigma$, with no assumption on the structure of $\Sigma$. In the   codimension $1$ case, that is $n=d+1$, we prove the precise critical weak $(2,2)$-bound.

Drawing on the analogy between   maximal subspace averages and $(d,n)$-Nikodym maximal averages, we also    formulate the appropriate maximal Nikodym conjecture for general $1<d<n$ by providing examples that determine the critical $L^p$-space for the $(d,n)$-Nikodym problem. Unlike the $d=1$ case, the maximal  Kakeya and Nikodym problems are shown not to be equivalent when $d>1$. In this context, we prove the best possible $L^2(\R^n)$-bound for the $(d,n)$-Nikodym maximal function for all combinations of dimension and codimension. 

Our estimates rely on Fourier analytic almost orthogonality principles, combined with polynomial partitioning, but we also use spatial analysis based on the precise calculation of intersections of $d$-dimensional plates in $\R^n$.
\end{abstract}
\maketitle

\section{Introduction}
For $\sigma \in \mathrm{Gr}(d,n)$, the Grassmannian of $d$-dimensional subspaces of $\R^n$, the scale $s$ subspace average of $f\in \mathcal C(\R^n)$ is 
\[
\langle f \rangle_{s,\sigma} (x) \coloneqq \int\displaylimits_{sB_n\cap \sigma} f(x-y) \frac{\d y}{s^d}, \qquad x\in \R^n.
\]
where $B_n\subset \R^n$ is the $n$-dimensional unit ball  centered at the origin, $sB_n$ is its concentric dilate, and $\d y=\d\mathcal L^d(y)$ denotes the Lebesgue measure on $\sigma\in\Grdn$. Fubini's theorem ensures that, up to the dimensional constant $\mathcal L^d(B_d)$, the map $f\mapsto \langle f \rangle_{s,\sigma} $ preserves the $L^1(\R^n)$-norm of $f$ and contracts all $L^p(\R^n)$-norms for $1<p\leq\infty$. 

The general concern of this article is the $L^p$-behavior of the positive maps
\[
f\mapsto \langle f \rangle_{s(\cdot),\sigma(\cdot)} (\cdot)
\] 
corresponding to a measurable choice of $\sigma \in \mathrm{Gr}(d,n)$ and scale $s>0$ depending on the point $x\in \R^n$.  When $d=n$,  these maps are pointwise controlled by the standard Hardy-Littlewood maximal operator for any pair of choice functions $\sigma,s$.  The singular cases $d<n$ give rise to a family of nontrivial problems of intrinsic relevance within the theory of differentiation of integrals, and possessing applications to  singular and oscillatory operators,   geometric measure theory and partial differential equations.

A central example is the classical question, attributed to Zygmund, of characterizing the class of  planar vector fields that differentiate $L^2(\R^2)$ functions. The corresponding singular integral variant of Zygmund's question, usually attributed to Stein, asks whether Lipschitz vector-fields of directions allow for a weak $(2,2)$ bound for the corresponding directional Hilbert transform, after suitable truncation. 

The case of choice functions $\sigma(\cdot)$ whose range is a \emph{finite} subset $\Sigma\subset \mathrm{Gr}(d,n)$ is also of particular importance. The averaging operator is of maximal nature, that is for $s\in S\subset (0,\infty)$
\[
 \M_{\Sigma,\{s\}}f\coloneqq \sup_{\sigma\in\Sigma}  \langle |f| \rangle_{s,\sigma} (\cdot),\qquad \M_{\Sigma,S}f\coloneqq \sup_{s\in S} \M_{\Sigma,\{s\}}f.
\]

The study of one-dimensional directional averages in $\R^n$, corresponding to $\mathrm{Gr}(1,n)$, is   connected to the problem of determining the Hausdorff dimension of Kakeya sets in $\R^n$. There is a classification of such questions for all $d<n$, with $\mathrm{Gr}(d,n)$ corresponding to the problem of studying the existence of $(d,n)$ Besicovitch sets. For the study of directional averages as above in relation to the Kakeya-Besicovitch category of problems, it suffices to consider finite subsets $\Sigma$ which are uniformly distributed at some distinct scale $\delta$; more specifically, the range $\Sigma$ of $\sigma(\cdot)$ is a maximal $\delta$-net in $\mathrm{Gr}(d,n)$. This in turn leads to  seeking for  $L^p$-bounds for the corresponding maximal directional averages as a function of $\delta$.

\subsection{Thin $(d,n)$-averages for arbitrary finite $\Sigma\subset \Grdn$} The case of of more general maximal directional averages, where $\sigma$ takes values in a finite but \emph{arbitrary} subset $\Sigma\subset\mathrm{Gr}(d,n)$, is in general much harder as there is no distinct scale in the set of directions, and any suitable method must  make up for the lack of uniform density in $\Sigma$. This obstruction is already present in the \emph{single scale problem}. Our first main result is an essentially complete description of the sharp $L^2(\R^n)$-bounds in the whole range of dimensions and codimensions $1\leq d < n$.

\begin{theorem}\label{t:sharpss}
For all $N>0$, $1\leq d < n $ there holds
\begin{equation}
\label{e:tAintro}
\sup_{s>0}\sup_{\substack{\Sigma\subset \mathrm{Gr}(d,n)\\  \#\Sigma \leq N} } \left\|\mathrm{M}_{\Sigma,\{s\}} \right\|_{L^2(\R^n)} \lesssim N^{\frac{(n-d-1)}{2(n-d)}} (\log N)^{\frac{n-d}{2}}
\end{equation}
with an implicit numerical constant depending only upon $d,n$. This bound is  sharp in terms of $N$ when $n=d+1$ and sharp up to the logarithmic factor in general.
\end{theorem}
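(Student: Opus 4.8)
The plan is to decouple the maximal estimate into a Fourier-analytic part and a combinatorial part, following the philosophy that each average $\langle f\rangle_{s,\sigma}$ has Fourier support essentially concentrated on a tube around $\sigma^\perp$ of width $s^{-1}$, so that two averages along subspaces $\sigma,\sigma'$ are almost orthogonal unless $\sigma^\perp$ and $(\sigma')^\perp$ are close. First I would fix the scale $s=1$ by dilation invariance and, after a Littlewood--Paley decomposition $f=\sum_j \Delta_j f$ in the variable transverse to a generic subspace, reduce matters to frequency annuli $|\xi|\sim 2^j$; the average $\langle \cdot \rangle_{1,\sigma}$ acts like a Fourier multiplier that is $O(1)$ on the slab $\{|\pr_{\sigma^\perp}\xi|\lesssim 1\}$ and decays away from it, with mild ($j$-dependent, hence logarithmic) tails. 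I would then group the subspaces in $\Sigma$ according to a $\delta$-net on $\Grdn$ with $\delta\sim 2^{-j}$, bound the contribution of each net-ball by an $\ell^2$-sum (square function) of the individual averages it contains, and sum in $j$. The cost of the maximal supremum over $s$ is absorbed by a standard Sobolev-embedding / vector-valued argument producing the final power of $\log N$ (one extra half-power of log per codimension coming from the iterated Littlewood--Paley sums, hence the exponent $(n-d)/2$).

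The heart of the matter is the single-scale bound $\|\M_{\Sigma,\{1\}}\|_{L^2\to L^2}\lesssim N^{(n-d-1)/(2(n-d))}(\log N)^{(n-d)/2}$. Here I would linearize the sup, writing $\M_{\Sigma,\{1\}}f(x)=\langle|f|\rangle_{1,\sigma(x)}(x)$ for a measurable choice $\sigma(\cdot)$, and estimate $\|\M_{\Sigma,\{1\}}f\|_2^2$ by expanding the square and using Fourier-side orthogonality: the cross term between $x$'s assigned to $\sigma$ and $\sigma'$ is controlled by the measure of the overlap of the corresponding frequency slabs, which is small unless $\dist(\sigma^\perp,(\sigma')^\perp)$ is small. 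A maximal $\delta$-net in $\Grdn$ has cardinality $\sim\delta^{-d(n-d)}$, and a key geometric input is the estimate for how many plates (dual tubes) through a fixed point can be pairwise $\delta$-separated yet mutually overlapping: this is where the precise calculation of intersections of $d$-dimensional plates in $\R^n$, alluded to in the abstract, enters. Optimizing the net parameter $\delta$ against $N=\#\Sigma$ — balancing the number of net-balls $\delta^{-d(n-d)}$, which we may take $\lesssim N$, against the per-ball square-function loss $\delta^{-(n-d-1)}$ coming from the overlap count in the transverse $(n-d)$-dimensional slab geometry — yields the exponent $\tfrac{n-d-1}{2(n-d)}$. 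I expect the main obstacle to be precisely this overlap/almost-orthogonality count: one must show that the relevant Gram matrix of slab indicator functions has operator norm $\lesssim N^{(n-d-1)/(n-d)}$ up to logs, with \emph{no} structural hypothesis on $\Sigma$, which forces a worst-case analysis over all configurations of $N$ points in the Grassmannian rather than the comfortable uniformly-distributed case.

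For the codimension-one refinement $n=d+1$, where the target is the sharp weak $(2,2)$ bound with no logarithmic loss, I would argue differently. When $n-d=1$ the transverse slab is one-dimensional, the dual objects are genuine $1$-parameter families of slabs (hyperplane-like plates), and the problem becomes structurally analogous to the classical $d=1$, $n=2$ directional maximal function of Nagel--Stein--Wainger / Córdoba, for which the critical weak $(2,2)$ estimate with bound $\sim N^{0}=O(1)$ in $N$... —wait, here $(n-d-1)/(2(n-d))=0$, so the claimed sharp bound is $O(1)$ in $N$? No: re-examining, for $n=d+1$ the exponent of $N$ in \eqref{e:tAintro} is $0$ and only the log survives, but the theorem asserts sharpness in $N$, i.e.\ the true growth is a genuine power of $\log N$; thus in this case I would prove the precise weak-type $(2,2)$ inequality $\|\M_{\Sigma,\{s\}}f\|_{L^{2,\infty}}\lesssim (\log N)^{1/2}\|f\|_2$ by a covering/Córdoba-type $L^2$ argument: cover $\{\M_{\Sigma,\{s\}}f>\lambda\}$ by plates, use the $L^1$-normalization to pass to an $L^2$ estimate for the sum of plate indicators, and control the overlap of $N$ plates in $\R^n$ meeting a common point via the one-dimensional transverse geometry, which loses exactly one power of $(\log N)^{1/2}$ — matching the lower bound from a Besicovitch-type configuration of $N$ subspaces. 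The matching lower bounds in both regimes come from explicit examples: in general dimension, a carefully spread $\delta$-net gives a function (roughly the indicator of a small ball) whose maximal average is $\gtrsim N^{(n-d-1)/(2(n-d))}$ on a set of comparable measure, and in codimension one a lacunary-type arrangement of hyperplanes produces the logarithmic blow-up.
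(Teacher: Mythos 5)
Your general philosophy — reduce to a single frequency annulus via a Littlewood--Paley / Chang--Wilson--Wolff step, then exploit Fourier almost-orthogonality of the averages, paying a logarithm per codimension — is aligned with the paper's opening moves (cf.\ Lemma \ref{l:cww}). But there is a genuine gap in the core of the argument, and your optimization heuristic in fact produces the wrong exponent for $d>1$.

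The gap is structural. You propose, after fixing a frequency annulus $|\xi|\sim \delta^{-1}$, to group $\Sigma$ according to a $\delta$-net in $\Grdn$ and control the per-ball overlap, and you optimize by balancing ``number of net-balls $\delta^{-d(n-d)}\lesssim N$'' against a ``per-ball loss $\delta^{-(n-d-1)}$''. Carrying this out gives $\delta\sim N^{-1/(d(n-d))}$ and hence a bound $\delta^{-(n-d-1)/2}\sim N^{(n-d-1)/(2d(n-d))}$, which only matches the claimed exponent $N^{(n-d-1)/(2(n-d))}$ when $d=1$. The issue is not a typo: for arbitrary $\Sigma$ there is no single ``correct'' net scale, so this kind of uniform bucketing cannot see the right combinatorics. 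The paper's mechanism is different, and this is what your plan is missing: after localizing to an annulus, one runs a \emph{greedy cluster extraction}. One repeatedly pulls out subsets $\Sigma_j\subset\Sigma$ that are $\delta$-clusters, i.e.\ lie in a $\delta$-neighborhood of the Grassmannian hyperplane $H_\xi(d)=\{\tau:\pr_\tau\xi=0\}$ for some bad frequency $\xi$, until the residual $\Sigma_0$ has bounded overlap of the cones $\Gamma_{\sigma,\delta}$ (threshold $N^{(n-d-1)/(n-d)}$), at which point $\Sigma_0$ is handled by Plancherel and a square function. The crucial point is Lemma \ref{l:doubleann}: a $\delta$-cluster in $\Grdn$ is essentially a subset of $\mathrm{Gr}(d,n-1)$, so each cluster's maximal operator is controlled by the \emph{lower-dimensional} problem. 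This turns the argument into an induction on $n$ with $d$ fixed, seeded at $n=d+1$, and it is this induction that assembles the exponent $\frac{n-d-1}{2(n-d)}$ (not a one-shot $\delta$-net balance) and the log power $\frac{n-d}{2}$ (one log from the annulus reduction, one per inductive step). Your plan never identifies the cluster/hyperplane structure nor the dimensional descent, and without it I do not see how to make the almost-orthogonality count close with the correct exponent for arbitrary $\Sigma$.

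Two smaller points. For the base case $n=d+1$, the paper's argument is not a Córdoba-type covering of a level set but a direct frequency decomposition of the annulus into $\sim\delta^{-d}$ conical sectors plus the switch lemma, giving $\|\sup_\sigma|A_{\sigma,1}f|\|_2\lesssim\|f\|_2$ on the annulus with no log loss at all — the single log in the codimension-one case comes entirely from the Chang--Wilson--Wolff reduction. (The Carleson embedding / covering approach does appear in the paper, but for the stronger multiscale Theorem \ref{thm:codim1_intro}, not for this base case.) Second, your description of the lower bound is qualitatively right, but the construction in Proposition \ref{prop:cex} is not a $\delta$-net plus a small ball: for $d>1$ one fixes a $(d-1)$-subspace $\omega$, tests against a cylinder $C_M(\omega)$ (or a tensor of a planar Kakeya set with a cube), and takes $\Sigma$ to be $d$-planes $\mathrm{span}(\omega,v)$ with $v$ ranging over an $M^{-1}$-net in a sphere $\mathbb S^{n-d}$; the resulting numerology $N=M^{n-d}$ is what makes the exponent come out as $\frac{n-d-1}{2(n-d)}$.
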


  For the special case $d=1$ and $n=3$,  we obtain an improved version of Theorem \ref{t:sharpss} where the corrective logarithmic term appearing in the right hand side of \eqref{e:tAintro} is replaced  by an arbitrarily iterated logarithm of $N$. The precise statement is given in Theorem~\ref{t:sharp3d} of Section~\ref{s:5}.

Before proceeding with the description of our second group of results, some remarks are in order. The case $d=1<n=2$ is due to Katz, \cite{KB}. It should be noted that two-dimensional versions of the theorem above are somewhat related to the   resolution of the Kakeya conjecture in two dimensions and to earlier results of C\'ordoba \cites{corpoly,Cor77} and Str\"omberg \cite{Stromberg}. We note here that Theorem~\ref{t:sharpss} recovers the sharp result of Katz, \cite{KB}, for single scale maximal directional averages on $L^2(\R^2)$, so necessarily $d=1$, in fact with a new and independent proof. Note that Theorem~\ref{t:sharpss} is a single-scale but with $\Sigma\subset \mathrm{Gr}(d,n) $ \emph{arbitrary}.
The  $d=1,n=3$ improved result of Theorem \ref{t:sharp3d} is itself an amelioration of \cite[Theorem B]{DPPalg}. The proof technique for Theorem \ref{t:sharp3d} combines the polynomial partitioning ideas of \cite{DPPalg} with a new algebraic almost-orthogonality result, Theorem \ref{t:alldortho}, which  is of independent interest and may be seen as a higher-dimensional analogue of the well-known Alfonseca-Soria-Vargas almost-orthogonality principle \cite{ASV}.

\subsection{Sharp critical estimates for multiscale $(d,d+1)$-averages}
In the cases of maximal directional averages in arbitrary dimension and codimension $1=n-d$, we give a final theorem that provides the best possible $L^p$-bounds, $p\geq 2$, for the corresponding maximal averages with respect to arbitrary finite $\Sigma\subset \mathrm{Gr}(d,d+1)$. It should be noted that in this case we can actually tackle the multiscale problem in a sharp fashion. An important feature shared by the codimension $1$ problems is that the critical integrability space for the maximal function $\mathrm M_{\Sigma,S}$ is $L^2(\R^n)$. 

\begin{theorem}\label{thm:codim1_intro} Let $n=d+1$ and $\Sigma\subset \mathrm{Gr(d,d+1)}$ be a finite set. Then
	\[
	\| \M_{\Sigma,(0,\infty)}:L^2(\R^n)\to L^{2,\infty}(\R^n)\|\lesssim (\log \#\Sigma)^{\frac 12}
	\]
and
	\[
	\| \M_{\Sigma,(0,\infty)}:L^p(\R^n)\to L^{p}(\R^n)\|\lesssim 
	 \begin{cases} 
		 (\log \#\Sigma)^{\frac{1}{p}},&\qquad p>2,\vspace{.4em}
		 \\
    	\log \#\Sigma,&\qquad p=2.
	\end{cases}
	\]
These bounds are best possible in terms of the dependence on the cardinality $\#\Sigma$.
\end{theorem}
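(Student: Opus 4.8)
\medskip\noindent\textbf{Proof plan.}
I would deduce the three displayed bounds from two estimates for the linearised operator: the strong bound $\|\M_{\Sigma,(0,\infty)}\|_{L^2(\R^n)\to L^2(\R^n)}\lesssim_d\log\#\Sigma$ and the critical weak bound $\|\M_{\Sigma,(0,\infty)}\|_{L^2(\R^n)\to L^{2,\infty}(\R^n)}\lesssim_d(\log\#\Sigma)^{1/2}$. From the latter, Marcinkiewicz interpolation with the trivial estimate $\|\M_{\Sigma,(0,\infty)}\|_{L^\infty\to L^\infty}\le\mathcal L^d(B_d)$ yields $\|\M_{\Sigma,(0,\infty)}\|_{L^p\to L^p}\lesssim_{d,p}(\log\#\Sigma)^{1/p}$ for every $p>2$, with exactly the asserted exponent; since no estimate is available below $L^2$ — this is the sense in which $L^2$ is critical in codimension one — the strong $L^2$ bound, which must carry the larger constant $\log\#\Sigma$, has to be proved independently. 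Throughout one linearises, fixing measurable selections $x\mapsto\sigma(x)\in\Sigma$ and $x\mapsto s(x)\in(0,\infty)$ realising the two suprema up to a factor, and a Calder\'on--Zygmund decomposition reduces the weak estimate to $f=\ind_E$.

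The geometric engine is the \emph{codimension-one plate calculus}. One replaces the $d$-dimensional averaging disc $x+\bigl(s(x)B_n\cap\sigma(x)\bigr)$ by a plate $P(x)$ — a pancake with $d$ long sides of length $\sim s(x)$ and one short side of length $\tau$ along the normal $\nu(\sigma(x))$, the thinness $\tau$ being a free parameter quantised later against a net-separation $\rho$ of $\Sigma$ — at a cost that will amount to only one logarithm. Since two affine $d$-planes in $\R^{d+1}$ meet in an affine $(d-1)$-plane, the intersection of two plates of a common scale $s$, thicknesses $\tau,\tau'$ and normals at angle $\theta$ is a slab-like set of volume, up to $d$-dependent constants, $\lesssim\min(|P|,|P'|)\,\bigl(1+s\theta/(\tau+\tau')\bigr)^{-1}$ — decaying like $\theta^{-1}$ past the critical angle $(\tau+\tau')/s$ — while different-scale pairs are handled by a nesting/$\ell^2$ argument; together with the bounded overlap of co-oriented, co-scaled plates (forced by maximality) this is the only geometric input.

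By the standard duality argument the maximal bound reduces to the $L^2$ orthogonality inequality $\bigl\|\sum_\sigma a_\sigma\ind_{P_\sigma}\bigr\|_{2}^2\lesssim(\log\#\Sigma)\sum_\sigma a_\sigma^2|P_\sigma|$. Expanding the left side and grouping pairs by the dyadic size of $\theta$ reduces this, after the normalisation $\tau\sim\rho$, to the Schur bound $\rho K\lesssim\log\#\Sigma$, where $K\coloneqq\sup_{\sigma\in\Sigma}\sum_{\sigma'\in\Sigma}(\theta(\sigma,\sigma')\vee\rho)^{-1}$. When $d=1$ this holds, since $K\sim\#\Sigma\,\log\#\Sigma$ and $\rho\sim(\#\Sigma)^{-1}$, and the argument collapses to C\'ordoba's planar proof; but for $d\ge2$ one only gets $K\sim\#\Sigma$, hence $\rho K\sim(\#\Sigma)^{(d-1)/d}$, so a single Schur test loses a \emph{power} of $\#\Sigma$ and recovering the sharp logarithm is the crux. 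I would recover it by a recursion on angular scales in $\mathrm{Gr}(d,d+1)=\mathbb{RP}^d$ resting on an Alfonseca--Soria--Vargas-type almost-orthogonality principle adapted to codimension-one plates: for a splitting $\Sigma=\Sigma'\sqcup\Sigma''$ with $\Sigma''$ lacunary relative to the angular shells carrying $\Sigma'$,
\[
\bigl\|\M_{\Sigma,(0,\infty)}\bigr\|_{L^2\to L^2}\lesssim_d\bigl\|\M_{\Sigma',(0,\infty)}\bigr\|_{L^2\to L^2}+O_d(1),
\]
the error being the transverse overlap term furnished by the plate calculus together with an $L^2$-bounded lacunary maximal operator over the shells (which also absorbs the passage to all scales). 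Iterating over $\sim\log\#\Sigma$ balanced stages, with base case $O_d(1)$, gives the strong $L^2$ bound.

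The principal obstacle is then the upgrade to the \emph{critical} weak $(2,2)$ bound with exponent $\tfrac12$: one must run the recursion at the level of distribution functions and show that the exceptional sets produced at the successive stages are essentially pairwise disjoint, so that the $\sim\log\#\Sigma$ per-stage losses combine in $\ell^2$ rather than additively and only $(\log\#\Sigma)^{1/2}$ survives — the multiscale, higher-dimensional counterpart of Bateman's sharpening of Katz's theorem, and the place where the exact plate-intersection formula is indispensable. Sharpness of all three exponents would be checked on explicit configurations: a single small ball together with a maximal $\rho$-net of directions, examined in the far field where the associated slabs have fanned out into disjoint pieces, shows that $\M_{\Sigma,(0,\infty)}$ is unbounded on $L^p$ for each $p<2$ with operator norm a genuine power of $\#\Sigma$ — confirming that $L^2$ is critical — whereas an iterated, self-similar Besicovitch--Perron construction, in which the configuration at each scale reinforces the next, produces the matching logarithmic lower bounds at and just above $p=2$, following the $d=1$ template of \cite{KB}.
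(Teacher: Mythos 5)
Your high-level reduction — interpolating the weak $(2,2)$ bound against the trivial $L^\infty$ bound to get the $p>2$ cases, and recognizing that $L^2$ is critical so the strong $(2,2)$ cannot come from Marcinkiewicz below $p=2$ — matches the paper, as does your use of a codimension-one plate-intersection calculus and a $TT^*$ reduction to a Bessel-type inequality. (One small point: the paper does not prove the strong $(2,2)$ bound independently; it derives it from the weak bound with a $\sqrt{\log}$-loss by Str\"omberg's interpolation argument, exactly as in the proof of Theorem~\ref{thm:L2nik}.)

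The central difficulty, however, is that your key lemma is neither proved nor, as stated, correct for $d\ge 2$. You propose a recursion on angular scales driven by an Alfonseca--Soria--Vargas-type inequality
\[
\bigl\|\M_{\Sigma,(0,\infty)}\bigr\|_{L^2\to L^2}\lesssim_d\bigl\|\M_{\Sigma',(0,\infty)}\bigr\|_{L^2\to L^2}+O_d(1),
\]
iterated $\sim\log\#\Sigma$ times. This is exactly the planar argument of Section~\ref{sec:2d} (where the dividing set $U$ has two points, hence $O(1)$ error), but the paper's higher-dimensional almost-orthogonality principle, Theorem~\ref{t:alldortho}, has error term $D^{(n-2)/2}\sup_{\mathsf C}\|A_{\Sigma\cap\mathsf C,S}\|_{L^2}$, not $O(1)$: the frequency cutoffs $\Gamma_{\mathsf C}$ overlap with multiplicity $\sim D^{n-2}$ (Lemma~\ref{l:squarefalld}), because in $\mathbb S^{n-1}$ a dividing variety $Z$ is itself a nontrivial $(n-2)$-dimensional directional problem. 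For codimension one this is $D^{d-1}$, so for $d\ge 2$ the per-stage error is not bounded and the recursion does not close. (The paper is explicit that it has no polynomial partitioning on $\mathrm{Gr}(d,n)$ for $d>1$, and even where it does — $d=1$, $n=3$, Theorem~\ref{t:sharp3d} — the iteration still leaves an iterated-logarithm loss.) Second, the proposed upgrade to weak $(2,2)$ with exponent $\tfrac12$ by ``running the recursion at the level of distribution functions'' so that per-stage losses ``combine in $\ell^2$'' has no identified mechanism: the ASV recursion is a Fourier-side operator-norm inequality with no exceptional-set or stopping-time structure to make disjoint. The Bateman--Katz template you cite is itself a spatial $TT^*$/Carleson argument, not an ASV argument, so invoking it here conflates two different methods.

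What the paper actually does in Section~\ref{s:cod1} is precisely that spatial approach: linearize, take adjoints, recognize $T^*_{\mathcal Q}(\ind_E)=\sum_Q|E_Q|\ind_Q/|Q|$ as a balayage of a Carleson sequence indexed by $(n-1)$-dimensional plates, and invoke the directional Carleson embedding theorem (Theorem~\ref{thm:carleson}) — this is the tool your proposal is missing. Theorem~\ref{thm:carleson} is proved by the level-set decomposition \eqref{eq:mainsplit} together with the exponential decay of Lemma~\ref{lem:expdecay}, itself obtained by iterating the stopping-time Lemma~\ref{lem:recurse}; the $\ell^2$ gain producing $(\log\#V)^{1/2}$ is baked into that decomposition, not retrofitted. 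The geometric input you correctly anticipate (slicing and intersection of codimension-one plates) appears in Lemma~\ref{lem:slice} and in the plate-intersection estimates of Lemma~\ref{l:inters} and Lemma~\ref{lem:inclT+}, but it feeds the Carleson recursion rather than a Schur test. I would replace your Schur/ASV engine with a Carleson embedding statement for $\delta$-plates and prove that by a Bateman-style mass-cutting recursion; your sharpness configurations and the $p>2$ interpolation can then stand as written.
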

  For the sharpness of the obtained bounds, we send to Proposition~\ref{prop:cex}. The main tool in the proof of the upper bounds is a directional Carleson embedding theorem for suitable Carleson sequences indexed by $\delta$-plates, see Theorem \ref{thm:carleson} in Section \ref{s:cod1}. More specifically, Theorem~\ref{thm:codim1_intro} is obtained by an application of Theorem \ref{thm:carleson}  to the adjoint of the (linearized) maximal operator $\mathrm M_{\Sigma,S}$. More general applications of the directional Carleson theorem are revealed by couplings with time-frequency  analysis. Along these lines, as a representative sample of the scope of   Theorem \ref{thm:carleson}, we derive from it a sharp   estimate for the Rubio de Francia square function associated to $N$ well-distributed conical sectors in $\R^n$, see Theorem~\ref{t:sfe}.

\subsection{$L^2$-estimates for the $(d,n)$-Nikodym maximal operator}  With precise definitions and discussion to come later, we define the $(d,n)$-Nikodym maximal function on $\R^n$ by\[
\mathcal N_\delta f (x) \coloneqq \sup_{\sigma \in\Grdn}\fint_{x+T_\delta(\sigma)}|f(y)|\,\d y,\qquad x\in\R^n,
\]
where $T_\delta(\sigma)$ is a $\delta$-neighborhood of $\sigma\cap B_n(1)$; see \eqref{e:plate}.  This is the maximal operator coupled with the dimensional analysis of $(d,n)$-Nikodym sets for general $1\leq d<n$.
 As   discussed in \S\ref{s:kakeya}, suitable $L^p(\R^n)$-bounds for $N_\delta$ imply corresponding lower bounds for the Hausdorff dimension of $(d,n)$-Nikodym sets. Our main result here is the following

\begin{theorem}\label{thm:L2nik} Let $1\leq d<n$ and $\delta>0$. There holds
	\[
	\| \Nn_\delta  :\, L^{2}(\R^n )\to L^{2,\infty}(\R^n) \| \lesssim 
	\begin{cases}
		\delta^{-\frac{n-d-1}{2}},&\qquad 1\leq d<n-1,
		\\
		\sqrt{\log (\delta^{-1})},&\qquad d=n-1,
	\end{cases}
	\]
	and 
	\[
	\| \Nn_\delta  :\, L^{2}(\R^n )\to L^{2}(\R^n) \| \lesssim 
	\begin{cases}
		\delta^{-\frac{n-d-1}{2}}\sqrt{\log\delta^{-1}},&\qquad 1\leq d<n-1,
		\\
		\log (\delta^{-1}),&\qquad d=n-1,
	\end{cases}
	\]
The weak $(2,2)$ bound is best possible while the strong $(2,2)$ bound is best possible up to the logarithmic factor $\sqrt{\log(\delta^{-1})}$.
\end{theorem}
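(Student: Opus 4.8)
The plan is to discretize and linearize the problem, reduce everything to a single $L^{2}$ estimate for a linearized $\delta$-plate average, and run an $L^{2}$ duality argument whose only serious input is the precise intersection geometry of $\delta$-plates; the codimension-one case is then also read off from the Carleson embedding already in the paper.

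\emph{Reduction and geometric inputs.} First I replace $\Grdn$ by a maximal $\delta$-net $\Sigma_{\delta}$, $\#\Sigma_{\delta}\simeq\delta^{-d(n-d)}$: this is lossless because $\dist(\sigma,\sigma')\lesssim\delta$ forces $T_{\delta}(\sigma)\subset T_{C\delta}(\sigma')$, hence $\fint_{T_{\delta}(\sigma)}|f|\lesssim\fint_{T_{C\delta}(\sigma')}|f|$, so $\Nn_{\delta}f\lesssim\sup_{\sigma\in\Sigma_{\delta}}\fint_{x+T_{C\delta}(\sigma)}|f|$, and dilating $\delta$ by a constant is harmless. Linearizing and using that the average is essentially constant at scale $\delta$ gives $\Nn_{\delta}f\lesssim\sum_{Q}c_{Q}\1_{Q}$, with $Q$ over a grid of $\delta$-cubes, $c_{Q}=\fint_{\mathrm P_{Q}}|f|$, and $\mathrm P_{Q}=x_{Q}+T_{C\delta}(\sigma_{Q})$ a single $\delta$-plate anchored at the grid point $x_{Q}$, $|\mathrm P_{Q}|\simeq\delta^{n-d}$. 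The two geometric facts I need are: (i) the precise plate–intersection estimate — if $\sigma,\sigma'$ have principal angles $\theta_{1}\ge\dots\ge\theta_{m}$, $m=\min(d,n-d)$, then uniformly in $y$
\[
|T_{\delta}(\sigma)\cap(y+T_{\delta}(\sigma'))|\ \lesssim\ \delta^{\,n-d}\prod_{i=1}^{m}\min\!\Big(1,\tfrac{\delta}{\theta_{i}}\Big)\ \le\ \delta^{\,n-d}\min\!\Big(1,\tfrac{\delta}{\theta_{1}}\Big);
\]
and (ii) the matching count — for dyadic $\rho\in[\delta,1]$ the number of grid cubes $Q'$ with $\theta_{1}(\sigma_{Q'},\sigma_{Q})\lesssim\rho$ and $\mathrm P_{Q'}\cap\mathrm P_{Q}\ne\emptyset$ is $\lesssim\delta^{-d}(\rho/\delta)^{n-d}$, since all such cubes lie in the $C\rho$–angular enlargement of $\mathrm P_{Q}$, a set of measure $\simeq\delta^{n-d}(\rho/\delta)^{n-d}$.

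\emph{The $L^{2}$ argument.} By duality against $g\ge0$, $\|g\|_{2}\le1$, and the size relations above,
\[
\|\Nn_{\delta}f\|_{2}\ \lesssim\ \delta^{d}\,\|f\|_{2}\,\Big\|\sum_{Q}b_{Q}\1_{\mathrm P_{Q}}\Big\|_{2},\qquad b_{Q}=\fint_{Q}g,\quad\sum_{Q}b_{Q}^{2}\le\delta^{-n}.
\]
Expanding $\|\sum_{Q}b_{Q}\1_{\mathrm P_{Q}}\|_{2}^{2}=\sum_{Q,Q'}b_{Q}b_{Q'}|\mathrm P_{Q}\cap\mathrm P_{Q'}|$ and splitting the pairs by the dyadic size $\rho\simeq2^{j}\delta$ of $\theta_{1}(\sigma_{Q},\sigma_{Q'})$, the inputs (i), (ii) and $b_{Q}b_{Q'}\le\tfrac12(b_{Q}^{2}+b_{Q'}^{2})$ give, for each $j$,
\[
\sum_{(Q,Q'):\,\theta_{1}\simeq 2^{j}\delta}b_{Q}b_{Q'}\,|\mathrm P_{Q}\cap\mathrm P_{Q'}|\ \lesssim\ \delta^{\,n-d}2^{-j}\cdot\delta^{-d}(2^{j})^{n-d}\cdot\delta^{-n}\ =\ \delta^{-2d}\,2^{\,j(n-d-1)}.
\]
Summing over the $\lesssim\log\delta^{-1}$ scales $j$: for $1\le d<n-1$ the exponent $n-d-1$ is positive, the geometric series is dominated by its top term $\simeq\delta^{-2d}\delta^{-(n-d-1)}$, and $\|\Nn_{\delta}f\|_{2}\lesssim\delta^{\,d-(n+d-1)/2}\|f\|_{2}=\delta^{-(n-d-1)/2}\|f\|_{2}$; for $d=n-1$ the series is harmonic and yields $\|\Nn_{\delta}f\|_{2}\lesssim(\log\delta^{-1})^{1/2}\|f\|_{2}$. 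These imply the asserted strong $L^{2}$ bounds, and the weak $(2,2)$ bounds follow a fortiori, or directly by running the same scheme with $g=\1_{E_{\lambda}}$, $E_{\lambda}=\{\Nn_{\delta}f>\lambda\}$, and Chebyshev. For $d=n-1$ the precise statement (in particular the multiscale-compatible strong bound $\log\delta^{-1}$) is cleanest via Theorem~\ref{thm:codim1_intro}: a $\delta$-slab average is dominated by the $n$-dimensional Hardy–Littlewood maximal operator composed with the singular hyperplane average, so $\Nn_{\delta}\lesssim\M_{\Sigma_{\delta},(0,\infty)}\circ M$, and Theorem~\ref{thm:codim1_intro} — via the directional Carleson embedding of Theorem~\ref{thm:carleson} — gives $(\log\#\Sigma_{\delta})^{1/2}\simeq(\log\delta^{-1})^{1/2}$ and $\log\delta^{-1}$ since $\#\Sigma_{\delta}\simeq\delta^{-(n-1)}$.

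\emph{Main obstacle.} Everything hinges on the geometric inputs (i)–(ii). A crude almost-orthogonality bound does not see the angular separation of plates at all: using only the scale-free identity $\int|T_{\delta}(\sigma)\cap(y+T_{\delta}(\sigma'))|\,\d y=\delta^{2(n-d)}$ and $|\bigcup_{\sigma}T_{\delta}(\sigma)|\simeq1$, one gets no better than $\|\Nn_{\delta}\|_{L^{2}\to L^{2,\infty}}\lesssim\delta^{-(n-d)/2}$. Shaving the last $\delta^{1/2}$ down to the critical exponent $\delta^{-(n-d-1)/2}$ — a Córdoba-type gain, now in all dimensions and codimensions — is exactly the content of the refined estimate (i), and in codimension $n-d\ge2$ that rests on the precise evaluation of $|T_{\delta}(\sigma)\cap T_{\delta}(\sigma')|$ through the singular-value data of the relative position of the two subspaces, together with the matching count (ii) of $\delta$-separated directions of $\Grdn$ at prescribed principal angles to a fixed one (the Weyl-type singular-value density on the Grassmannian). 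This spatial plate-intersection analysis is the heart of the matter; the passage from $\Grdn$ to $\Sigma_{\delta}$ and the $\delta$-cube linearization are routine.
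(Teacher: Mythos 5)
Your argument is correct, but it takes a genuinely different route from the paper's, and in fact proves a strictly sharper strong $(2,2)$ bound than the one stated in the theorem. The paper linearizes $\mathcal N_\delta$ to $\mathcal N_{\mathcal T}$ with \emph{pairwise disjoint} sets $F_T\subset T$, computes $\|\mathcal N_{\mathcal T}^*\ind_E\|_2$ by expanding the square and exploiting that $\{|E_T|\}$ is a Carleson sequence — specifically, the inequality $\sum_{T'\in\mathcal T_k(T)}|E_{T'}|\leq|T_\delta^+(\sigma_T)|\lesssim(2^k\delta)^{n-d}$ coming from disjointness and Lemma~\ref{lem:inclT+} — combined with the same intersection bound $|T\cap T'|/|T'|\lesssim2^{-k}$ from Lemma~\ref{l:inters}. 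This yields the weak $(2,2)$ bound, and then the strong $(2,2)$ bound follows from Str\"omberg interpolation, at the cost of an extra $\sqrt{\log\delta^{-1}}$ factor. You instead dualize against a general $g\in L^2$ rather than $\ind_E$, retain only the constraint $\sum_Q b_Q^2\leq\delta^{-n}$, and replace the Carleson inequality by a uniform counting estimate: the number of $\delta$-cubes $Q'$ with $\mathrm P_{Q'}\cap\mathrm P_Q\neq\varnothing$ and $\dist(\sigma_{Q'},\sigma_Q)\lesssim\rho$ is $\lesssim\rho^{n-d}/\delta^n$, which follows from $Q'\subset\mathrm P_{Q'}\subset x_Q+T^{+\sigma_{Q'}}_\delta(\sigma_Q)$ (Lemma~\ref{lem:inclT+}) and $|T^+|\lesssim\rho^{n-d}$. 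Combined with Lemma~\ref{l:inters} this gives $\sup_Q S_j(Q)\lesssim\delta^{n-2d}2^{j(n-d-1)}$, hence $\|\sum_Q b_Q\ind_{\mathrm P_Q}\|_2^2\lesssim\delta^{-2d}\sum_j2^{j(n-d-1)}$, and the computations check out.

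The interesting consequence is that you obtain $\|\Nn_\delta\|_{L^2\to L^2}\lesssim\delta^{-(n-d-1)/2}$ for $1\leq d<n-1$ and $\lesssim(\log\delta^{-1})^{1/2}$ for $d=n-1$, \emph{without} the $\sqrt{\log\delta^{-1}}$ loss that the paper incurs from the weak-to-strong interpolation. In other words you prove the conjectured critical bound of Conjecture~\ref{conj:nik} at $p=2$. This is consistent with the paper's own caveat that its strong $(2,2)$ bound is ``best possible up to the logarithmic factor $\sqrt{\log\delta^{-1}}$,'' and with the classical C\'ordoba/Str\"omberg sharp $(\log\delta^{-1})^{1/2}$ bound for the planar Nikodym maximal function ($n=2$, $d=1$). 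The tradeoff: your count-times-max bound for the off-diagonal sum is \emph{not} what one would want if one tried to push the restricted weak-type estimate — the Carleson disjointness is genuinely stronger than the crude count when the $|E_{T'}|$ are unbalanced — but since you are controlling $\sum_Q b_Q^2 S_j(Q)$ by $(\sup_Q S_j(Q))\sum_Q b_Q^2$ directly against $L^2$, the count-times-max estimate is exactly the right substitute and loses nothing. Your side remark that the codimension-one case can be rederived via $\Nn_\delta\lesssim\M_{\Sigma_\delta,(0,\infty)}\circ M$ and Theorem~\ref{thm:codim1_intro} is also correct, but note that that route only recovers the paper's $\log\delta^{-1}$; your direct argument is better.
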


\subsection{Motivation and Background} The study of maximal directional averages has a long history,   motivated for instance by the Kakeya,   Zygmund  and Stein conjectures mentioned above. 
Maximal  averages   along subsets of $\mathrm{Gr}(d,n)$ are also quantitatively connected with the behavior of the ball and polygon multipliers and the convergence of Fourier series in higher dimensions, as well as to  square functions formed by frequency projections to the corresponding dual subsets of the frequency domain. This last theme is explored in detail in \cite{APHPR} in dimension $n=2$. As mentioned above, we present  an application of this type, for arbitrary dimension $n$, in \S\ref{sec:sf} below.

The study of $\mathrm{Gr}(d,n)$-averages can be classified according to the structure of   $\Sigma\subset \mathrm{Gr}(d,n)$. This classification is more established in the case $d=1$, where  $\Sigma\subset \mathbb S^{n-1}$ and three particular  cases of interest arise.

First, lacunary sets  \cites{CorFeflac,NSW,SS,Strlac} are the only  {infinite} subsets of $\mathbb S^{n-1}$ that give rise to bounded directional maximal functions. This characterization is due to Bateman in $n=2$, and albeit in a weaker form, to Parcet and Rogers \cite{ParRog}, and relies ultimately on the existence of Kakeya and Nikodym sets. Second,  sharp $L^p(\R^n)$-bounds for maximal averages along $\delta$-uniform sets of directions $\Sigma_\delta\subset \mathbb S^{n-1}$ are the subject of the maximal Nikodym conjecture, whose lower bounds tells us \emph{a fortiori} that $\|\M_{\Sigma_\delta,\{1\}}\|_{L^p(\R^n)}$ cannot be independent of $\delta$ for any $p\in(1,\infty)$.
In fact, the maximal Nikodym conjecture is formally weaker than statements involving $\M_{\Sigma_\delta,\{1\}}$, dealing with averages along $\delta$-tubes instead of \emph{thin}, $\delta$-uniformly spaced  averages of the form $\M_{\Sigma_\delta,\{1\}}$.

In two dimensions the sharp bounds for $\M_{\Sigma_\delta,S}$ are known, see for example \cite{corpoly,Cor77,Stromberg}.  In higher dimensions $n>3$ the Kakeya conjecture is open but several partial and very significant results are available; see for example \cite{HRZ} and the references therein. Best possible $L^2(\R^n)$ bounds for multiscale averages along uniformly distributed sets in $\mathbb S^{n-1}$ recently appeared in \cite{Kim}. 

The problem of sharp $L^p$-bounds for  $\M_{\Sigma,S}$  when   $\Sigma\subset \mathbb S^{n-1}$ is instead arbitrary is also fully solved when $n=2$ \cite{Katz,KB}. However, in particular when $n>2$, it is in general much harder than the $\delta$-net case,  as there is no distinct scale on the set of directions, or  alternatively, there is no fixed density of directions. This lack of structure   does not allow for e.g.\ the use of induction on scales, and new tools are necessary.   Recently, these authors  proved in \cite{DPPalg} essentially sharp $L^2(\R^n)$-bounds for $\M_{\Sigma,\{1\}}$ for arbitrary finite $\Sigma\subset \mathbb S^{n-1}$ or $\Sigma \subset Z$ with $Z$ algebraic submanifold of $\mathbb S^{n-1}$, via the  polynomial method.

Moving away from the restriction $d=1$, we introduce in this paper a family of problems related to averages with respect to $\mathrm{Gr}(d,n)$ for general $1\leq d <n$. Such Radon-type transforms have been studied in several forms in the literature, for example for the $(d,n)$-Kakeya maximal function
\[
\mathcal K_\delta f(L)\coloneqq  \sup_{x\in\R^n}\int_{L+x} f(y)\,\d y,\qquad L\in \mathrm{Gr}(d,n),
\]
where $f$ is a suitable function in $\R^n$. In analogy to the case $d=1$, suitable $L^p(\R^n)\to L^q(\mathrm{Gr}(d,n))$ bounds for the $(d,n)$-Kakeya operator, or for corresponding $d$-plane transforms, relate to the existence and dimension of $(d,n)$-Besicovitch sets; see \cite{Mattila}*{\S24}, \cites{Oberlin,Oberlin_thesis}, and the discussion in Section~\ref{s:kakeya}. Our point of view is different, unifying the study of $d$-plane averages for all $1\leq d<n$ in the form of \emph{thin} subspace averages $\mathrm{M}_{\Sigma,\{s\}}$, possibly at different scales $s$, as operators acting on $L^p(\R^n)$. As in the case $d=1$ the structure of the subset $\Sigma\subset \mathrm{Gr}(d,n)$ under consideration is of paramount importance. In this light, our setup is new and, for example, there is currently no definition of lacunary subsets of $ \mathrm{Gr}(d,n)$. In this paper we thoroughly study the cases of arbitrary $\Sigma\subset  \mathrm{Gr}(d,n)$. The case of uniformly distributed subsets  $\Sigma_\delta\subset  \mathrm{Gr}(d,n)$ is also important and is implicit in the study of the $(d,n)$-Kakeya and Nikodym maximal operators in Section~\ref{s:kakeya} and more precisely in the statements of Proposition~\ref{prop:kakeya} and Theorem~\ref{thm:L2nik}. There is again a critical integrability space $L^{p(d,n)}(\R^n)$ for $\M_{\Sigma_\delta,\{1\}}$ relating to the problem of existence and dimension of $(d,n)$-Nikodym sets. 

An important difference is that for $d>1$ the $(d,n)$-Kakeya conjecture and the $(d,n)$-Nikodym conjecture appear to be independent of each other and so are the corresponding critical exponents $p(d,n)$; note that this in stark with the case $d=1$ where the maximal Kakeya and maximal Nikodym conjectures are equivalent; see \cite{Tao}. For example a well known result of Falconer, \cite{Falc}, implies the there are no $(d,n)$ Besicovitch sets for $d>n/2$ while another result of Falconer, \cite{FalcPLMS}, shows that there exist $(d,n)$-Nikodym sets for all $1\leq d<n$. This difference is also reflected to the fact that, unlike the case $d=1$, the possibility of $\delta$-free bounds for the $(d,n)$-Kakeya operator $\mathcal K_\delta$ is not excluded. An instance of this is contained in the statement of Proposition~\ref{prop:kakeya}. On the other hand this is not the case for the  $(d,n)$-Nikodym maximal operator, nor for $\M_{\Sigma,\{s\}}$, as exhibited in Theorem~\ref{thm:L2nik}, in accordance to the previously mentioned result of Falconer. A more general study of $d$-plane averages of the form $\M_{\Sigma,S}$ and of $(d,n)$-Nikodym maximal operators for $1<d<n$ is motivated by these connections and nuances. We introduce the maximal multiscale problem with $S=(0,\infty)$ and generic $\Sigma\subset \mathrm {Gr}(d,n)$ and manage to fully resolve the problem in the codimension $1=n-d$ case, in the form of Theorem~\ref{thm:codim1_intro} above. At the same time we formulate a general $(d,n)$-Nikodym conjecture and discuss the critical integrability index for general $d$. From that point of view Theorem~\ref{thm:L2nik} stated above describes a sharp but subcritical estimate for the $(d,n)$-Nikodym function, and corresponding maximal conjecture.

The investigations in the current paper lead to several natural questions concerning the $L^p(\R^n)$-bounds for $d$-subspace averages $\M_{\Sigma,S}$, and especially the study of such $L^p$-norms close to the critical exponent $p=p(d,n)$ is particularly interesting, and consistently hard. The current paper addresses in particular all the $L^2(\R^n)$-bounds for such operators in a sharp fashion, whether $L^2(\R^n)$ happens to be subcritical as in the case $n>2$, $d<n-1$, or critical as in the case $d=n-1$ in any dimension.

\subsection{Methodology} In this paper we employ a mix of geometric, Fourier analytic, and polynomial methods. The latter technique for the study of directional maximal operators along by arbitrary sets of directions was introduced in \cite{DPPalg}. Using a polynomial partition we divide the set of directions into subsets (cells) of controlled cardinality and such the boundary of these subsets is an algebraic variety of controlled degree. The properties of this partition allows us to prove an almost orthogonality principle via Fourier methods, as the algebraic nature of the boundary of the cells (wall) yields suitable overlap estimates for the relevant Fourier multipliers. Using this scheme we prove a general almost orthogonality principle for single scale directional averages defined with respect to an arbitrary set of directions. In the current paper we apply this principle to yield essentially sharp bounds for maximal $d=1$-dimensional averages given by arbitrary directions on the sphere. The application of the polynomial partitioning scheme to the case of $d$-dimensional averages in $\R^n$ will require a suitable polynomial partition on the Grassmannian $\Grdn$ and will be taken up in a future work. However, for general codimension $n-d$ we present an alternative argument that recovers almost sharp $L^2$-bounds for $d$-dimensional single-scale averages given by \emph{arbitrary} subsets $\Sigma\subset \Grdn$. Indeed this approach misses the conjectured sharp $L^2$-bound, which is polynomial in $\#\Sigma$, by a logarithmic factor in $\#\Sigma$.

In two cases, we employ a different point of view in order to sharp $L^2(\R^n)$-bounds via space analysis. This is particular efficient when proving $L^2(\R^n)$-bounds for maximal directional $d$-plane averages with $n=d+1$. It is important to note that for this codimension-1 case, the space $L^2(\R^n)$ is critical and we do prove the best possible bound in Theorem~\ref{thm:codim1_intro}, in fact even for the multiscale maximal function. The approach, inspired by the works  of Katz \cite{Katz} and Bateman   \cite{Bat1v},  is via a $TT^*$-argument on the adjoint of the linearized maximal operator and an appeal to a suitable directional Carleson embedding theorem. 
In \cite{APHPR} this method was elaborated into a directional Carleson embedding theorem for suitable directional Carleson sequences. Here we suitably adapt  the geometric part of the argument, resulting in a corresponding directional Carleson embedding for sequences indexed by $(n-1)$-dimensional plates in $\R^n$ and satisfying a Carleson condition adjusted to the geometry of such plates.

A second $TT^*$ instance   appears in the proof of sharp $L^2(\R^n)$-bounds for the  $\Grdn$-Nikodym maximal operator.
 Here we are able to exploit specific structure of the nets $\Sigma_\delta$ and prove explicit estimates for the volume of the intersections of such $\delta$-plates in all combinations of dimension and codimension. These volume estimates and the $TT^*$ argument yield the sharp bound for the $(d,n)$-Nikodym maximal operator of Theorem~\ref{thm:L2nik}.

\subsection{Notation} The purpose of this  paragraph is to provide easy reference for a few central definitions, in particular for the several maximal and averaging operators that appear throughout the paper. 

\begin{itemize} 
\item The notation  $B_{k}(z)$ is reserved for the unit ball in $\R^{k}$ centered at $z\in\R^k$, and we write $B_k\coloneqq B_k(0)$. 
\item If $\sigma $ is a  subspace of $\R^n$, we denote by $\pr_\sigma$ the corresponding orthogonal projection. For $v\in\mathbb S^{n-1}$ we abuse notation and write $\pr_v$ instead of $\pr_{\mathrm{span}\{v\}}$.
\item The notation $\mathrm M$ is reserved for the Hardy-Littlewood maximal operator on the corresponding  $\R^n$.
\item For $\Sigma\subset \Grdn$ and $S\subset (0,\infty)$,  $\mathrm M_{\Sigma,S}$ stands for the maximal averaging operator with respect to \emph{thin} plates $\sigma \cap B_n(s)$ with $\sigma\in \Sigma$ and choice of scale $s\in S$. In symbols,
\begin{equation}\label{e:geommax}
\mathrm{M}_{\Sigma,S} f(x)\coloneqq \sup_{\substack{s\in S\\\sigma \in\Sigma}} \langle |f| \rangle_{\sigma,s}(x), \qquad x\in \R^n.
\end{equation}
The case $S=\{s\}$ for some $s>0$ is the single-scale case and will appear in several places below. When $S=\{1\}$ we simplify the notation to $\mathrm M_{\Sigma}f \coloneqq \mathrm M_{\Sigma,\{1\}}f$. 
\item The smooth, compactly Fourier supported version of $\mathrm M_{\Sigma,S}$,  denoted by $A_{\Sigma,S}$, and defined in \eqref{e:fourieravg}, will be used throughout the paper.   
\item Also, given $\delta>0$ we consider the $(d,n)$-Nikodym maximal operator $\mathcal N_\delta$ which is a maximal --with respect to $\sigma$-- average along plates $x+\sigma\cap B_n(1)$, $x\in\R^n$, oriented along any $\sigma\in\Grdn$ and having thickness $\delta$ in the $\sigma^\perp$-directions. %
\end{itemize}
All of the above operators are functions defined on $\R^n$ and we will be proving $L^p(\R^n)\to L^q(\R^n)$ operator norm-bounds.

\subsection{Structure of the article} In Section \ref{S:2}, we collect a few definitions related to the Grassmannian and its distance, and develop a technical subspace switch lemma for the Fourier version of our averages. Section \ref{sec:2d} uses the switch lemma to give a new and simple of the  $L^2$-almost-orthogonality principle of \cite{ASV}  for maximal directional operators in the plane. The argument of Section \ref{sec:2d} serves as a model for the more complex Section \ref{sec:algao}, where an algebraic almost orthogonality principle in arbitrary dimension, of similar flavor, is proved. Section \ref{s:5} contains the proof of Theorems \ref{t:sharpss} and \ref{t:sharp3d}. In Section  \ref{s:kakeya} we discuss the Nikodym analogue of the maximal function $\M_\Sigma$, formulating the relevant maximal conjecture and proving the $L^2$ case. Finally, Section \ref{s:cod1} is dedicated to the full solution of the codimension 1 case via subspace Carleson embedding theorem and to the application of the latter to the Rubio de Francia estimate for conical cutoffs in $\R^n$.
\section{Grassmannian, Fourier averages and switch lemmas} \label{S:2} This section contains a few definitions and technical lemmas that will be used throughout the paper.  

\subsection{Grassmannian}  We write  $\mathrm{Gr}(d,n)$ for  the Grassmannian of $d$-dimensional subspaces of $\R^n$. If $O(n)$ stands for the orthogonal group on $\R^n$ then
\begin{equation}
\label{e:qgroup}
\mathrm{Gr}(d,n) = O(n)\backslash \left[O(d)\otimes O(n-d)\right],
\end{equation}
identifying each subspace   $\sigma\in \mathrm{Gr}(d,n)$ with the orthogonal map sending  the first $d$ canonical vectors onto an orthonormal basis of $\sigma$. In particular $\mathrm{Gr}(d,n)$ is a smooth algebraic variety of dimension $d(n-d)$. Equipped with the metric
\[
\mathsf{d}(\sigma,\tau) \coloneqq \sup_{v\in \mathbb S^{n-1}} |\sigma v-\tau v|, \qquad \sigma,\tau\in \mathrm{Gr}(d,n),
\] 
the Grassmanian $\mathrm{Gr}(d,n)$ can be viewed as a compact metric space. For $\delta>0$ and $\sigma \in \mathrm{Gr}(d,n)$ we denote by $\mathsf{B}_\delta(\sigma)\coloneqq \{\tau\in \mathrm{Gr}(d,n):\, \mathsf{d}(\sigma,\tau)<\delta\}$ the open $\delta$-ball centered at $\sigma \in \mathrm{Gr}(d,n)$. 

In analogy with the classical Kakeya-Nikodym directional maximal functions, we will consider below the maximal subspace averages along $\delta$-separated subsets   $\Sigma\subset \mathrm{Gr}(d,n) $. We say $\Sigma\subset \mathrm{Gr}(d,n) $ is $\delta$-separated if 
$\{\mathsf{B}_\delta (\sigma):\sigma \in \Sigma\}$ is a collection of pairwise disjoint sets.
We will need the following lemma concerning the cardinality of the subset of a $\delta$-separated set $\Sigma$ consisting of subspaces  which are $\delta$-approximately orthogonal to some $\xi \in \R^n$. We will see that these belong to a $\delta$-neighborhood of the Grassmanian hyperplane
\begin{equation}
\label{e:hd}
{H_\xi(d)}\coloneqq \{\tau \in \mathrm{Gr}(d,n): \, \pr_\tau \xi =0\}.
\end{equation}
Notice that ${H_\xi(d)}$ is linearly isomorphic to $\mathrm{Gr}(d,n-1)$.

\begin{lemma}\label{l:grasm}  Let $\xi\in \R^n\setminus \{0\}$, $\Sigma \subset \mathrm{Gr}(d,n)$, $\delta>0 $ and  
$\displaystyle\Sigma_\xi\coloneqq \left\{\sigma \in \Sigma:\, \frac{|\pr_\sigma \xi |}{|\xi|}<\frac{\delta}{4} \right\}.$ Then
\begin{itemize}
\item[\emph{1.}] The set $\Sigma_\xi$ is contained in the  $\frac{\delta}{3}$-neighborhood of  $H_\xi(d)$. 
\item[\emph{2.}] If $\,\Sigma$ is a $\delta$-separated set, then $ \#\Sigma_\xi 
 \lesssim \delta^{-d(n-d-1)}$.
\end{itemize}
\end{lemma}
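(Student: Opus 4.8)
The plan is to prove the two assertions in sequence, the first being essentially elementary and the second following from a volume/packing argument on the lower-dimensional Grassmannian.

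For part 1, I would start by unwinding what $|\pr_\sigma\xi|<\tfrac{\delta}{4}|\xi|$ means and comparing it to the distance in $\mathsf d$ from $\sigma$ to the hyperplane $H_\xi(d)$. Write $v=\xi/|\xi|\in\mathbb S^{n-1}$, so the hypothesis reads $|\pr_\sigma v|<\delta/4$. I want to produce some $\tau\in H_\xi(d)$, i.e. with $\pr_\tau\xi=0$, such that $\mathsf d(\sigma,\tau)<\delta/3$. The natural candidate: decompose $\pr_\sigma v=\lambda w$ with $w\in\sigma$ a unit vector (assuming $\pr_\sigma v\neq0$; otherwise $\sigma\in H_\xi(d)$ already and there is nothing to prove) and $\lambda=|\pr_\sigma v|<\delta/4$. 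Replace $w$ inside an orthonormal basis of $\sigma$ by a unit vector $w'$ which is a small rotation of $w$ within $\mathrm{span}\{w,v\}$ — more precisely, rotate $w$ toward the component of $v$ orthogonal to $\sigma$ — so that the resulting subspace $\tau$ (same basis except $w\mapsto w'$) satisfies $\pr_\tau v=0$. The angle of this rotation is comparable to $\lambda$, so $\mathsf d(\sigma,\tau)\lesssim\lambda$, and by choosing the geometry carefully one checks the constant works out so that $\lambda<\delta/4$ forces $\mathsf d(\sigma,\tau)<\delta/3$. This is a routine two-plane trigonometry computation; the slight subtlety is making sure the operator-norm distance $\sup_{u\in\mathbb S^{n-1}}|\sigma u-\tau u|$, not just a coordinatewise bound, comes out with the right constant, but since $\sigma$ and $\tau$ differ only by a planar rotation of angle $O(\lambda)$ in a single two-dimensional block, this distance is exactly $2\sin(\text{angle}/2)\le\lambda$-ish.

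For part 2, the point is that $\Sigma_\xi$ lies in the $\tfrac{\delta}{3}$-neighborhood of $H_\xi(d)$ by part 1, that $H_\xi(d)$ is (linearly isomorphic to) $\mathrm{Gr}(d,n-1)$, hence a smooth submanifold of $\mathrm{Gr}(d,n)$ of dimension $d(n-d-1)$, and that $\Sigma$ is $\delta$-separated. A $\delta$-separated subset of a $c\delta$-neighborhood of a $d(n-d-1)$-dimensional compact submanifold of the compact Riemannian manifold $\mathrm{Gr}(d,n)$ has cardinality $\lesssim\delta^{-d(n-d-1)}$: indeed the $\delta$-neighborhood has volume $\lesssim\delta^{d(n-d)-d(n-d-1)}=\delta^{d}$ times a dimensional constant (the codimension of $H_\xi(d)$ in $\mathrm{Gr}(d,n)$ is $d(n-d)-d(n-d-1)=d$), and by $\delta$-separation the balls $\mathsf B_{\delta/2}(\sigma)$, $\sigma\in\Sigma_\xi$, are disjoint and each has volume $\gtrsim\delta^{d(n-d)}$; enlarging slightly they are all contained in the $O(\delta)$-neighborhood of $H_\xi(d)$, whose volume is $\lesssim\delta^{d}\cdot\delta^{\,0}$ — wait, I need to be careful: the volume of a $c\delta$-tube around a submanifold of codimension $d$ in a $d(n-d)$-manifold is $\lesssim\delta^{d}$ (codimension many powers of $\delta$), while each separated ball has volume $\gtrsim\delta^{d(n-d)}$, so counting gives $\#\Sigma_\xi\lesssim\delta^{d}/\delta^{d(n-d)}=\delta^{-d(n-d-1)}$, exactly as claimed. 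I would phrase this cleanly using a fixed finite atlas of coordinate charts on $\mathrm{Gr}(d,n)$ in which $\mathsf d$ is comparable to Euclidean distance and $H_\xi(d)$ is (a piece of) an affine subspace, reducing to the Euclidean packing statement: a $\delta$-separated set inside a $\delta$-neighborhood of a $k$-plane in $\R^m$ has $\lesssim\delta^{-k}$ points.

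The main obstacle is bookkeeping the implicit constants so that the hypothesis ``$<\delta/4$'' lands inside ``$\delta/3$-neighborhood'' in part 1 with a genuinely valid (not just asymptotic) inequality, and, in part 2, justifying the tube-volume estimate uniformly in $\xi$ — the submanifold $H_\xi(d)$ moves with $\xi$, so one needs the covering/charting to be uniform over the compact family $\{H_\xi(d):\xi\in\mathbb S^{n-1}\}$, which follows from compactness of $\mathbb S^{n-1}$ and smoothness of the map $\xi\mapsto H_\xi(d)$, but should be stated. Neither step is deep; the content is entirely in setting up the right comparison between the projection condition, the metric $\mathsf d$, and a linear model of the Grassmannian near $H_\xi(d)$.
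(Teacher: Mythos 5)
Part 1 of your proof takes essentially the paper's route: reduce to $\xi=e_1$ (the paper does this by rotation invariance, which also dispatches the uniformity-in-$\xi$ worry you raise at the end), single out the distinguished unit vector $w=\pr_\sigma\xi/|\pr_\sigma\xi|$ in $\sigma$, rotate it inside the two-plane $\mathrm{span}\{w,\xi\}$ to annihilate its $\xi$-component, and leave the rest of an orthonormal basis of $\sigma$ fixed; the paper realizes this rotation explicitly as $c_1=(b_1-ue_1)/|b_1-ue_1|$ with $u=|\pr_\sigma e_1|$, and reads off the distance from $|c_1-b_1|$.

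Part 2 is where you genuinely diverge. The paper maps each $\sigma\in\Sigma_\xi$ to the nearby point $a_\sigma\in H_\xi(d)$ constructed in part 1, deduces by the triangle inequality (using $\delta$-separation of $\Sigma$ and $\mathsf d(\sigma,a_\sigma)<\delta/3$) that the $a_\sigma$ form a $\tfrac{\delta}{3}$-separated set \emph{inside} $H_\xi(d)$, and reads off $\#\Sigma_\xi\lesssim\delta^{-d(n-d-1)}$ from the dimension $d(n-d-1)$ of the compact manifold $H_\xi(d)$ alone; no tube-volume estimate is invoked. You instead pack in the ambient Grassmannian: disjoint $\sim\delta$-balls of volume $\gtrsim\delta^{d(n-d)}$ sitting inside an $O(\delta)$-tube around the codimension-$d$ submanifold $H_\xi(d)$, whose volume is $\lesssim\delta^{d}$, giving the same count. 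Both are correct. The paper's projection-then-count is a touch cleaner because the packing bound for a $\delta$-separated set in a compact $k$-manifold is elementary and requires no tube-volume computation or uniform atlas; your ambient packing carries a bit more geometric-measure bookkeeping, though it has the virtue of making transparent that only the codimension of $H_\xi(d)$ matters. The compactness/uniform-chart argument you sketch for uniformity in $\xi$ can simply be replaced by the one-line reduction to $\xi=e_1$ by rotation invariance.
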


\begin{proof} We first prove claim 2 assuming claim 1. Indeed, assuming 1, for each $\sigma\in \Sigma_\xi$  we may pick $a_\sigma\in {H_\xi(d)}$ with $\mathsf{d}(a_\sigma,\sigma)<\frac\delta3$.  By the triangle inequality $\Sigma \ni \sigma,\tau,\, \sigma\neq \tau \implies \mathsf{d}(a_\sigma,a_\tau)\geq \frac{\delta}{3} $. Therefore, the set $\{a_\sigma:\sigma \in \Sigma_ \xi \}\subset {H_\xi(d)}$ has at most $\sim\delta^{-{d(n-d-1)} }$ elements by dimensionality of ${H_\xi(d)}$. This completes the proof of the lemma up to establishing the claim.

 We now prove claim 1. By rotation invariance of the statement it suffices to prove the claim for $\xi=e_1$. Let $\sigma \in \Sigma_{\xi}$. Let $u=|\pr_{\sigma}e_1|$. If $u=0$ there is nothing to prove, which means we may work with $0<u<\delta /4$. Pick an orthonormal  basis $\{b_1,\ldots, b_d\}$ of   $\sigma$ with  $b_1=\pr_{\sigma}e_1/|\pr_{\sigma}e_1|$. Then  $b_2,\ldots, b_d\in e_1^\perp\cap b_1^{\perp}$. Let
 \[
 c_1\coloneqq \frac{b_1-u e_1}{|b_1-u e_1|}.
 \]
Then $a_\sigma\coloneqq\mathrm{span}\{c_1,b_2,\ldots b_d\}\in {H_\xi(d)}$ and as $|c_1-b_1|<\frac{\delta}{3}$, we have shown that $\mathsf{d}(a_\sigma,\sigma)<\frac\delta3$ as claimed.
\end{proof}
\subsection{Fourier averages and switch lemmas} If $0\leq \delta\ll 1$ and  $\sigma \in \mathrm{Gr}(d,n)$ then
 \begin{equation}\label{e:plate}
T_\delta (\sigma)\coloneqq T_\delta ^1 (\sigma) \coloneqq \big\{\xi\in \R^{n}: \,\, \left|\pr_{\sigma}\xi\right|\leq 1, \,	 \left|\pr_{\sigma^\perp}\xi\right|<\delta \big\} 
\end{equation}
will stand for the unit scale $d$-dimensional $\delta$-plate oriented along $\sigma$. In general, we think of $T_\delta(\sigma)$ as a slight fattening of the unit ball $B_d$ on $\sigma$. Throughout the paper a few slightly different versions of this fattening will be employed depending on the problem being considered.

Let  $\phi_d \in \mathcal S(\R^{d})$ be a real valued even function with support in $2^{-8}B_{d}$, and $\|\phi_d\|_{L^1(\R^d)}=1$. For $\sigma \in \mathrm{Gr}(d,n)$ and $s>0$ we  define the smooth subspace averages and   maximal averages of $f\in \mathcal S(\R^d)$ by 
\begin{equation}
\label{e:fourieravg}
A_{\sigma,s} f(x) \coloneqq \int_{\R^n} \widehat{f}(\xi) \phi_d\left(s\pr_\sigma \xi \right)\, \e^{ix\cdot \xi}\, \d \xi, \qquad 
A_{\Sigma,S} f(x)\coloneqq\sup_{\substack{s\in S\\\sigma \in\Sigma}} |A_{\sigma,s} f(x)|,
\qquad x\in \R^n. 
\end{equation}
When $\sigma=\R v$ for some $v\in \mathbb S^{n-1}$, we write $A_{v,s}$ in place of  $A_{\sigma,s}$.  The easily verified fact 
\begin{equation}
\label{e:compar}
\|M_{\Sigma,S}\|_{L^2(\R^n)}= \sup_{k\in\mathbb Z}\|M_{\Sigma,2^kS}\|_{L^2(\R^n)}\sim\sup_{k\in\mathbb Z}\|A_{\Sigma,2^kS}\|_{L^2(\R^n)}= \|A_{\Sigma,S}\|_{L^2(\R^n)}
\end{equation}
will be used  in what follows, often in combination with \eqref{e:switch2} below.

Fix a radial function  $\Phi\in \mathcal S(\R^n)$ with 
$
\cic{1}_{B_n} \leq \Phi \leq \cic{1}_{2B_n}
$
and for $\delta>0$ introduce the  low-high splitting
\begin{equation}
\label{e:LPsplit}
A_{\sigma,s} f= A_{\sigma,s}^{>\delta} f +  A_{\sigma,s}^{<\delta} f,\qquad   
\widehat{ A_{\sigma,s}^{>\delta} f }(\xi) \coloneqq \widehat{ A_{\sigma,s}  f }(\xi)\Phi\left(   4s\delta\xi \right), \qquad \xi \in \R^n.
\end{equation}
The notation is motivated by the fact that $A_{\sigma,s}^{>\delta}$ is a smooth average at spatial scale $s$ in the directions of $\sigma$, and scale $s\delta$ in the directions orthogonal to $\sigma$, and consequently containing frequency scales at most $1/(s\delta)$. With this in mind, the parameter $0<\delta\leq 1$ in the following lemma measures the eccentricity of the plates with long directions oriented along the subspace  $\tau$ appearing in the averages on the right hand side.
\begin{lemma} \label{l:1}
Let $s>0$, $\sigma\neq \tau \in \mathrm{Gr}(d,n) $ and $\delta\in [ \mathsf{d}(\sigma,\tau), 1]$ be given. Then
\begin{align}
\label{e:switch}
& \left|A_{\sigma,s}^{>\delta}  f(x)\right|\lesssim \sum_{k=0}^\infty  2^{-kn}\fint \displaylimits _{x + 2^ksT_\delta(\tau) } \left| f \right|,
\\ 
\label{e:switchdiff} &\left|A_{\sigma,s}^{>\delta}  f(x)- A_{\tau,s}^{>\delta}f(x) \right|\lesssim \frac{\mathsf{d}(\sigma,\tau)}{\delta} \sum_{k=0}^\infty  2^{-kn}\fint \displaylimits _{x + 2^ksT_\delta(\tau) } \left| f \right|.
\end{align}
\end{lemma}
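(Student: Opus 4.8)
The two estimates are closely related, so I would prove \eqref{e:switch} first and then derive \eqref{e:switchdiff} as a perturbation of it. The starting point is the Fourier-side formula: writing $m_{\sigma,s}^{>\delta}(\xi) = \phi_d(s\pr_\sigma\xi)\Phi(4s\delta\xi)$ for the multiplier of $A_{\sigma,s}^{>\delta}$, we have $A_{\sigma,s}^{>\delta}f = f * K_{\sigma,s}^{>\delta}$ where $K_{\sigma,s}^{>\delta} = (m_{\sigma,s}^{>\delta})^\vee$. The whole lemma reduces to a pointwise kernel bound: I claim
\[
\left| K_{\sigma,s}^{>\delta}(y) \right| \lesssim \sum_{k=0}^\infty 2^{-kn} \frac{\ind_{2^k s T_\delta(\tau)}(y)}{|s T_\delta(\tau)|},
\]
or more honestly a rapidly decaying majorant adapted to the plate $sT_\delta(\tau)$ (a "bump adapted to $sT_\delta(\tau)$", with $L^1$ norm $O(1)$ and Schwartz-type tails in the plate coordinates), from which \eqref{e:switch} follows by integration since $|sT_\delta(\tau)| \sim s^n\delta^{n-d}$ and averaging over the dilates $2^k sT_\delta(\tau)$ exactly produces the stated sum.

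To get the kernel bound I would rescale so that $s=1$ and analyze $m^{>\delta}_{\sigma,1}(\xi)=\phi_d(\pr_\sigma\xi)\Phi(4\delta\xi)$. The factor $\Phi(4\delta\xi)$ localizes $\xi$ to a ball of radius $\sim 1/\delta$; the factor $\phi_d(\pr_\sigma\xi)$ localizes $\pr_\sigma\xi$ to a ball of radius $\sim 1$. The point of the hypothesis $\delta \geq \mathsf{d}(\sigma,\tau)$ is that on the support of $\Phi(4\delta\xi)$ — where $|\xi|\lesssim 1/\delta$ — we have $|\pr_\sigma\xi - \pr_\tau\xi| \le \mathsf{d}(\sigma,\tau)|\xi| \lesssim \mathsf{d}(\sigma,\tau)/\delta \le 1$, so that $\phi_d(\pr_\sigma\xi)$ is, on the relevant frequency region, comparable to a bump in $\pr_\tau\xi$ of width $O(1)$. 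Concretely the support of $m^{>\delta}_{\sigma,1}$ is contained in the dual plate $\{|\pr_\tau\xi|\lesssim 1,\ |\pr_{\tau^\perp}\xi|\lesssim 1/\delta\}$, which is $T_\delta(\tau)$ reciprocally scaled; the inverse Fourier transform of a smooth bump adapted to that frequency plate is a Schwartz bump adapted to the physical plate $T_\delta(\tau)$, giving exactly the claimed majorant after summing over annular pieces $2^k T_\delta(\tau)$.

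For \eqref{e:switchdiff} I would estimate the multiplier difference $m_{\sigma,s}^{>\delta} - m_{\tau,s}^{>\delta} = \big(\phi_d(s\pr_\sigma\xi) - \phi_d(s\pr_\tau\xi)\big)\Phi(4s\delta\xi)$. By the mean value theorem and $\|\nabla\phi_d\|_\infty \lesssim 1$, this difference is bounded by $\lesssim s|\pr_\sigma\xi - \pr_\tau\xi| \lesssim s\,\mathsf{d}(\sigma,\tau)|\xi| \lesssim \mathsf{d}(\sigma,\tau)/\delta$ on the support of $\Phi(4s\delta\xi)$, uniformly; the gain $\mathsf{d}(\sigma,\tau)/\delta$ is precisely the prefactor in \eqref{e:switchdiff}. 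One has to be slightly more careful than "multiply by $\mathsf{d}(\sigma,\tau)/\delta$" because that factor must come out of the multiplier cleanly while leaving a residual symbol that still obeys the right differential inequalities; the honest way is to write the difference as $\frac{\mathsf d(\sigma,\tau)}{\delta}$ times a new symbol of the same type (same frequency support in $T_\delta(\tau)^*$, same $O(1)$ normalization after the rescaling) and apply the kernel bound from the first part to that symbol.

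**Main obstacle.** The routine-but-delicate point is the symbol/derivative bookkeeping: one must verify that $\phi_d(\pr_\sigma\xi)\Phi(4\delta\xi)$, regarded as a function adapted to the \emph{tilted} plate coordinates of $\tau$ rather than $\sigma$, still satisfies the symbol estimates $|\partial^\alpha m| \lesssim_\alpha (\text{plate-adapted weights})$ uniformly in $\sigma,\tau$ with $\mathsf{d}(\sigma,\tau)\le\delta$ — i.e. that changing from the $\sigma$-frame to the $\tau$-frame only costs constants. This is where the hypothesis $\delta \ge \mathsf{d}(\sigma,\tau)$ is used quantitatively, and it is the only place the argument could go wrong if one is cavalier about how derivatives in $\pr_{\tau^\perp}$-directions hit the factor $\phi_d(\pr_\sigma\xi)$ (they do, because $\sigma \ne \tau$), producing factors of $\mathsf{d}(\sigma,\tau)$ that must be absorbed into the $1/\delta$ budget rather than blowing up. Once that uniformity is in hand, both \eqref{e:switch} and \eqref{e:switchdiff} follow by the same stationary-phase-free Fourier estimate together with the volume identity $|sT_\delta(\tau)| \sim s^n \delta^{n-d}$.
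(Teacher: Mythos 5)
Your proposal is correct and uses essentially the same Fourier-side argument as the paper (multiplier supported in the dual plate, symbol estimates uniform after the frame-switch, decay via integration by parts). The only cosmetic difference is ordering: the paper proves \eqref{e:switchdiff} directly via the first-order Taylor formula $\phi_d(\pr_\sigma\xi)-\phi_d(\pr_\tau\xi)=\int_0^1\nabla\phi_d(\zeta(\xi,y))\cdot[(\pr_\sigma-\pr_\tau)\xi]\,dy$ and then obtains \eqref{e:switch} from \eqref{e:switchdiff} plus the trivial $\sigma=\tau$ case, whereas you prove the kernel majorant first and treat the difference as a perturbation; both are instances of the same symbol-calculus estimate, and the delicate derivative bookkeeping you flag is precisely the content of the paper's inequality \eqref{e:cf}.
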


\begin{proof} The case $\sigma=\tau$ of \eqref{e:switch} follows easily by the Schwartz decay of the smooth function $\phi_d$ used in the definition of $A_{\sigma,s}$. This means that the general case of \eqref{e:switch} is an immediate consequence of \eqref{e:switchdiff}, which we now prove. By isotropic scaling and rotational invariance it suffices to treat the case $s=1$, $\tau=\mathrm{span}\{e_1,\ldots, e_d\}$. Let 
\[
\zeta(\xi,y) \coloneqq \pr_\tau \xi + y (\pr_\sigma  -\pr_\tau) \xi, \qquad y \in [0,1].
\]
The Fourier transform of the integral kernel  $K $ of $A_{\sigma,1}^{>\delta}  - A_{\tau,1}^{>\delta}$ is given by
\[
\widehat K(\xi ) =  \Phi(4\delta\xi) \int_0^1 \,\nabla \phi_d(\zeta(\xi, y)) \cdot\left[\left(\pr_\sigma  -\pr_\tau\right)  \xi  \right]\, \d y \]
and   is supported in a $1$-neighborhood of $\sigma^\perp\cap 2\delta^{-1}B_{d}$, which has measure $\sim \delta^{d-n}$. It also  
satisfies for each multi-index $\alpha=(\alpha_1,\ldots, \alpha_n)$
\begin{equation}
\label{e:cf}
\begin{split}
& \quad \big|\partial^\alpha \widehat K(\xi ) \big|   
  \lesssim  {\mathsf{d}(\sigma,\tau)} \delta^{\alpha_{d+1}+\cdots +\alpha_n-1}. 
\end{split}
\end{equation}
This estimate is obtained via repeated use of the Leibniz rule and the following considerations:
\[
\mathrm{supp}\, \widehat K \subset   2\delta^{-1}B_{d}, \qquad
\sup_{y\in [0,1]} \sup_{d+1\leq j \leq  n} \left| \partial_j \zeta(\xi,y) \right| \lesssim \mathsf{d}(\sigma,\tau)\leq  \delta, \qquad \|\pr_\sigma  -\pr_\tau\|  =  \mathsf{d}(\sigma,\tau).
\] 
Integration by parts then readily yields
\[
  |K(x)| \lesssim    \frac {\mathsf{d}(\sigma,\tau)}{\delta^{n-d+1}}  \big[  1+{|\pr_\tau x|}\big]^{-4n}\left[ 1 +   \frac{|\Pi_{\tau^\perp} x|}{\delta}    \right]^{-4n},\qquad x\in \R^n,
\]
which in turn implies  \eqref{e:switchdiff}. The proof is thus complete.
\end{proof}

\begin{remark} \label{rem:unravel}
If $\mathrm{M}$ is the standard maximal function in $\R^n$, a simple averaging argument leads from \eqref{e:switch} and \eqref{e:switchdiff} to
\begin{equation}
\label{e:switch2}
 \left|A_{\sigma,s}^{>\delta}  f(x)\right|+ \frac{\delta} {\mathsf{d}(\sigma,\tau)}\left|A_{\sigma,s}^{>\delta}  f(x) - A_{\tau,s}^{>\delta}  f(x)\right|\lesssim \sum_{k=0}^\infty  2^{-kn} \mathrm{M}[\langle| f| \rangle_{\tau,2^ks}](x), \qquad x\in \R^n.
\end{equation}
\end{remark}
The previous lemma suggests that the low frequencies $|\xi|\lesssim (s\delta)^{-1}$ of the averages $A_{\sigma,s}$ may be approximated by plates oriented along $\delta$-nearby subspaces. The next lemma records the support of the complementary high-frequency components. For $\sigma\in \mathrm{Gr}(d,n)$ and $\delta>0$, consider the two-sheeted cone
\begin{equation}\label{e:2sc}
\Gamma_{\sigma,\delta}\coloneqq \left\{\xi \in \R^n \setminus \{0\}:\, \frac{|\pr_\sigma \xi |}{|\xi|} <2^{-2}\delta\right\}
\end{equation}
and, abusing notation, denote also the corresponding Fourier restriction by $\Gamma_{\sigma,\delta}$, namely we write $(\Gamma_{\sigma,\delta} f)^\wedge (\xi)\coloneqq \ind_{\Gamma_{\sigma,\delta}}(\xi) \hat f(\xi) $. 

\begin{lemma} \label{l:2} Let $s>0,\sigma\in \mathrm{Gr}(d,n)$. Then
$   A_{\sigma,s}^{<\delta} f = A_{\sigma,s}^{<\delta}  \Gamma_{\sigma,\delta}f$.
\end{lemma}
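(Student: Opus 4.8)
The plan is to analyze the Fourier support of $A_{\sigma,s}^{<\delta}f$ directly from the definitions in \eqref{e:fourieravg} and \eqref{e:LPsplit}, and show it is automatically contained in the cone $\Gamma_{\sigma,\delta}$, so that inserting the Fourier cutoff $\ind_{\Gamma_{\sigma,\delta}}$ changes nothing. First I would recall that by definition
\[
\widehat{A_{\sigma,s}^{<\delta}f}(\xi)=\widehat f(\xi)\,\phi_d(s\pr_\sigma\xi)\,\bigl(1-\Phi(4s\delta\xi)\bigr),
\]
so the relevant question is the interplay between the two cutoffs $\phi_d(s\pr_\sigma\xi)$ and $1-\Phi(4s\delta\xi)$. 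By isotropic scaling we may take $s=1$.

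The key point is a pair of inclusions for the individual supports. Since $\phi_d$ is supported in $2^{-8}B_d$, the factor $\phi_d(\pr_\sigma\xi)$ forces $|\pr_\sigma\xi|\le 2^{-8}$. Since $\Phi\ge\ind_{B_n}$, the factor $1-\Phi(4\delta\xi)$ vanishes unless $4\delta|\xi|>1$, i.e. $|\xi|>\tfrac{1}{4\delta}$, which in particular forces $\xi\neq0$. Combining these two, on the support of $\widehat{A_{\sigma,1}^{<\delta}f}$ we have
\[
\frac{|\pr_\sigma\xi|}{|\xi|}\le \frac{2^{-8}}{(4\delta)^{-1}}=2^{-6}\delta<2^{-2}\delta,
\]
which is precisely the defining inequality of $\Gamma_{\sigma,\delta}$ in \eqref{e:2sc}. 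Hence $\mathrm{supp}\,\widehat{A_{\sigma,1}^{<\delta}f}\subset\Gamma_{\sigma,\delta}$, and therefore $\ind_{\Gamma_{\sigma,\delta}}(\xi)\widehat{A_{\sigma,1}^{<\delta}f}(\xi)=\widehat{A_{\sigma,1}^{<\delta}f}(\xi)$ identically. Taking inverse Fourier transforms and using that $\Gamma_{\sigma,\delta}$ commutes as a Fourier multiplier with $A_{\sigma,1}^{<\delta}$ (both are Fourier multiplier operators, so the order is irrelevant) gives $A_{\sigma,1}^{<\delta}f=A_{\sigma,1}^{<\delta}\Gamma_{\sigma,\delta}f$, and undoing the scaling yields the general $s$.

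There is essentially no obstacle here — the statement is a bookkeeping check on the numerology of the two frequency cutoffs, and the only thing to be careful about is that the constants line up so that $2^{-8}\cdot 4 = 2^{-6} < 2^{-2}$, i.e. that the separation between the "low" threshold built into $\Phi(4s\delta\xi)$ and the width $2^{-8}$ of $\phi_d$ is generous enough to land strictly inside the cone aperture $2^{-2}\delta$. (This is exactly why the factor $4$ and the radius $2^{-8}$ were chosen in \eqref{e:LPsplit} and in the definition of $\phi_d$.) One should also note that $\Gamma_{\sigma,\delta}$ is defined only away from the origin, which is harmless since, as observed, the support of $\widehat{A_{\sigma,s}^{<\delta}f}$ misses a neighborhood of $0$.
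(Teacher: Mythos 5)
Your proof is correct and is essentially the paper's own argument: both proceed by checking directly that the two Fourier cutoffs $\phi_d(s\pr_\sigma\xi)$ and $1-\Phi(4s\delta\xi)$ jointly force $|\pr_\sigma\xi|/|\xi|<2^{-2}\delta$, so the support of $\widehat{A^{<\delta}_{\sigma,s}f}$ lies in $\Gamma_{\sigma,\delta}$. Your bookkeeping of constants ($2^{-8}\cdot 4=2^{-6}<2^{-2}$) is in fact a bit tighter and cleaner than the paper's stated intermediate bounds, but the reasoning is the same.
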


\begin{proof} Suppose $\xi$ belongs to the Fourier support of $ A^{<\delta}_{\sigma,s}  f .$ Then $|\xi|\geq 4 (\delta s)^{-1}$, and
$
 |\pr_\sigma \xi | < {s}^{-1} \leq \delta |\xi |/4$. The latter means $\xi \in  \Gamma_{\sigma,\delta}.$
\end{proof}
\section{Almost Orthogonality for Directions in the Plane}\label{sec:2d}
In this section, we present a simple proof of the  $L^2$-almost-orthogonality principle of \cite{ASV}  for maximal directional operators in the plane. Our argument uses Fourier analysis and overlap estimates instead of geometric considerations and $TT^*$-type arguments. Although this result is known, we include here a new argument that serves as a prelude to the more technical algebraic almost orthogonality  in arbitrary dimension devised in the next section. The statement is as follows.
\begin{theorem} \label{t:2dortho} There is an absolute constant $C$ such that the following holds. Let $S\subset (0,\infty)$, $U=\{u_1,\ldots, u_{N+1}\}\subset \mathbb S^1 $ be a set of directions ordered counterclockwise. For each $j=1,\ldots, N$ let $V_j\subset \mathbb S^1$ be a set of directions contained in the cone bordered by $u_{j}, u_{j+1}$ and let $V=\bigcup\{V_j:1\leq j \leq N\}$. Then,
\begin{equation}
\label{e:v2}
\left\|\mathrm{A}_{V,S}    \right\|_{L^2(\R^2)} \leq C \left\|\mathrm{A}_{U,S}    \right\|_{L^2(\R^2)}  + \max_{1\leq j \leq N}\left\|\mathrm{A}_{V_j,S}    \right\|_{L^2(\R^2)}.
\end{equation}
\end{theorem}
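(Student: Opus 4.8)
The plan is to linearize the maximal operators by choosing, for each $x$, a single direction $\mathbf{v}(x)\in V$ and a single scale $\mathbf{s}(x)\in S$ realizing (within a factor of $2$) the supremum defining $\mathrm{A}_{V,S}f(x)$, and then to decompose $\mathbf{v}(x)$ according to the cone $[u_j,u_{j+1}]$ it belongs to. Writing $E_j\coloneqq\{x: \mathbf v(x)\in V_j\}$, these sets partition $\R^2$ (up to the boundary directions), and on $E_j$ we have $|\mathrm{A}_{V,S}f(x)|\le 2|A_{\mathbf v(x),\mathbf s(x)}f(x)|$ with $\mathbf v(x)\in V_j$. The standard strategy (following Alfonseca–Soria–Vargas) is then: for a fixed $x\in E_j$ set $\delta=\delta(x)$ to be, roughly, the distance from $\mathbf v(x)$ to the nearer of the two bounding directions $u_j,u_{j+1}$, and apply the low–high splitting \eqref{e:LPsplit} with this $\delta$ to $A_{\mathbf v(x),\mathbf s(x)}$. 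The low-frequency piece $A_{\mathbf v(x),\mathbf s(x)}^{>\delta}f(x)$ is, by Lemma~\ref{l:1} (specifically \eqref{e:switch2} with $\tau=u_j$ or $u_{j+1}$, whichever is $\delta$-close), controlled by $\sum_k 2^{-kn}\mathrm M[\langle|f|\rangle_{u_{j},2^k\mathbf s(x)}]$, hence pointwise by $\sum_k 2^{-kn}\mathrm{M}[\mathrm{A}_{U,2^kS}f]$, which in $L^2$ costs only an absolute constant times $\|\mathrm{A}_{U,S}\|_{L^2}$ by \eqref{e:compar} and the boundedness of $\mathrm M$ on $L^2(\R^2)$.

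The remaining, genuinely two-dimensional, point is the high-frequency piece $A_{\mathbf v(x),\mathbf s(x)}^{<\delta}f(x)$. By Lemma~\ref{l:2} this equals $A_{\mathbf v(x),\mathbf s(x)}^{<\delta}\Gamma_{\mathbf v(x),\delta}f(x)$, so it only sees frequencies in the cone $\Gamma_{\mathbf v(x),\delta}$, i.e.\ frequencies making angle $\lesssim\delta$ with the line $\mathbf v(x)^\perp$. Here I would exploit that in $\R^2$ the bound can be made to depend only on the single cone index $j$: when $x\in E_j$ and $\delta(x)$ is comparable to the gap between $\mathbf v(x)$ and, say, $u_{j+1}$, the relevant frequency cone $\Gamma_{\mathbf v(x),\delta(x)}$ is contained in a fixed cone $\widetilde\Gamma_j$ (of aperture comparable to the angular width of the $j$-th sector, appropriately rotated to be transverse). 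The key overlap/almost-orthogonality fact, to be proven via a Fourier-analytic argument with overlap estimates rather than geometry, is that the frequency cones $\widetilde\Gamma_j$ associated to distinct sectors have \emph{bounded overlap}: any $\xi\in\R^2\setminus\{0\}$ lies in $O(1)$ of them. Granting this, one estimates
\[
\Big\|\sup_{j}\,\sup_{\substack{v\in V_j,\ s\in S}}\big|A_{v,s}^{<\delta}\Gamma_{v,\delta}f\big|\,\ind_{E_j}\Big\|_{L^2(\R^2)}^2\ \le\ \sum_{j}\big\|\mathrm{A}_{V_j,S}(\Gamma_{\widetilde\Gamma_j}f)\big\|_{L^2(\R^2)}^2\ \le\ \max_j\|\mathrm{A}_{V_j,S}\|_{L^2(\R^2)}^2\sum_j\|\Gamma_{\widetilde\Gamma_j}f\|_{L^2}^2,
\]
and the bounded overlap of the $\widetilde\Gamma_j$ together with Plancherel turns $\sum_j\|\Gamma_{\widetilde\Gamma_j}f\|_{L^2}^2\lesssim\|f\|_{L^2}^2$. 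Combining the low- and high-frequency contributions gives \eqref{e:v2}.

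A subtlety I would need to handle carefully is that the splitting parameter $\delta=\delta(x)$ varies with $x$, so the operators $A_{\mathbf v(x),\mathbf s(x)}^{<\delta(x)}$ and $\Gamma_{\mathbf v(x),\delta(x)}$ are genuinely $x$-dependent; the clean way around this is to further decompose each sector dyadically, fixing $\delta$ up to a factor of $2$ on each piece $V_{j,m}\subset V_j$ consisting of directions at angular distance $\sim 2^{-m}$ times the sector width from the boundary, run the above argument with $\delta\sim 2^{-m}$, and then sum in $m$. The per-scale frequency cones still have bounded overlap across $j$ for each fixed $m$, and the $\ell^2$ sum over $j$ at fixed $m$ behaves as above; summing the resulting geometric-type series in $m$ (the low-frequency error term from \eqref{e:switch2} carries the factor $\mathsf d(\mathbf v,u_j)/\delta\sim 1$ on $V_{j,m}$, harmless, while the high-frequency term is handled per $m$ and the $\max_j\|\mathrm A_{V_j,S}\|$ absorbs the $\ell^2(m)$-sum after noting $\bigcup_m V_{j,m}=V_j$) produces the stated bound with only an absolute constant loss. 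I expect the main obstacle to be precisely this bounded-overlap statement for the frequency cones $\widetilde\Gamma_j$ — formulating it so that the aperture of $\widetilde\Gamma_j$ is simultaneously (i) large enough to contain $\Gamma_{\mathbf v(x),\delta(x)}$ for all relevant $x$ and (ii) small/placed so that distinct $\widetilde\Gamma_j$ overlap boundedly — and verifying it cleanly from the counterclockwise ordering of $U$; this is the step where the planarity of the problem is essential and where the Fourier-overlap viewpoint replaces the geometric $TT^*$ argument of \cite{ASV}.
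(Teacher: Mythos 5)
Your overall approach matches the paper's: low--high splitting per direction via \eqref{e:LPsplit}, the switch lemma (Lemma~\ref{l:1}/Remark~\ref{rem:unravel}) for the low-frequency piece, cone localization via Lemma~\ref{l:2} for the high-frequency piece, and a square-function argument across sectors. But there is a genuine gap in your high-frequency estimate. You assert
\[
\Big\|\sup_j\sup_{v\in V_j,\ s\in S}\big|A_{v,s}^{<\delta}\Gamma_{v,\delta}f\big|\ind_{E_j}\Big\|_{L^2}^2\ \le\ \sum_j\big\|A_{V_j,S}\big(\Gamma_{\widetilde\Gamma_j}f\big)\big\|_{L^2}^2,
\]
but $A_{v,s}^{<\delta}$ is not dominated pointwise by $A_{v,s}$, so $|A_{v,s}^{<\delta}g|\le A_{V_j,S}g$ is false as written. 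The paper's fix (see \eqref{e:ortho4}--\eqref{e:ortho5}) is to split again, $|A_{v,s}^{<\delta(v)}\Gamma_j f|\le|A_{v,s}^{>\delta(v)}\Gamma_j f|+|A_{v,s}\Gamma_j f|$, apply the switch lemma a second time to the first summand to get a term of the form $\mathrm{M}\circ\mathrm{M}_{U,2^kS}[\Gamma_j f]$, and then control that extra term by the same $\Gamma_j$-orthogonality plus the $\|A_{U,S}\|$ bound; your estimate is missing this third contribution.

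Conversely, the step you flag as the main obstacle is not one. If, as the paper does, you take $\Gamma_j$ to be the two-sheeted cone bordered by $u_j^\perp$ and $u_{j+1}^\perp$, these cones are \emph{pairwise disjoint} by construction (the $u_j$ are ordered counterclockwise), so $\sum_j\|\Gamma_j f\|_2^2\le\|f\|_2^2$ is immediate from Plancherel; no bounded-overlap lemma is needed, and the containment $\Gamma_{v,\delta(v)}\subset\Gamma_j$ for every $v\in V_j$ follows directly from the choice $\delta(v)=\mathsf{d}(v,u(v))$. Likewise, both the linearization $(\mathbf v(x),\mathbf s(x))$ and the dyadic subdecomposition $V_{j,m}$ to pin down $\delta$ are unnecessary: the paper defines $\delta(v)$ per direction, proves pointwise estimates for each fixed $v\in V_j$ and $s\in S$, and only then takes suprema, so no uniformity of $\delta$ across $x$ is ever required.
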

\label{s:ao}
A simple induction argument using the leftmost and  middle directions as elements of the dividing set $U$, together with the control from \eqref{e:compar}, bootstrap Theorem \ref{t:2dortho} to recover the following sharp bound from \cite{Katz,ASV}.

\begin{corollary} \label{c:2dsharp}  Let $V\subset \mathbb{S}^1$ be a finite set. Then
$\displaystyle\left\|\mathrm{M}_{V,(0,\infty)}    \right\|_{L^2(\R^2)} \lesssim  \log (\# V).$
\end{corollary}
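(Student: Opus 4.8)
\textbf{Proof plan for Corollary~\ref{c:2dsharp}.}
The plan is to derive the logarithmic bound from the almost-orthogonality principle of Theorem~\ref{t:2dortho} by a dyadic induction on the cardinality of the set of directions, in the spirit of the Alfonseca--Soria--Vargas scheme. Let $\Phi(M)\coloneqq \sup\{\|\mathrm{A}_{V,(0,\infty)}\|_{L^2(\R^2)} : V\subset \mathbb S^1,\ \#V\leq M\}$; by \eqref{e:compar} it is equivalent to bound $\mathrm{M}_{V,(0,\infty)}$ and $\mathrm{A}_{V,(0,\infty)}$, so it suffices to show $\Phi(M)\lesssim \log M$. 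First I would fix a finite $V\subset\mathbb S^1$ with $\#V=M$ and split $\mathbb S^1$ into two opposite open half-circles plus the two boundary directions; on each half-circle the directions are linearly ordered, and one extra rotation handles the two degenerate pieces, so it is enough to treat a set $V$ that is contained in an open half-circle and hence \emph{ordered} (say counterclockwise) as $v_1,\dots,v_M$.

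Next I would apply Theorem~\ref{t:2dortho} with a \emph{balanced} choice of separating set. Picking $U$ to consist of the first direction, a median direction $v_{\lceil M/2\rceil}$, and the last direction, the cone bordered by consecutive elements of $U$ contains at most $\lceil M/2\rceil$ of the $v_i$'s; writing $V_1,V_2$ for these two subfamilies, Theorem~\ref{t:2dortho} gives
\[
\|\mathrm{A}_{V,S}\|_{L^2(\R^2)} \leq C\,\|\mathrm{A}_{U,S}\|_{L^2(\R^2)} + \max_{j=1,2}\|\mathrm{A}_{V_j,S}\|_{L^2(\R^2)}
\]
for every $S$, and in particular for $S=(0,\infty)$. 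Here $\#U=3$, so $\|\mathrm{A}_{U,(0,\infty)}\|_{L^2}\lesssim 1$ (a fixed finite number of smooth single-direction maximal averages, each bounded on $L^2$ uniformly in scale by comparison with the Hardy--Littlewood maximal function via \eqref{e:compar}), while $\#V_j\leq \lceil M/2\rceil$. Taking suprema over admissible $V$ yields the recursion $\Phi(M)\leq \Phi(\lceil M/2\rceil) + C'$ for a universal constant $C'$, with base case $\Phi(M)\lesssim 1$ for bounded $M$. Iterating the recursion $O(\log M)$ times gives $\Phi(M)\lesssim \log M$, which is the claim.

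The main technical points to verify are: (i) that the half-circle reduction does not cost more than a constant, which is routine since a rotation of $\R^2$ is an $L^2$-isometry that conjugates the operators and the two boundary directions contribute only $O(1)$; (ii) that Theorem~\ref{t:2dortho} may legitimately be applied with the \emph{same} $S=(0,\infty)$ at every stage of the induction, which is built into the statement of the theorem; and (iii) the uniform $L^2$-boundedness of $\mathrm{A}_{U,S}$ for a fixed finite $U$, which follows from $|A_{u,s}f|\lesssim \mathrm{M}f$ pointwise together with \eqref{e:compar}. I expect the only real subtlety to be bookkeeping in the induction: one must make the median split so that \emph{both} child families have cardinality at most a fixed fraction (here $1/2$, up to rounding) of the parent, so that the depth of the recursion is genuinely $\log_2 M + O(1)$ and the accumulated additive constants sum to $O(\log M)$ rather than something larger.
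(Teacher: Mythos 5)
Your proof follows the same route as the paper: a median-split dyadic induction applied to Theorem~\ref{t:2dortho}, iterated $O(\log M)$ times, which is exactly the ``simple induction argument using the leftmost and middle directions as elements of the dividing set $U$'' that the paper invokes to bootstrap from the almost-orthogonality principle. One small imprecision in your point (iii): the pointwise bound $|A_{u,s}f|\lesssim \mathrm{M}f$, with $\mathrm{M}$ the ($n$-dimensional) Hardy--Littlewood maximal function as the paper uses that symbol, is false --- $A_{u,s}$ is a \emph{thin} smooth average along the line $\R u$, and such directional averages are not dominated by ball averages; the correct and sufficient statement is that $\sup_{s>0}|A_{u,s}f|$ is controlled by the one-dimensional directional maximal function in the direction $u$, which is bounded on $L^2(\R^2)$ by Fubini, and then the finitely many directions in $U$ contribute $O(1)$.
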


\begin{remark}\label{rmrk:2dsharp} The sharpness of the estimate in Corollary~\ref{c:2dsharp} follows by a variation of an example employed in \cite{Dem}*{Proposition 3.3} for the directional Hilbert transform. In particular one considers the action of $\mathrm{M}_{\Sigma_\delta,(0,\infty)}$ on the function $f(x)\coloneqq |x|^{-1}\ind_{\{1\leq |x|\lesssim N\}}$, where $\Sigma_\delta$ is a $\delta=N^{-1}$-net on $\mathbb S^1$. We omit the details.
\end{remark}

\begin{proof}[Proof of Theorem \ref{t:2dortho}] In this proof the constant $C>0$ is absolute and may vary at each occurrence. For each $1\leq j\leq N$, let $\Gamma_j$ be the two-sheeted cone bordered by the supporting lines to $u_j^\perp,u_{j+1}^\perp$, and denote also by $\Gamma_j$ be the corresponding Fourier restriction. 
The cones $\{\Gamma_j:1\leq j \leq n\}$ are pairwise disjoint, so that 
   \begin{equation}
\label{e:ortho2}
  \sum_{j=1}^N \left\|{\Gamma}_j f\right\|_2^2 \leq \|f\|_2^2. 
\end{equation} 
For $v\in V_j$, let $u(v)=\arg\min\left\{\mathsf{d}(v,u_j),\mathsf{d}(v,u_{j+1}) \right\}$ and $\delta(v)= {\mathsf{d}(v,u(v))}$. By assumption, the direction $v_j^\perp \in \mathbb S^1$ lies between $u_j^\perp,u_{j+1}^\perp$ and thus in $\Gamma_j$. The sense of the definitions above is that the whole cone $\Gamma_{v, \delta(v)}\coloneqq \{\xi:\,|\xi\cdot v|<\delta_v|\xi|\}$  is contained in  $\Gamma_j$, namely
\begin{equation}\label{e:ortho3}
	 \Gamma_{v, \delta(v)} \subset  \Gamma_j, \qquad v\in V_j.
\end{equation}
With this choice, Lemma \ref{l:2} and \eqref{e:ortho3} tell us that 
\begin{equation}
\label{e:ortho4}
\left|A_{v, s}^{<\delta(v)} f\right| =  \left|A_{v, s}^{<\delta(v)}  {\Gamma}_j f\right| \leq  \left|A_{v, s}^ {>\delta(v)} {\Gamma}_j f\right| +  \left|A_{v, s}   {\Gamma}_j f\right| .
\end{equation}
Applying Remark \ref{rem:unravel} twice, we obtain
\begin{equation}
\label{e:ortho4a0}
\begin{split} 
\left|A_{v, s}   f\right| &\leq \left|A_{v, s}^{>\delta(v)}  f\right| + \left|A_{v, s}^{<\delta(v)}f\right| 
 \leq  \left|A_{v, s}^{>\delta(v)} f \right|  + \left|A_{v, s}^{>\delta(v)}{\Gamma}_jf\right| +  \left|A_{v, s} {\Gamma}_j f\right| 
\\ & \leq \left|A_{v, s} {\Gamma}_j  f\right|+  C \sum_{k=0}^\infty 2^{-k}\Big(\mathrm{M}[ \langle |f|\rangle_{u(v),2^ks}] +   \mathrm{M}[ \langle |{\Gamma}_j f|\rangle_{u(v),2^ks}] \Big),
\end{split}
\end{equation}
where we remember that $\mathrm{M}$ denotes the Hardy-Littlewood maximal operator. Taking supremum over $v\in V_j, s\in S$, $1\leq j\leq N$, and subsequently taking  $L^2$  norms,  
\begin{equation}
\label{e:ortho5}
\begin{split}
\|A_{V,S }   f \|_2 &\leq \Big\| \max_{1\leq j\leq N}\left\{ A_{V_j,S} {\Gamma}_j   f\right\} \Big\|_{2} 
\\ 
&\quad+ C   \sup_{k} \left\|\mathrm{M}\circ \mathrm M_{ U,2^kS} f \right\|_{2} 
+ C\sup_{k} \bigg\|\sup_{1\leq j\leq N}\left\{\mathrm{M}\circ \mathrm M_{ U,2^kS}[ {\Gamma}_j   f]\right\}\bigg\|_2.
\end{split}
\end{equation}
Using the observation in \eqref{e:compar}
\begin{equation}
\label{e:ortho6a}
\begin{split}
 \sup_k \left\|  \mathrm{M}\circ \mathrm M_{ U,2^kS} \right\|_{2\to 2}   \leq C \sup_k\left\| \mathrm M_{ U,2^kS} \right\|_{2\to 2} \leq C \left\| A_{ U,S} \right\|_{2\to 2}, 
\end{split}
\end{equation} and similarly using the orthogonality in \eqref{e:ortho2} and a square function argument 
\begin{equation}
\label{e:ortho6b}
\begin{split}
\bigg\|\sup_{1\leq j\leq N}\left\{\mathrm{M}\circ \mathrm M_{ U,2^kS}[ \Gamma_j   f]\right\}\bigg\|_2 
 & \leq C\left\| \mathrm{A}_{ U,S}\right\|_{2\to 2}\sqrt{  \sum_{1\leq j\leq N}\left\| \Gamma_j   f \right\|_{2}^2}  \leq C \left\| \mathrm{A}_{ U,S}\right\|_{2\to 2}
\left\|  f \right\|_{2 }
\end{split}
\end{equation}
and 
\begin{equation}
\label{e:ortho6c}
\begin{split}
  \Big\| \max_{1\leq j\leq N}\left\{ A_{V_j,S} \Gamma_j   f\right\}\Big\|_{2}
 & \leq \left( \max_{1\leq j\leq N} \left\| A_{V_j,S}\right\|_{2\to 2}\right)\sqrt{ \sum_{1\leq j\leq N}\left\| \Gamma_j   f \right\|_{2}^2 } \leq \left( \max_{1\leq j\leq N} \left\| A_{V_j,S}\right\|_{2\to 2}\right) 
\left\|  f \right\|_{2 }.
\end{split}
\end{equation}
Inserting \eqref{e:ortho6a},  \eqref{e:ortho6b} and  \eqref{e:ortho6c} into estimate \eqref{e:ortho5} completes the proof of the theorem.
\end{proof}

\section{Algebraic almost orthogonality in general codimension}\label{sec:algao}
This section contains an analogue of Theorem \ref{t:2dortho} in higher dimensions, where subsets of $\mathrm{Gr}(d,n)$ are partitioned by algebraic sets. The almost orthogonality result thus obtained may then be employed in combination with polynomial partitioning to obtain a sharpening of the recent result of \cite{DPPalg} concerning averages along arbitrary directions in $\R^3$.  
 For simplicity, we restrict ourselves to the case $d=1$  below and identify $\mathrm{Gr}(1,n)$ with $\mathbb{S}^{n-1}$ in the obvious way. Higher $d$ analogues and the related polynomial partitioning theorems on $\mathrm{Gr}(d,n)$ for $d>1$ are the object of a forthcoming companion paper.

\begin{definition} Let $p\in \R[x_1,\ldots, x_{n}]$ be a degree $D$ polynomial  and $Z(p)\coloneqq\{x\in \R^n:p(x)=0\}$ be the corresponding zero set. The associated set of \emph{cells}  $\mathbf{C}(p) $ is the set of connected components of $\mathbb S^{n-1}\setminus Z(p)$. Then   $\mathbf{C}(p) $ has $\lesssim D^{n-1}$ elements,  see e.\ g.\   \cite{BarBas}.
\end{definition}

\begin{theorem} \label{t:alldortho} Let Let $p\in \R[x_1,\ldots, x_{n}]$ be a degree $D$ polynomial. For every finite set $\Sigma\subset\mathbb S^{n-1}$  and $S\subset (0,\infty)$, 
\begin{equation}
\label{e:alldortho}
\left\|A_{\Sigma,S}    \right\|_{L^2(\R^{n})} \lesssim    \sup_{\substack{U \subset \mathbb S^{n-1}\cap  Z(p) \\ \#U\leq \#\Sigma }} \left\|A_{U,S}    \right\|_{L^2(\R^{n})}  +  D^{\frac{(n-2)}2}\sup_{\mathsf{C}\in \mathbf{C}(p)}\left\|A_{\Sigma\cap \mathsf{C},S}    \right\|_{L^2(\R^{n})}. 
\end{equation}
\end{theorem}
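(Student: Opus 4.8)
The plan is to mimic the structure of the proof of Theorem~\ref{t:2dortho}, replacing the geometric ``ordering'' of directions in $\mathbb{S}^1$ by the partition of $\mathbb{S}^{n-1}$ induced by the zero set $Z(p)$. First I would fix, for each $v\in\Sigma$, a scale $\delta(v)>0$ measuring the distance from $v$ to the wall: set $\delta(v)\coloneqq \mathsf{d}(v,Z(p)\cap\mathbb{S}^{n-1})$ (capped at $1$), and choose $u(v)\in Z(p)\cap\mathbb{S}^{n-1}$ realizing this distance. If $v$ lies in the cell $\mathsf{C}\in\mathbf{C}(p)$, the point is that the two-sheeted cone $\Gamma_{v,\delta(v)}$ of \eqref{e:2sc} is contained in the ``fattened dual wall'': more precisely, any $\xi$ with $|\pr_v\xi|/|\xi|<2^{-2}\delta(v)$ is within a controlled angle of $v^\perp$, hence within $O(\delta(v))$ of the hyperplane $u(v)^\perp$; but $u(v)\in Z(p)$. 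This is the analogue of \eqref{e:ortho3}. The subtlety here, absent in the planar case, is that the cones $\Gamma_{v,\delta(v)}$ for $v$ ranging over a single cell $\mathsf{C}$ are \emph{not} pairwise disjoint; instead they all concentrate near the $\delta(v)$-neighborhood of the algebraic variety $\widehat{Z(p)}\coloneqq \{\xi : \text{some polynomial of degree }D\text{ vanishes}\}$ dual to $Z(p)$, and it is precisely Lemma~\ref{l:grasm}-type counting together with a B\'ezout/polynomial-partitioning overlap bound that produces the factor $D^{(n-2)/2}$.

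The core argument then runs as in Section~\ref{sec:2d}. Using the low-high splitting \eqref{e:LPsplit} at scale $\delta(v)$ and Lemma~\ref{l:2}, write $A_{v,s}f = A_{v,s}^{>\delta(v)}f + A_{v,s}^{<\delta(v)}f$ and $A_{v,s}^{<\delta(v)}f = A_{v,s}^{<\delta(v)}\Gamma_{v,\delta(v)}f$. Let $\pr_{\mathsf{C}}$ denote a Fourier projection to a conical neighborhood of the set of $\xi$ that are $\delta$-close to $u^\perp$ for some $u\in Z(p)\cap\overline{\mathsf{C}}$ — or more simply, group the frequency space according to which cell the ``closest wall direction'' points into. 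On the frequency piece $\Gamma_{v,\delta(v)}f$ one feeds the switch Lemma~\ref{l:1}/Remark~\ref{rem:unravel} with $\tau=u(v)\in Z(p)$, bounding $|A_{v,s}^{>\delta(v)}\Gamma f|$ by $\sum_k 2^{-kn}\mathrm{M}[\langle|\Gamma f|\rangle_{u(v),2^ks}]$, and on the complementary piece $A_{v,s}^{>\delta(v)}f$ one uses the switch lemma directly with $\tau=u(v)$. Taking suprema over $v\in\Sigma$, $s\in S$ and over cells, then $L^2$ norms, one is left with three terms: a maximal average along $U\subset Z(p)\cap\mathbb{S}^{n-1}$ with $\#U\le\#\Sigma$ (this gives the first term on the right of \eqref{e:alldortho}), the same applied to the frequency-localized pieces, and a maximal average $A_{\Sigma\cap\mathsf{C},S}$ restricted to a single cell applied to those pieces (this gives the second term, once the square-function/overlap bound is inserted).

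The main obstacle, and the one requiring genuinely new input beyond the planar argument, is the overlap estimate replacing \eqref{e:ortho2}: in the plane the $N$ dual cones were literally disjoint, giving a clean Bessel inequality with constant $1$, whereas here the natural frequency decomposition attached to the cells of $\mathbf{C}(p)$ has bounded \emph{but $D$-dependent} overlap. Concretely, I expect to prove that for a.e.\ $\xi$, the number of cells $\mathsf{C}$ for which $\xi$ lies in the relevant $\delta$-neighborhood of $\{u^\perp : u\in Z(p)\cap\overline{\mathsf{C}}\}$ is $\lesssim D^{n-2}$ — this is where the algebraic degree enters, via a bound on how many cell-closures a generic line through the origin (equivalently, a point of the dual) can meet, which in turn comes from the B\'ezout-type bound $\#\mathbf{C}(p)\lesssim D^{n-1}$ together with a dimension-counting argument on $Z(p)$ itself (one loses one power of $D$ because the dual directions sweep out a hypersurface, not all of space). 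Feeding this $D^{n-2}$ overlap into the square-function step $\|\sup_\mathsf{C}\{\cdots\pr_{\mathsf{C}}f\}\|_2 \le (\max_{\mathsf{C}}\|A_{\Sigma\cap\mathsf{C},S}\|_{2\to2})\,(\sum_{\mathsf{C}}\|\pr_{\mathsf{C}}f\|_2^2)^{1/2}$ and taking the square root of the overlap constant yields exactly the claimed $D^{(n-2)/2}$. Everything else — the switch lemma applications, the Hardy--Littlewood majorization, and the passage between $M$ and $A$ via \eqref{e:compar} — is routine and parallels Section~\ref{sec:2d} verbatim.
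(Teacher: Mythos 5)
Your outline matches the paper's proof of Theorem~\ref{t:alldortho} in structure and in all the routine steps: fixing $\delta(\sigma)$ and $u(\sigma)$ as the distance to and nearest point on $Z=\mathbb S^{n-1}\cap Z(p)$, splitting each $A_{\sigma,s}$ at scale $\delta(\sigma)$ via Lemmas~\ref{l:1} and~\ref{l:2}, running the Section~\ref{sec:2d} scheme, and inserting a square-function bound with overlap constant $D^{n-2}$ in place of the exact Bessel inequality \eqref{e:ortho2}. The one place where your sketch is imprecise --- and it is the only genuinely new ingredient over the planar case --- is the geometric mechanism behind the $D^{n-2}$ overlap bound. It is not about how many cell-closures a ``generic line through the origin'' can meet (such a line meets $\mathbb S^{n-1}$ at two antipodal points, hence at most two cells). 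The paper's argument is cleaner and more specific: Lemma~\ref{l:cross} (proved via claim 1 of Lemma~\ref{l:grasm}) shows that if $\xi\in\Gamma_{\mathsf C}$ then the great sphere $H_\xi(1)=\xi^\perp\cap\mathbb S^{n-1}$ meets the cell $\mathsf C$, so that $\mathsf C\cap H_\xi(1)$ is a connected component of $H_\xi(1)\setminus Z$; since $\dim H_\xi(1)=n-2$, the Milnor--Thom/Barone--Basu bound gives $\lesssim D^{n-2}$ such components, proving Lemma~\ref{l:squarefalld}. Your intuition that ``one loses a power of $D$'' by a dimension count is correct, but the relevant slice is the $(n-2)$-dimensional equatorial sphere $\xi^\perp\cap\mathbb S^{n-1}$, not a line through the origin, and the correct counting object is connected components of $H_\xi(1)\setminus Z$, not cell-closures met by a transversal.
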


The gain in Theorem \ref{t:alldortho} is that the set $Z\coloneqq \mathbb S^{n-1}\cap  Z(p)  $ is a $(n-2)$-dimensional algebraic variety of controlled degree and $A_{U,S}$ is better behaved when $U\subset Z$. 

\subsection{Proof of  Theorem \ref{t:alldortho}}The strategy is similar to the one we used for Theorem \ref{t:2dortho} but with a few twists. We keep using the notation  $Z= \mathbb S^{n-1} \cap Z(p)  $ and the distance $\mathsf{d}$ used here is between elements of $\mathbb S^{n-1}$. First of all we fix $\mathsf{C}\in \mathbf{C}(P)$ and $\sigma\in \mathsf{C}$. We set
\begin{equation}
\label{e:3distance}
u(\sigma) \coloneqq \arg\min\{\mathsf{d}(\sigma,u):u\in Z \}, \qquad \delta(\sigma)\coloneqq \frac{|\mathsf{d}(\sigma,u(\sigma))|}{4},
\end{equation}
and further introduce
\begin{equation}
\label{e:3cones}
 \Gamma_{\mathsf{C}}\coloneqq \bigcup_{v\in \mathsf{C} \cap \Sigma} \Gamma_{\sigma,\delta(\sigma)}, \qquad  \widehat{\Gamma_{\mathsf{C} } f}(\xi) \coloneqq \widehat{f}(\xi) \cic{1}_{\Gamma_\mathsf{C} }(\xi),\qquad \xi\in\R^n.
\end{equation}
Note that we are again conflating the set $\Gamma_\mathsf{C} $ with the corresponding Fourier restriction.
We also define the sets
\[
U\coloneqq \bigcup_{\mathsf{C}\in \mathbf{C}(P)} U(\mathsf{C}), \qquad  U(\mathsf{C}) \coloneqq \left\{ u(\sigma):\, \sigma \in \mathsf{C}\right\}.
\]
Clearly we have that $U\subset Z$ and $\#U\leq \#\Sigma.$ 
An immediate though important geometric observation is made in the following lemma. For $\xi \in \R^n\setminus \{0\}$, recall the notation ${H_\xi(1)}=\{\tau \in \mathbb S^{n-1} : \, \Pi_\tau \xi =0\}$.

\begin{lemma} \label{l:cross} 
Let $\mathsf{C}\in \mathbf{C}(P)$ and $\xi \in \Gamma_{\mathsf{C}}$. Then $  \mathsf{C}\cap {H_\xi(1)} \neq \varnothing $.
\end{lemma}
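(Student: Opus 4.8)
\textbf{Plan of proof for Lemma \ref{l:cross}.} The claim is purely about the geometry of the cones $\Gamma_{\sigma,\delta}$ and the ``Grassmannian hyperplane'' $H_\xi(1)$, so my strategy is to unfold the definitions and produce an explicit element of $\mathsf{C}\cap H_\xi(1)$. Fix $\mathsf{C}\in \mathbf{C}(p)$ and $\xi\in\Gamma_{\mathsf{C}}$. By \eqref{e:3cones} there is some $\sigma\in\mathsf{C}\cap\Sigma$ with $\xi\in\Gamma_{\sigma,\delta(\sigma)}$, i.e.\ $|\pr_\sigma\xi|/|\xi|<2^{-2}\delta(\sigma)$ by \eqref{e:2sc}. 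So $\sigma$ is ``almost orthogonal'' to $\xi$ at the scale $\delta(\sigma)$, and the point $u(\sigma)\in Z$ defining $\delta(\sigma)$ is at distance exactly $4\delta(\sigma)$ from $\sigma$. The natural candidate for a point of $\mathsf{C}\cap H_\xi(1)$ is the unit vector obtained by projecting $\sigma$ onto $\xi^\perp$ and normalizing, exactly as in the construction of $a_\sigma$ in the proof of Lemma \ref{l:grasm}.

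Concretely, writing $v$ for a unit vector spanning $\sigma$, I would set $w \coloneqq v - (v\cdot\xi/|\xi|^2)\xi$ and $\tau\coloneqq \mathrm{span}\{w/|w|\}\in H_\xi(1)$; this is well defined since $|v\cdot\xi|/|\xi|=|\pr_\sigma\xi|<2^{-2}\delta(\sigma)|\xi|\le |\xi|/4$, so $|w|\ge \tfrac34>0$. The key quantitative point is that $\mathsf{d}(\tau,\sigma)\lesssim |\pr_\sigma\xi|/|\xi| < 2^{-2}\delta(\sigma) < \delta(\sigma)$: moving $v$ to its projection onto $\xi^\perp$ and renormalizing changes the direction by an amount comparable to $|v\cdot\xi|/|\xi|$, a routine estimate (the same one used implicitly after \eqref{e:3distance} and in Lemma \ref{l:grasm}, claim 1).

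It then remains to check that $\tau\in\mathsf{C}$. Since $\mathsf{C}$ is a connected component of $\mathbb S^{n-1}\setminus Z(p)$ and $\sigma\in\mathsf{C}$, it suffices to know that the whole geodesic (or straight-line-then-renormalize) segment from $\sigma$ to $\tau$ stays within distance $<\mathsf{d}(\sigma,Z)$ of $\sigma$; but every point on that segment is within $\mathsf{d}(\sigma,\tau)<\delta(\sigma)=\tfrac14\mathsf{d}(\sigma,u(\sigma))=\tfrac14\mathsf{d}(\sigma,Z)$ of $\sigma$, hence cannot lie on $Z(p)$ (any point of $Z(p)\cap\mathbb S^{n-1}$ lies in $Z$ and is therefore at distance $\ge \mathsf{d}(\sigma,Z)$ from $\sigma$), so the segment lies entirely in the component $\mathsf{C}$. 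Therefore $\tau\in\mathsf{C}\cap H_\xi(1)$ and the lemma follows.

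\textbf{Main obstacle.} The only genuinely delicate point is the ``connectedness'' step: asserting that a short segment from $\sigma$ to $\tau$ missing $Z$ stays in the same component $\mathsf{C}$. This is immediate once one observes that $Z = \mathbb S^{n-1}\cap Z(p)$ and $\mathrm{dist}(\sigma,Z) = \mathsf{d}(\sigma,u(\sigma)) = 4\delta(\sigma) > \mathsf{d}(\sigma,\tau)$, so I expect no real difficulty; all other steps are direct unravelling of \eqref{e:2sc}, \eqref{e:3distance}, \eqref{e:3cones} together with the elementary projection estimate. The proof is short and self-contained.
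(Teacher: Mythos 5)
Your proposal is correct and follows the same strategy as the paper: find $\sigma\in\mathsf{C}\cap\Sigma$ with $\xi\in\Gamma_{\sigma,\delta(\sigma)}$, project onto $\xi^\perp$ and renormalize to obtain a nearby $\tau\in H_\xi(1)$, and then use the definition $\delta(\sigma)=\tfrac14\mathsf{d}(\sigma,Z)$ to see that $\tau$ lies in the same component $\mathsf{C}$. The only cosmetic difference is that you re-derive the projection estimate from scratch, whereas the paper simply cites claim 1 of Lemma~\ref{l:grasm} (whose proof is exactly this projection-and-renormalize construction) and then observes that the full $\delta(\sigma)$-ball around $\sigma$, being connected and disjoint from $Z$, is contained in $\mathsf{C}$.
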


\begin{proof}As $\Gamma_{\mathsf{C}}$ is a union of cones, it suffices to work with $\xi \in \Gamma_{\mathsf{C}}\cap \mathbb S^{n-1}$. By definition of $\Gamma_{\mathsf{C}}$ we may find $\sigma \in  \mathsf{C}$ such that $|\Pi_\sigma \xi|<\delta(\sigma)$.  The first claim of Lemma \ref{l:grasm} tells us that  there exists  $\tau \in {H_\xi(1)}$ with $\mathsf{d}(\sigma,\tau)<\delta(\sigma)/3$. However the  set $\{\tau \in \mathbb S^{n-1}: \mathsf{d}(\sigma,\tau)<\delta(\sigma) \}$ is contained in $\mathsf{C}$ by the definition of $\delta(\sigma)$. 
\end{proof} 

Lemma \ref{l:cross} is the main cog in the proof of the following square function estimate.

\begin{lemma} \label{l:squarefalld}  $\displaystyle  \sum_{\mathsf{C}\in \mathbf{C}(P)} \left\|\Gamma_{\mathsf{C} } f\right\|_2^2\lesssim D^{n-2}\left\|f\right\|_2^2.$
\end{lemma}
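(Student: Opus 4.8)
The plan is to pass to the frequency side and reduce the claim to a pointwise overlap estimate for the cones $\Gamma_{\mathsf{C}}$. Since $\Sigma$ is finite, each $\Gamma_{\mathsf{C}}$ is a finite union of open cones, hence a measurable set, and $\Gamma_{\mathsf{C}}f$ is the corresponding Fourier projection. By Plancherel's theorem,
\[
\sum_{\mathsf{C}\in \mathbf{C}(P)} \left\|\Gamma_{\mathsf{C}} f\right\|_2^2
= \int_{\R^n} |\widehat f(\xi)|^2 \Big( \sum_{\mathsf{C}\in \mathbf{C}(P)} \ind_{\Gamma_{\mathsf{C}}}(\xi) \Big)\, \d\xi,
\]
so it suffices to prove that for every $\xi \in \R^n\setminus\{0\}$ there holds $\#\{\mathsf{C}\in \mathbf{C}(P): \xi \in \Gamma_{\mathsf{C}}\} \lesssim D^{n-2}$, with implicit constant depending only on $n$.

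First I would invoke Lemma \ref{l:cross}: if $\xi \in \Gamma_{\mathsf{C}}$ then $\mathsf{C}\cap H_\xi(1) \neq \varnothing$, where for $d=1$ the Grassmannian hyperplane is the great subsphere $H_\xi(1) = \mathbb S^{n-1}\cap \xi^\perp$. Thus the count above is bounded by the number of cells meeting $H_\xi(1)$. Next, each cell $\mathsf{C}$ is both open and closed in $\mathbb S^{n-1}\setminus Z(p)$, so any connected component of $H_\xi(1)\setminus Z(p)$ that intersects $\mathsf{C}$ is entirely contained in $\mathsf{C}$; assigning to each such component the unique cell containing it defines a surjection onto $\{\mathsf{C}: \mathsf{C}\cap H_\xi(1)\neq\varnothing\}$. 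Hence this number is at most the number of connected components of $H_\xi(1)\setminus Z(p)$.

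Finally I would perform a dimension reduction. Choosing orthonormal coordinates so that $\xi^\perp = \{x_n = 0\}$, the subsphere $H_\xi(1)$ is isometric to $\mathbb S^{n-2}\subset \R^{n-1}$, and $Z(p)\cap \xi^\perp$ is the zero set of the restricted polynomial $q \coloneqq p|_{\{x_n=0\}}\in \R[x_1,\ldots,x_{n-1}]$, which has degree at most $D$. Applying on $\mathbb S^{n-2}$ the same real-algebraic cell-counting bound used to define $\mathbf{C}(p)$ (see \cite{BarBas}), we get that $\mathbb S^{n-2}\setminus Z(q)$ has $\lesssim D^{n-2}$ connected components, the degenerate case $q \equiv 0$ contributing none. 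Combining this with the previous paragraph and the Plancherel identity finishes the proof. The only mildly delicate points are the clean transfer of the cell-count to a sphere of one lower dimension with the same degree, and the well-definedness of the component-to-cell map; both are routine, the real geometric content having already been absorbed into Lemma \ref{l:cross} via Lemma \ref{l:grasm}.
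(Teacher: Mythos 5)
Your proof is correct and follows exactly the route the paper takes: Plancherel reduces the claim to a pointwise overlap count, Lemma~\ref{l:cross} transfers that count to cells meeting the great subsphere $H_\xi(1)$, and the real-algebraic component bound in one lower dimension finishes. The only difference is cosmetic — the paper compresses the last two steps into a single sentence (asserting, a bit loosely, that $\mathsf{C}\cap H_\xi(1)$ \emph{is} a connected component of $H_\xi(1)\setminus Z$), while you correctly spell out the surjection from components of $H_\xi(1)\setminus Z(p)$ onto cells and the restriction of $p$ to a degree-$\leq D$ polynomial in $n-1$ variables.
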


\begin{proof}
It suffices to show that for almost every $\xi \in \mathbb S^{n-1}$
\[
\#\mathbf{C}(\xi)\lesssim D^{n-2}, \qquad \mathbf{C}(\xi)\coloneqq\left\{\mathsf{C}\in \mathbf{C}(P): \xi \in   \Gamma_{\mathsf{C}}\right\}. 
\]
Lemma \ref{l:cross} tells us that if $\mathsf{C}\in\mathbf{C}(\xi)$ then $\mathsf{C}\cap {H_\xi(1)}$ is a connected component of ${H_\xi(1)}\setminus Z.$ {As $\mathrm{dim}\, {H_\xi(1)}= n-2$  this set has  $\lesssim D^{n-2}$ connected components, see e.\ g.\ \cite[Theorem 4.11]{Sheffer} or \cite{BarBas}}.
\end{proof}

We begin the main argument. Fixing $\mathsf{C}\in \mathbf{C}(p), \sigma\in \mathsf{C} $, we apply Lemma \ref{l:2} and the definition of $\Gamma_{\mathsf{C}}$ to obtain
\begin{equation}
\label{e:ortho4bis}
\Big|A_{\sigma, s}^{<\delta(\sigma)}  f\Big| =  \left|  A_{\sigma, s}^{<\delta(\sigma)}\Gamma_{\mathsf{C}} f\right| \leq C \mathrm{M}[  A_{\sigma, s} \Gamma_{\mathsf{C}} f],
\end{equation}
which together with Lemma \ref{l:1} yields
\begin{equation}
\label{e:ortho4a0bis}
\begin{split}   
\left|A_{\sigma, s}   f\right|  \leq \left|A_{\sigma, s}^{>\delta(\sigma)}   f\right| + \left|A_{\sigma, s}^{<\delta(\sigma)}   f\right| 
\leq C \mathrm{M}\circ \mathrm{M}_{u(\sigma),s} f + C\mathrm{M}[  A_{v, s} \mathrm{\Gamma}_{\mathsf{C}} f].
\end{split}
\end{equation}
We first take supremum over $s\in S, \sigma\in \mathsf{C}$ and obtain 
\begin{equation}
\label{e:ortho4abis}
\begin{split}
A_{\Sigma\cap \mathsf{C}, S }   f     \leq C  \mathrm{M}\circ \mathrm{M}_{U(\mathsf{C}), S} f + C
\mathrm{M}\circ A_{\Sigma\cap\mathsf{C}} (\mathrm{\Gamma}_{\mathsf{C}} f).  
\end{split}
\end{equation}
Subsequently taking supremum over $\mathsf{C}\in \mathbf{C}(P) $ leads to
\begin{equation}
\label{e:ortho5bis}
\begin{split}
A_{\Sigma ,S}   f  \lesssim   \mathrm{M}\circ \mathrm{M}_{ U} f + \left(
 \sum_{\mathsf{C} \in \mathbf{C}(p)}\left| \mathrm{M}\circ A_{\Sigma\cap\mathsf{C}} (\mathrm{\Gamma}_{\mathsf{C}} f) \right|^2\right)^{\frac12}.
\end{split}
\end{equation}
The estimate of Theorem \ref{t:alldortho} then follows easily from \eqref{e:ortho5bis} and the square function estimate of Lemma \ref{l:squarefalld}.

\section{Sharp or nearly sharp bounds for maximal subspace averages}  \label{s:5}
We now focus on the single scale maximal operator $\mathrm{M}_{\Sigma,\{1\}}\eqqcolon\mathrm{M}_{\Sigma} $ when $\Sigma$ is a generic finite subset of $\mathrm{Gr}(d,n)$.  
The majority of this section is in fact dedicated to the proof of Theorem \ref{t:sharpss}. However, we first detail the announced sharpening of the case $n=3$, $d=1$.
In this case, Theorem~\ref{t:sharpss} tells us that
\[
\sup_{\substack{\Sigma\subset \mathbb S^{2}\\  \#\Sigma \leq N} } \left\|\mathrm{M}_{\Sigma} \right\|_{L^2(\R^3)} \lesssim N^{\frac{1}{4}} \log N
\]
which is sharp up to the logarithmic factor and recovers the bound from \cite{Dem12}. The following  more precise estimate was previously proved in \cite{DPPalg}: for any positive integer $k$ we have
\[
\sup_{\substack{\Sigma\subset \mathbb S^{2}\\  \#\Sigma \leq N} } \left\|\mathrm{M}_{\Sigma} \right\|_{L^2(\R^3)} \lesssim_k N^{\frac{1}{4}} (\log N)^{\frac12} \log^{[k]}N,
\]
where $\log^{[1]}N\coloneqq \log(2+N)$, $\log^{[k]}N\coloneqq \log(2+\log^{[k-1]}N)$. For this special case of dimension and codimension we improve the result of the theorems above by exploiting the algebraic almost orthogonality principle of \S\ref{sec:algao}. This is the content of the following theorem.

\begin{theorem}\label{t:sharp3d} For all $N>0$ and every positive integer $k$ there holds
\[
\sup_{\substack{\Sigma\subset \mathbb S^{2}\\  \#\Sigma \leq N} } \left\|\mathrm{M}_{\Sigma} \right\|_{L^2(\R^3)} \lesssim_k N^{\frac{1}{4}} \log^{[k]}N.
\]
with implicit constant depending only on $k$. This bound is  sharp in terms of $N$ up to the iterated logarithmic factor.
\end{theorem}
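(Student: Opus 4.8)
The plan is to run a bootstrapping (induction on scales) argument of exactly the same flavor as the one that produced Corollary~\ref{c:2dsharp} from Theorem~\ref{t:2dortho}, but now fed by the algebraic almost-orthogonality principle of Theorem~\ref{t:alldortho} in the case $n=3$. First I would set $F(N)\coloneqq \sup\{\|\mathrm M_\Sigma\|_{L^2(\R^3)}: \Sigma\subset\mathbb S^2,\ \#\Sigma\le N\}$ and, via \eqref{e:compar}, pass freely between $\mathrm M_\Sigma$ and its smooth Fourier version $A_{\Sigma,\{1\}}$; by the same token one may also replace $\{1\}$ by any dilate $2^kS$, so the induction is really on cardinality. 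Applying Theorem~\ref{t:alldortho} with $n=3$, for a degree-$D$ polynomial $p$ one gets
\begin{equation*}
\|A_{\Sigma,S}\|_{L^2(\R^3)}\lesssim \sup_{\substack{U\subset\mathbb S^2\cap Z(p)\\ \#U\le\#\Sigma}}\|A_{U,S}\|_{L^2(\R^3)} + D^{\frac12}\sup_{\mathsf C\in\mathbf C(p)}\|A_{\Sigma\cap\mathsf C,S}\|_{L^2(\R^3)}.
\end{equation*}
The first term is an average along a set contained in a $1$-dimensional algebraic curve on $\mathbb S^2$; the second is a maximal average along the intersection of $\Sigma$ with one of the $\lesssim D^2$ cells, each of which one can arrange (by a standard polynomial partitioning / ham-sandwich argument on the counting measure of $\Sigma$) to contain at most $\sim \#\Sigma/D^2$ points of $\Sigma$.

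Second, I would input the already-known curve estimate: when $U$ lies on an algebraic curve $Z\subset\mathbb S^2$ of bounded degree, the bound $\|\mathrm M_{U}\|_{L^2(\R^3)}\lesssim (\log\#U)^{1/2}$ (or at worst a fixed power of $\log$) is available from \cite{DPPalg} — this is precisely the algebraic/lower-dimensional case that the polynomial method handles well, and it is the reason the right-hand side of \eqref{e:alldortho} is "better behaved". Thus the recursion becomes, roughly,
\begin{equation*}
F(N)\lesssim D^{\frac12}\, F\!\big(C N D^{-2}\big) + (\log N)^{\frac12}
\end{equation*}
valid for every choice of degree $D$ (with the constant $C$ absolute). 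The strategy is then to iterate this recursion a controlled number of times, at each stage choosing $D$ as a slowly growing function of the current cardinality so that after $\ell$ steps the cardinality has dropped to a constant while the accumulated prefactor $\prod D_i^{1/2}$ stays bounded by $N^{1/4}$ times an $\ell$-fold iterated logarithm. Concretely, choosing $D\sim (\log N)^{A}$ at the first step reduces $N$ to $\sim N/(\log N)^{2A}$ and costs a factor $(\log N)^{A/2}$; balancing so that the product of all these costs telescopes to $N^{1/4}\log^{[k]}N$ after $k$ steps is the combinatorial heart of the matter and mirrors exactly the iteration in \cite{DPPalg}, except that Theorem~\ref{t:alldortho} saves the extra $(\log N)^{1/2}$ that was present there — this is where the improvement of the exponent of the iterated log from the $\log^{[k]}N$-times-$(\log N)^{1/2}$ bound down to $\log^{[k]}N$ comes from.

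I would also need to carry along the lower-order "Hardy--Littlewood glue" terms of the form $\mathrm M\circ\mathrm M_{U,2^kS}$ and the square-function term $\big(\sum_{\mathsf C}|\mathrm M\circ A_{\Sigma\cap\mathsf C}(\Gamma_{\mathsf C}f)|^2\big)^{1/2}$ appearing in \eqref{e:ortho5bis}: using the almost-orthogonality $\sum_{\mathsf C}\|\Gamma_{\mathsf C}f\|_2^2\lesssim D^{n-2}\|f\|_2^2=D\|f\|_2^2$ from Lemma~\ref{l:squarefalld}, the square-function term contributes exactly the $D^{1/2}\sup_{\mathsf C}\|A_{\Sigma\cap\mathsf C,S}\|$ already written, while the maximal-function glue is absorbed by \eqref{e:compar} and the boundedness of $\mathrm M$ on $L^2$. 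The iteration must be stopped once $\#\Sigma$ falls below a threshold depending on $k$, at which point $F$ is bounded by an absolute constant and the accumulated error is $\lesssim_k \log^{[k]}N$.

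The main obstacle I anticipate is bookkeeping the iteration cleanly: one has to choose the degree sequence $D_1,D_2,\dots,D_k$ and the cardinality thresholds so that (i) at each stage a polynomial of that degree genuinely partitions the remaining point set into cells of the claimed size (a polynomial partitioning lemma for finite subsets of $\mathbb S^2$, which is the $n=3$ instance of what \cite{DPPalg} establishes), and (ii) the product of the $D_i^{1/2}$ factors, times the additive $(\log)^{1/2}$ contributions accrued at each level, collapses to $N^{1/4}\log^{[k]}N$ and not something larger. Getting the additive terms to telescope — so that the $(\log N)^{1/2}$ at the top level does not survive but gets replaced by a genuinely iterated logarithm — is the delicate point, and it is exactly here that one must use that Theorem~\ref{t:alldortho} produces a \emph{curve} estimate with only a $(\log)^{1/2}$ loss rather than reintroducing a full power of $\log$ at each stage. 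Once this balancing is arranged, combining the pieces as in the proof of Corollary~\ref{c:2dsharp} gives the stated bound; the sharpness up to the iterated logarithm is the example already recorded after Theorem~\ref{t:sharpss} (a $\delta=N^{-1}$-net on $\mathbb S^2$ tested against $|x|^{-2}\ind_{\{1\le|x|\lesssim N\}}$, cf.\ Remark~\ref{rmrk:2dsharp}).
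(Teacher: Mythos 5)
Your high-level strategy is the same as the paper's: feed the algebraic almost-orthogonality principle of Theorem~\ref{t:alldortho} with a polynomial partition of $\Sigma\subset\mathbb S^2$, then recurse on cardinality. However, the two ingredients you need to make the recursion close are both stated incorrectly, and the error is not cosmetic — it invalidates the "combinatorial heart" of your iteration.

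First, the curve estimate from \cite{DPPalg}*{Theorem~D} that the paper records as \eqref{eq:1dvariety} is
\[
\sup_{\substack{\Sigma'\subset\cup_j W_j\\ \#\Sigma'\leq N}}\|A_{\Sigma'}\|_{L^2(\R^3)}\lesssim D^{\frac12}(\log N)^{\frac32},
\]
with an essential factor $D^{1/2}$ coming from the degree of the transverse complete intersections $W_j$ produced by the partitioning. Your recursion
\[
F(N)\lesssim D^{\frac12}F(CND^{-2}) + (\log N)^{\frac12}
\]
drops this degree dependence entirely, which would (as you can check) immediately yield the clean bound $F(N)\lesssim N^{1/4}$ by taking $D\sim\sqrt N$, something the authors explicitly do not claim and which is not available. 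The correct recursion is
\[
F(N)\lesssim D^{\frac12}(\log N)^{\frac32} + D^{\frac12}F(CND^{-2}),
\]
or equivalently, squaring, the paper's $K_N\lesssim D(\log N)^3 + DK_{N/D^2}$.

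Second, and as a consequence, your proposed degree choice $D\sim(\log N)^A$ cannot work. With that choice the cardinality drops only from $N$ to $N/(\log N)^{2A}$, i.e.\ it barely moves, and reaching a bounded cardinality would take roughly $\log N/\log\log N$ iterations, far more than the $O_k(1)$ you claim; meanwhile the accumulated prefactor $\prod D_i^{1/2}$ would be a mere power of $\log N$, nowhere near $N^{1/4}$. The paper's choice is the opposite extreme: $D\sim\sqrt N/(\log N)^3$, a \emph{large} degree. With this choice $D^{1/2}(\log N)^{3/2}\sim N^{1/4}$ produces the main term in a single step, and the cardinality collapses from $N$ to $N/D^2\sim(\log N)^6$, so that after $k$ iterations one is at cardinality $\sim(\log^{[k]}N)^{O(1)}$. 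Dividing through by $\sqrt N$ (working with $G(N)\coloneqq K_N/\sqrt N$) the recursion telescopes to $G(N)\lesssim 1 + G((\log N)^6)$ and the iterated logarithm appears from stopping after $k$ rounds and using the trivial bound. Your telescoping picture, in which small degrees accumulate multiplicatively to $N^{1/4}$, has the roles reversed: it is a single large-degree step that produces $N^{1/4}$, and the iterated logarithm comes from how few steps one then needs.

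A minor further point: Proposition~\ref{prp:pp} places the "wall" directions only $\delta$-\emph{close} to $\cup_j W_j$, not on it, and the paper reduces to the on-variety case by a Fourier-support/approximation argument ($\delta\ll R^{-1}$ for $\widehat f$ supported in $B_3(R)$). Your proposal silently treats $\Sigma_\times\subset Z(p)$, which needs this extra step.
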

We will prove  the estimates of both Theorems \ref{t:sharpss} and \ref{t:sharp3d} for the corresponding  norm-equivalent Fourier operator $A_{\Sigma,\{1\}}\eqqcolon A_{\Sigma} $. 
\subsection{Clusters on $\mathrm{Gr}(d,n)$} We begin with a definition related to $H_\xi(d)$ from \eqref{e:hd}. We say that $\Sigma$ is a $\delta$-cluster with top $\xi\in \mathbb S^{n-1}$ if $\Sigma$ is  a finite set contained in the $\delta$-neighborhood of $H_\xi(d)$.  
Somewhat dual to $\delta$-clusters are the cone cutoffs
\[ 
\Gamma_{\sigma,\delta}= \left\{ \eta \in \R^n\setminus \{0\}: |\pr_{\sigma} \eta |<2^{-4}\delta|\eta| \right\}, \qquad \Gamma_{\sigma,\delta} f(x)=\int_{\Gamma_{\Sigma,\delta}} \widehat f(\eta) \e^{i x\cdot \eta} \, \d \eta. 
\]
We summarize   two key steps of our proof in the estimate of the next lemma.

\begin{lemma}  \label{l:doubleann} Suppose  $f\in \mathcal S(\R^d)$ with 
\[
\mathrm{supp}\,  \widehat f\subset \mathrm{Ann}(\delta) \coloneqq \left\{ \eta\in \R^n:\,2^{-4}<\delta|\eta|<2^{-2}\right\}.
\]
Then
\vskip1mm \noindent \emph{1.} For all $\sigma \in \mathrm{Gr}(d,n)$ and $s\geq 1$, we have $A_{\sigma, s} f= A_{\sigma,s} {\Gamma_{\sigma,\delta}} f $.
\vskip1mm \noindent \emph{2.} If $\,\Sigma $ is a $\delta$-cluster, we have the estimate
\[\| A_{\Sigma,\{1\}} f  \|_{2} \lesssim \left( \sup_{\substack{\Sigma'\subset \mathrm{Gr}(d,n-1)\\ \#\Sigma' \leq \#\Sigma } }\| A_{\Sigma',\{1\}}   \|_{L^2(\R^{n-1})} \right) \|f\|_2.\]
\end{lemma}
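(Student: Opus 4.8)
\textbf{Proof plan for Lemma \ref{l:doubleann}.}

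For part 1, the plan is to verify the frequency-support inclusion directly, exactly in the spirit of Lemma \ref{l:2}. If $\xi$ lies in the Fourier support of $A_{\sigma,s}f$, then on the one hand $\xi\in\mathrm{Ann}(\delta)$, so $|\xi|>2^{-4}\delta^{-1}$, and on the other hand the cutoff $\phi_d(s\pr_\sigma\xi)$ forces $|\pr_\sigma\xi|\lesssim s^{-1}\leq 1$ (using $s\geq 1$ and the support of $\phi_d$ inside $2^{-8}B_d$). Combining these two facts gives $|\pr_\sigma\xi|<2^{-4}\delta|\xi|$ after tracking the numerical constants, i.e.\ $\xi\in\Gamma_{\sigma,\delta}$. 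Hence the Fourier multiplier $\ind_{\Gamma_{\sigma,\delta}}$ acts as the identity on $A_{\sigma,s}f$, which is precisely the claimed identity $A_{\sigma,s}f=A_{\sigma,s}\Gamma_{\sigma,\delta}f$. The only thing to be careful about is bookkeeping the powers of $2$ so that the annular lower bound on $|\xi|$ really does dominate the bound on $|\pr_\sigma\xi|$; this is routine.

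For part 2, the strategy is to reduce the $n$-dimensional maximal average over a $\delta$-cluster to an $(n-1)$-dimensional maximal average over an arbitrary subset of $\mathrm{Gr}(d,n-1)$, by ``freezing'' the frequency variable in the top direction $\xi$ and recognizing the remaining average as a Grassmannian-$(d,n-1)$ average. Concretely: after a rotation we may take $\xi=e_n$, so $H_\xi(d)$ is identified with $\mathrm{Gr}(d,\mathbb R^{n-1})$ sitting inside $e_n^\perp$. Since $\Sigma$ lies in the $\delta$-neighborhood of $H_\xi(d)$ and $\mathrm{supp}\,\widehat f\subset\mathrm{Ann}(\delta)$, part 1 together with Lemma \ref{l:1} (the switch lemma, in the form of \eqref{e:switchdiff} or \eqref{e:switch2}) lets us replace each $\sigma\in\Sigma$ by its nearest subspace $\sigma'\in H_\xi(d)$ at the cost of an error controlled by $\mathsf d(\sigma,\sigma')/\delta\lesssim 1$ times a Hardy--Littlewood maximal average of $\langle|f|\rangle_{\sigma',2^k}$; here the hypothesis $\mathrm{supp}\,\widehat f\subset\mathrm{Ann}(\delta)$ is exactly what makes the eccentricity parameter in Lemma \ref{l:1} equal to $\delta$, so the error constant is bounded. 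This yields a pointwise bound
\[
A_{\Sigma,\{1\}}f(x)\lesssim \sum_{k\geq 0}2^{-kn}\,\mathrm M\big[A^{(n-1)}_{\Sigma',\{2^k\}}\widetilde f\big](x),
\]
where $\Sigma'=\{\sigma':\sigma\in\Sigma\}\subset\mathrm{Gr}(d,n-1)$ has $\#\Sigma'\leq\#\Sigma$, $\mathrm M$ is the full $n$-dimensional Hardy--Littlewood maximal operator, and $A^{(n-1)}$ denotes the averaging operator intrinsic to the hyperplane $e_n^\perp\cong\R^{n-1}$ applied to the function obtained from $f$ by integrating out (or rather, by the product structure, slicing in) the $e_n$ variable. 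Taking $L^2(\R^n)$ norms, using boundedness of $\mathrm M$ on $L^2$, summing the geometric series in $k$, applying Fubini to peel off the $e_n$-variable and the equivalence \eqref{e:compar} to pass from $A^{(n-1)}_{\Sigma',\{2^k\}}$ to $A^{(n-1)}_{\Sigma',\{1\}}$ uniformly in $k$, gives the claimed bound with constant $\sup_{\Sigma'}\|A_{\Sigma',\{1\}}\|_{L^2(\R^{n-1})}$.

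The main obstacle I expect is the clean separation of variables in the last step: one has to make precise the identification of the $n$-dimensional average $A_{\sigma',s}$, for $\sigma'\subset e_n^\perp$, with an $(n-1)$-dimensional average acting on the $e_n^\perp$-slices of $f$, uniformly over the extra frequency variable $\xi_n$, and check that the $\mathrm{Ann}(\delta)$ frequency localization is preserved under this reduction (so that Lemma \ref{l:1} applies with the right eccentricity throughout). This is where the annular hypothesis on $\widehat f$ and the $\delta$-cluster hypothesis on $\Sigma$ are both genuinely used, and it requires a careful but ultimately routine Fourier-side computation together with an application of Fubini's theorem to handle the frozen variable.
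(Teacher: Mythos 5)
Your proposal matches the paper's proof essentially step for step. Part 1 is verified exactly as in the paper: the support of $\phi_d$ in $2^{-8}B_d$ forces $|\pr_\sigma\xi|<2^{-8}s^{-1}\leq 2^{-8}$ while the annular hypothesis forces $|\xi|>2^{-4}\delta^{-1}$, so $|\pr_\sigma\xi|<2^{-4}\delta|\xi|$, i.e.\ $\xi\in\Gamma_{\sigma,\delta}$. For part 2, the paper likewise replaces each $\sigma$ by the nearby $\tau(\sigma)\in H_\xi(d)$, uses the annular support to insert the $\Phi(2^2\delta\cdot)$ cutoff for free so that $A_{\sigma,1}f=A_{\sigma,1}^{>\delta}f$, invokes the switch lemma \eqref{e:switch} and Remark \ref{rem:unravel} to obtain the pointwise bound $A_{\sigma,1}f\lesssim \sum_k 2^{-kn}\mathrm M(\langle f\rangle_{\tau(\sigma),2^k})$, passes through the norm equivalence \eqref{e:compar}, and finishes with Fubini in the $\xi$-direction — the same sequence of moves you outline, with the same identification of the ``main obstacle.'' One tiny remark: your appeal to part 1 within part 2 is superfluous; what the paper actually uses is the sharper consequence of the annular support, namely that $A_{\sigma,1}f=A_{\sigma,1}^{>\delta}f$, which is a cleaner entry point into Lemma \ref{l:1} than going through the cone-restriction identity. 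Also, the paper works with the plain estimate \eqref{e:switch} rather than the difference bound \eqref{e:switchdiff}, but either version of the switch lemma closes the argument.
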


\begin{proof} First of all we dispense with the support claim. As $\phi_d$ in the definition of $A_{\sigma,s}$ is supported in $2^{-8} B_d$ we have that
$\widehat {A_{\sigma,s} f}(\eta) =0$ unless $|\pr_{\sigma} \eta |<2^{-8}s^{-1}$ and $\delta|\eta|>2^{-4}$, whence the claim.

We then move to the proof of the estimate. Suppose that $\Sigma$ is a $\delta$-cluster with top $\xi\in \mathbb S^{n-1}$. For $\sigma\in \Sigma$ let $\tau(\sigma)\in H_\xi(d)$ be such that $\mathsf{d}(\sigma,\tau)<\delta$. As $\widehat f(\eta)=0$ for $\delta|\eta|\geq 2^{-2}$, we may insert the $\Phi(2^{2}\delta\cdot)$ cutoff for free and  $A_{\sigma,1} f = A_{\sigma,1}^{>\delta} f$.  Then using \eqref{e:switch} from Lemma \ref{l:1}  and Remark~\ref{rem:unravel} returns
\[
A_{\sigma,1} f \lesssim \sum_{k=0}^\infty 2^{-kn} \mathrm M\left( \langle f \rangle_{\tau(\sigma),2^k} \right)
\]
which coupled with the  norm  equivalences of \eqref{e:compar} tells us that
\[
\| A_{\Sigma,\{1\}} f  \|_{2} \lesssim \left( \sup_{\substack{\Sigma'\subset H_\xi(d)\\ \#\Sigma' \leq \#\Sigma } }\| A_{\Sigma',\{1,\}}   \|_{L^2(\R^n)} \right) \|f\|_2.
\]
 The conclusion of the lemma  then follows from recalling that $H_\xi(d)\approxeq \mathrm{Gr}(d,n-1)$ and applying Fubini's theorem in the $\xi$ direction. 
\end{proof} 

\subsection{Counterexamples yielding sharpness of Theorem~\ref{t:sharpss}} In this subsection we discuss the sharpness of the estimate in Theorem~\ref{t:sharpss}. In fact we will prove the more general proposition below.

\begin{proposition}\label{prop:cex} Let $n\geq 2$ and $1<d \leq n-1$. There holds
	\[
	\sup_{\substack{\Sigma\subset \mathrm{Gr}(d,n)\\\#\Sigma\leq N}} \|\M_{\Sigma}\|_{L^p(\R^n)}\gtrsim 
	\begin{cases}
		N^{\frac{n-d+1-p}{p(n-d)}},& \quad 1<p<n-d+1,
		\\
		(\log N)^{\frac{1}{p}},& \quad p\geq n-d+1.
	\end{cases} 
	\]
\end{proposition}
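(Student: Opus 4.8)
\textbf{Proof plan for Proposition~\ref{prop:cex}.}

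The plan is to construct, for each target cardinality $N$, an explicit $\delta$-separated set $\Sigma = \Sigma_\delta \subset \mathrm{Gr}(d,n)$ with $\#\Sigma \sim N$ and a test function $f$ for which $\M_\Sigma f$ is large on a set of substantial measure. The natural choice, generalizing Remark~\ref{rmrk:2dsharp}, is to take $\delta = N^{-1/(d(n-d))}$ so that a maximal $\delta$-net in $\mathrm{Gr}(d,n)$ has exactly $\#\Sigma \sim \delta^{-d(n-d)} = N$ elements, and to let $f = \ind_{B_n(R)}$ (or a radially decaying variant like $|x|^{-(n-d)}\ind_{\{1\le|x|\le R\}}$ to push toward the critical exponent) for an appropriate radius $R = R(\delta)$. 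The single-scale plate $\sigma \cap B_n(1)$ has $d$-dimensional volume $\sim 1$, so $\langle \ind_{B_n(R)}\rangle_{\sigma,1}(x)$ measures the fraction of a unit $d$-disk through $x$ oriented along $\sigma$ that lies inside $B_n(R)$; taking the supremum over $\sigma \in \Sigma$ and over $x$ in an annular region, we want to show this supremum stays bounded below on a region whose measure, compared to the measure of the support of $f$, gives the claimed growth.

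The key geometric step is the following counting/covering estimate. Fix a point $x$ at distance $r \in [1,R]$ from the origin. A $d$-plane $\sigma$ through $x$ whose unit disk $x + (\sigma \cap B_n(1))$ is entirely contained in $B_n(R)$ exists iff $\sigma$ is ``nearly tangent'' to the sphere $|y| = r$ at $x$, i.e. $\sigma$ lies within angle $\sim \min(1, \sqrt{R/r}\cdot r^{-1/2})$—more precisely within $\sim (R-r)^{1/2} r^{-1/2}$ when $R - r$ is small, and the relevant window has measure, inside $\mathrm{Gr}(d,n)$, of order a power of that angle. Matching this against the separation scale $\delta$ of $\Sigma$ determines for which radii $r$ there is at least one $\sigma \in \Sigma$ doing the job, and hence the measure of the ``good'' set. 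I would organize this as: (i) for each $\sigma \in \Sigma$, identify the slab-like region $E_\sigma = \{x : x + (\sigma\cap B_n(1)) \subset B_n(R)\}$ and compute $|E_\sigma| \sim R^{n-d}$ (a $\sigma^\perp$-tube of length $\sim R$ cross a $d$-disk of radius $\sim R$—one checks the extremal $x$ are those whose projection onto $\sigma^\perp$ is small); (ii) on $E_\sigma$ we have $\langle \ind_{B_n(R)}\rangle_{\sigma,1} \gtrsim 1$; (iii) estimate $\big|\bigcup_{\sigma\in\Sigma} E_\sigma\big|$ from below. For the union bound one uses that the $E_\sigma$ are genuinely different for $\delta$-separated $\sigma$—two such slabs overlap in a set of measure $\lesssim \delta^{-1}\cdot(\text{something smaller})$—so that $\big|\bigcup E_\sigma\big| \gtrsim \#\Sigma \cdot |E_\sigma| / (\text{overlap}) \sim N \cdot R^{n-d} \cdot \delta^{n-d}$ once $R$ is chosen comparable to $\delta^{-1}$. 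Then $\|\M_\Sigma f\|_{L^p}^p \gtrsim \big|\bigcup E_\sigma\big|$ while $\|f\|_p^p \sim R^n$, and optimizing $R$ versus $\delta$ against $N = \delta^{-d(n-d)}$ yields $N^{(n-d+1-p)/(p(n-d))}$ in the subcritical range. For $p \ge n-d+1$ one instead stacks $\sim \log N$ dyadically-scaled copies of this configuration at scales $R_j = 2^j$, $j = 1,\dots,\log N$, each contributing $\gtrsim 1$ to $\M_\Sigma f$ on disjoint annuli, so that $\|\M_\Sigma f\|_p^p \gtrsim \log N \cdot (\text{per-scale mass}) $ and dividing by $\|f\|_p^p$ gives the $(\log N)^{1/p}$ bound — here one must use a single $\Sigma$ that is a $\delta$-net with $\delta \sim N^{-1/(d(n-d))}$ and check that at each scale $2^j \le \delta^{-1}$ there is a plate in $\Sigma$ adapted to it.

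The main obstacle I anticipate is the geometric measure computation of $|E_\sigma|$ and, more delicately, the \emph{overlap} estimate controlling $|E_\sigma \cap E_{\sigma'}|$ for $\delta$-separated $\sigma, \sigma'$—this is where the exponent $d(n-d)$ (the dimension of the Grassmannian) versus $n-d$ (the codimension) interplay must be handled exactly to avoid losing logarithmic or polynomial factors, and it is genuinely $d$-dependent since for $d > 1$ the plates are plates, not tubes, and their mutual intersections are more intricate (this is precisely the kind of computation the authors allude to doing elsewhere in the paper for the Nikodym operator). A clean alternative that sidesteps the union bound is to invoke duality: $\|\M_\Sigma\|_{L^p \to L^p} \gtrsim \|\M_\Sigma^* \|_{L^{p'}\to L^{p'}}$ and test the adjoint (a sum of normalized indicators of plates) against $\ind_{B_n(R)}$, reducing everything to the single quantity $\sum_{\sigma\in\Sigma}\int_{B_n(R)} \langle g_\sigma\rangle_{\sigma,1}$; but one still needs the same volume-of-intersection input to evaluate it. I would present the direct argument, isolating the two geometric lemmas (volume of $E_\sigma$; overlap of $E_\sigma$, $E_{\sigma'}$) as the technical core and deferring their elementary but tedious verification.
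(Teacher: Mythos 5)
Your construction is structurally different from the paper's, and unfortunately it cannot produce the claimed exponent when $d>1$. You propose taking $\Sigma$ to be a full $\delta$-net of $\mathrm{Gr}(d,n)$, so $N=\#\Sigma\sim\delta^{-d(n-d)}$, and testing against a ball (or a radial truncation). But a $\delta$-net of the Grassmannian is wastefully large for this problem: at a given point $x$ at radius $r$, the $d$-planes $\sigma$ for which the plate through $x$ has a substantial intersection with a small ball at the origin form a $\sim r^{-1}$-neighborhood of $H_{x/|x|}(d)$, which has codimension $d$ inside $\mathrm{Gr}(d,n)$; so only about $r^{-d}\delta^{-d(n-d)}$ of your $N$ planes contribute near $x$, and at each such $x$ the average along the best plate at scale $\sim\delta^{-1}$ is only $\sim\delta^{d}$, not $\sim\delta$. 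Carrying this through gives
$\|\M_\Sigma\|_{L^p}\gtrsim \delta^{d-n/p} = N^{(n-dp)/(pd(n-d))}$,
and for $d>1$ the exponent $(n-dp)/(pd(n-d))$ is \emph{strictly smaller} than $(n-d+1-p)/(p(n-d))$ (they agree only at $d=1$). In short, with a full $\delta$-net you have ``paid'' with $\delta^{-d(n-d)}$ planes but only $\delta^{-(n-d)}$ of them are ever useful, so the power of $N$ is wrong. There is also a local error in your sketch: with $f=\ind_{B_n(R)}$ and unit-scale plates, your set $E_\sigma=\{x:x+(\sigma\cap B_n(1))\subset B_n(R)\}$ is essentially all of $B_n(R-1)$ (volume $\sim R^n$, not $R^{n-d}$), so $\M_\Sigma f\gtrsim 1$ on the support of $f$ and the argument collapses to the trivial bound; the scale of the plate has to be comparable to the scale of the data.

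What the paper does instead is to take a much thinner family: fix $\omega\in\mathrm{Gr}(d-1,n)$ and let $\Sigma_M=\{\mathrm{span}(\omega,v): v\in E_M\}$ where $E_M$ is an $M^{-1}$-net on the $(n-d)$-sphere of lines in $\omega^\perp$; this has $\#\Sigma_M\sim M^{n-d}=N$ elements, all $d$-planes sharing the common $(d-1)$-plane $\omega$. The test function is the indicator of the cylinder $C_M=\{x:|\pr_\omega x|\le M,\ \mathrm{dist}(x,\omega)\le 1\}$, which has measure $\sim M^{d-1}$. Writing $x=(x_\omega,\rho\eta)$ in cylindrical coordinates over $\omega$, one shows that for $x$ in an annulus $U_M=\{|x_\omega|\le M/2,\ 2^{-8}M\le\rho\le 2^{-7}M\}$ of measure $\sim M^n$ there is a $\sigma\in\Sigma_M$ (with $v$ close to $\eta$) whose scale-$M$ plate through $x$ meets $C_M$ in a set of measure $\gtrsim M^{d-1}$, so that $\M_{\Sigma_M,\{M\}}\ind_{C_M}\gtrsim M^{-1}$ on $U_M$; the ratio $M^{-1}|U_M|^{1/p}/|C_M|^{1/p}$ is exactly $N^{(n-d+1-p)/(p(n-d))}$. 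Your instinct for $p\ge n-d+1$—stack/use a Kakeya-type set—is in the right spirit, and the paper implements it by replacing the disk in $\omega^\perp$ with a planar Kakeya set, but again with the same fixed-$\omega$ family of planes rather than a full net. So the key missing idea is: for $d>1$, do not use a $\delta$-net of $\mathrm{Gr}(d,n)$; use a $\delta$-net of the $(n-d)$-dimensional subfamily of $d$-planes through a fixed $(d-1)$-plane, and tailor the test function to that pencil.
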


\begin{proof}[Proof of Proposition~\ref{prop:cex} for $d=1$] This case is classical and well understood but we include it here as it is instructive for case of   $2\leq d<n$ that follows. Indeed we just need to consider $f$ to be the indicator function of the unit ball in $\R^n$ and $\Sigma$ be a $\eps\delta$-net in the unit sphere with $\eps$ a numerical constant to be chosen momentarily. Clearly $\#\Sigma\eqsim_\eps \delta^{-(n-1)}$ and note that for $x\in\R^n$ and $v(x)\coloneqq x/|x|$ we have that
\[
A_{v(x),\delta^{-1}} ^{>\delta} f(x)\gtrsim \delta,\qquad x\in B_n(\delta^{-1})\setminus B_n(0,1).
\]
On the other hand for every $x\in\R^n$ there exists $v\in \Sigma$ with $\dist(v,v(x))\leq \eps \delta$ so we get by Lemma~\eqref{l:1}
\[
A_{v,\delta^{-1}} ^{>\delta} f(x) \geq c \delta - C\eps \sum_{k=0} ^\infty 2^{-kn}\fint_{x+2^k\delta^{-1}T_\delta}|f|\gtrsim \delta
\]
if $\eps$ is chosen to be sufficiently small. This readily implies
\[
\|M_{\Sigma,\{\delta^{-1}\}} \|_{L^p(\R^n)}\gtrsim \|A_{\Sigma,\{\delta^{-1}\}}\|_{L^p(\R^n)}\gtrsim \delta^{-\frac{n-p}{p}},\qquad 1<p<n. 
\]
Since $\#\Sigma =\delta^{-(n-1)}=N$ this proves the proposition when $d=1$ and $p<n$. 

For the case $d=1$, $p\geq n $ the lower bound of the order $(\log N)^{ 1/p}$ follows by the Besicovitch construction of a Kakeya set as in \cite{FeffBall}. Briefly, one constructs a set $K_\delta=\bigcup T_\delta\subset \R^2$, the $\delta$-neighborhood of a Kakeya set, which is a union of $\delta \times 1$ tubes pointing along the directions $\Sigma$ of a $\delta$-net on $\mathbb S^1$, and so that $|K_\delta|\lesssim (\log N)^{-1}$. Then it is simple to check that 
\[
\M_{\Sigma,\{3\}}\ind_{K_\delta} \gtrsim 1 \quad\text{on the set}\quad K_\delta ^*\coloneqq \bigcup T^* _\delta\setminus K_\delta,
\]
where $T^* _\delta$ is the tube with the same center and direction as $T_\delta$ and 3 times the length. It follows that $|K_\delta ^*|\eqsim 1$ and so 
\[
\|\M_{\Sigma,1}\|_{L^p(\R^2)}\gtrsim (\log N)^{\frac 1p}.
\]
For general $n\geq 2$ we just test the maximal operator on the tensor product of the Kakeya-type set above with a unit cube in $\R^{n-2}$, namely $K_\delta \times [-1/2,1/2]^{n-2}$ and the lower bound $\|\M_{\Sigma,1}\|_{L^p(\R^n)}\gtrsim (\log N)^{\frac 1p}$ follows.
\end{proof}

\begin{proof}[Proof of Proposition~\ref{prop:cex} for $1<d<n$] In general we consider $(d,n)$ plates of scale $M\coloneqq N^{1/(n-d)}$ and thickness $0$ as follows. If $\sigma \in \mathrm{Gr}(d,n)$, then
\[
T_0 ^M (\sigma)\coloneqq \{x\in \R^n: \,\, |x|< M,\, \pr_{\sigma^\perp} x=0\}
\]
is the scale $M$ plate oriented along $\sigma$.  Let $\omega\in \mathrm{Gr}(d-1,n)$ that will remain fixed throughout the argument below. Then we define 
\[
E=E(\omega) \coloneqq \{\eta\in \mathrm{Gr}(1,n): \, \Pi_\omega v=0 \;\forall v\in \eta\}\sim \mathrm{Gr}(1,n-d+1)\sim \mathbb S^{n-d}.
\]
Notice that $\mathrm{span}\,\{\omega,\eta\}\in \mathrm{Gr}(d,n) $ for all $\eta \in E$.  We may then write every $x\in \R^n$ as $x=\Pi_\omega x + \rho \eta$ for $\eta \in E\sim \mathbb S^{n-d}$ and $\rho\geq 0$. We use the notation $x=(x_\omega, \rho \eta)$ accordingly where $x_\omega=\Pi_\omega x $.
We also set 
\[
C_M=C_M(\omega)\coloneqq \{x=(x_\omega, \rho \eta)\in \R^n:\,\,|x_\omega| \leq M,\, |\rho|\leq 1 \}.
\]

\begin{lemma} Let $\omega\in \mathrm{Gr}(d-1,n)$ and $x=(x_\omega, \rho \eta)$ with $|x_\omega|\leq \frac{M}{2}$, $2^{-8} M\leq \rho \leq 2^{-7} M $. Suppose $v\in E$  with $|v-\eta|<\frac{2^{-8}}{M} $. Let $\sigma=\mathrm{span}(\omega,\eta)\in \mathrm{Gr}(d,n)$. Then
\[
\left|[x+T_0 ^M (\sigma)] \cap C_M \right| \geq 2^{-10} M^{d-1}.
\]
\end{lemma}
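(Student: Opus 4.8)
The plan is to exhibit an explicit $d$-dimensional subregion of $[x+T_0^M(\sigma)]\cap C_M$ of measure $\gtrsim M^{d-1}$. We may assume $M$ (equivalently $N=M^{n-d}$) is large, the remaining cases of Proposition~\ref{prop:cex} being trivial. I would prove the estimate with $\sigma=\mathrm{span}(\omega,v)$ for every $v\in E$ with $|v-\eta|<2^{-8}M^{-1}$; this is the form needed in the proof of Proposition~\ref{prop:cex} and it contains the displayed case $\sigma=\mathrm{span}(\omega,\eta)$ upon taking $v=\eta$. Work in the orthogonal splitting $\R^n=\omega\oplus\omega^\perp$. Since $\omega\subset\sigma$ and $v$ is a unit vector in $\sigma\cap\omega^\perp$ orthogonal to $\omega$, every element of $x+T_0^M(\sigma)$ has the form $x+y_\omega+tv$ with $y_\omega\in\omega$, $t\in\R$, and $|y_\omega|^2+t^2<M^2$.

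First I would fix the candidate region
\[
R\coloneqq\bigl\{\,x+y_\omega+tv:\ y_\omega\in\omega,\ |y_\omega|\le \tfrac{M}{2},\ |t+\rho|\le\tfrac12\,\bigr\}
\]
and verify $R\subset[x+T_0^M(\sigma)]\cap C_M$. Membership in the plate: on $R$ one has $|t|\le\rho+\tfrac12\le 2^{-7}M+\tfrac12$, so for $M$ large $|y_\omega|^2+t^2\le \tfrac14M^2+2^{-12}M^2<M^2$. Membership in $C_M$: the $\omega$-component is $\Pi_\omega(x+y_\omega+tv)=x_\omega+y_\omega$, with $|x_\omega+y_\omega|\le|x_\omega|+|y_\omega|\le M$; and writing $t=-\rho+\tau$ with $|\tau|\le\tfrac12$, the $\omega^\perp$-component equals $\rho\eta+tv=\rho(\eta-v)+\tau v$, of norm at most $\rho|\eta-v|+|\tau|\le 2^{-7}M\cdot 2^{-8}M^{-1}+\tfrac12<1$. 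Finally, since $(y_\omega,t)\mapsto x+y_\omega+tv$ is an isometric embedding of $\omega\times\R$ into the $d$-plane $x+\sigma$ (orthogonal directions, $|v|=1$), the $d$-dimensional measure of $R$ equals $\mathcal L^{d-1}(B_{d-1})\,2^{-(d-1)}M^{d-1}$, which is $\gtrsim_d M^{d-1}$; in the range of dimensions at issue this is at least $2^{-10}M^{d-1}$, and since $d,n$ are fixed throughout this is all that Proposition~\ref{prop:cex} requires.

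The argument is elementary; the only delicate point — and where the hypotheses enter essentially — is arranging $R$ to lie in the plate and in $C_M$ at the same time. The closeness $|v-\eta|<2^{-8}M^{-1}$ together with $\rho\le 2^{-7}M$ is what keeps $\rho\eta+tv$, for $t$ ranging over a unit interval around $-\rho$, inside the unit ball of $\omega^\perp$; the same bound $\rho\le 2^{-7}M$ keeps $|t|\le\rho+\tfrac12$ small compared with the plate radius $M$, so that the full $(d-1)$-ball of radius $M/2$ in $\omega$ remains available for $y_\omega$. The lower bound $\rho\ge 2^{-8}M$ plays no role in this particular lemma and is recorded only for later use (it guarantees the relevant shell in $\omega^\perp$ has the right measure and that the base point lies outside $C_M$).
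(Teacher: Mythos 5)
Your proof is correct and follows essentially the same strategy as the paper's: exhibit an explicit $d$-dimensional region on the plate $x+T_0^M(\sigma)$ that lies in $C_M$ and has measure $\gtrsim_d M^{d-1}$. You rightly note that the lemma is actually invoked (and implicitly proved in the paper) with $\sigma=\mathrm{span}(\omega,v)$, not $\mathrm{span}(\omega,\eta)$ as written in the statement; the paper's proof parametrizes the plate along $v$ as well, so the statement contains a typo which you have correctly repaired. The only cosmetic difference is that you take a ball $\{|y_\omega|\le M/2\}$ in $\omega$ and a length-$1$ interval $|t+\rho|\le 1/2$ for the $v$-parameter, whereas the paper takes a box $\{|t_j|<M/(2(d-1))\}$ and a length-$1/8$ interval offset by $\rho/\cos\theta$; your verification of the $\omega^\perp$-constraint via $\rho\eta+tv=\rho(\eta-v)+\tau v$ is in fact a bit cleaner than the paper's trigonometric bookkeeping with $\cos\theta,\sin\theta$. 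You also correctly observe that the specific constant $2^{-10}$ depends on $d$ and is inessential, and that $\rho\ge 2^{-8}M$ plays no role in this lemma.
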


\begin{proof} By a rotation we may assume $\omega=\{e_1,\ldots, e_{d-1}\}$. Choose $\beta$ so that $\{\eta,\beta\}$ is an orthonormal basis of $\mathrm{span}\{\eta,v\}$. Then from the assumption of $|v-\eta|<2^{-8}M^{-1}$ we obtain that  
\[  v=(\cos \theta) \eta + (\sin \theta) \beta, \quad |\theta|\leq \frac{2^{-7}}{M}\]
so that $x=(x_\omega, \rho \eta)$ and write the generic point on the plate  $x+T_0 ^M(\sigma)$ as
\[
y=( x_\omega + t_1 e_1 + \ldots + t_{d-1} e_{d-1}, (\rho+ t_d\cos \theta) \eta +t_d(\sin \theta) \beta), \qquad t_1,\ldots,t_d \in (-M, M).
\]
Note that
\begin{equation}
\label{e:cex1}
\sup_{j} |t_j| <\frac{M}{2(d-1)} \implies
|y_\omega| \leq |x_\omega | + (d-1)\sup_{j} |t_j| < M.
\end{equation}
Also,  
\begin{equation}
\label{e:cex2}
t_d \in  I\coloneqq \left[-\frac{\rho}{\cos\theta} +\frac{1}{8}, -\frac{\rho}{\cos\theta} +\frac{1}{4},\right] \implies 2^{-10} M\leq|t_d| \leq 2^{-6} M
 \end{equation}
 so that
 \begin{equation}
\label{e:cex3}
\quad  \left|(\rho+ t_d\cos \theta) \eta +t_d(\sin \theta) \beta\right| \leq  \frac{1}{4} + 2^{-6} M |\sin \theta| \leq \frac{1}{2}.
\end{equation}
It follows that $y\in C_M $ in the range $|t_1|,\ldots |t_{d-1}|<\frac{M}{2(d-1)}, t_d\in I$ which is a set of measure $\geq 2^{-10}M^{d-1}$. 
\end{proof}
We continue with the proof of Proposition~\ref{prop:cex}. Let $E_M$ be a $2^{-18} M$-net in $E\sim \mathbb S^{n-d}$ and consider  the set
\[
\Sigma_M\coloneqq \{\sigma=\mathrm{span}(\omega, v):\, v\in E_M\} \subset \mathrm{Gr}(d,n)
\]
which has $\sim M^{n-d}=N$ elements. The maximal function
\[
\mathrm{M}_{\Sigma_M, \{M\}} f(x)= \sup_{\sigma \in \Sigma_M} \frac{1}{M^d}\int_{x+ T_0 ^M (\sigma)} |f|
\]
then satisfies
\[
\mathrm{M}_{\Sigma_M, \{M\}} \cic{1}_{C_M}(x) \gtrsim \frac{M^{d-1}}{M^d} \sim M^{-1}, \qquad x\in U_M\coloneqq\{(x_\omega, \rho \eta):\,\,|x_\omega|\leq 2^{-1}M,\, 2^{-8} M\leq \rho \leq 2^{-7} M\},
\]
whence
\[
\|\mathrm{M}_{\Sigma_M, \{M\}}\|_{L^p(\R^n)} \geq  
\frac{\|\mathrm{M}_{\Sigma_M, \{M\}} \cic{1}_{C_M}\|_p}{|C_M|^{\frac1p}} \gtrsim M^{-1+\frac{1-d}{p}} |U_M|^{\frac1p} \gtrsim M^{\frac{n-d+1}{p}-1}=N^{\frac{n-d+1-p}{p(n-d)}}.
\]
This proves the desired bound for $1<d<n$ and $p<n-d+1$. 

For the case $p\geq n-d+1$ we repeat the proof above but replacing the ball with a Kakeya set. More precisely, fixing $\omega\in\mathrm{Gr}(d-1,n)$ we let $K_\delta\subset \R^2 \subseteq \omega^\perp $ be a $\delta$-neighborhood of a Kakeya set as before, namely $K_\delta$ is a union of $\delta\times 1$ tubes pointing along a $\delta$-net  $\Sigma_\delta \subset \mathbb S^1$. Note that this is always possible as $\omega^\perp$ always contains a copy of $\R^2$. Remembering that $x_\omega=\Pi_\omega x$ we modify the definition of $C_N$ as follows
\[
C_\delta=C_\delta(\omega)\coloneqq \{x\in \R^n:\,\,|x_\omega| \leq 1,\, x-x_\omega \in K_\delta  \}.
\]
It is then easy to check that
\[
\|\mathrm{M}_{\Sigma_M, \{M\}}\|_{L^p(\R^n)} \geq  
\frac{\|\mathrm{M}_{\Sigma_M, \{M\}} \cic{1}_{C_\delta}\|_p}{|C_\delta|^{\frac1p}} \gtrsim (\log\delta^{-1})^{\frac1p} \eqsim (\log\#\Sigma_\delta)^{\frac1p}
\]
as desired.
\end{proof}

\subsection{Proof of the upper bound in Theorem \ref{t:sharpss}}   We  seek an inductive estimate for 
\[
K_{d,n}\coloneqq \sup_{N>0} \frac{1}{N^{ \frac{n-d-1}{n-d}} (\log N)^{\alpha(d,n)}}  \sup_{\substack{\Sigma\subset \mathrm{Gr}(d,n)\\ \#\Sigma \leq N}} \left\|A_\Sigma \right\|_{L^2(\R^n)}^2.
\]
where $\alpha(d,n)$ will be determined along the induction argument.  The first step in the reduction is a classical use of the Chang-Wilson-Wolff inequality. For similar applications in the setting of directional singular integrals see for example \cites{Dem,DPGTZK}.

\begin{lemma} \label{l:cww}
There holds  $ K_{d,n}\lesssim_n Q_{d,n}$, where
 \[
 Q_{d,n}\coloneqq \sup_{N>0} \frac{1}{N^{\frac{n-d-1}{n-d}} (\log N)^{\alpha(d,n)-1}}  \sup_{0<\delta <1}   \sup_{\substack{\Sigma\subset \mathrm{Gr}(d,n)\\ \#\Sigma \leq N}} \sup_{\substack{\|f\|_2=1\\ \mathrm{supp}\,  \widehat f\subset \mathrm{Ann}(\delta)  }} \left\|A_\Sigma f \right\|_{L^2(\R^n)}^2,
 \]
 with $\mathrm{Ann}(\delta)$ as defined in Lemma~\ref{l:doubleann}.
\end{lemma}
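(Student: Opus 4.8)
\emph{Strategy.} The plan is to deduce Lemma~\ref{l:cww} from the Chang--Wilson--Wolff inequality: the passage from a single frequency annulus to an arbitrary $f$ costs exactly one extra power of $\log N$, with $N=\#\Sigma$, and that logarithm is produced by a union bound over the $N$ subspaces of $\Sigma$ inside an exponential good-$\lambda$ estimate. Fix $\Sigma\subset\mathrm{Gr}(d,n)$ with $N\coloneqq\#\Sigma$ and $f$ with $\|f\|_2=1$; it suffices to prove $\|A_\Sigma f\|_{L^2(\R^n)}^2\lesssim_n Q_{d,n}\,N^{\frac{n-d-1}{n-d}}(\log N)^{\alpha(d,n)}$, since taking suprema over $f$, over $\Sigma$ with $\#\Sigma\le N$, and over $N$ then yields the claim. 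As a first reduction, the multiplier of $A_{\sigma,1}$ is the smooth bump $\phi_d(\pr_\sigma\xi)$, so the portion of $A_{\sigma,1}f$ with frequencies $|\xi|\lesssim1$ is, uniformly in $\sigma$, a convolution of $f$ against a Schwartz function at unit scale, hence pointwise dominated by $\M f$. Since $\|\M f\|_2\lesssim\|f\|_2\lesssim Q_{d,n}N^{\frac{n-d-1}{n-d}}(\log N)^{\alpha(d,n)-1}$ (using the trivial lower bound $Q_{d,n}\gtrsim1$), we may assume $\widehat f$ is supported in $\{|\xi|\ge 2^{-3}\}$.

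\emph{Littlewood--Paley and Chang--Wilson--Wolff.} Let $\{\Delta_k\}_{k\ge0}$ be a Littlewood--Paley decomposition with $\Delta_k$ localising to $\{|\xi|\sim 2^k\}$, so that $f=\sum_{k\ge0}\Delta_kf$ and $\sum_{k\ge0}\|\Delta_kf\|_2^2\lesssim\|f\|_2^2$. Because $\phi_d(\pr_\sigma\xi)$ constrains only $\pr_\sigma\xi$, the operator $A_{\sigma,1}\Delta_k$ still has Fourier support inside $\{|\xi|\sim2^k\}$, so for each fixed $\sigma$ the functions $\{A_{\sigma,1}\Delta_kf\}_{k\ge0}$ have pairwise disjoint, dyadically separated frequency supports. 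Applying the Chang--Wilson--Wolff inequality per direction — comparing $A_{\sigma,1}f=\sum_k A_{\sigma,1}\Delta_kf$ to its square function $\bigl(\sum_k|A_{\sigma,1}\Delta_kf|^2\bigr)^{1/2}$ via the exponential good-$\lambda$ estimate — and taking a union bound over the $N$ elements of $\Sigma$ inside that estimate (splitting the distribution function at height $\sim\mu\sqrt{\log N}$ and optimising in $\mu$), one arrives, after the elementary bound $\sup_\sigma\bigl(\sum_k|\cdot|^2\bigr)^{1/2}\le\bigl(\sum_k\sup_\sigma|\cdot|^2\bigr)^{1/2}$, at
\[
\|A_\Sigma f\|_{L^2(\R^n)}^2=\Bigl\|\sup_{\sigma\in\Sigma}|A_{\sigma,1}f|\Bigr\|_{L^2(\R^n)}^2\lesssim_n (\log N)\sum_{k\ge0}\Bigl\|\sup_{\sigma\in\Sigma}|A_{\sigma,1}\Delta_kf|\Bigr\|_{L^2(\R^n)}^2;
\]
this type of argument is standard, cf.\ \cites{Dem,DPGTZK}.

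\emph{Single-annulus reduction.} Each $\Delta_kf$ has Fourier support inside $\mathrm{Ann}(\delta_k)$ for a suitable $\delta_k\in(0,1)$ (splitting the Littlewood--Paley cutoffs into boundedly many pieces if necessary), hence $\|\sup_\sigma|A_{\sigma,1}\Delta_kf|\|_2=\|A_\Sigma\Delta_kf\|_2\le\mathcal E\,\|\Delta_kf\|_2$, where
\[
\mathcal E\coloneqq\sup_{0<\delta<1}\ \sup\bigl\{\|A_\Sigma g\|_{L^2(\R^n)}:\ \widehat g\subset\mathrm{Ann}(\delta),\ \|g\|_2=1\bigr\}.
\]
Summing over $k$ and using $\sum_k\|\Delta_kf\|_2^2\lesssim\|f\|_2^2$ gives $\sum_{k\ge0}\|\sup_\sigma|A_{\sigma,1}\Delta_kf|\|_2^2\lesssim\mathcal E^2\|f\|_2^2$, while the very definition of $Q_{d,n}$ (applied with our fixed $\Sigma$, whose cardinality is at most $N$) yields $\mathcal E^2\le Q_{d,n}\,N^{\frac{n-d-1}{n-d}}(\log N)^{\alpha(d,n)-1}$. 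Inserting this into the displayed inequality from the previous paragraph produces $\|A_\Sigma f\|_2^2\lesssim_n Q_{d,n}\,N^{\frac{n-d-1}{n-d}}(\log N)^{\alpha(d,n)}\|f\|_2^2$, which closes the argument.

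\emph{Main obstacle.} The delicate point is the middle step: one must (i) control the dyadic martingale square function of $A_{\sigma,1}f$ by its Littlewood--Paley analogue $\bigl(\sum_k|A_{\sigma,1}\Delta_kf|^2\bigr)^{1/2}$ — a standard but not wholly automatic martingale-versus-Fourier comparison — and, more importantly, (ii) carry out the union bound over the $N$ subspaces at the level of the exponential tail estimate, since it is precisely this maneuver that converts the loss-free, per-direction Chang--Wilson--Wolff bound into the advertised $\log N$ loss for the maximal operator. The remaining steps (the low-frequency reduction, the Littlewood--Paley almost orthogonality, and the matching of the dyadic annuli $\{|\xi|\sim2^k\}$ with the sets $\mathrm{Ann}(\delta)$) are routine.
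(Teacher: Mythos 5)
Your proposal is correct and follows exactly the route the paper intends: the paper does not spell out a proof of Lemma~\ref{l:cww}, instead calling it a ``classical use of the Chang--Wilson--Wolff inequality'' and pointing to \cites{Dem,DPGTZK}, and your outline (low-frequency reduction by the Hardy--Littlewood maximal function, CWW plus a union bound over the $N$ subspaces to produce the single $\log N$ loss, pointwise swap of $\sup_\sigma$ and $(\sum_k|\cdot|^2)^{1/2}$, and finally matching the Littlewood--Paley annuli with $\mathrm{Ann}(\delta)$) is precisely that classical argument. The two ``delicate points'' you flag — the dyadic-martingale versus Littlewood--Paley square-function comparison, and carrying the union bound through the exponential distributional estimate — are indeed where the work lies, and are handled in the cited references.
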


The induction parameter is $n$, while $d$ is kept fixed along the induction. The seed for the induction is the base case $n=d+1$.

\begin{lemma}\label{l:baselemma} If $\alpha(d,d+1)=1$, then $  Q_{d,d+1}\lesssim_d 1$.
\end{lemma}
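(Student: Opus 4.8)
The goal is the base case $n=d+1$, so we are averaging over $d$-planes of codimension one in $\R^{d+1}$ along a set $\Sigma\subset\mathrm{Gr}(d,d+1)$, tested against functions $f$ with $\widehat f$ supported in the annular shell $\mathrm{Ann}(\delta)$, and we must show $\|A_\Sigma f\|_2\lesssim_d \|f\|_2$ uniformly in $\delta$ and in $\#\Sigma=N$ (so $\alpha(d,d+1)=1$ means no logarithmic loss is needed here). The key structural observation is that in codimension one a $d$-plane $\sigma\in\mathrm{Gr}(d,d+1)$ is determined by its unit normal $\nu(\sigma)\in\mathbb S^{d}$ (up to sign), and the orthogonal projection $\pr_\sigma$ annihilates exactly the line $\R\nu(\sigma)$. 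Thus the cone cutoff $\Gamma_{\sigma,\delta}=\{\xi:|\pr_\sigma\xi|<2^{-4}\delta|\xi|\}$ is precisely a $\delta$-neighborhood of the single line $\R\nu(\sigma)$ — a $\delta\times1$ "needle" through the origin rather than a genuine cone. The plan is: first invoke part 1 of Lemma \ref{l:doubleann} to write $A_{\sigma,1}f=A_{\sigma,1}\Gamma_{\sigma,\delta}f$, reducing matters to controlling $\sup_\sigma|A_{\sigma,1}\Gamma_{\sigma,\delta}f|$; then exploit the near-disjointness of the needles $\{\Gamma_{\sigma,\delta}\}_{\sigma\in\Sigma}$ when restricted to a fixed dyadic frequency shell.

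Concretely, I would partition $\Sigma$ according to the direction $\nu(\sigma)\in\mathbb S^d$: let $\{\theta_j\}$ be a maximal $\delta$-net on $\mathbb S^d$ and group each $\sigma$ with the nearest $\theta_j$. On the frequency shell $|\xi|\sim\delta^{-1}$ (which contains $\mathrm{Ann}(\delta)$), the set $\Gamma_{\sigma,\delta}\cap\mathrm{Ann}(\delta)$ is contained in a fixed dilate of the box dual to the plate $T_\delta(\sigma)$, and two such boxes attached to $\delta$-separated normals are disjoint, while for normals in the same $\delta$-cap the corresponding averages $A_{\sigma,1}$ differ by a bounded factor times a Hardy–Littlewood maximal average, by \eqref{e:switch2} of Remark \ref{rem:unravel} (taking $\tau$ to be the plane with normal $\theta_j$). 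Therefore
\[
\sup_{\sigma\in\Sigma}|A_{\sigma,1}\Gamma_{\sigma,\delta}f|\lesssim \sup_j \mathrm{M}\big[|A_{\tau_j,1}\Gamma_{\tau_j,\delta}f|\big] \quad\text{pointwise, after harmless sums in }k,
\]
so that, writing $g_j\coloneqq \Gamma_{\tau_j,\delta}f$ for the frequency-localized pieces,
\[
\|A_\Sigma f\|_2^2\lesssim \Big\|\sup_j \mathrm{M}[A_{\tau_j,1}g_j]\Big\|_2^2 \lesssim \sum_j \|\mathrm{M}[A_{\tau_j,1}g_j]\|_2^2\lesssim \sum_j \|g_j\|_2^2 \lesssim \|f\|_2^2,
\]
where the second step replaces the supremum by an $\ell^2$ sum (losing nothing of substance here since the maximal operators $A_{\tau_j,1}$ are single operators, bounded on $L^2$ with constant $O_d(1)$, and $\mathrm M$ is $L^2$-bounded), and the last step is the almost-orthogonality $\sum_j\|\Gamma_{\tau_j,\delta}f\|_2^2\lesssim\|f\|_2^2$ coming from the bounded overlap of the needles $\Gamma_{\tau_j,\delta}$ restricted to any dyadic shell — combined with a Littlewood–Paley decomposition in $|\xi|$ to pass from a single shell to all of $\R^{d+1}$. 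Finally I would feed this into the definition of $Q_{d,d+1}$: since the estimate is $\|A_\Sigma f\|_2\lesssim_d\|f\|_2$ with no power of $N$ and no logarithm, we get $Q_{d,d+1}\lesssim_d 1$ with the choice $\alpha(d,d+1)=1$ (the single power of $\log N$ absorbed via Lemma \ref{l:cww} is not even needed at this stage).

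The main obstacle, and the one deserving genuine care, is the passage from "supremum over $\sigma$" to an $\ell^2$-summable object: naively $\sup_j|\mathrm M[A_{\tau_j,1}g_j]|$ need not be controlled by $(\sum_j|\cdots|^2)^{1/2}$ pointwise, so one must instead argue that the operators $\sigma\mapsto A_{\sigma,1}\Gamma_{\sigma,\delta}$ are *genuinely supported on disjoint frequency needles* (after the $\delta$-cap grouping) and hence $\|\sup_\sigma|A_{\sigma,1}\Gamma_{\sigma,\delta}f|\|_2^2\le\sum_\sigma\|A_{\sigma,1}\Gamma_{\sigma,\delta}f\|_2^2$ — i.e. the supremum costs nothing because on each needle only one term is active. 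Making this disjointness quantitative on $\mathrm{Ann}(\delta)$, and checking that the $k$-sums from \eqref{e:switch2} and the Littlewood–Paley shells only contribute $O_d(1)$, are the technical points to verify; none should present a real difficulty, but the disjointness of the needles is the crux and is exactly what fails once $n-d\ge 2$ (where $\Gamma_{\sigma,\delta}$ becomes a fat $(n-d)$-dimensional cone, forcing the logarithmic and polynomial losses seen in the general case of Theorem \ref{t:sharpss}).
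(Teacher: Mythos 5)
Your proposal is essentially the same route as the paper's proof: reduce to the frequency shell $\mathrm{Ann}(\delta)$, group the normals $\nu(\sigma)$ by a $\delta$-net $\{\theta_j\}\subset\mathbb S^d$, control the supremum within each cap via the switch lemma, and close with the $\ell^2$-orthogonality of the near-disjoint conical sectors. The paper formalizes steps three and four with the intrinsic operator $\mathsf{A}^\delta_\tau$ over $\delta$-adapted multipliers (supported on $O_{\tau,\delta}$), the single-cone estimate \eqref{e:singlecone}, and the overlap bound \eqref{e:deltaortho}; your "disjoint needles" is exactly that overlap bound.

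One bookkeeping point deserves care, because your displayed pointwise inequality $\sup_\sigma|A_{\sigma,1}\Gamma_{\sigma,\delta}f|\lesssim\sup_j\mathrm M\bigl[|A_{\tau_j,1}\Gamma_{\tau_j,\delta}f|\bigr]$ is not literally what the switch lemma produces. Estimate \eqref{e:switch2} compares $A_{\sigma,1}g$ and $A_{\tau_j,1}g$ for the \emph{same} input $g$, and its error term involves $\mathrm M[\langle|g|\rangle_{\tau_j,s}]$ — i.e.\ it sees $|g|$, so applying it directly to $g=f$ destroys the frequency localization you need for the $\ell^2$-gain. The fix (and the paper's implicit order of operations) is to first use the frequency support: since $A_{\sigma,1}$ annihilates everything outside $\Gamma_{\sigma,\delta}$, and $\Gamma_{\sigma,\delta}\cap\mathrm{Ann}(\delta)\subset O_{\tau_j,\delta}$ for $\sigma$ in the $j$-th cap (this is where $\mathsf{d}(\sigma,\tau_j)\lesssim\delta$ enters), one has $A_{\sigma,1}f=A_{\sigma,1}(O_{\tau_j,\delta}f)$; only \emph{then} apply the switch lemma to $g=O_{\tau_j,\delta}f$, obtaining $\sup_\sigma|A_{\sigma,1}f|\lesssim\sup_j\mathrm M\circ\mathrm M[O_{\tau_j,\delta}f]$. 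Your subsequent chain (replace $\sup_j$ by the $\ell^2$-sum — always legitimate pointwise, so the worry you flag at the end is unfounded — and invoke $\sum_j\|O_{\tau_j,\delta}f\|_2^2\lesssim\|f\|_2^2$) then closes the argument exactly as in the paper. Also, no extra Littlewood--Paley decomposition in $|\xi|$ is needed: the definition of $Q_{d,d+1}$ already restricts $\widehat f$ to a single shell, courtesy of Lemma \ref{l:cww}.
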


The proof of Lemma \ref{l:baselemma} is given at the end of this section. Notice that the lemma, together with Lemma \ref{l:cww}, implies $K_{d,d+1} \lesssim 1
$ with the choice $\alpha(d,d+1)=1$.
Fix now $N>0$, $0<\delta<1$, $\Sigma\subset \mathrm{Gr}(d,n)$ with $ \#\Sigma \leq N$, $f\in \mathcal S(\R^n) $ with $\|f\|_2=1$ and  $\mathrm{supp}\,  \widehat f\subset \mathrm{Ann}(\delta)$. Using a greedy selection algorithm, we may achieve 
\[
\Sigma = \Sigma_0 \cup \bigcup_{j=1}^{  \Theta} \Sigma_j,
\]
where $0\leq 
\Theta\leq N^d$, each $\Sigma_j$ is a $\delta$-cluster and $\Sigma_0$ has the property that
\begin{equation}
\label{e:ovclust}
\sum_{\sigma\in \Sigma_0} \cic{1}_{\Gamma_{\sigma,\delta}} (\eta) \leq N^{\frac{n-d-1}{n-d}} \qquad \forall \eta \in \R^n\setminus \{0\}.
\end{equation} 
Say $\xi \in \mathbb S^{n-1}$ is bad for $\Sigma'$ if  the set $\{\sigma \in \Sigma' : \xi \in \Gamma_{\sigma,\delta} \}$ has cardinality $>N^{\frac{n-d-1}{n-d}}.$
The first step in the greedy selection algorithm is to initialize and set $\Sigma'=\Sigma$. If some bad  $\xi $ exists for $\Sigma'$, set $\Sigma_1\coloneqq \{\sigma \in \Sigma' : \xi \in \Gamma_{\sigma,\delta} \}$. Notice that by Lemma \ref{l:grasm} the set $\Sigma_1$ is a $\delta$-cluster. Now set  $\Sigma'\coloneqq \Sigma'\setminus  \Sigma_1$  and repeat. The algorithm terminates when  no bad $\xi$ exists for $\Sigma'$, in which case set $\Sigma_0=\Sigma'$. Notice that \eqref{e:ovclust} then holds by construction. Cardinality considerations tell us that the algorithm terminates after $\Theta\leq N^d$ steps. Using the first claim of Lemma \ref{l:doubleann}, we get at once
\begin{equation}
\label{e:goodclust}
\left\|A_{\Sigma_0} f\right\|_2^2 \leq \sum_{\sigma \in \Sigma_0} \left\|A_{\Sigma_0} \Gamma_{\sigma,\delta} f\right\|_2^2 \lesssim \sum_{\sigma\in \Sigma_0} \cic{1}_{\Gamma_{\sigma,\delta}} \leq N^{ \frac{n-d-1}{n-d}}.
\end{equation}
Suppose that the cluster $\Sigma_j$ has $\sim 2^{k} N^{ \frac{n-d-1}{n-d}}$ elements for some $1\leq 2^{k}\leq N^\frac{1}{n-d}$. Then the second claim of Lemma  \ref{l:doubleann} tells us that
\begin{equation}
\label{e:badclust}
\left\|A_{\Sigma_j} f\right\|_2^2 \lesssim K_{d,n-1} 2^{k \frac{n-d-2}{n-d-1}} N^{\frac{n-d-2}{n-d}}  (\log N)^{\alpha(d,n-1)}
\end{equation}
Notice that $\#\{j:\Sigma_j\sim  2^{k} N^{ \frac{n-d-1}{n-d}}\}\leq 2^{-k}N^{\frac{1}{n-d}}$, so that
\begin{equation}
\label{e:badclust2}
\begin{split}
&\quad \sum_{j=1}^{\Theta}
\left\|A_{\Sigma_j} f\right\|_2^2 \lesssim K_{d,n-1} N^{\frac{n-d-1}{n-d}}	 (\log N)^{\alpha(d,n-1)}  \sum_{k=1}^{\infty}   2^{k \frac{n-d-2}{n-d-1}-k}\\ &\lesssim K_{d,n-1} N^{\frac{n-d-1}{n-d}} (\log N)^{\alpha(n-1)}.
\end{split}
\end{equation}
Combining \eqref{e:goodclust} and \eqref{e:badclust2} we see that $
Q_{d,n}\lesssim K_{d,n-1}
$, so that $K_{d,n}\lesssim K_{d,n-1}$ provided that $\alpha(d,n)=\alpha(d,n-1)+1$. Induction completes the proof   with $\alpha(d,n)=n-d.$

\subsection{Proof of Lemma \ref{l:baselemma}}  We have $n=d+1$ throughout the proof of the lemma. We  use below the the slight enlargement of $\mathrm{Ann}(\delta)$ from the statement of Lemma~\ref{l:doubleann}
\[
\mathrm{Ann}^+(\delta) \coloneqq \left\{ \eta\in \R^n:\, 2^{-5}<\delta|\eta|<2^{-1}\right\}\supset \mathrm{Ann}(\delta).
\]
Before the proof proper we carefully reshuffle the conclusion of Lemma \ref{l:1}. We say that $m\in \mathcal S(\R^{n})$ is $\delta$-adapted to  $\tau=\mathrm{span}\{e_1,\ldots,e_d\}$ if 
\begin{itemize}
\item[a.] $\mathrm{supp}\, m \subset O_{\tau,\delta}\coloneqq\{\xi \in \mathrm{Ann}^+(\delta)  :\, |\pr_\tau \xi|\leq 2\delta |\xi|\}$,
\item[b.]  $\|\partial^\alpha m\|_\infty \leq \delta^{\alpha_{d+1}}$ for all multi-indices $\alpha=(\alpha_1,\ldots,\alpha_{d+1})$ of order $|\alpha|\leq 100d $.  
\end{itemize}
 Now if $\tau \in \mathrm{Gr}(d,d+1)$ is generic, we say that $m$ is $\delta$-adapted to $\tau$ if $m\circ \tau$ is $\delta$-adapted to $\mathrm{span}\{e_1,\ldots,e_d\}$, where $\tau$ also stands for the rotation mapping $\mathrm{span}\{e_1,\ldots,e_d\}$ to $\tau$ and $e_{d+1}$ to $\tau^\perp$. A typical example of function $\delta$-adapted to $\tau$ is 
\[
m(\xi)\coloneqq\mu(\pr_\tau \xi) \Psi(\delta \xi), \qquad \xi\in \R^{d+1},
\]
where $\Psi\in\mathcal S(\R^{d+1})$ is supported in $\mathrm{Ann}(1)$ and $\mu\in\mathcal S(\R^{d})$ is supported on a small ball near the origin and suitably normalized. 
 We denote by $\mathsf{m}(\tau,\delta)$ the class of  multipliers which are $\delta$-adapted to $\tau$ and define
\[
\mathsf{A}^\delta_{\tau} f(x) \coloneqq \sup_{m\in \mathsf{m}(\tau,\delta)}\left| \int_{\R^{d+1}} \widehat f(\xi) m(\xi) \e^{i x\cdot \xi
} \, \d \xi \right|, \qquad x\in \R^{d+1}.
\]
A repetition of the proof of \eqref{e:switch} of Lemma \ref{l:1} tells us that $\mathsf{A}^\delta_{\tau} f$ is pointwise bounded by the right hand side of  \eqref{e:switch} for $s=1$. This together with frequency support considerations yield
\begin{equation}
\label{e:singlecone}
\|\mathsf{A}^\delta_{\tau} f\|_{2}  \lesssim \|O_{\tau,\delta} f\|_2
\end{equation}
uniformly in $\tau\in \mathrm{Gr}(d,d+1)$ and $\delta>0$, where we have denoted by $O_{\tau,\delta}$ the frequency cutoff to the corresponding conical sector. Now, if $\Sigma_\delta$ is  $2^{-10}\delta$-net in $\mathrm{Gr}(d,d+1)$, we have
\begin{equation}
\label{e:deltaortho}
 \sum_{\tau \in  \Sigma_\delta} \|O_{\tau,\delta} f\|_2^2 \lesssim \|  f\|_2^2
 \end{equation}
as the projection on $\mathbb S^{d}$ of  $O_{\tau,\delta} $ lies in a $\sim \delta$-neighborhood of $\tau$. 
A square function argument combining   \eqref{e:singlecone} with  \eqref{e:deltaortho} then yields
\begin{equation}
\label{e:uninet}
\bigg\| \sup_{\tau \in  \Sigma_\delta} \mathsf{A}^\delta_{\tau} f \bigg\|_2 \lesssim \|  f\|_2.
 \end{equation} 
 We are ready to complete the proof proper of this Lemma. Fix $\delta>0$ and let $f\in L^2(\R^{d+1})$   with $\mathrm{supp} \, \widehat f \subset \mathrm{Ann}(\delta)$.
 Read from the proof of Lemma \ref{l:1}, cf.\ \ref{e:cf} in particular,  that 
\begin{equation} \label{e:poibd}
\sup_{\substack{ \sigma \in \mathrm{Gr}(d,d+1) \\ \mathsf{d}(\sigma,\tau)<\delta }} \left |A_{\sigma, 1} f\right| \lesssim  \mathsf{A}^\delta_{\tau} f 
\end{equation}
by means of a suitable insertion of a Littlewood-Paley cutoff equal to $1$ on $\mathrm{Ann}(\delta)$ and supported on $\mathrm{Ann}^+(\delta) $.  As $\Sigma_\delta$ is a $2^{-10}\delta$-net, an application of \eqref{e:poibd} followed by \eqref{e:uninet} returns
\begin{equation}
\label{e:baselemma1}
\bigg\|
\sup_{\sigma\in \mathrm{Gr}(d,d+1)} | A_{\sigma, 1} f | \bigg\|_{L^2(\R^{d+1})} \lesssim \bigg\| \sup_{\tau \in  \Sigma_\delta} \mathsf{A}^\delta_{\tau} f \bigg\|_2 \lesssim \|  f\|_2
\end{equation}
which is what is claimed in the lemma.

\subsection{Polynomial partition for the proof of Theorem~\ref{t:sharp3d}} We plan to apply the algebraic almost orthogonality principle of \S\ref{sec:algao} in order to prove Theorem~\ref{t:sharp3d}. The first order of business is to feed the almost orthogonality result of Theorem~\ref{t:alldortho} with a suitable polynomial partition of the set $\Sigma\subseteq \mathbb S^2$ with $\#\Sigma\leq N^2$, tailored to the problem in hand.

\begin{proposition}\label{prp:pp} Let $\Sigma \subset \mathbb S^2$ be a finite set with $\#\Sigma \leq N$ and let $\delta>0$. For each integer $D\geq 2^3$ there exists a partition
\[
\Sigma =\Sigma_{\circ}\cup\Sigma_{\times}	
\]
satisfying the following properties:
\begin{itemize}
	\item [1.] There exist $O(1)$ transverse complete intersections $W_j\subset \mathbb S^2$ of dimension $1$ and degree $O(D)$ such that
	\[
	\sup_{\sigma\in \Sigma_\times }\inf_j \mathrm{dist}(\sigma,W_j)<\delta.
	\]
	\item[2.] There exist $O(D^2)$ disjoint connected open subsets $\mathsf{C}\in\vec{\mathsf C}$ of $\,\mathbb S^2$ with the property that
	\[
	\Sigma_\circ=\bigcup_{\mathsf{C}\in\vec{\mathsf{C}}}\Sigma_{\mathsf C},\qquad \Sigma_{\mathsf C}\coloneqq \Sigma\cap\mathsf{C},\qquad \#\Sigma_{\mathsf C}\leq \frac{N}{D^2}.
	\]
\end{itemize}
\end{proposition}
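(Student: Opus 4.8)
The plan is to obtain the partition from the polynomial ham-sandwich / partitioning machinery applied to the point set $\Sigma$ on $\mathbb S^2$, then to upgrade the bare zero-set into the transverse complete intersections $W_j$ by perturbing the polynomial, and finally to absorb the near-wall directions into $\Sigma_\times$. First I would fix $\Sigma\subset\mathbb S^2$ with $\#\Sigma\le N$ and $D\ge 2^3$. By the polynomial partitioning theorem (Guth--Katz, in the form convenient on $\mathbb S^2$; see the surface-version used in \cite{DPPalg}, or apply the Euclidean version in $\R^3$ to the cone over $\Sigma$ and homogenize) there is a nonzero polynomial $q\in\R[x_1,x_2,x_3]$ of degree $O(D)$ whose zero set $Z(q)$ partitions $\mathbb S^2\setminus Z(q)$ into $O(D^2)$ connected cells, each containing at most $\#\Sigma/D^2$ points of $\Sigma$; equivalently $\le N/D^2$. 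This already gives the collection $\vec{\mathsf C}$ of property~2, \emph{provided} we declare $\Sigma_\circ$ to consist of those $\sigma\in\Sigma$ that lie in one of the open cells, and put every remaining point — i.e.\ every $\sigma$ within distance $\delta$ of $Z(q)$ after a harmless $\delta$-thickening absorption — into $\Sigma_\times$. Points of $\Sigma$ landing exactly on $Z(q)$ but not in the $\delta$-neighborhood are a measure-zero nuisance that can be removed by an arbitrarily small generic rotation of $q$, or simply reassigned to the nearest cell; I would note this once and move on.

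Next I would address property~1: the wall is a priori just the real algebraic set $Z(q)\cap\mathbb S^2$, which need not be a smooth curve, let alone a \emph{transverse complete intersection}. Here I would invoke the now-standard perturbation argument (as in Guth's polynomial method book, or Zahl, or the formulation in \cite{DPPalg}): replace $q$ by $q+\epsilon_1 r_1$ and intersect $\mathbb S^2$ (itself the zero set of $x_1^2+x_2^2+x_3^2-1$) so that $Z(q)\cap\mathbb S^2$ becomes, after a generic small perturbation, a one-dimensional transverse complete intersection $W$ of degree $O(D)$; if the perturbed curve has several components or one prefers to split it, one records them as $W_1,\dots,W_{O(1)}$ — the number of such pieces is bounded independently of $D$ because it is controlled by the number of polynomials one is forced to throw in (a bounded number: the sphere polynomial plus the perturbed $q$, possibly one more), not by $D$. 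Choosing the perturbation small enough (depending on $\delta$ and on $\Sigma$, which is harmless since $\Sigma$ is finite and $\delta$ is fixed) guarantees that the cell structure is not disturbed in a way that violates the cardinality bound in property~2, and that every $\sigma$ we put in $\Sigma_\times$ genuinely sits within $\delta$ of some $W_j$. This yields $\sup_{\sigma\in\Sigma_\times}\inf_j\mathrm{dist}(\sigma,W_j)<\delta$.

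The main obstacle, and the step I would spend the most care on, is the interplay between the $\delta$-thickening and the cell count: after enlarging the wall to its $\delta$-neighborhood and moving those directions into $\Sigma_\times$, the leftover open pieces of $\mathbb S^2$ are the original cells shrunk by $\delta$, and I must still be able to group the surviving points of $\Sigma$ into $O(D^2)$ sets of size $\le N/D^2$. Since removing points only decreases cell populations, the cardinality bound is automatic; the only genuine check is that we have not \emph{created} extra components by shrinking, which we have not, because each shrunken cell is contained in exactly one original cell. A secondary subtlety is ensuring the degree of $W_j$ stays $O(D)$ and the \emph{number} of $W_j$ stays $O(1)$ through the transversalization; this is where I would cite the precise perturbation lemma rather than reprove it. With these points settled, properties~1 and~2 hold, and the proof of Proposition~\ref{prp:pp} is complete.
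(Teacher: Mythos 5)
The paper does not actually prove Proposition~\ref{prp:pp}: it simply asserts it as a consequence of the more general polynomial partitioning result stated as Proposition~2.10 of \cite{DPPalg}, and refers the reader there for the definition of a transverse complete intersection. Your proposal, by contrast, attempts to reconstruct the underlying argument. Your outline is roughly the right shape — partition via a degree-$O(D)$ polynomial into $O(D^2)$ cells of small population, place cell points in $\Sigma_\circ$, place wall-adjacent points in $\Sigma_\times$, then transversalize the wall by a generic small perturbation — and this is indeed the kind of proof that sits behind \cite{DPPalg}*{Proposition 2.10}. But note that you too ultimately defer the two genuinely hard steps (the partitioning theorem on a variety, and the perturbation lemma producing $O(1)$ transverse complete intersections of degree $O(D)$) to external sources, so in substance you are doing what the paper does, only at a finer granularity.

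Two technical points you should tighten. First, your suggestion to apply the Euclidean polynomial partitioning theorem in $\R^3$ ``to the cone over $\Sigma$ and homogenize'' is not a correct substitute for the on-sphere result: the Euclidean theorem in $\R^3$ yields $O(D^3)$ cells, not $O(D^2)$, and the cone over a finite set is one-dimensional so partitioning it is vacuous. The correct ingredient is the polynomial partitioning theorem \emph{on varieties} (Guth's second partitioning theorem), applied to the $2$-dimensional variety $\mathbb S^2$, which is exactly what \cite{DPPalg}*{Proposition 2.10} packages. Second, the bookkeeping around the $\delta$-neighborhood and the perturbation needs one more line: if $\sigma$ is within $\delta$ of $Z(q)$ and the perturbed wall $\bigcup_j W_j$ is Hausdorff-distance $\eps$ from $Z(q)\cap\mathbb S^2$, then $\sigma$ is only within $\delta+\eps$ of $\bigcup_j W_j$; the fix is either to assign to $\Sigma_\times$ only points within $\delta/2$ of $Z(q)$ (or only the points on $Z(q)$ itself, which is enough for a partition) and take $\eps<\delta/2$. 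These are repairable inaccuracies, not conceptual gaps, and your high-level structure matches the cited result.
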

The proposition above is a consequence of the more general polynomial partitioning result of \cite{DPPalg}*{Proposition 2.10}. We also refer the reader to \cite{DPPalg}*{\S2.9} for the definition of a \emph{transverse complete intersection} and further background on polynomial partitioning results. 

The partition of Proposition~\ref{prp:pp} is not directly applicable as an input for Theorem~\ref{t:alldortho} as the set $\Sigma_\times$ also contains directions \emph{close} to the algebraic variety $Z\coloneqq \cup_j W_j$ instead of just directions \emph{on} the $Z$. This is easily remedied by a soft approximation argument. Indeed as the conclusion of Theorem~\ref{t:sharp3d} is an $L^2(\R^3)$-operator norm bound, we can work with functions $f\in L^2(\R^3)$ such that $\mathrm{supp}(\hat f)\subseteq B_3(R)$ for some $R>0$, as long as we prove bounds independent of $R$. Now if we choose $\delta\ll R^{-1}$ in Proposition~\ref{prp:pp}, we will have that $\|A_{\sigma}f\|_{L^2(\R^3)}\lesssim \| \mathrm{M}_{\tau(\sigma)}f\|_{L^2(\R^3)}$ for all $\sigma\in\Sigma_\times$, where $\tau(\sigma)$ denotes a direction on $Z$ such that $\mathrm{dist}(\sigma,\tau(\sigma))<\delta$, whose existence is guaranteed from point 1. of Proposition~\ref{prp:pp}. This remark allows us to assume that $\Sigma_\times\subset Z=\cup_j W_j$.

With the remark above taken as understood, the almost orthogonality principle of Theorem~\ref{t:sharp3d} reduces the proof of Theorem~\ref{t:sharp3d} to a recursive \emph{cellular} term, and a \emph{wall} term which is the $L^2(\R^3)$-norm of $A_{\Sigma_\times}$. The latter operator is a single scale, maximal directional average along directions on a one-dimensional algebraic subvariety of $\mathbb S^2$. It follows from \cite{DPPalg}*{Theorem D} that
\begin{equation}\label{eq:1dvariety}
\sup_{\substack{ \Sigma' \subset \cup_j W_j \\ \#\Sigma' \leq N }}\|A_{\Sigma' } \|_{L^2(\R^3)} \lesssim D^{\frac12}(\log N)^{\frac32}.	
\end{equation}

\subsection{The proof of Theorem~\ref{t:sharp3d}} We are seeking an estimate for 
\[
K_N \coloneqq \sup_{\substack{\Sigma\subset \mathbb S^2 \\ \#\Sigma\leq N }} \| A_{\Sigma}\|_{L^2(\R^n)}^2.
\]
Combining Theorem~\ref{t:alldortho} with the polynomial partition of Proposition~\ref{prp:pp}, the subsequent remarks, and \eqref{eq:1dvariety} we can estimate for any $D\gtrsim 1$
\[
K_N \leq  K_1 D (\log N)^{ 3} + K_2  D K_{\frac{N}{D^2}}
\]
for numerical constants $K_1,K_2>0$ and $D$ a sufficiently large degree, to be chosen momentarily. Indeed choosing $D\coloneqq \sqrt{N}/(\log N)^3 $ yields
\[
\frac{K_N}{\sqrt{N}} \leq K_1 +K_2 \frac{K_{(\log N)^6}}{\sqrt{(\log N)^{6}}}
\]
which readily implies the estimate in statement of Theorem~\ref{t:sharp3d} by recursion.

\section{Kakeya and Nikodym maximal operators} \label{s:kakeya} In this section we digress a bit  in order to discuss two maximal operators that appear naturally in the context of this paper, namely the \emph{Kakeya} and \emph{Nikodym} maximal operators. These operators have been studied extensively in the case $d=1$ in relation to the maximal Kakeya conjecture and the maximal Nikodym conjecture, which are equivalent when $d=1$; see \cite{Tao}. For $d>1$, the  corresponding Kakeya maximal function on $\mathrm Gr(d,n)$ has been studied in relation to the existence and dimension of $(d,n)$-Besicovitch sets.  We send the interested reader to \cite{Mattila} for general background on the topic, and will focus below  on just a few  notions central to our discussion.

We briefly recall some elementary properties of the Haar measure on $\mathrm Gr(d,n)$. Let $1\leq d<n$ be fixed parameters and  $\mathrm{d}\sigma $ be the quotient Haar measure on $\mathrm{Gr}(d,n)$ seen as a quotient group as in \eqref{e:qgroup}, that is
\[
\int\displaylimits_{\mathrm{Gr}(d,n)} f(\sigma)\,  \d\sigma = \int\displaylimits_{O(n)} f(\omega) \, \d \omega 
\] 
where $O(n)$ is equipped with its left-invariant Haar probability measure. When $A\subset  \mathrm{Gr}(d,n)$, the notation $|A|$ stands for the $\d\sigma$-measure of $A$.
As the measure  $\mathrm{d}\sigma $ is the unique  probability measure on $ \mathrm{Gr}(d,n)$ which is left invariant under the action of $O(n)$, it coincides with the normalized Riemannian volume form on $ \mathrm{Gr}(d,n)$. This implies that, if 
\[
{\mathsf B}_{\delta}(\sigma)\coloneqq \{\tau \in \mathrm{Gr}(d,n):\,\mathsf{d}(\sigma,\tau)<\delta\},
\]  
the measure $|{\mathsf B}_{\delta}(\sigma)|$ is independent of $\sigma$ and $|{\mathsf B}_{\delta}(\sigma)|  \sim \delta^{d(n-d)}$.

\subsection{The Kakeya maximal operator} Recall from \eqref{e:plate} the notation $T_\delta(\sigma)$ for the $\delta$-plate oriented along $\sigma \in \mathrm{Gr}(d,n)$
Consider the dual maximal operator acting on $f\in L^1 _{\mathrm{loc}}(\R^n)$  and its tailed version
 \[
 \K_{\delta} f(\sigma) = \sup_{x\in \R^n} \fint\displaylimits_{x+ T_\delta(\sigma)} |f|, \qquad  \widetilde {\K_{\delta}} f(\sigma) = \sup_{x\in \R^n} \sum_{k=0}^\infty 2^{-2kn} \fint\displaylimits_{x+ 2^{k}T_\delta(\sigma)} |f|,\qquad \sigma\in \mathrm{Gr}(d,n).
 \]

\begin{definition} Let $1\leq d <n$. A Borel set $A\subset \R^n$ is said to be a \emph{$(d,n)$-set} if for every $\sigma \in \mathrm{Gr}(d,n)$ there is $y\in\R^n$ such that $B_n(y,1)\cap (\sigma+y)\subset E$. If $|A|=0$ then $A$ is called a $\emph{$(d,n)$-Besicovitch}$ set.
\end{definition}
It is well known that for $d=1$ there exist $(1,n)$-Besicovitch or \emph{Kakeya} sets and the \emph{Kakeya conjecture} states that the Hausdorff dimension of Kakeya sets should be at least $n$. The \emph{maximal Kakeya} conjecture is the statement that for all $\delta,\eps>0$ we should have the estimate
\begin{equation}\label{eq:maxkak}
\|\K_\delta\|_{L^n(\mathrm{Gr}(1,n))}\lesssim_\eps \delta^{-\eps}\|f\|_{L^n(\R^n)}.
\end{equation}
When $1<d<n$, it is conjectured that no $(d,n)$-Besicovitch sets exist. For $d>n/2$  this conjecture was verified by Falconer, \cite{Falc}. The range of non-existence of $(d,n)$-Besicovitch sets was extended by Bourgain in \cite{bourgain_gafa} and further by Oberlin in \cite{Oberlin_thesis}. The reader is also referred to  \cite{Oberlin_thesis} for further results and lower bounds on the Hausdorff dimensions of $(d,n)$-Besicovitch for general $d$. The connection with the Kakeya maximal function is revealed by the following well known implication; see \cite[p. 3]{Oberlin_thesis}.

\begin{proposition}\label{prop:kakeya} Let $1\leq d<n$ and suppose that there exists  $1\leq p <\infty$ and $\eps>0$ such that for every $\delta>0$ we have the bound
	\[
	\|\K_\delta f\|_{L^p(\mathrm{Gr}(d,n))} \lesssim_\eps \delta ^{-\frac{\eps}{p}}\|f\|_{L^p(B_n)}.
	\] 
Then every $(d,n)$-set has Hausdorff dimension at least $n-\eps$. 	
\end{proposition}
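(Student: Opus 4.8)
The plan is to argue by contradiction via a Borel--Cantelli / covering argument, which is the standard route for passing from an $L^p$ maximal bound to a Hausdorff dimension lower bound. Suppose $A$ is a $(d,n)$-set with $\dim_{\mathcal H} A < n-\eps$. Then for a fixed $t$ with $\dim_{\mathcal H} A < t < n-\eps$ we may cover $A$ by balls $B_n(x_i, r_i)$ with $\sum_i r_i^{t} < \infty$. After a standard dyadic decomposition, it suffices to work at a single dyadic scale: for each $j$, let $A_j$ be the part of $A$ covered by balls of radius comparable to $\delta = 2^{-j}$, so $A \subseteq \bigcup_j A_j$ (up to a null set) and the number $N_j$ of such $\delta$-balls satisfies $N_j \delta^{t} \lesssim 1$, i.e.\ $N_j \lesssim \delta^{-t}$. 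Since $A$ is a $(d,n)$-set, \emph{every} $\sigma \in \mathrm{Gr}(d,n)$ admits a translate $\sigma + y$ whose unit-ball piece lies in $A$; by pigeonholing over the countably many $j$'s, a positive-measure subset $G_j \subset \mathrm{Gr}(d,n)$ of directions has the property that a substantial fraction of the plate $(\sigma+y)\cap B_n(y,1)$ lies in $A_j$, hence in the union $E_j$ of the $N_j$ balls of radius $\delta$.

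Next I would quantify this. Fix $j$ with $|G_j|$ bounded below by a constant independent of $j$ (possible since $\mathrm{Gr}(d,n)$ has finite measure and the $G_j$ cover it). Let $f = f_j \coloneqq \cic{1}_{E_j^\ast}$ where $E_j^\ast$ is a slight fattening of $E_j$ to scale $\delta$ (a union of $\lesssim N_j$ balls of radius $\sim\delta$), so $\|f_j\|_{L^p(B_n)}^p = |E_j^\ast| \lesssim N_j \delta^n \lesssim \delta^{n-t}$. For $\sigma \in G_j$, the corresponding plate $T_\delta(\sigma)$ translated appropriately is contained in (a dilate of) $E_j^\ast$ up to the small uncovered fraction, so
\[
\K_\delta f_j(\sigma) = \sup_{x} \fint_{x+T_\delta(\sigma)} |f_j| \gtrsim 1, \qquad \sigma \in G_j.
\]
Integrating over $G_j$ gives $\|\K_\delta f_j\|_{L^p(\mathrm{Gr}(d,n))}^p \gtrsim |G_j| \gtrsim 1$. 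On the other hand, the hypothesised bound gives $\|\K_\delta f_j\|_{L^p} \lesssim_\eps \delta^{-\eps/p}\|f_j\|_{L^p(B_n)} \lesssim_\eps \delta^{-\eps/p}\delta^{(n-t)/p}$, so $1 \lesssim_\eps \delta^{n-t-\eps}$. Letting $\delta = 2^{-j}\to 0$ forces $n - t - \eps \le 0$, i.e.\ $t \le n-\eps$, contradicting $t < n-\eps$. Hence $\dim_{\mathcal H} A \ge n-\eps$.

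The one point requiring genuine care — and the main technical obstacle — is the passage from "a translate of $\sigma$ lies in $A$" to "a translate of the $\delta$-plate $T_\delta(\sigma)$ is essentially contained in the $\delta$-cover $E_j$, uniformly for $\sigma$ in a fixed-measure set $G_j$". The issue is that the covering of $A$ has balls of \emph{varying} radii $r_i$; one must localize to a dyadic scale and show that the lower-dimensional slice $(\sigma+y)\cap B_n(y,1)$, being covered by arbitrary-radius balls summing to $\sum r_i^t <\infty$, is for a positive fraction of $\sigma$ mostly covered by balls of a \emph{single} scale $\delta_j$. This is where the measurability of the choice $\sigma \mapsto y(\sigma)$ and a Fubini/pigeonhole over the dyadic scales enter; it is routine but must be set up correctly, e.g.\ by first replacing $A$ by an $F_\sigma$ set of the same dimension and invoking the definition of Hausdorff content at each scale. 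Everything else is a soft rescaling and duality computation. One should also note the harmless role of the tailed operator $\widetilde{\K_\delta}$: if only a fraction of the plate is covered, the geometric tail sum lets one still extract a lower bound $\gtrsim 1$, so the argument goes through with $\widetilde{\K_\delta}$ in place of $\K_\delta$ and the stated hypothesis is in fact slightly stronger than needed.
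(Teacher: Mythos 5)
The paper does not supply a proof of Proposition \ref{prop:kakeya}; it defers to Oberlin's thesis (and the analogous Theorem 22.9 of Mattila for the Nikodym variant). Your argument --- cover a low-dimensional $(d,n)$-set, pigeonhole to a single dyadic scale $\delta=2^{-j}$, then test the assumed $L^p$ bound against an indicator of the scale-$\delta$ cover --- is precisely the standard route and is sound in structure.

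One quantitative step is stated incorrectly, though the argument is not broken by it. You assert that one can ``fix $j$ with $|G_j|$ bounded below by a constant independent of $j$, possible since $\mathrm{Gr}(d,n)$ has finite measure and the $G_j$ cover it.'' That does not follow: from $\sum_j |G_j|\gtrsim 1$ one can only extract some $j$ with $|G_j|\gtrsim \omega_j$ for any fixed summable positive sequence $\omega_j$, e.g.\ $|G_j|\gtrsim j^{-2}$, never a $j$-uniform lower bound. The same issue appears one level earlier: the fraction of the unit plate $(\sigma+y(\sigma))\cap B_n(y(\sigma),1)$ that is captured by the single scale-$j$ piece of the cover is also only guaranteed to be $\gtrsim j^{-2}$, so $\K_\delta f_j(\sigma)\gtrsim j^{-2}$ rather than $\gtrsim 1$ on $G_j$. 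Together these degrade the lower bound to $\|\K_\delta f_j\|_{L^p(\mathrm{Gr}(d,n))}^p \gtrsim j^{-2(p+1)}$. This is harmless: the assumed bound still gives $\|\K_\delta f_j\|_{L^p}^p\lesssim_\eps 2^{-j(n-t-\eps)}$, and polynomial decay in $j$ is incompatible with exponential decay when $n-t-\eps>0$. To let $j\to\infty$, as you implicitly do, choose the cover so that all radii are $<2^{-J}$ (possible since $\mathcal H^t(A)=0$); then the pigeonholed $j$ is at least $J$, and $J\to\infty$ yields the contradiction. Finally, a small algebraic slip at the end: $n-t-\eps\le 0$ rearranges to $t\ge n-\eps$, not $t\le n-\eps$; it is the former that contradicts $t<n-\eps$. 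With these corrections your proposal matches the standard proof.
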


In the context of the current paper the case $1<d<n$ is the most relevant. The estimate of the following proposition is a natural consequence of the methods of this paper and we include as an illustration of how these methods can be applied in the context of the $(d,n)$-Kakeya maximal function. Our estimate below recovers the well known result of Falconer \cite{Falc}:   there are no $(d,n)$-Besicovitch sets when $d>n/2$.  By Proposition~\ref{prop:kakeya}, it also yields that a $(d,n)$-Besicovitch set necessarily has full Hausdorff dimension  when $d=n/2$.

\begin{proposition}$
\displaystyle
\left\|  \K_{\delta} :L^2(\R^n) \to L^2(\mathrm{Gr}(d,n))   \right\|  \lesssim 
\begin{cases} 
1 & 2d> n 
\\ 
\log \delta & 2d=n 
\\
\delta^{d-\frac{n}{2}} & 2d< n. 
\end{cases} 
$
\end{proposition}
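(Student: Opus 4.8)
The strategy is a $TT^*$ argument, reducing the operator norm of $\K_\delta\colon L^2(\R^n)\to L^2(\mathrm{Gr}(d,n))$ to a kernel estimate governed by the volumes of pairwise intersections of $\delta$-plates. First I would linearize: for each $\sigma\in\mathrm{Gr}(d,n)$ choose a measurable centre $c(\sigma)\in\R^n$ (a near-maximizer) and set $P_\sigma\coloneqq c(\sigma)+T_\delta(\sigma)$, so that pointwise $\K_\delta f(\sigma)\approx |P_\sigma|^{-1}\int_{P_\sigma}|f|$; since $|T_\delta(\sigma)|\sim\delta^{n-d}$ uniformly in $\sigma$, this reduces matters to bounding the linear operator $Lf(\sigma)\coloneqq \delta^{-(n-d)}\int \cic{1}_{P_\sigma}(y)f(y)\,\d y$ from $L^2(\R^n)$ to $L^2(\mathrm{Gr}(d,n))$, on nonnegative $f$. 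Computing $LL^*$ on $L^2(\mathrm{Gr}(d,n))$, one gets an integral operator with kernel
\[
\mathsf{K}(\sigma,\tau)=\delta^{-2(n-d)}\,\bigl|P_\sigma\cap P_\tau\bigr|,\qquad \sigma,\tau\in\mathrm{Gr}(d,n),
\]
so by Schur's test it suffices to show $\sup_\sigma \int_{\mathrm{Gr}(d,n)}\mathsf{K}(\sigma,\tau)\,\d\tau$ is bounded by the claimed quantity (the kernel is symmetric, so one Schur bound suffices).

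The heart of the matter is therefore the geometric estimate for the volume of the intersection of two $\delta$-plates as a function of $\mathsf{d}(\sigma,\tau)$. The key point is that $P_\sigma$ is (comparably) a $\delta$-neighbourhood of a unit $d$-ball lying in the affine translate $c(\sigma)+\sigma$; when $\mathsf{d}(\sigma,\tau)\sim t$ with $\delta\le t\le 1$, the two central $d$-planes make an angle $\sim t$, and an elementary computation — slicing along the $d$-dimensional ``long'' directions and measuring the transverse spread — gives
\[
\bigl|P_\sigma\cap P_\tau\bigr|\lesssim \delta^{n-d}\cdot\Bigl(\frac{\delta}{t}\Bigr)^{\min(d,n-d)}
\]
(the transverse directions number $n-d$, each contributing a factor $\delta$; but the intersection can only be as long as $\sim \delta/t$ in at most $d$ of the plate's long directions, whichever count is the binding constraint — this is exactly where the dimension-vs-codimension dichotomy $2d\gtrless n$ will enter). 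I would establish this by the spatial methods alluded to in the introduction: reduce by rotation/translation to $\sigma=\mathrm{span}\{e_1,\dots,e_d\}$, parametrize $\tau$ by a linear map of operator norm $\sim t$, and bound the measure of the set of points within $\delta$ of both plates directly.

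With the volume bound in hand, the Schur sum is estimated by decomposing $\mathrm{Gr}(d,n)$ into dyadic annuli $\{\tau: \mathsf{d}(\sigma,\tau)\sim 2^{-j}\delta^{-1}\cdot\delta\}$... more precisely $\{2^{j}\delta\le \mathsf{d}(\sigma,\tau)< 2^{j+1}\delta\}$ for $0\le 2^j\le \delta^{-1}$, each of Haar measure $\sim (2^j\delta)^{d(n-d)}$, plus the innermost ball $\mathsf{d}(\sigma,\tau)<\delta$ of measure $\sim\delta^{d(n-d)}$ on which $\mathsf{K}(\sigma,\tau)\lesssim\delta^{-(n-d)}$. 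Summing $\delta^{-(n-d)}\cdot 2^{-j\min(d,n-d)}\cdot(2^j\delta)^{d(n-d)}$ over $j$, the exponent of $2^j$ is $d(n-d)-\min(d,n-d)$, which is positive for all $1\le d<n$ with $n\ge 2$ except that the total is controlled by the endpoint $2^j\sim\delta^{-1}$, giving $\delta^{-(n-d)}\cdot\delta^{-(d(n-d)-\min(d,n-d))}\cdot\delta^{d(n-d)}=\delta^{\min(d,n-d)-(n-d)}$; when $d\ge n-d$ this is $\delta^{(n-d)-(n-d)}\cdot(\text{correction})$, i.e.\ bounded (case $2d>n$) or logarithmically divergent (case $2d=n$, where the geometric sum degenerates to $\log\delta^{-1}$ terms of equal size), and when $d<n-d$ it is $\delta^{d-(n-d)}=\delta^{d-n/2}\cdot\delta^{-(n/2-d)}$... recomputing, $\delta^{\min(d,n-d)-(n-d)}=\delta^{d-(n-d)}=\delta^{2d-n}$, matching $\delta^{d-n/2}$ only after noting the problem statement's exponent $d-\tfrac n2$ corresponds to the square of the operator norm being $\delta^{2d-n}$. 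The main obstacle is the sharp plate-intersection volume bound: getting the correct power $\min(d,n-d)$ rather than a lossy $\delta/t$ to the first power requires care about \emph{which} of the plate's dimensions can actually be ``long'' inside the intersection, and this is the one genuinely geometric computation that cannot be outsourced to the Fourier-analytic lemmas already established.
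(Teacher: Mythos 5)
Your $TT^*$ plus Schur-test strategy is a genuinely different route from the paper's. The paper proves this proposition Fourier-analytically: it splits off the low-frequency part via Lemma~\ref{l:1}, uses the Grassmannian overlap estimate \eqref{e:deltaortho2} (a consequence of Lemma~\ref{l:grasm}) to get $\|\mathsf{K}_\delta f_\delta\|_2^2\lesssim\delta^{2d-n}$ on the annular piece, and then runs an induction on scales via the recursion $Q_\delta\leq Q_{4\delta}+O(\delta^{d-n/2})$. Your geometric linearize-and-$TT^*$ route is instead in the spirit of the paper's \emph{Nikodym} proof (Theorem~\ref{thm:L2nik}), and is a priori a reasonable alternative; the pairwise plate-intersection volume estimate you flag as the crux is exactly the content of the paper's Lemma~\ref{l:inters}.

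However, there are two concrete gaps. First, your proposed bound $|P_\sigma\cap P_\tau|\lesssim\delta^{n-d}(\delta/t)^{\min(d,n-d)}$ with $t=\mathsf{d}(\sigma,\tau)$ is not a valid upper bound when $d\geq 2$. The distance $\mathsf{d}(\sigma,\tau)$ only records the \emph{largest} canonical angle $\theta_d$, but the intersection volume is governed by all of them: Lemma~\ref{l:inters} gives $|P_\sigma\cap P_\tau|\lesssim \delta^{n-m}\big(\prod_{j=m+1}^d\max\{\delta,\theta_j\}\big)^{-1}$ with $m=\dim(\sigma\cap\tau)$. Taking $d=2$, $n=4$, $\theta_1\leq\delta$, $\theta_2\sim t$, the actual bound is $\delta^3/t$, while your formula predicts $\delta^4/t^2\ll\delta^3/t$ — your estimate under-counts. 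Second, even granting your volume bound, the Schur sum you wrote does not produce the claimed dichotomy. The exponent $d(n-d)-\min(d,n-d)$ is zero only when $d=1$, $n=2$; for $2d=n$ with $d\geq 2$ it equals $d(d-1)>0$, so your geometric sum is dominated by its last term and yields $O(1)$, not $\log\delta^{-1}$. The logarithm at the critical codimension $2d=n$ instead comes from the interplay between the sharp volume bound of Lemma~\ref{l:inters} and the distribution of canonical angles on $\Grdn$ (the local density $\prod_{i<j}|\theta_i^2-\theta_j^2|\prod_j\theta_j^{n-2d}\,d\theta$, whose factor $\theta_j^{n-2d}$ degenerates precisely at $2d=n$). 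With those two ingredients repaired the $TT^*$ route does close — and in fact would yield $(\log\delta^{-1})^{1/2}$ at $2d=n$ — but the argument as written is not correct.
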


\begin{proof}  By an isotropic rescaling of the input function $f$,  the norm equivalence
\begin{equation}
\label{e:tricky0}
\left\|  \K_{\delta} :L^p(\R^n) \to L^q(\mathrm{Gr}(d,n))   \right\| \sim_{p,q}
\left\| \widetilde{ \K_{\delta} }:L^p(\R^n) \to L^q(\mathrm{Gr}(d,n))   \right\|  
\qquad \forall 1\leq p,q \leq \infty
\end{equation} holds.
By standard arguments and Lemma \ref{l:1}, if  $ \tau \in \mathrm{Gr}(d,n) $ and $\sigma \in B_{2^{-10}\delta}(\tau)$ we have the pointwise estimates
\begin{equation}
\label{e:tricky1}
 \K_{\delta} f(\sigma) \lesssim   \sup_{x\in \R^n} \left| A_{\sigma, 1}^{>\delta} f(x)\right| \lesssim   \sup_{x\in \R^n} \mathsf{A}^\delta_\tau f(x) \lesssim    \widetilde {\K_{\delta}} f(\tau) \lesssim  \delta^{-\frac{(n-d)}{p}} \|f\|_p.
\end{equation}
The proof of the Proposition relies on an inductive estimate for 
$
Q_\delta \coloneqq \left\|  \mathsf{K}_{\delta} :L^2(\R^n) \to L^2(\mathrm{Gr}(d,n))   \right\|,
$ having defined the
the Fourier localized operator
\[
\mathsf{K}_{\delta} f(\sigma) \coloneqq  \sup_{x\in \R^n} \left| A_{\sigma, 1}^{>\delta} f(x)\right| , \qquad  \sigma\in \mathrm{Gr}(d,n).
\]
Let $\Phi$ be as in \eqref{e:LPsplit} and set $f_\delta\coloneqq  \mathcal{F}^{-1} [ \widehat f (\cdot)(1- \Phi(2^{4} \delta \cdot))]. $
 Then \begin{equation}
A_{\sigma, 1}^{>\delta} f = A_{\sigma, 1}^{>4\delta} f + A_{\sigma, 1}^{>\delta} f_\delta, \qquad \sigma \in \mathrm{Gr}(d,n)
 \end{equation}
which tells us immediately that
\begin{equation} \label{e:inductive0}
Q_{\delta} \leq Q_{4\delta} + \sup_{\|f\|_2=1} \left\| \mathsf{K}_{\delta} f_\delta\right\|_2.
 \end{equation}
 The important fact to notice here is that due to the support of $f_\delta$ being contained in $\mathrm{Ann}^+(\delta)$, we have   
 \[
 \sigma \in B_{h\delta}(\tau) \implies
 A_{\sigma, 1}^{>\delta} f_\delta = A_{\sigma, 1}^{>\delta} O_{\tau,\delta} f 
 \]
 where $O_{\tau,\delta}$ is the frequency cutoff to the set  $\ O_{\tau,\delta}\coloneqq\{\xi \in \mathrm{Ann}^+(\delta):\, |\pr_\tau \xi|\leq 2\delta |\xi|\}$. Notice that by Lemma \ref{l:grasm} we have for every $\delta$-separated set $\Sigma_\delta \subset \Grdn$ that
 \begin{equation}
\label{e:deltaortho2}
 \sum_{\tau \in   \Sigma_\delta} \|O_{\tau,\delta} f\|_2^2 \lesssim \delta^{-d(n-d-1)} \|  f\|_2^2
 \end{equation}
Therefore if $\Sigma_\delta$ is a maximal $2^{-11} \delta$-net  in $ \mathrm{Gr}(d,n)$, relying on \eqref{e:tricky1} to pass to the second line and on the overlap estimate  \eqref{e:deltaortho2}
\begin{equation}
\label{e:inductive1}
\begin{split}
\left\| \mathsf{K}_{\delta} f_\delta\right\|_2^2  &\leq  \delta^{d(n-d)} \sum_{\tau \in \Sigma_\delta}   \sup_{\sigma \in {\mathsf B}_{2^{-10}\delta}(\tau)  }
\sup_{x\in \R^n} \left| A_{\sigma, 1}^{>\delta} O_{\tau,\delta} f(x)\right|^2
\\ & \lesssim  \delta^{(d-1)(n-d)} \sum_{\tau \in \Sigma_\delta}   \|O_{\tau,\delta} f\|_2^2 \lesssim \delta^{2d-n}
\end{split}
\end{equation}
Combining \eqref{e:inductive1} with \eqref{e:inductive0} yields the recursion 
\[
Q_{\delta} \leq Q_{4\delta} + \Theta \delta^{2d-n}. 
\]
for some dimensional constant $\Theta$. This proves the proposition via easy induction.
\end{proof}

\subsection{The Nikodym maximal operator} For $1\leq d<n$, $f\in L^1 _{\mathrm{loc}}(\R^n)$, consider the maximal $\delta$-plate averages
\[
\mathcal N_\delta f(x)\coloneqq \sup_{ \sigma\in\mathrm{Gr}(d,n)} \fint_{x+T_\delta(\sigma)}|f|,\qquad x\in\R^n.
\]
The role of $\Grdn$  is kept implicit in the notation of the Nikodym maximal function $\mathcal N_\delta$. The study of the Nikodym maximal operator is motivated by the question of existence and dimension of $(d,n)$-Nikodym sets as defined below.

\begin{definition} We say that $A\subset \R^n$ is a $(d,n)$-Nikodym set if $|A|=0$ and for every $x\in A$ there exists an affine $d$-plane $\sigma+y$, with $\sigma \in \mathrm{Gr}(d,n)$ and $y\in\R^n$, such that
\[
 x\in \sigma+y \quad\text{and}\quad	  B_n(y,1) \cap (\sigma+y)  \subset E.
\]
\end{definition}
For $d=1$ it is easy to see that the Kakeya conjecture would imply that all $(1,n)$-Nikodym sets have Hausdorff dimension at least $n$. Again for the case $d=1$ the maximal Kakeya conjecture \eqref{eq:maxkak} is equivalent to the statement that for all $\delta,\eps>0$ we have
\[
\|\mathcal N_\delta f\|_{L^n(\R^n)}\lesssim_\eps \delta^{-\eps}\|f\|_{L^n(\R^n)},\qquad d=1.
\]
On the other hand, while for $d>n/2$ there are no $(d,n)$-Besicovitch sets, it is known that for all $1\leq d <n$ there exist $(d,n)$-Nikodym sets; see \cite[Corollary 6.6]{FalcPLMS}. It is thus natural to focus on lower bounds for the Hausdorff dimension of such Nikodym sets. In order to formulate the relevant conjecture on the maximal level we briefly explore below the relation of Hausdorff dimension with $L^p(\R^n)$-bounds for $\mathcal N_\delta$. The proof is similar to \cite[Theorem 22.9]{Mattila}.

\begin{proposition} Suppose that there exists $1\leq p<\infty$, $1\leq q<\infty$  and $\eps>0$ such that the following estimate holds. For all $\delta,\eps>0$ there holds
\[
\| \Nn_\delta f :\, L^{p}(\R^n )\to L^{p}(\R^n) \| \lesssim_\eps \delta^{-\frac{\eps}{p}}.
\]
Then every $(d,n)$-Nikodym set has Hausdorff dimension at least $n-\eps$.
\end{proposition}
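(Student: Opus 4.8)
The plan is to follow the classical Fefferman-type covering argument adapting \cite[Theorem 22.9]{Mattila} to the $(d,n)$-setting. Suppose $A\subset\R^n$ is a $(d,n)$-Nikodym set, so $|A|=0$ and for every $x\in A$ there is an affine $d$-plane $\sigma(x)+y(x)$ through $x$ with $B_n(y(x),1)\cap(\sigma(x)+y(x))\subset A$. Fix $t<n-\eps$; we want to show $\mathcal H^t(A)=\infty$, equivalently that $A$ cannot be covered by countably many balls $\{B_n(z_i,r_i)\}$ with $\sum_i r_i^t<\infty$. Arguing by contradiction, assume such a cover exists. The key point, as in the Kakeya/Nikodym literature, is that for each $x\in A$ the affine plate-segment $B_n(y(x),1)\cap(\sigma(x)+y(x))$ is $d$-dimensional and (a dyadic piece of it) must be efficiently covered by the balls $B_n(z_i,r_i)$; by pigeonholing on dyadic scales $r_i\sim\delta=2^{-k}$ and rescaling, one produces, for a suitable $\delta$, a set $F_\delta=\bigcup_i B_n(z_i,\delta)$ (a $\delta$-neighborhood of the union of relevant balls) of measure $|F_\delta|\lesssim \delta^{n-t}$, together with a ``Nikodym-type'' lower bound: a set $G$ of substantial measure, $|G|\gtrsim 1$, on which $\mathcal N_\delta\ind_{F_\delta}\gtrsim c$ for an absolute constant $c>0$. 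Indeed, for each $x\in G$ one chooses $\sigma$ so that the affine plate $x+T_\delta(\sigma)$ (after the rescaling) contains a fixed fraction of a $\delta$-neighborhood of one of the plane-segments furnished by the Nikodym property, and that neighborhood lies in $F_\delta$.

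With this geometric reduction in place, the remainder is soft. By the hypothesized $L^p\to L^p$ bound for $\mathcal N_\delta$,
\[
|G|\, c^p \lesssim \int_G \big(\mathcal N_\delta \ind_{F_\delta}\big)^p \leq \|\mathcal N_\delta\ind_{F_\delta}\|_{L^p(\R^n)}^p \lesssim_\eps \delta^{-\eps}\,\|\ind_{F_\delta}\|_{L^p(\R^n)}^p = \delta^{-\eps}\,|F_\delta| \lesssim \delta^{-\eps}\delta^{n-t}.
\]
Since $|G|\gtrsim 1$ this forces $1\lesssim_\eps \delta^{n-t-\eps}$, i.e.\ $\delta^{-(n-t-\eps)}\lesssim_\eps 1$ uniformly in $\delta$. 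Because $t<n-\eps$ we have $n-t-\eps>0$, so letting $\delta\to 0$ (that is, passing to the relevant subsequence of dyadic scales produced by the cover, which can be taken arbitrarily small since $\sum r_i^t<\infty$ forces $r_i\to 0$) yields a contradiction. Hence no such cover exists and $\mathcal H^t(A)=\infty$ for every $t<n-\eps$, giving $\dim_{\mathcal H}A\geq n-\eps$.

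I would expect the one genuinely nontrivial step to be the geometric bookkeeping in the first paragraph: organizing the countable cover into a single scale $\delta$, passing from the affine $d$-plane segments of the Nikodym property to honest $\delta$-plates $x+T_\delta(\sigma)$ up to the permitted rescaling and translation, and verifying simultaneously both the volume bound $|F_\delta|\lesssim\delta^{n-t}$ \emph{and} the lower bound $|G|\gtrsim 1$ with a scale-independent constant. This is exactly where the argument parallels the proof of \cite[Theorem 22.9]{Mattila}, and the adaptation from $d=1$ tubes to $d$-dimensional plates is routine once one notes that $T_\delta(\sigma)$ has the $n$-dimensional volume $\sim\delta^{n-d}$ and that the Nikodym condition delivers a full unit-size $d$-segment inside $A$ for each point; the $L^p$-bound then converts the overlap of these plates into the dimension estimate. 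A minor additional point is the harmless replacement of $A$ by a bounded piece (intersecting with a large ball) so that finitely many scales dominate, and the use of $\ind_{F_\delta}\in L^p(\R^n)$ with $\|\ind_{F_\delta}\|_p^p=|F_\delta|$, which is immediate.
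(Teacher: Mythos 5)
The paper does not actually spell out a proof of this proposition; it simply points to \cite[Theorem~22.9]{Mattila}, and your sketch follows exactly that C\'{o}rdoba--Fefferman covering argument, which is the intended route: cover $A$, organize by dyadic scales, produce a small-measure $\delta$-neighborhood $F_\delta$ together with a positive-measure set $G$ on which $\Nn_\delta \ind_{F_\delta}\gtrsim 1$, then feed this into the $L^p$ bound and let $\delta\to 0$. Structurally this is right.

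There is, however, one point you state but do not resolve, and it is not purely cosmetic. You begin by quoting the paper's definition, in which the plane condition is imposed only for $x\in A$. Since $|A|=0$, that hypothesis alone cannot manufacture a set $G$ of measure $\gtrsim 1$ on which the Nikodym maximal function of $\ind_{F_\delta}$ is bounded below: the only points at which the Nikodym property hands you a plane to test against live in a null set. (Indeed, with the definition read literally, the unit ball of a single $d$-plane is a $(d,n)$-Nikodym set of dimension $d$, so the proposition would be false.) What the argument actually uses --- and what the cited Mattila formulation provides --- is the condition that for (essentially) \emph{every} $x$ in a set of positive Lebesgue measure (e.g.\ every $x\in\R^n$) there is an affine $d$-plane through $x$ whose unit $d$-ball lies in $A$. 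You implicitly switch to this stronger hypothesis when you assert $|G|\gtrsim 1$; the switch should be made explicit. A second, smaller point: pigeonholing to a \emph{single} scale $\delta$ with $|G|\gtrsim 1$ uniformly in $\delta$ is slightly lossy. The clean version either sums the weak-type inequality over all dyadic scales $k$, or uses a summable splitting $c_k\sim (k+1)^{-2}$ to obtain $|G_k|\geq c_k$ for some $k$, the polynomial loss in $k$ then being absorbed by the exponential gain $2^{-k(n-t-\eps)}$. Neither point changes the shape of the argument, which otherwise matches the reference the paper cites.
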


%

For general $1\leq d<n$ the counterexample presented in Proposition~\ref{prop:cex} for the maximal operator $\M_{\Sigma,\{1\}}$ can be easily modified for the Nikodym maximal function with $\delta^{-1}=N^{1/(n-d)}$, showing that the following conjecture would yield the best possible quantification of the range of boundedness.

\begin{conjecture}\label{conj:nik} Let $\mathcal N_\delta$ denote the $(d,n)$-Nikodym maximal function for $1\leq d< n$. There holds
	\[
	\| \Nn_\delta  :\, L^{p}(\R^n)\to L^{p}(\R^n) \| \lesssim 
	\begin{cases} \delta^{-\frac{n-d+1-p}{p}},&\qquad 1<p<n-d+1,
		\\
		(\log \delta^{-1})^{\frac{1}{p}},&\qquad p\geq n-d+1.
	\end{cases}
	\]
\end{conjecture}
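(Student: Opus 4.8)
The plan is to dispose of the lower bounds quickly and to organize the upper bounds around a single \emph{critical} estimate at the endpoint $p_c\coloneqq n-d+1$, from which every other exponent follows by interpolation; the genuinely hard input is that critical estimate itself.

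\emph{Sharpness.} This is the easy direction, and as already indicated before the statement it follows by transplanting the extremal configurations from the proof of Proposition~\ref{prop:cex}, rescaled so that $\delta^{-1}=N^{1/(n-d)}$. For $1<p<p_c$ one tests $\Nn_\delta$ on the rescaled version of $C_M(\omega)$ with a fixed $\omega\in\mathrm{Gr}(d-1,n)$ and obtains $\|\Nn_\delta:L^p(\R^n)\to L^p(\R^n)\|\gtrsim\delta^{-\frac{n-d+1-p}{p}}$, while for $p\geq p_c$ one replaces the ball in that construction by a $\delta$-neighborhood of a planar Kakeya set lying inside $\omega^\perp$, which gives the lower bound $(\log\delta^{-1})^{1/p}$. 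Hence the conjectured upper bounds are, once established, best possible.

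\emph{Reduction of the upper bound.} Since the conjectured exponent $-\frac{n-d+1-p}{p}$ is affine in $1/p$, vanishing at $p=p_c$ and equal to $-(n-d)$ at $p=1$, Marcinkiewicz interpolation reduces the whole conjecture to two estimates: (a) the critical bound $\|\Nn_\delta:L^{p_c}(\R^n)\to L^{p_c}(\R^n)\|\lesssim(\log\delta^{-1})^{1/p_c}$, which together with the trivial $L^\infty(\R^n)$ bound also accounts for $p>p_c$; and (b) the restricted weak-type bound $\|\Nn_\delta:L^1(\R^n)\to L^{1,\infty}(\R^n)\|\lesssim\delta^{-(n-d)}$. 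Estimate (b) should be within reach of a \emph{bush} argument over $\delta$-plates: the configuration $f=\cic{1}_{\delta B_n}$ already forces the exponent $-(n-d)$, and the matching upper bound rests on the sharp volume estimates for intersections of $d$-plates of thickness $\delta$, in all combinations of dimension and codimension, that underlie the proof of Theorem~\ref{thm:L2nik}. In the codimension-one case $d=n-1$ one has $p_c=2$, and (a), up to a factor $(\log\delta^{-1})^{1/2}$, is precisely the content of Theorem~\ref{thm:L2nik} --- equivalently, of the directional Carleson embedding of Theorem~\ref{thm:codim1_intro} applied to the adjoint of the linearized operator $\Nn_\delta$.

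\emph{The main obstacle.} The serious difficulty is the critical estimate (a) when $d\leq n-2$, where $p_c=n-d+1>2$ lies strictly above the range reachable by the $L^2$ almost-orthogonality machinery of this paper, which yields only the subcritical loss $\delta^{-(n-d-1)/2}$ of Theorem~\ref{thm:L2nik}. Proving (a) amounts to a $(d,n)$-Nikodym incarnation of the maximal Kakeya conjecture at its critical exponent: it should require genuinely exploiting the transversality of $d$-plates --- through a $d$-plane \emph{hairbrush} argument, the polynomial partitioning on $\Grdn$ deferred in this paper to a forthcoming companion, or decoupling and multilinear input --- rather than mere almost-orthogonality. This is expected to be hard, since even the classical case $d=1$, $p_c=n$ is the open maximal Nikodym (equivalently maximal Kakeya) conjecture for $n\geq3$. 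A secondary and much more tractable point is closing the $(\log\delta^{-1})^{1/2}$ discrepancy between Theorem~\ref{thm:L2nik} and (a) in codimension one, which should be reachable by running the $TT^*$/Carleson argument against the weak-type rather than the strong-type endpoint.
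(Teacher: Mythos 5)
The statement you were asked to address is labeled a \emph{conjecture} in the paper, and it remains open for $d\le n-2$: the paper only establishes the matching lower bounds (via constructions as in Proposition~\ref{prop:cex}, rescaled so that $\delta^{-1}=N^{1/(n-d)}$) and claims to verify the conjecture in the codimension-one case $d=n-1$ through Theorem~\ref{thm:codim1_intro}. You diagnose this correctly and honestly: you locate the critical exponent $p_c=n-d+1$, recall the subcritical $L^2$ input of Theorem~\ref{thm:L2nik}, note the codimension-one verification via the Carleson embedding, and correctly identify that for $d\le n-2$ the endpoint $p_c>2$ lies beyond the $L^2$ almost-orthogonality machinery, with even $d=1$, $p_c=n\ge 3$ being the open maximal Nikodym/Kakeya conjecture. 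There is no ``paper's proof'' to compare against; your write-up is an accurate status report rather than a proof, and it is in agreement with the paper's own remarks.

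Two technical imprecisions in the reduction are worth flagging. First, a plain Marcinkiewicz interpolation of an $L^1\to L^{1,\infty}$ bound at $\delta^{-(n-d)}$ against a strong $L^{p_c}$ bound at $(\log\delta^{-1})^{1/p_c}$ leaves a residual logarithmic factor throughout the subcritical range, which the conjectured polynomial bound $\delta^{-(n-d+1-p)/p}$ does not allow; obtaining the clean polynomial bound for $1<p<p_c$ requires a Lorentz- or restricted-type input at the critical endpoint (as in C\'ordoba's two-dimensional argument), not a naive strong-type interpolation. Second, even in the verified codimension-one case the sharp strong $(2,2)$ bound in Theorem~\ref{thm:codim1_intro} is $\log\#\Sigma$, claimed best possible, not $(\log\#\Sigma)^{1/2}$; so the conjecture's $(\log\delta^{-1})^{1/p}$ form is realized by the range $p>p_c$ and by the weak $(2,2)$ endpoint, but \emph{not} by the strong $(p_c,p_c)$ estimate. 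Your closing suggestion that the $\sqrt{\log}$ discrepancy at $p_c=2$ ``should be reachable by running the $TT^*$/Carleson argument against the weak endpoint'' therefore contradicts the paper's own sharpness claim; the correct reading is that the strong bound at $p=p_c$ carries a full $\log$, and the $(\log)^{1/p}$ decay only begins strictly above $p_c$. Neither caveat changes your accurate bottom line: for $d\le n-2$, the upper bound in this conjecture is genuinely open and out of reach of the methods in this paper.
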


The conjecture above suggests that the critical exponent for the $(d,n)$-Nikodym problem is $p_o\coloneqq n-d+1$. We verify the conjecture in the case $d=n-1$, so that $p_o=2$, in Section~\ref{s:cod1} below; in fact we prove the same logarithmic dependence for a more general operator given with respect to $(n-1)$-plates in $\R^n$ of arbitrary eccentricity. A range of off-diagonal estimates can be conjectured by interpolating the estimate of Conjecture~\ref{conj:nik} with the trivial $L^1\to L^\infty$ estimate.

\begin{conjecture} Let $\mathcal N_\delta$ denote the $(d,n)$-Nikodym maximal function for $1\leq d< n$. For $1\leq p \leq n-d+1$, $q=p'(n-d)$, and all $\eps>0$
	\[
	\| \Nn_\delta f :\, L^{p}(\R^n)\to L^{q}(\R^n) \| \lesssim_\eps  \delta^{-\big(\frac{n-d+1-p}{p}\big)} (\log \delta^{-1})^{\frac{1}{p'(n-d)}}
	\]
\end{conjecture}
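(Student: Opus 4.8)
The plan is to obtain the whole off‑diagonal family from the diagonal estimate of Conjecture~\ref{conj:nik} by interpolating it against the elementary $L^1\to L^\infty$ endpoint. First I would record that trivial endpoint: the plate $T_\delta(\sigma)$ is the product of a unit $d$‑ball in $\sigma$ with a $\delta$‑ball in $\sigma^\perp$, so $|T_\delta(\sigma)|\sim\delta^{n-d}$ uniformly in $\sigma\in\Grdn$, and hence
\[
\Nn_\delta f(x)=\sup_{\sigma\in\Grdn}\fint_{x+T_\delta(\sigma)}|f|\;\leq\;\Big(\inf_{\sigma\in\Grdn}|T_\delta(\sigma)|\Big)^{-1}\|f\|_{L^1(\R^n)}\;\lesssim\;\delta^{-(n-d)}\|f\|_{L^1(\R^n)},
\]
i.e.\ $\|\Nn_\delta\|_{L^1(\R^n)\to L^\infty(\R^n)}\lesssim\delta^{-(n-d)}$. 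At the other end I take the critical exponent $p_o\coloneqq n-d+1$; since $q=p'(n-d)$ degenerates to $q=p_o$ precisely at $p=p_o$, the instance $p=p_o$ of the family to be proved is exactly the diagonal bound $\|\Nn_\delta\|_{L^{p_o}\to L^{p_o}}\lesssim(\log\delta^{-1})^{1/p_o}$ of Conjecture~\ref{conj:nik}.

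Next I would feed these two endpoints into the off‑diagonal Marcinkiewicz interpolation theorem for the sublinear operator $\Nn_\delta$ — legitimate because both are strong‑type, hence weak‑type, endpoints and the target exponents $\infty$ and $p_o$ are distinct (one may equivalently linearize $\Nn_\delta$ by a measurable choice $\sigma\colon\R^n\to\Grdn$ and apply Riesz--Thorin to the resulting positive linear operator). With interpolation parameter $\theta\coloneqq\frac{n-d+1}{p'(n-d)}$, the relations $\frac1p=1-\theta+\frac{\theta}{p_o}$ and $\frac1q=\frac{\theta}{p_o}$ hold throughout $1\leq p\leq p_o$ (with $q=p'(n-d)$), and the interpolated norm is $\lesssim\big(\delta^{-(n-d)}\big)^{1-\theta}\big((\log\delta^{-1})^{1/p_o}\big)^{\theta}$. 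A one‑line computation using $\frac1{p'}=1-\frac1p$ and $n-d=p_o-1$ gives $(n-d)(1-\theta)=\frac{n-d+1-p}{p}$ and $\frac{\theta}{p_o}=\frac1{p'(n-d)}$, which is exactly the asserted bound; in particular no $\delta^{-\eps}$ loss is created, so once Conjecture~\ref{conj:nik} is granted the factor $\lesssim_\eps$ in the statement is superfluous.

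The main, and essentially only, obstacle is that Conjecture~\ref{conj:nik} is itself open for $1<d<n-1$: on that range the argument is therefore conditional, and making it unconditional would require the $d$‑plane polynomial‑partitioning input on $\Grdn$ announced in the introduction. In the codimension‑one case $d=n-1$ one has $p_o=2$ and Theorem~\ref{thm:L2nik} furnishes only the weaker \emph{weak}‑type endpoint $\|\Nn_\delta\|_{L^2\to L^{2,\infty}}\lesssim(\log\delta^{-1})^{1/2}$ in place of the strong $L^2$ bound predicted by Conjecture~\ref{conj:nik}; nonetheless, interpolating this weak $(2,2)$ bound with the (strong, hence weak) $(1,\infty)$ bound by Marcinkiewicz still yields the strong $L^p\to L^{p'}$ estimate with the stated $\delta$‑ and $\log$‑exponents for every $1\leq p<2$. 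Thus the asserted inequality is in fact \emph{unconditional} when $d=n-1$ on that open range, the single endpoint $p=2$ being the only place where the present knowledge loses — and there only by the gap between the weak and strong $L^2$ Nikodym bounds.
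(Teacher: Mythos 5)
Your derivation is exactly the one the paper intends: the statement is a conjecture because it is obtained (conditionally on Conjecture~\ref{conj:nik}) by interpolating the critical $L^{p_o}\to L^{p_o}$ bound with the trivial $L^1\to L^\infty$ bound $\|\Nn_\delta\|_{L^1\to L^\infty}\lesssim\delta^{-(n-d)}$, and your exponent bookkeeping $(n-d)(1-\theta)=\frac{n-d+1-p}{p}$, $\theta/p_o=\frac{1}{p'(n-d)}$ is correct. The observations that the $\eps$-loss in the statement is in fact superfluous and that the range $1\leq p<2$ becomes unconditional for $d=n-1$ via Theorem~\ref{thm:L2nik} are sound and consistent with the paper.
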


For the case of general $1<d<n$, we include here the proof of the best possible $L^2(\R^n)$-estimate for the $(d,n)$-Nikodym maximal operator, namely the proof of Theorem~\ref{thm:L2nik} as stated in the introduction of the paper. Unlike the previous parts of this paper, the proof of Theorem~\ref{thm:L2nik}, as well as the one yielding the sharp critical bound for $d=n-1$ in Section~\ref{s:cod1} below, rely on geometric rather than Fourier analytic considerations, together with $TT^*$-types of arguments. In particular we will rely on precise estimate for pairwise intersections of $\delta$-neighborhoods of elements of $\Grdn$. When $n\geq 2d$ pairwise intersections of elements of $\Grdn$ may have any dimension between $0$ and $d-1$, requiring a corresponding classification of the volume estimates; these estimates are stated and proved in Lemma~\ref{l:inters} below. Similar estimates have appeared in \cite{Rog} for $d=2$.

\subsection{Pairwise intersections in $\Grdn$ and volume estimates} 
For $\sigma \in \mathrm{Gr}(d,n)$ it is convenient to denote  $\mathbb S^{\sigma}\coloneqq \mathbb S^{n} \cap \sigma$. 
Let $\sigma,\tau\in \mathrm{Gr}(d,n)$ and suppose that  $\eta=\sigma\cap \tau \in \mathrm{Gr}(m,n)$ with $0\leq m <d$. We define the   angles \[0<\theta_{m+1}(\sigma, \tau)\leq \cdots \leq \theta_{d}(\sigma, \tau)\]
 inductively as follows. 
Initially set  $\sigma_{m+1}= \sigma \cap \eta^\perp$, $\tau_{m+1}=\tau\cap \eta^\perp$. For $m+1\leq j<d$ and $\sigma_{j}, \tau_{j}$ have been defined let
\[
\begin{split}
&(s_j,t_j) \coloneqq \arg \min \left\{\arccos ( s t) :\,\, s\in   \mathbb{S}^{\sigma_{j}}, \, t\in    \mathbb{S}^{\tau_{j}}\right\}, \qquad\theta_j  \coloneqq \arccos ( s_j t_j), \\ & \sigma_{j+1}\coloneqq \sigma_{j} \cap s_j^\perp, \qquad \tau_{j+1}\coloneqq \tau_{j} \cap t_j^\perp,
\end{split}\]
and repeat with $j+1$ in place of $j$. The algorithm stops when $j=d$.

\begin{remark}Notice that $\theta_{m+1}>0$, as there is no pair $(s,t)\in \mathbb{S}^{\sigma_{m+1}}\times \mathbb{S}^{ \tau_{m+1}} $ with $s$ and $t$ colinear, and that clearly $\theta_{j}$ is nondecreasing.
Also notice that if $s_1=t_1, \cdots s_{m}=t_m$ is an orthonormal basis of $\sigma\cap \tau$, then $\{s_1,\ldots, s_d\}$ and  $\{t_1,\ldots, t_d\}$ are   orthonormal bases of $\sigma$, $\tau$ respectively. { In addition, the construction of $s_{m+1},\ldots, s_{d}$, $t_{m+1},\ldots, t_{d}$ yields a further orthogonality property; namely if $\pi_j=\mathrm{span}\{s_j,t_j\}$ for $m+1\leq j \leq d$ then 
\[
1\leq j<k \leq d\implies
xy=0 \qquad \forall x\in \pi_j, y\in \pi_k.
\]
This is obvious if $j\leq m$. Otherwise,   if  $t\in \tau_k$ is such that $s_jt\neq 0$ for some $j<k$, we may write $t=(\cos\phi) \widetilde t + (\sin \phi) s_{j}$ for some $\phi\neq 0$ and $\widetilde t\in \tau_k$. Thus for any $s\in \sigma_k$ as $ss_j=0$ we have
\[
st =(\cos\phi) s\widetilde t + (\sin \phi) ss_{j}= (\cos\phi) s\widetilde t  < s \widetilde t 
\]
thus no  $s\in \sigma_k$ exists such that $(s,t)$ is a minimizer.  It follows that $t_{k}$ is orthogonal to $s_j$ whence $xy=0   $ for all $x\in \pi_j, y\in \pi_k.$ The angles and bases
\[
0=\theta_1=\cdots=\theta_m,\quad 0<\theta_{m+1}\leq \cdots \leq  \theta_{d}\leq \frac{\pi}{2}, \qquad S=\{s_1,\ldots, s_d\}, \qquad  T=\{t_1,\ldots, t_d\},
\] 
are respectively  called  \emph{canonical angles and bases} and may be obtained, respectively, as inverse cosines of the eigenvalues and eigenvectors of the $2d\times 2d$ matrix $M$  of inner products between elements of arbitrary orthonormal bases of $\sigma$ and $\tau$. 
}
\label{rem:angle}
Finally we stress that $\theta_{d}(\sigma,\tau)\sim\mathsf{d}(\sigma,\tau)$, where the latter refers to the $\mathrm{Gr}(d,n)$ distance. For completeness, if $m>0$, we may set $\theta_j(\sigma,\tau)=0$ for $1\leq j\leq m$.
\end{remark}

\begin{remark} \label{rem:angles2} Given $\sigma,\tau\in \mathrm{Gr}(d,n)$, suppose  $\sigma\cap\tau\in \mathrm{Gr}(m,n)$, so that $\zeta=\mathrm{span}\,\{\sigma,\tau\}\in \mathrm{Gr}(2d-m,n)$. Let $S=\{s_1,\ldots, s_d\}$ and $T=\{t_1,\ldots, t_d\}$ be the principal bases of $\sigma,\tau$ respectively that we constructed above. For each $j=m+1,\ldots,d $ let $z_j$ be the unit vector so that \[
t_j=(\cos \theta_j) s_j +(\sin \theta_j)z_j.\]  Notice  that $z_j$ belongs to $\pi_j$ and is thus orthogonal to $s_k$ for all $k\neq j$ by Remark  \ref{rem:angle} and to $s_j$ by construction. It follows that
\[
Z=\{s_1,\ldots, s_d, z_{m+1},\ldots, z_d\}
\]
is an orthonormal basis of $\zeta$. 
\end{remark}

\begin{lemma} \label{l:inters}
Let $\sigma,\tau\in \mathrm{Gr}(d,n)$ and suppose that $\eta=\sigma\cap \tau \in \mathrm{Gr}(m,n)$. Let $a,b\in \R^n,$ $P(\sigma)=a+T_\delta(\sigma), P(\tau)=b+T_\delta(\tau)$ be $\delta$-plates. Then
\[
\left|P(\sigma)\cap P(\tau)\right| \lesssim_{n,d} \delta^{n-m}\left(  \prod_{j=m+1}^d \max\{  \delta, \theta_{j}(\sigma, \tau) \}\right)^{-1}.
\]
\end{lemma}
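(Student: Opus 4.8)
The plan is to choose an orthonormal basis of $\R^n$ adapted to the pair $\sigma,\tau$ in which both plates $P(\sigma)=a+T_\delta(\sigma)$ and $P(\tau)=b+T_\delta(\tau)$ become, after a harmless coordinatewise relaxation of the inequalities defining them, a product of one-dimensional conditions; the volume is then computed by Fubini, one coordinate block at a time. Concretely, I would take the canonical bases $S=\{s_1,\ldots,s_d\}$, $T=\{t_1,\ldots,t_d\}$ of $\sigma,\tau$ and canonical angles $0=\theta_1=\cdots=\theta_m<\theta_{m+1}\le\cdots\le\theta_d\le\frac\pi2$ from Remark~\ref{rem:angle}, so $s_j=t_j$ for $j\le m$ is an orthonormal basis of $\eta$, $t_j=(\cos\theta_j)s_j+(\sin\theta_j)z_j$ for $j>m$ with $z_j$ as in Remark~\ref{rem:angles2}, the planes $\pi_j=\mathrm{span}\{s_j,z_j\}$ ($m<j\le d$) are pairwise orthogonal and orthogonal to $\eta$, and $Z=\{s_1,\ldots,s_d,z_{m+1},\ldots,z_d\}$ is an orthonormal basis of $\zeta=\mathrm{span}\{\sigma,\tau\}\in\mathrm{Gr}(2d-m,n)$. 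Completing $Z$ by an orthonormal basis $\{w_1,\ldots,w_{n-2d+m}\}$ of $\zeta^\perp$ gives the orthogonal splitting
\[
\R^n=\eta\oplus\pi_{m+1}\oplus\cdots\oplus\pi_d\oplus\zeta^\perp ,
\]
along which Lebesgue measure factors; writing $x=\sum_{j\le m}\alpha_js_j+\sum_{j>m}(\beta_js_j+\gamma_jz_j)+\sum_k\mu_kw_k$, one checks that $\pr_\sigma x$ involves only the $\alpha_j,\beta_j$, that $\pr_{\sigma^\perp}x$ involves only the $\gamma_j,\mu_k$, that $\pr_\tau x$ has coordinates $\alpha_j$ ($j\le m$) and $\cos\theta_j\beta_j+\sin\theta_j\gamma_j$ ($j>m$), and that $\pr_{\tau^\perp}x$ has coordinates $-\sin\theta_j\beta_j+\cos\theta_j\gamma_j$ ($j>m$) and $\mu_k$.

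Next I would relax the two plate conditions coordinatewise: $\sum y_i^2\le1$ forces $|y_i|\le1$ and $\sum y_i^2<\delta^2$ forces $|y_i|<\delta$, so $P(\sigma)\cap P(\tau)$ lies in the set cut out by $|\alpha_j-a_j^s|\le1$ and $|\alpha_j-b_j^s|\le1$ for $j\le m$, by $|\mu_k-a_k^w|<\delta$ for all $k$, and, for each $m<j\le d$, by the four conditions $|\beta_j-a_j^s|\le1$, $|\gamma_j-a_j^z|<\delta$, $|\cos\theta_j(\beta_j-b_j^s)+\sin\theta_j(\gamma_j-b_j^z)|\le1$, $|{-}\sin\theta_j(\beta_j-b_j^s)+\cos\theta_j(\gamma_j-b_j^z)|<\delta$, where a superscript on $a,b$ denotes the corresponding coordinate. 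The crucial point is that these relaxed conditions respect the orthogonal splitting, so the volume factorizes over the blocks: the $\eta$-block contributes $\le 2^m$, the $\zeta^\perp$-block contributes $\le(2\delta)^{n-2d+m}$ (using only the $P(\sigma)$ side), and each $\pi_j$-block is estimated by integrating first in $\gamma_j$ over an interval of length $2\delta$ and then in $\beta_j$: the last condition confines $\sin\theta_j(\beta_j-b_j^s)$ to an interval of length $2\delta$, hence $\beta_j$ to one of length $2\delta/\sin\theta_j$, whereas $|\beta_j-a_j^s|\le1$ confines it to one of length $2$; since $0<\theta_j\le\frac\pi2$ gives $\sin\theta_j\sim\theta_j$, the intersection has length $\lesssim\delta/\max\{\delta,\theta_j\}$, so the $\pi_j$-block contributes $\lesssim\delta^2/\max\{\delta,\theta_j\}$. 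Multiplying,
\[
|P(\sigma)\cap P(\tau)|\lesssim_{n,d}2^m(2\delta)^{n-2d+m}\prod_{j=m+1}^d\frac{\delta^2}{\max\{\delta,\theta_j\}}\lesssim_{n,d}\delta^{n-m}\prod_{j=m+1}^d\frac{1}{\max\{\delta,\theta_j\}},
\]
because $(n-2d+m)+2(d-m)=n-m$; when $m=0$ the $\eta$-block is absent and when $2d-m=n$ the $\zeta^\perp$-block is absent, with no change to the computation.

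The genuinely delicate step is the per-plane bound on $\pi_j$: it amounts to estimating the overlap of the two $2\times\delta$ rectangles that $P(\sigma)$ and $P(\tau)$ cut out of $\pi_j$ and tracking how it degrades as $\theta_j\to0$. The factor $\max\{\delta,\theta_j\}$ is exactly the interpolation between the transversal regime $\theta_j\gtrsim\delta$, where the overlap is $\sim\delta^2/\theta_j$, and the near-parallel regime $\theta_j\lesssim\delta$, where it degenerates only to $\sim\delta$; obtaining this dichotomy cleanly through the $\min\{2,2\delta/\sin\theta_j\}$ bound and checking its compatibility with the simultaneous long-direction constraint is where some care is required. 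Everything else — the construction and orthogonality of the adapted basis, and the coordinatewise relaxation of the ball and plate inequalities — is routine bookkeeping built on Remarks~\ref{rem:angle} and~\ref{rem:angles2}.
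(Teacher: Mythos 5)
Your proof is correct and takes essentially the same approach as the paper: both use the canonical bases and angles from Remarks~\ref{rem:angle}--\ref{rem:angles2} to bound $P(\sigma)\cap P(\tau)$ by a box whose side along each principal direction $s_j$ ($m<j\le d$) has length $\lesssim\delta/\max\{\delta,\theta_j\}$, obtained from the $\sigma^\perp$ and $\tau^\perp$ thinness constraints. Your Fubini-over-orthogonal-blocks presentation is a slightly more explicit route than the paper's (which picks a base point $p\in P(\sigma)\cap P(\tau)$ and bounds the $s_j$-coordinates of $y-p$ by contradiction), and in particular your per-$\pi_j$ computation cleanly combines both $\delta$-thinness constraints where the paper's one-line trigonometric identity is stated somewhat loosely.
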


\begin{proof} Let $\{s_1,\ldots, s_d\}$ and  $\{t_1,\ldots, t_d\}$ be the   orthonormal bases of $\sigma$, $\tau$ respectively we obtained with the principal angle construction. Let $\{s_{d+1}, \ldots s_{n}\}$ also be a basis of $\sigma^\perp$. Pick any point $p\in P(\sigma)\cap P(\tau)$. Choose coordinates $y=(y_1,\ldots, y_j)\in \R^n$ so that $p$ is the origin and $y_j=y\cdot s_j$.  We claim that $P(\sigma)\cap P(\tau)$ is contained in the intersection of the $n$ bands
\[
B_{j} \coloneqq  
\begin{cases} 
\left\{y\in \R^n : |y_j| <  3\right\}, & j=1,\ldots, m,\vspace{.1em}
\\
\left\{y\in \R^n : |y_j| < \frac{C\delta}{\max\{\delta,\theta_j\}}\right\}, & j=m+1,\ldots, d,\vspace{.1em}
\\
 \left\{y\in \R^n : |y_j| < 3\delta \right\} , & j=d+1,\ldots, n.
\end{cases}   
\]
The claim readily yields the conclusion of the lemma and is
 is completely  obvious for $j=1,\ldots, m$ and $j=d+1, \ldots, n$, because for those values of $j$ one has $P(\sigma)\subset B_{j}$ as well.
It is also obvious for $j\in\{m+1,\ldots, d\}$ if $\max\{\delta,\theta_j\}=\delta$, thus we fix now $j\in\{m+1,\ldots, d\}$ and  arguing by contradiction, suppose that there exists $y\in P(\sigma)\cap P(\tau)$ with $|y_j|\geq \frac{C\delta}{\theta_j}$.  Let $z_j$ be as in Remark \ref{rem:angles2}. It follows that 
{
\[
|y\cdot z_j |\leq  2\delta.
\]
}However simple geometry shows that
\[
\tan\theta_j = \frac{|y\cdot z_j |}{|y_j|} \leq \frac{2\delta}{\frac{C\delta}{\theta_j}} = \frac{2\theta_j}{C},
\]
a contradiction if $C$ is large enough.
\end{proof}
In the next lemma we describe how to construct the essentially minimal dilate of a $\delta$-plate $P(\sigma)$ that covers a nearby plate $P(\tau)$, depending on the principal angles of $\sigma, \tau.$  We need some notation first

Let $\sigma,\tau\in \mathrm{Gr}(d,n)$ and suppose that $\sigma\cap \tau \in \mathrm{Gr}(m,n)$. Let $S,T$ and $Z$ be the respective orthonormal bases of $\sigma,\tau$ and $\zeta=\mathrm{span}(\sigma,\tau)$ constructed in Remark \ref{rem:angles2}. Then the plates $T_\delta(\sigma), T_\delta(\tau) $ may be described by
\[
\begin{split}&
T_{\delta}(\sigma)=\left\{x :\,\max_{\{1,\ldots, d\}} |x\cdot s_j| <1, \, \max_{m+1\leq j\leq d} |x\cdot z_j|<\delta,\, \max_{w\in \zeta^\perp} |x\cdot w|<\delta \right\},
\\ &
T_{\delta}(\tau)=\left\{x :\,\max_{\{1,\ldots, m\}} |x\cdot s_j| <1, \, \max_{\{m+1,\ldots, d\}} |x\cdot[(\cos \theta_j) s_j + (\sin\theta_j) z_j ]  |<1, \,  \max_{w\in \tau^\perp} |x\cdot w|<\delta \right\}.
\end{split}
\]
We define the dilation $T_{\delta}^{+\tau}(\sigma)$ of $T_{\delta}(\sigma)$ by
\[
T_{\delta}^{+\tau}(\sigma)\coloneqq \left\{x :\, \max_{\{1,\ldots, d\}} |x\cdot s_j| <3, \, \max_{m+1\leq j\leq d} |x\cdot z_j|<3\max\{\delta,\theta_j\},\, \max_{w\in \zeta^\perp} |x\cdot w|<3\delta \right\}.
\]

\begin{lemma}  \label{lem:inclT+}
Let $a,b\in \R^n,$ $P(\sigma)=a+T_\delta(\sigma), P(\tau)=b+T_\delta(\tau)$ be $\delta$-plates with $P(\sigma)\cap P(\tau)\neq \varnothing$. Then
\[
P(\tau)\subset a+ T_{\delta}^+\tau(\sigma).
\] 
\end{lemma}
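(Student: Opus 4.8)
The statement is a containment between two explicit polytopes, described in the coordinate system adapted to the principal angles of $\sigma$ and $\tau$. Since $P(\sigma)\cap P(\tau)\neq\varnothing$, fix a point $p\in P(\sigma)\cap P(\tau)$ and, as in the proof of Lemma~\ref{l:inters}, choose coordinates $y=(y_1,\ldots,y_n)$ centred at $p$ with $y_j=y\cdot s_j$ for $j=1,\ldots,d$, together with the coordinates $y\cdot z_j$ for $m+1\leq j\leq d$ and $y\cdot w$ for $w$ ranging over an orthonormal basis of $\zeta^\perp$. The goal is to take an arbitrary $x\in P(\tau)$ and verify that $x-a$ satisfies each of the defining inequalities of $T_\delta^{+\tau}(\sigma)$, that is $|(x-a)\cdot s_j|<3$ for $1\leq j\leq d$, $|(x-a)\cdot z_j|<3\max\{\delta,\theta_j\}$ for $m+1\leq j\leq d$, and $|(x-a)\cdot w|<3\delta$ for $w\in\zeta^\perp$.

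\textbf{Key steps.} First, because both $P(\sigma)$ and $P(\tau)$ meet at $p$, we have $|(p-a)\cdot s_j|<1$, $|(p-a)\cdot z_j|<\delta$, $|(p-a)\cdot w|<\delta$ for the relevant indices (from $p\in P(\sigma)$), and the analogous bounds with $\tau$-data from $p\in P(\tau)$; the same holds for $x$ with the $\tau$-inequalities since $x\in P(\tau)$. So it suffices to bound $(x-p)\cdot s_j$, $(x-p)\cdot z_j$, $(x-p)\cdot w$ and then combine via the triangle inequality, absorbing the extra $O(1)$ or $O(\delta)$ or $O(\theta_j)$ into the constant $3$. Now $x-p\in T_\delta(\tau)-T_\delta(\tau)$ lies in the symmetric body $2T_\delta(\tau)$ (recentred at the origin), which in the $Z$-basis is $\{v:\ |v\cdot s_j|<2\ (1\leq j\leq m),\ |v\cdot[(\cos\theta_j)s_j+(\sin\theta_j)z_j]|<2\ (m+1\leq j\leq d),\ |v\cdot w|<2\delta\ (w\in\tau^\perp)\}$. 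The three families of estimates are then:
(i) for $1\leq j\leq m$, $|(x-p)\cdot s_j|<2$ directly, so $|(x-a)\cdot s_j|<3$;
(ii) for $m+1\leq j\leq d$, write $v=x-p$ and use $|v\cdot[(\cos\theta_j)s_j+(\sin\theta_j)z_j]|<2$ together with $|v\cdot w|<2\delta$ for $w\in\tau^\perp$; since $z_j\in\zeta$ and $z_j\perp s_k$ for all $k$, the vector $z_j$ decomposes along $\tau$ and $\tau^\perp$, and $(\cos\theta_j)s_j+(\sin\theta_j)z_j=t_j\in\tau$, so one solves the $2\times 2$ linear system in the plane $\pi_j=\mathrm{span}\{s_j,z_j\}$ to get $|v\cdot s_j|\lesssim 1/\sin\theta_j\cdot(\ldots)$ — more carefully, $v\cdot z_j=\frac{1}{\sin\theta_j}\big((v\cdot t_j)-(\cos\theta_j)(v\cdot s_j)\big)$ is not bounded, so instead one bounds $v\cdot z_j$ by projecting: writing $z_j=(\cos\theta_j)\,n_j-(\sin\theta_j)\,s_j$ where $n_j\in\tau^\perp$ is the unit normal obtained by rotating $t_j$ in $\pi_j$, we get $|v\cdot z_j|\leq|\cos\theta_j|\,|v\cdot n_j|+|\sin\theta_j|\,|v\cdot s_j|\leq 2\delta+\theta_j\cdot(\text{bound on }|v\cdot s_j|)$, and $|v\cdot s_j|$ is in turn controlled by $|v\cdot t_j|<2$ and $|v\cdot n_j|<2\delta$ via $s_j=(\cos\theta_j)t_j-(\sin\theta_j)n_j$, giving $|v\cdot s_j|<2+2\delta\lesssim 1$; combining, $|(x-p)\cdot z_j|\lesssim\delta+\theta_j\lesssim\max\{\delta,\theta_j\}$, hence $|(x-a)\cdot z_j|<3\max\{\delta,\theta_j\}$ after absorbing the $|(p-a)\cdot z_j|<\delta$ term;
(iii) for $w\in\zeta^\perp$, we have $w\in\tau^\perp$ as well (since $\tau\subset\zeta$), so $|(x-p)\cdot w|<2\delta$ directly, and $|(x-a)\cdot w|<3\delta$.
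A parallel (in fact simpler) argument shows $|(x-a)\cdot s_j|<3$ for $m+1\leq j\leq d$ using $s_j=(\cos\theta_j)t_j-(\sin\theta_j)n_j$ and the $\tau$-bounds on $x-p$, then adding $|(p-a)\cdot s_j|<1$.

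\textbf{Main obstacle.} The only real subtlety is step (ii): the ``obvious'' way to invert the relation between the $S$-coordinates and the $T$-coordinates of $x-p$ introduces a factor $1/\sin\theta_j$, which would be catastrophic for small angles. The fix is to exploit the \emph{thinness} of $T_\delta(\tau)$ in the direction $n_j\in\tau^\perp$ — i.e.\ to use the bound $|v\cdot n_j|<2\delta$ rather than treating $n_j$ as a generic unit vector — so that the dangerous $1/\sin\theta_j$ is paired with a gain of $\delta$ and with the $(\cos\theta_j)$ weight, yielding the clean $\max\{\delta,\theta_j\}$ scale. Once this trigonometric bookkeeping in the two-dimensional planes $\pi_j$ is done correctly (using the orthogonality $z_j\perp s_k$ and $\pi_j\perp\pi_k$ from Remark~\ref{rem:angle}), all inequalities follow by the triangle inequality with room to spare in the constant $3$.
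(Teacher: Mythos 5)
Your proposal is essentially the same argument as the paper's: both proofs hinge on fixing a common point $p\in P(\sigma)\cap P(\tau)$, expressing the $\{s_j,z_j\}$-coordinates of the displacement in terms of the orthonormal basis $\{t_j,n_j\}$ of $\pi_j$, and exploiting the thinness $|v\cdot n_j|\lesssim\delta$ for $n_j\in\tau^\perp$ so that the dangerous $1/\sin\theta_j$ never appears. (The paper phrases this as the two-step containment $P(\tau)\subset 3T_\delta(\tau)\subset T_\delta^{+\tau}(\sigma)$ after translating so that $a=0$; your direct triangle-inequality route through $p$ is structurally identical.)

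One small but genuine slip: the identity $z_j=(\cos\theta_j)\,n_j-(\sin\theta_j)\,s_j$ is not correct. In the orthonormal basis $\{t_j,n_j\}$ of $\pi_j$ one has
\[
s_j=(\cos\theta_j)\,t_j-(\sin\theta_j)\,n_j,\qquad z_j=(\sin\theta_j)\,t_j+(\cos\theta_j)\,n_j,
\]
so the clean estimate is $|v\cdot z_j|\leq\sin\theta_j\,|v\cdot t_j|+\cos\theta_j\,|v\cdot n_j|\leq 2\sin\theta_j+2\delta\cos\theta_j$, which lands you at $\lesssim\max\{\delta,\theta_j\}$ in one step without passing through $|v\cdot s_j|$. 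Your detour via $|v\cdot s_j|\lesssim 1$ is harmless because your (incorrect) coefficients $\cos\theta_j$ and $\sin\theta_j$ happen to be of the same size as the true ones, but the formula as written should be replaced; note also that $\{s_j,n_j\}$ is \emph{not} orthonormal when $\theta_j\neq 0$ (indeed $s_j\cdot n_j=-\sin\theta_j$), so expansions in that pair are best avoided.
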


\begin{proof} By translation invariance, we may assume  $a=0$. Then $P(\tau)$ is contained in the moderate dilate
\[
3T_{\delta}(\tau)\coloneqq\left\{x :\max_{\{1,\ldots, m\}} |x\cdot s_j| <3, \, \max_{\{m+1,\ldots, d\}} |x\cdot[(\cos \theta_j) s_j + (\sin\theta_j) z_j ]  |<3, \,  \max_{w\in \tau^\perp} |x\cdot w|<3\delta \right\}.
\]
Simple geometry tells us that when $j\in \{m+1,\ldots, d\} $
\[
x\in 3T_{\delta}(\tau)\implies |x\cdot[(\cos \theta_j) s_j + (\sin\theta_j) z_j|\leq 3 \implies |x\cdot z_j| \leq 3\sin \theta_j\leq 3\theta_j.
\]
On the other hand it is obvious that $|x\cdot s_j|<3$ for all $1\leq j\leq d$ when $x\in 3T_{\delta}(\tau)$. Therefore $3T_{\delta}(\tau)\subset   T_{\delta}^{+\tau}(\sigma)$, and the proof is complete.
\end{proof}

\subsection{The proof of Theorem~\ref{thm:L2nik}} As noted in the discussion leading to the formulation of Conjecture~\ref{conj:nik}, the bound of  Theorem~\ref{thm:L2nik} is best possible so it suffices to prove the upper bound. We begin by linearizing the maximal operator $\mathcal N_\delta$ as follows. For $\Sigma\subset \Grdn$ that will remain fixed throughout the proof we let $\mathcal T_\Sigma$ denote the collection of all $\delta$-plates of the form $a_T+T_\delta(\sigma_T)$ for $a_T\in\R^n$ and $\sigma_T\in\Sigma$.  Now given $f\in \mathcal S(\R^n)$ and $\mathcal T\subset\mathcal T_\Sigma$ we consider the linear operator
\[
\mathcal N_{\mathcal T} f (x)\coloneqq \sum_{T\in\mathcal T} \bigg(\fint_T f \bigg) \ind_{F_T}(x)
\]
where $F_T\subset T$ for every $T\in\mathcal T$ and the collection $\{F_T\}_{T\in\mathcal T}$ is pairwise disjoint. Denoting by $\mathcal N_T ^*$ the adjoint of $\mathcal N_T$ we have that
\[
\|\mathcal N_{\delta}: \, L^2(\R^n)\to L^{2,\infty}(\R^n)\|\eqsim \sup_{\mathcal T\subset \mathcal T_\Sigma} \|\mathcal N_{\mathcal T}: \, L^2(\R^n)\to L^{2,\infty}(\R^n)\|\eqsim\sup_{\mathcal T\subset \mathcal T_\Sigma} \sup_{E\subset \R^n} \frac{\|\mathcal N_{\mathcal T} ^* \ind_E\|_2}{|E|^{\frac12}},
\]
where the supremum in ${\mathcal T\subset \mathcal T_\Sigma}$ can be taken over finite collections $\mathcal T$.
As the adjoint operator has the form
\[
\mathcal N_{\mathcal T} ^* g \coloneqq \sum_{T\in{\mathcal T} } \bigg(\frac{1}{|T|}\int_{F_T} g\bigg) \ind_T(x)
\]
we readily see that
\[
\mathcal N_T ^* \ind_E =\sum_{T\in{\mathcal T} } |F_T\cap E| \frac{\ind_T}{|T|}\eqqcolon \sum_{T\in{\mathcal T} }|E_T|\frac{\ind_T}{|T|}
\]
with $E_T\coloneqq F_T \cap E$ pairwise disjoint and $E_T\subset T$. Note that the collection $\{|E_T|\}_{T\in\mathcal T}$ is a Carleson sequence: for any open set $U\subset\R^n$ we have
\[
\sum_{\substack{T\in\mathcal T\\ T\subset U}}|E_T|\leq |U\cap E|.
\]

We now expand the square of the $L^2$-norm as follows
\[
\| \mathcal N_{\mathcal T  } ^* \ind_E\|_2 ^2 =\sum_{T\in{\mathcal T} } |E_T| \sum_{\substack{T' \in{\mathcal T} \\ T\cap T' \neq \varnothing}} |E_{T'}|\frac{|T\cap T'|}{|T||T'|}.
\]
For fixed $T\in{\mathcal T}$ and $0 \leq k \leq \log \delta^{-1}$ we define
\[
\mathcal T_{k}(T)\coloneqq \{ T' \in\mathcal T:\,\, T\cap T' \neq \varnothing, \, 2^{k}\delta\leq \dist(T,T')\leq 2^{k+1}\delta \}. 
\]
This notation allows us to write
\begin{equation}\label{eq:2^kdecomp}
\| \mathcal N_{\mathcal T} ^* \ind_E\|_2 ^2 =\sum_{k=0} ^{\log \delta^{-1}} \sum_{T\in\mathcal T}\frac{|E_T|}{|T|} \sum_{T'\in \mathcal T_{k}(T)} |E_{T'}|\frac{|T\cap T'|}{|T'|}.
\end{equation}
Now for fixed $k,T$ and $T'\in \mathcal T_k(T)$ we use Lemma~\ref{l:inters} to estimate for every $m\in \{0,\ldots,d-1\}$
\[
\frac{|T\cap T'|}{|T'|} \leq \delta^{n-m} \frac{1}{\delta^{d-m-1} \dist(\sigma_T,\sigma_{T'})}\frac{1}{|T'|}\eqsim \frac{\delta^{n-d+1}}{2^k\delta}\frac{1}{\delta^{n-d}}=2^{-k}.
\]
Then for every fixed $k$ we will have by Lemma~\ref{lem:inclT+} and the Carleson property of the sequence $\{|E_T|\}_{T\in\mathcal T}$
\[
\sum_{T' \in \mathcal T_k }|E_{T'}|\leq \bigg| \bigcup_{T' \in \mathcal T_k } T' \bigg| \leq |T_\delta ^+(\sigma_T)|.
\]
Since $\theta_j(\sigma_T,\sigma_{T'})\leq \theta_d(\sigma_T,\sigma_{T'})\eqsim 2^k\delta$ and $2^k \delta \geq \delta$ we can use the definition of $T^+ _\delta (\sigma_T)$ to estimate
\[
|T_\delta ^+ (\sigma_T) |\lesssim (2^k\delta)^{n-d}.
\]
Using the estimate in the last two displays and the calculation in \eqref{eq:2^kdecomp} we gather
\[
\| \mathcal N_{\mathcal T} ^* \ind_E\|_2 ^2 \lesssim \sum_{k=0} ^{\log \delta^{-1}} \sum_{T\in\mathcal T}\frac{|E_T|}{|T|} (2^k\delta)^{n-d} 2^{-k} \lesssim \begin{cases}
\delta^{-(n-d-1)}|E|,\qquad 1\leq d<n-1,\vspace{.4em}
\\
\log(\delta^{-1})|E|,\qquad d=n-1,
\end{cases}
\]
which proves the desired weak $(2,2)$ norm-estimate. The corresponding strong $(2,2)$ bound with an additional $\sqrt{\log\delta^{-1}}$-term follows from the corresponding weak-type $(2,2)$ bound and a well known interpolation argument of Str\"omberg, \cite{Stromberg}; see also \cite{Katz}*{p. 77--78} for the details of this argument.

\section{The sharp bound for codimension one maximal operators} \label{s:cod1} 
In this section, we prove Theorem \ref{thm:codim1_intro} as a consequence of a more general directional Carleson embedding theorem, Theorem~\ref{thm:carleson} below. Therefore,  we work with fixed codimension  $1$, so that $n=d+1\geq 2$ throughout. Recall that our goal is to prove a sharp estimate in terms of the cardinality parameter $N$ for  the maximal $d$-subspace averaging operator $\mathrm{M}_{\Sigma,(0,\infty)} $ when $\Sigma\subset \mathrm{Gr}(d,d+1)$ is a finite set with $\#\Sigma=N$. 

Our setup is more conveniently described by taking advantage of the isometric isomorphism 
\[
\sigma \in \Sigma \subset \mathrm{Gr}(d,d+1) \mapsto v=\sigma^\perp\subset  \mathrm{Gr}(1,d+1) \sim \mathbb S^d
\]
 By finite splitting and rotational invariance, we may work under the assumption that   $V\subset \mathbb S^{d}$ is   contained in a small neighborhood of $e_n$. This choice of coordinate system is conveniently exploited by modifying slightly our definition of $d$-plate \eqref{e:plate}, as follows.   For a $d$-dimensional {axis-parallel} cube $I\subset e_n^\perp$  with center $(c_I,0)\in \R^n$ and sidelength $\ell_I$, for $v\in V$, an interval $K \subset \R$ with $|K|\leq  \ell_I$, let 
\[
\begin{split}
&p(I,t,v)\coloneqq \{y\in v^\perp + (c_I,t):\, \pr_{e_n^\perp} y \in I\},
 \qquad P(I,K,v) \coloneqq \bigcup_{t\in K}  p(I,t,v).
 \end{split}
\]
The set $Q=P(I,K,v)$ is a $d$-\emph{plate} with orientation $v_Q=v$, basis $I_Q=I$,  scale $s_Q=\ell_I$,  height $K_Q=K$ and center $c_Q=(c_{I_Q},c_{K_Q})$, where $c_{K_Q}$ is the center of the interval $K\subset \R$.  Each set $p(I_Q,t,v_Q)$ with $t\in K_Q$ is referred to as the $t$-\emph{slice} of $Q$.
\begin{remark} The plate $Q$ is  the  shearing of an axis-parallel box whose short side is oriented along $e_n$. To compare with \eqref{e:plate}, observe that if $\delta=\ell_{K_Q}/\ell_{I_Q}$, then $Q$ and $Q'\coloneqq c_Q+ s_QT_{\delta}(v_Q^\perp)$  are comparable, that is $Q\subset CQ',Q'\subset C Q$ for a suitably chosen dimensional constant $C$.
\end{remark}

We will work with different special collections of $d$-plates which we define below.
\begin{definition} Let  $V\subset \mathbb S^{d}$ be a finite set of directions, $\delta>0$ be a small parameter.
\begin{itemize}
\item[$\cdot$] The collection of all $d$-plates in $\R^n$ with orientation along $v\in V$ will be denoted by $\mathcal P_v$, and  $\mathcal P_{V }\coloneqq \bigcup_{v\in V}\mathcal P_v$. 
\item[$\cdot$] For $Q=P(I,K,v)\in\mathcal P_V$, write $\pl_Q\coloneqq v^\perp +c_Q$ and  call $\pl_Q$ the \emph{plane of Q}.
\item[$\cdot$] A $d$-plate $Q(I,K,v)$ will be called a $(d,\delta)$-plate if $\ell(K_Q)=\delta>0$, namely if it is a $d$-plate with fixed thickness $\delta>0$. The subcollection of those $(d,\delta)$-plates belonging to   $\mathcal P_v$ is referred to by $\mathcal P_{v,\delta}$ and $\mathcal P_{V,\delta}\coloneqq\cup_v \mathcal P_{v,\delta}$.
\item[$\cdot$] Given a dyadic grid $\mathcal D$ in $\R^{d}$,  special subcollections of $\mathcal P_v, \mathcal P_V$ are produced by defining $\mathcal D_v\coloneqq\{Q\in\mathcal P_v:\,I_Q\in\mathcal D\}$, and $\mathcal D_V\coloneqq \bigcup_{v\in V} \mathcal D_v$. The special subcollection of $\mathcal D_V$ consisting of $(d,\delta)$-plates will be denoted by $\mathcal D_{V,\delta}\coloneqq \bigcup_{v\in V}\mathcal D_{v,\delta}$. 
\item[$\cdot$]For a generic collection $\mathcal Q\subseteq \mathcal P_V$, set $\mathcal Q_v\coloneqq \{Q\in\mathcal Q:\, v_Q=v\}$. This yields  $\mathcal Q=\bigcup_{v\in V}\mathcal Q_v$.
\item[$\cdot$]A  partial order on $\mathcal D_V$ is defined as follows. If $Q,R\in\mathcal D_V$, say $Q\leq R$ if $Q\cap R\neq \varnothing$ and $I_Q\subseteq I_R$.
\item[$\cdot$] If $\mathcal Q\subset \mathcal P_V$ we will use the notation $\sh(\mathcal Q)\coloneqq \cup_{Q\in\mathcal Q} Q$ for the \emph{shadow} of the collection.
\end{itemize}
\end{definition}
With these definitions we introduce below maximal operators defined with respect to collection of plates. For any $\mathcal L\subseteq \mathcal P_V$, set
\[
\M_{\mathcal L} f(x)\coloneqq \sup_{\substack{L\in\mathcal L}}  \left( \, \avgint_{L} |f|\,\d y\right) \cic{1}_L(x),  \qquad x\in\R^n.
\]
Note that in general $\M_{\mathcal L}$ is a \emph{directional operator} as the plates have variable orientation. In the special case that $\mathcal L=\mathcal L_v \subseteq \mathcal P_v$ for some fixed $v\in V$ then $\M_{\mathcal L_v}$ is pointwise bounded by the strong maximal function in a suitable coordinate system. Another special case of interest occurs when $\mathcal L\subseteq \mathcal P_{v,\delta}$ for fixed $v\in V$ and $\delta>0$, in which case $\M_{\mathcal L}$ is a one-parameter operator and satisfies weak $(1,1)$ bounds uniformly in $v$ and $\delta$. Note that the weak $(1,1)$-bound persists if $\mathcal L\subset \mathcal P_v$ is a collection of fixed eccentricity: in that case the operators $\M_{\mathcal L}$ are again of weak-type $(1,1)$ uniformly in $v\in V$ and the eccentricity of the collection.

%
%
\subsection*{Carleson sequences}Directional Carleson sequences of positive numbers $\{a_Q\}_{Q\in \mathcal D_V}$ are introduced in the string of definitions that follow. 

\begin{definition} Let $\mathcal L\subset \mathcal P_V$ be a collection of $d$-plates and let $v\in V$ be a fixed direction. The collection $\mathcal L$ is \emph{subordinate to} $\mathcal T\subset\mathcal P_v$ if for every $L\in \mathcal L$ there exists $T\in\mathcal T$ such that $L\subseteq T$. 
\end{definition}
We stress that $\mathcal T\subset \mathcal P_v$ in the definition above \emph{only contains plates with fixed orientation} $v$. 

\begin{definition}\label{def:carleson} Let $a=\{a_Q\}_{Q\in\mathcal D_V}$ be a sequence of positive numbers. The sequence $a$ is an ($L^\infty$-normalized) \emph{Carleson sequence} if for every $\mathcal L\subset \mathcal D_V$ which is subordinate to some $\mathcal T\subset P_{v}$ for some fixed $v\in V$ we have
	\[
	\sum_{L\in\mathcal L} a_L\leq |\sh(\mathcal T)|, \qquad 	\mass_a(\mathcal Q) \coloneqq \sum_{Q\in\mathcal D_V} a_Q <\infty.
	\]
For $\mathcal Q\subset \mathcal D_{V}$ and a Carleson sequence $a=\{a_Q\}_{Q\in\mathcal D_V}$ define the \emph{balayage}
\begin{equation}\label{eq:TQ}
T_{\mathcal Q}(a)(x)\coloneqq \sum_{Q\in\mathcal Q} a_Q \frac{\ind_Q(x)}{|Q|},\qquad x\in\R^n.
\end{equation}
\end{definition}
It follows from the definition above that if $a$ is a Carleson sequence and $\mathcal T\subset \mathcal P_v$ for some fixed $v\in V$ then $\mathcal T$ is subordinate to itself and thus $\mass_a(\mathcal T)\leq |\sh(\mathcal T)|$.
\subsection*{An $L^2$-Carleson embedding theorem for $d$-plates} Here we describe and prove the main result of this section, a directional Carleson embedding theorem for $d$-plates in $\R^n$. In order to state it we also introduce for any $\mathcal Q\subseteq \mathcal P_V$ the notation
\[
\widehat {\mathcal Q} \coloneqq \bigcup_{1\leq s \leq 100n} \bigcup_{Q\in\mathcal Q} (1+s)Q.
\]

\begin{theorem}\label{thm:carleson} Let $V\subset\mathbb S^d$ be a finite set of directions and 	$\mathcal Q\subseteq \mathcal D_{V}$ be a collection of $d$-plates in $\R^n$. We assume that the operators $\{M_{\mathcal Q_v}:\, v\in V\}$ satisfy
	\[
	\sup_{v\in V} \big\| M_{\widehat{\mathcal Q}_v}:\, L^1(\R^n)\to L^{1,\infty}(\R^n)\|\lesssim_n 1.
	\]
If $a=\{a_Q\}_{Q\in\mathcal D_{V,\delta}}$ is a Carleson sequence then
\[
\| T_{\mathcal Q}(a)\|_{L^2(\R^n)}\lesssim_n (\log\#V)^{\frac12} \mass_a (\mathcal Q) ^{\frac12}
\]
with implicit constant depending only upon dimension.
\end{theorem}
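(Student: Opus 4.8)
The plan is to run a $TT^*$-type argument on the balayage operator, reducing the $L^2$-bound to a bound on pairwise interactions of plates, which is then controlled by the Carleson condition combined with the uniform weak $(1,1)$-bounds for the one-parameter maximal operators $M_{\widehat{\mathcal Q}_v}$. The logarithmic loss in $\#V$ will come from a pigeonholing over the scales of the thin direction, exactly as in the one-dimensional directional Carleson theorem of \cite{APHPR} that this result generalizes.

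First I would expand
\[
\|T_{\mathcal Q}(a)\|_{L^2(\R^n)}^2 = \sum_{Q\in\mathcal Q}\frac{a_Q}{|Q|}\sum_{\substack{R\in\mathcal Q\\ R\cap Q\neq\varnothing}}\frac{a_R}{|R|}\,|Q\cap R|,
\]
and split the inner sum according to the orientations $v_Q,v_R\in V$ and the relative sizes of the bases $I_Q,I_R$. By symmetry it suffices to treat $\ell_{I_R}\le \ell_{I_Q}$. The key geometric point is that when $R$ meets $Q$ and has smaller base, $R$ is subordinate to a fixed dilate of $Q$ along a common coordinate frame, so that all such $R$ with orientation $v_R$ fit inside a bounded number of plates in $\mathcal P_{v_R}$ contained in $\widehat{\mathcal Q}$; the Carleson condition (applied with $\mathcal T\subset\mathcal P_{v_R}$) then bounds $\sum a_R\le |\sh(\widehat{\mathcal Q})\cap (\text{dilate of }Q)|$, which after dividing by $|Q|$ is $O(1)$. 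This is where the hypothesis that $a$ is Carleson \emph{with respect to single-orientation towers} is used in an essential way.

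The heart of the matter is converting these local counts into a global $L^2$ estimate with only a $\log\#V$ loss. For this I would, for each pair of directions $v,v'\in V$, bucket the interacting pairs $(Q,R)$ by the dyadic ratio between $\mathsf d(v,v')$ (equivalently the eccentricity forced by $v\neq v'$) and the scale $s_Q$; since all plates are $(d,\delta)$-plates this ratio takes $O(\log\delta^{-1})$ values — but crucially, after summing the geometric series coming from $|Q\cap R|/(|Q||R|)$ and using that the number of $v'$ contributing at a fixed angular scale to a fixed $Q$ is controlled, the $\delta$-dependence drops out and one is left with a clean bound
\[
\|T_{\mathcal Q}(a)\|_2^2 \lesssim_n \Big(\sum_{v\in V}\big\|M_{\widehat{\mathcal Q}_v}\big\|_{L^1\to L^{1,\infty}}\text{-type contributions}\Big)\lesssim (\log\#V)\,\mass_a(\mathcal Q),
\]
the $\log\#V$ entering precisely when one estimates $\sum_{v\in V}\|T_{\mathcal Q_v}(a)\,\mathbf 1_{E}\|$-type quantities by a square function and loses a logarithm in the number of directions, as in Córdoba's argument for the directional maximal function. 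The single-orientation pieces $\sum_v \|T_{\mathcal Q_v}(a)\|_2^2$ are handled directly: $T_{\mathcal Q_v}(a)$ is, up to the coordinate shear, a balayage of a classical (strong-maximal) Carleson sequence, so $\|T_{\mathcal Q_v}(a)\|_2^2\lesssim \mass_a(\mathcal Q_v)$ by the standard one-parameter Carleson embedding, and these sum to $\mass_a(\mathcal Q)$ with no loss.

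The main obstacle I anticipate is the geometry of \emph{skew} intersections $Q\cap R$ with $v_Q\neq v_R$: controlling $|Q\cap R|$ in terms of the angle $\mathsf d(v_Q,v_R)$ and the two scales, in such a way that the resulting double sum telescopes, requires the precise plate-intersection estimates (the analogue here of Lemma~\ref{l:inters} in the codimension-one setting) together with the covering Lemma~\ref{lem:inclT+} to place the smaller plate inside a controlled dilate of the larger. Getting the bookkeeping so that exactly one logarithm in $\#V$ — and no power of $\delta^{-1}$ — survives is the delicate step; everything else is a routine combination of the Carleson hypothesis, the uniform weak $(1,1)$-bounds assumed in the statement, and $L^2$-orthogonality for the single-orientation pieces.
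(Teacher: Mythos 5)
Your opening move (expanding $\|T_{\mathcal Q}(a)\|_2^2$ as a bilinear sum over pairs of plates and invoking the Carleson condition for single-orientation towers) matches the paper's starting point, which rewrites $\|T_{\mathcal Q}(a)\|_2^2 \lesssim \sum_R a_R\,B_R^{\mathcal Q}$ with $B_R^{\mathcal Q}$ the local density $\avgint_R\sum_{Q\leq R}a_Q\ind_Q/|Q|$. But from there the two arguments diverge, and yours has a real gap in the step you yourself flag as ``the delicate step.'' You propose to bucket interacting pairs $(Q,R)$ by the dyadic ratio of $\mathsf d(v_Q,v_R)$ to the scale, observe there are $O(\log\delta^{-1})$ such buckets, and then assert that ``the $\delta$-dependence drops out'' after summing a geometric series, leaving only $\log\#V$. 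No mechanism for this cancellation is given, and it is not clear one exists along these lines: in this theorem $\delta$ and $\#V$ are completely independent parameters (the paper emphasizes this in the closing remark of \S\ref{s:cod1}), so introducing $\log\delta^{-1}$ buckets and then hoping they resum to $\log\#V$ cannot work unconditionally. Moreover, summing the single-orientation contributions naively gives $\#V\cdot\mass_a(\mathcal Q)$, and the C\'ordoba-type almost-orthogonality of angular sectors that rescues the $2$D Kakeya maximal bound does not transfer to codimension-$1$ plates in $\R^n$ without a new idea, which you do not supply.

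What the paper actually does is not a bilinear bucketing but a stopping-time stratification: fix a direction $v$ and let $\mathcal Q_{v,k}$ be the plates $R\in\mathcal Q_v$ whose density $B_R^{\mathcal Q}$ lies in $[\mu k,\mu(k+1))$. The crux is Lemma~\ref{lem:expdecay}, which shows $|\sh(\widehat{\mathcal Q}_{v,k})|\lesssim 2^{-k}\mass_a(\mathcal Q)$; this is proved iteratively via Lemma~\ref{lem:recurse}, whose geometric engine is the slicing estimate of Lemma~\ref{lem:slice} together with the hatted dilations $\widehat Q,\widehat R$ of Remark~\ref{r:geom} and the uniform weak-$(1,1)$ hypothesis for $M_{\widehat{\mathcal Q}_v}$. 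The Carleson condition for fixed-orientation towers then gives $\sum_{R\in\mathcal Q_{v,k}}a_R\lesssim 2^{-k}\mass_a(\mathcal Q)$, and the logarithm appears only because the sum $\#V\sum_{k>\mu\log\#V}(k+1)2^{-k}$ is $O(\log\#V)$ once $\mu$ is a large dimensional constant, i.e., the threshold $k>\mu\log\#V$ is precisely what absorbs the $\#V$ from summing over directions. This density-level mechanism, and in particular the exponential-decay lemma that drives it, is absent from your outline; without it (or a worked-out replacement) the proof does not close.
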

The  proof of Theorem~\ref{thm:carleson} begins with some reductions that  simplify and highlight the main argument.
First, for any $\mathcal Q\subseteq \mathcal D_{V}$ we expand the square of the $L^2$-norm in the statement of the theorem as  
\begin{equation}
\label{eq:mainsplit}
\begin{split}
\frac{1}{2}\|T_{\mathcal Q}(a)\|_{L^2(\R^n)} ^2 &\leq  \sum_{R\in\mathcal Q} a_R  \avgint_R \sum_{\mathcal Q\ni Q\leq R} a_Q \frac{\ind_Q}{|Q|} 
  \\
  & \leq  \mu (\log\#V )\mass_a(\mathcal Q) +\mu(\#V) \sum_{k>\mu(\log\#V)} (k+1) \sup_{v\in V} \sum_{R\in\mathcal Q_{v,k} } a_R 
\end{split}
\end{equation}
where $\mu>0$ is a numerical constant to be chosen later and
\[
 \mathcal Q_{v,k} \coloneqq\Big\{R\in\mathcal Q_v:\, \mu k \leq \avgint_R \sum_{\mathcal Q\ni Q\leq R}a_Q\frac{\ind_Q}{|Q|}< \mu (k+1)\Big\}.
\]
Thus the proof reduces to proving a suitable estimate for $\mass_a (\mathcal Q_{v,k})$ for every fixed $v\in V$ and every $k> \mu(\log \#V)$. The next remark encapsulates  some simple but important geometric observations that are at the heart of the argument. 

\begin{remark}\label{r:geom}
Fix $R=P(I_R,K_R,v_R)\in \mathcal P_{V}$ and consider a $d$-plate $\mathcal P_V\ni Q=P(I_Q,K_Q,v_Q)\leq R$. Note that if $v_Q\neq v_R$ then $\pl_Q\cap \pl_R$ is a $(d-1)$-dimensional affine subspace and let $\mathsf{lin}_{Q,R}$ be the subspace parallel to $\pl_Q\cap \pl_R$. 
As $v_Q,v_R$ lie in a small neighborhood of $e_n ^\perp$, the subspace  $\pr_{e_n ^\perp} \mathsf{lin}_{Q,R}$ has dimension $d-1$ as well and is a  codimension 1 subspace of $e_n ^\perp$. We may thus pick an orthonormal basis $G_{Q,R}\coloneqq (g_1,\ldots,g_{d})$ for $e_n^\perp$ such that  $(g_1,\ldots,g_{d-1})$ is  an orthonormal basis of $\pr_{e_n ^\perp}\mathsf{lin}_{Q,R}$.

Let $\widehat{I}_{Q,R}\subset   $ be the smallest $d$-dimensional  cube in the coordinates $G_{Q,R}$ that contains $I_Q$. Then
\[
\widehat{I}_{Q,R} \subset e_n ^\perp, \qquad I_Q\subseteq  \widehat{I}_{Q,R} \subseteq d I_Q,
\]
where the dilation is taken with respect to the center of $I_Q$. Furthermore, defining 
\[
\widehat Q \coloneqq P(\widehat{I}_{Q,R} ,K_Q,v_Q)
\]
then $Q\subseteq \widehat Q$ and $\big|\widehat{Q}\big|\eqsim_n |Q|$. In case $v_q=v_R$ we just set $\widehat Q \coloneqq Q$ for the sake of having a general definition. Finally, setting  $\widehat I_{R}\coloneqq dI_R$ and $\widehat R \coloneqq P( \widehat I_{R},K_R,v_R)$ yields
\[ 
v_{\widehat R}=v_R,\qquad \widehat R  \supseteq R, \qquad  I_{\widehat R} \supseteq I_{\widehat Q}   \quad \forall Q\leq R.
\]
As $v_Q=v_{\widehat Q}$ and  $\pl_{\widehat Q}=\pl_Q$,   the plate $\widehat Q$ is a rotation and  $\pl_Q$-tangential dilation of $Q$ with respect to the line $\{c_Q+tv_Q:t\in v_Q\}$. Also, our construction yields that one of the $(d-1)$-dimensional edges of $\widehat Q$ lies on an affine copy of $\pl_Q\cap \pl_R$.	Note also that $\widehat I_{Q,R}$ depends both on $Q$ and $R$; we will however many times suppress the $R$-dependence as $R$ will be fixed and just write $\widehat I_Q$ in place of $\widehat I_{Q,R}$.
\end{remark}
With these definitions and conventions in hand,  we state a  geometric slicing lemma that will be important for the proof of Theorem~\ref{thm:carleson}.
\begin{lemma}\label{lem:slice} Let $R=P(I_R,K_R,v_R)\in \mathcal D_v$ and $\mathcal D_V\ni Q=P(I_Q,K_Q,v_Q)\leq R$, and $\widehat R, \widehat Q$ be  as in Remark \ref{r:geom}. Let $K\subset \R$ be an interval with $\pr_{e_n}(\widehat Q)\nsubseteq 3K$ and  $K_R\subseteq K.$ Then
\[	
 \max_{a\in \R }\frac{\big|\widehat Q \cap p(I_{\widehat R},a,v_{\widehat R})  \big|}{\big| I_{\widehat R}\hspace{.1em}\big|}
\lesssim_n \frac{1}{  \big| I_ {\widehat R} \hspace{.1em}\big|  | K |}\int_{ I_ {\widehat R} \times 3K} \ind_{\widehat Q}
	\]
with implicit constant depending only upon dimension.
\end{lemma}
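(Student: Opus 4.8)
The plan is to reduce the statement to an elementary comparison of "shadows'' in $e_n^\perp$. Since $V\subset\mathbb S^d$ lies in a small neighbourhood of $e_n$, the projection $\pr_{e_n^\perp}$ restricted to any affine $d$-plane carrying a slice $p(\cdot,\cdot,v_{\widehat R})$ or $p(\cdot,\cdot,v_Q)$ is bi-Lipschitz with constants, say, in $[\tfrac12,2]$. Hence, writing $S_a\coloneqq\pr_{e_n^\perp}\big(\widehat Q\cap p(I_{\widehat R},a,v_{\widehat R})\big)\subseteq \widehat I_R$ and $J(\zeta)\coloneqq\{t\in\R:(\zeta,t)\in\widehat Q\}$ for the $e_n$-fibre of $\widehat Q$, one has $|\widehat Q\cap p(I_{\widehat R},a,v_{\widehat R})|\eqsim |S_a|$, while Fubini gives $\int_{I_{\widehat R}\times 3K}\ind_{\widehat Q}=\int_{\widehat I_Q}|J(\zeta)\cap 3K|\,\d\zeta$. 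Recall from Remark~\ref{r:geom} that $\widehat I_Q=\widehat I_{Q,R}\subseteq \widehat I_R$, that $J(\zeta)$ is an interval of length $\ell(K_Q)$ with affine centre $\gamma(\zeta)$, and that the height of the slice $p(I_{\widehat R},a,v_{\widehat R})$ over $\zeta$ is $a+\psi_R(\zeta)$ for an affine $\psi_R$, so that $S_a=\{\zeta\in \widehat I_Q:|\psi_R(\zeta)-\gamma(\zeta)+a|\le\tfrac12\ell(K_Q)\}$. After dividing by $|I_{\widehat R}|$, the inequality to prove becomes
\[
|K|\,\max_{a\in\R}|S_a|\ \lesssim_n\ \int_{\widehat I_Q}|J(\zeta)\cap 3K|\,\d\zeta .
\]

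\textbf{Straightening via Remark~\ref{r:geom} and the size of the left side.} Put $\phi\coloneqq\psi_R-\gamma$. Writing $v_Q=(w_Q,\beta_Q)$, $v_R=(w_R,\beta_R)$ with $|w_Q|,|w_R|$ small and $\beta_Q,\beta_R$ near $1$, a direct computation gives $\nabla\phi$ parallel to $\tfrac{w_Q}{\beta_Q}-\tfrac{w_R}{\beta_R}$ (and $\nabla\phi=0$ when $v_Q=v_R$), and the orthogonal complement of this vector inside $e_n^\perp$ is exactly $\pr_{e_n^\perp}\mathsf{lin}_{Q,R}=\mathrm{span}\{g_1,\dots,g_{d-1}\}$ --- which is precisely why the frame $G_{Q,R}$ is chosen in Remark~\ref{r:geom}. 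Consequently $S_a$ is a \emph{slab} inside the cube $\widehat I_Q$, cut out by a constraint on the $g_d$-coordinate alone, namely the product of a $(d-1)$-cube of side $\eqsim\ell(\widehat I_Q)$ (spanned by $g_1,\dots,g_{d-1}$) with an interval on the $g_d$-axis of length $\ell(K_Q)/|\nabla\phi|$ truncated to the $g_d$-extent of $\widehat I_Q$. Therefore
\[
\max_{a\in\R}|S_a|\ \eqsim\ \ell(\widehat I_Q)^{\,d-1}\,m,\qquad m\coloneqq\min\Big\{\ell(\widehat I_Q),\ \ell(K_Q)/|\nabla\phi|\Big\},
\]
with the convention $m=\ell(\widehat I_Q)$ if $\nabla\phi=0$. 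Thus it remains to prove $|K|\,\ell(\widehat I_Q)^{d-1}m\lesssim_n\int_{\widehat I_Q}|J(\zeta)\cap 3K|\,\d\zeta$; since $|J(\zeta)\cap 3K|\le\ell(K_Q)$ always, this will follow once we (i) produce the scalar inequality $|K|\,m\lesssim \ell(\widehat I_Q)\,\ell(K_Q)$ and (ii) exhibit a set of $\zeta\in\widehat I_Q$ of measure $\gtrsim\ell(\widehat I_Q)^{d-1}m$ on which $|J(\zeta)\cap 3K|\gtrsim\min\{\ell(K_Q),|K|\}$.

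\textbf{Locating $\widehat Q$ relative to $3K$.} For both (i) and (ii) I would pass to coordinates adapted to $v_{\widehat R}$, in which $\widehat R$ is axis-parallel and the slices $p(I_{\widehat R},a,v_{\widehat R})$ are the horizontal slices; since $V$ is confined near $e_n$, this frame differs from the $e_n$-frame by an $O(1)$-bi-Lipschitz map, and in it the $(g_d,e_n)$-profile of $\widehat Q$ is a thin parallelogram of vertical extent $\eqsim\max\{\ell(K_Q),\ell(\widehat I_Q)|\nabla\phi|\}$, which is also the $v_{\widehat R}$-width of $\widehat Q$. The three hypotheses now combine as follows: $Q\cap R\ne\varnothing$ forces the $e_n$-fibres of $Q$ and $R$ to meet over some common $\zeta^\ast\in I_Q\cap I_R$, and since $K_R\subseteq K$ this places a point of $\widehat Q$ at $v_{\widehat R}$-height inside a bounded dilate of $K$; then $\pr_{e_n}(\widehat Q)\nsubseteq 3K$ forces the $v_{\widehat R}$-width of $\widehat Q$ to exceed $|K|$, i.e.\ $|K|\lesssim\max\{\ell(K_Q),\ell(\widehat I_Q)|\nabla\phi|\}$, which is exactly (i). With (i) in hand, the same "meets the middle of $3K$ while protruding'' mechanism shows that a $g_d$-slab of width $\gtrsim m$ worth of fibres $J(\zeta)$ overlap $3K$ in an interval of length $\gtrsim\min\{\ell(K_Q),|K|\}$ (while the $g_1,\dots,g_{d-1}$ directions merely translate $J$, contributing the full $\ell(\widehat I_Q)^{d-1}$), which is (ii). Plugging (i) and (ii) into the display at the end of the previous paragraph completes the proof.

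\textbf{Main obstacle.} The genuinely delicate point is the localization step: deriving $|K|\lesssim\max\{\ell(K_Q),\ell(\widehat I_Q)|\nabla\phi|\}$ from $Q\cap R\ne\varnothing$, $K_R\subseteq K$ and $\pr_{e_n}(\widehat Q)\nsubseteq 3K$ \emph{together}. One must guarantee that a point of $R$ sits at $e_n$-height \emph{well inside} $K$ (not merely inside $3K$), so that "$\widehat Q$ leaves $3K$'' really forces a lower bound on its extent rather than a boundary artefact, and one must transfer the hypothesis on $\pr_{e_n}(\widehat Q)$ --- stated in the $e_n$-frame --- to the $v_{\widehat R}$-adapted frame in which $\max_a|S_a|$ was computed, both times using only that $V$ lies in a small cap around $e_n$. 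Beyond this, and the uniform tracking of the dilation constants ($d$, $3$, $100n$) appearing in the definitions of $\widehat I_{Q,R}$, $\widehat R$, $3K$ and $\widehat{\mathcal Q}$, the argument is the elementary slicing geometry above.
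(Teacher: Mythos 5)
Your reduction to the slab picture in the $G_{Q,R}$ frame is in the same spirit as the paper's proof (the paper fixes $t\in\pl_Q\cap\pl_R$ and does a two-dimensional parallelogram estimate inside each $H(t)$; you instead fix $\zeta\in\widehat I_Q$ and look at the $e_n$-fibres $J(\zeta)$, which is the dual slicing of the same geometry). The straightening of $\phi$ along $g_d$, the computation $\max_a|S_a|\eqsim\ell(\widehat I_Q)^{d-1}m$, and the derivation of (i) from the "meets $K$ but protrudes from $3K$" dichotomy are all sound. You are also right to flag the frame-transfer of $\pr_{e_n}(\widehat Q)$ as a point that needs tracking; the paper is equally terse about it.

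There is, however, a genuine quantitative gap in step (ii). As stated, (ii) produces a set of $\zeta$ of measure $\gtrsim\ell(\widehat I_Q)^{d-1}m$ on which $|J(\zeta)\cap 3K|\gtrsim\min\{\ell(K_Q),|K|\}$, giving
\[
\int_{\widehat I_Q}|J(\zeta)\cap 3K|\,\d\zeta\ \gtrsim\ \ell(\widehat I_Q)^{d-1}\,m\,\min\{\ell(K_Q),|K|\},
\]
whereas the target is $|K|\,\ell(\widehat I_Q)^{d-1}m$. These match only when $|K|\lesssim\ell(K_Q)$. But in the regime $\ell(\widehat I_Q)|\nabla\phi|>\ell(K_Q)$ (so $m=\ell(K_Q)/|\nabla\phi|$) and $\ell(K_Q)\ll|K|\lesssim\ell(\widehat I_Q)|\nabla\phi|$, which is the typical case in the application in Lemma~\ref{lem:recurse} (where $K=J_R=3^\ell K_R$ with $\ell\geq 1$ and $\ell(K_Q)=\ell(K_R)=\delta$), your (i) only gives $|K|\lesssim\ell(\widehat I_Q)|\nabla\phi|$, not $|K|\lesssim\ell(K_Q)$. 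So "plugging (i) and (ii) into the display" loses a factor of $|K|/\ell(K_Q)$, which can be arbitrarily large, and the proof does not close. The fix is that the good $g_d$-slab of $\zeta$'s should have width $\gtrsim\min\{\ell(\widehat I_Q),|K|/|\nabla\phi|\}$ rather than $\gtrsim m=\min\{\ell(\widehat I_Q),\ell(K_Q)/|\nabla\phi|\}$: the tilted strip intersects the horizontal band $\R\times 3K$ over a $\zeta_d$-range comparable to $|K|/|\nabla\phi|$, not merely $\ell(K_Q)/|\nabla\phi|$, and it is exactly this longer range that produces the needed factor of $|K|$. With that correction (and invoking (i) to absorb the $\min$ in the right way), the argument does go through, but as written (ii) is a strictly weaker statement than what is needed, and the claimed conclusion "which is (ii)" would have to be reproved in the corrected form.

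Separately, be careful about one further point that your writeup elides: the near-miss case where $|K|$ may be smaller than $\ell(K_Q)$. In the intended application one always has $|K|=|J_R|\geq|K_R|=\delta=\ell(K_Q)$, but your reduction via (i) and (ii) should either invoke this or handle the case $|K|<\ell(K_Q)$ explicitly, since the intersection estimate $|J(\zeta)\cap 3K|\gtrsim\min\{\ell(K_Q),|K|\}$ is asymmetric in those two quantities.
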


\begin{proof} 
By composing $d$ shearing transformations, we reduce to the case of $v_{\widehat R}=e_n$. In this case the slices may simply be described by $p(I_{\widehat R},a,v_{\widehat R}) =I_ {\widehat R} \times \{a\}$. The conclusion is immediate if $v_Q=v_{\widehat R}=e_n$ since then the slices of $Q$ by planes perpendicular to $e_n$ have all constant measure. Thus we assume that $e_n=v_{\widehat R}\neq v_Q$. Then, $\pl_Q\cap \pl_R$ is an affine space of dimension $d-1$ parallel to the subspace $\mathsf{lin}_{Q,R}$. As $v_{\widehat R}=e_n$, we have  $\mathsf{lin}_{Q,R}= \pr_{e_n^\perp} \mathsf{lin}_{Q,R}$, in other words $\mathsf{lin}_{Q,R}\subset e_n^\perp.$ Setting $H(t)\coloneqq t+ (\mathsf{lin}_{Q,R})^\perp$ for $t\in\pl_R\cap \pl_Q\cap \widehat Q$, we have for any $a\in\R$ that
	\[
	\big|\widehat {Q}\cap ( I_ {\widehat R} \times \{a\})\big| = \int\displaylimits_{ \pl_R\cap \pl_{Q}\cap \widehat Q} \big|H(t)\cap \widehat {Q}\cap (I_{\widehat R}\times\{a\})\big|\, \d t.
	\]
Now observe that for each  $  t\in\pl_R\cap \pl_Q\cap \widehat Q$ the set $H(t) \cap \widehat{Q}$ is a  two-dimensional parallelogram lying on $H(t)$ with long side  perpendicular to $v_Q$ and short side of length $|K_Q|$ parallel to $e_n$. Our assumptions yield
\[
\pr_{e_n}(\widehat Q)\cap K\neq \varnothing,\qquad \pr_{e_n}(\widehat Q)\nsubseteq 3K;
\]
a two-dimensional calculation then reveals that there exists a set $A\subset 3K\setminus K$ with $|A|\geq |K|/3$ such that for each $a'\in A$ and all  $  t\in\pl_R\cap \pl_Q\cap \widehat Q$
\[
|H(t)\cap \widehat Q \cap (I_{\widehat R}\times \{a'\})|\eqsim \max_{a\in\R} |H(t)\cap \widehat Q \cap (I_{\widehat R}\times \{a\})|;
\]
the implicit constant in the estimate above is independent everything and in particular this estimate holds uniformly in $t$. This clearly implies that 
\[
\max_{a\in\R} |H(t)\cap \widehat Q \cap (I_{\widehat R}\times \{a\})| \leq \avgint_{A}   |H(t)\cap \widehat Q \cap (I_{\widehat R}\times \{a'\})|\, \d a'  \lesssim \avgint_{3K} \big|\widehat Q \cap H(t)\cap   (I_{\widehat{R}} \times \{a'\})\big|\,\d a'.
\]
The conclusion of the lemma readily follows by noticing that for every $a\in\R$ the quantity $|H(t)\cap \widehat Q \cap (I_{\widehat R}\times \{a\})|$ is independent of $t$ and integrating for $t\in \pl_R\cap \pl_Q\cap \widehat Q$.
\end{proof}

\vspace{1em}
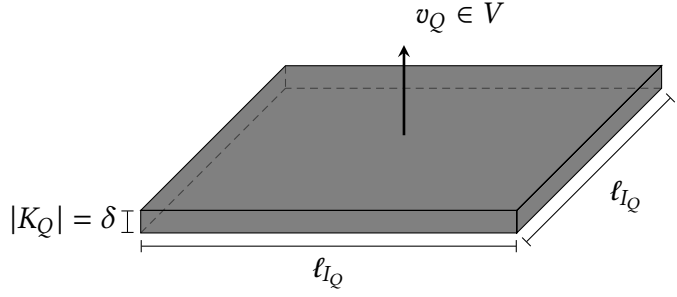
\begin{figure}
\centering
\begin{tikzpicture}[every edge quotes/.append style={auto, text=black}]
  \pgfmathsetmacro{\cubex}{5}
  \pgfmathsetmacro{\cubey}{.3}
  \pgfmathsetmacro{\cubez}{5}
  \draw [draw=black, every edge/.append style={draw=black, densely dashed, opacity=.5}, fill=gray]
    (0,0,0) coordinate (o) -- ++(-\cubex,0,0) coordinate (a) -- ++(0,-\cubey,0) coordinate (b) edge coordinate [pos=1] (g) ++(0,0,-\cubez)  -- ++(\cubex,0,0) coordinate (c) -- cycle
    (o) -- ++(0,0,-\cubez) coordinate (d) -- ++(0,-\cubey,0) coordinate (e) edge (g) -- (c) -- cycle
    (o) -- (a) -- ++(0,0,-\cubez) coordinate (f) edge (g) -- (d) -- cycle;

\path [every edge/.append style={draw=black, |-|}]
    (b) +(0,-5pt) coordinate (b1) edge ["$\ell_{I_Q}$"'] (b1 -| c)
    (b) +(-5pt,0) coordinate (b2) edge ["$|K_Q|=\delta $"] (b2 |- a)
    (c) +(3.5pt,-3.5pt) coordinate (c2) edge ["$\ell_{I_Q}$"'] ([xshift=3.5pt,yshift=-3.5pt]e)
    ;
\draw[line width=1pt,black,-stealth](-1.5,1)--(-1.5,2.2) node[anchor=south west]{$v_Q\in V$};
\end{tikzpicture}
\caption{A plate $Q=P(I_Q,K_Q,v_Q)$ in $\R^3$ perpendicular to $v_Q\in V$.}
\end{figure}

We now return to the estimate for $\mass_a (\mathcal Q_{v,k})$. Letting $R\in\mathcal Q_{v,k}$ we note that for every $\mathcal Q \ni Q\leq R$ we have $\widehat Q \supseteq Q$ and $|\widehat Q|\eqsim_n |Q|$ so that
\[
\mu k \leq \avgint_R \sum_{\mathcal Q\ni Q\leq R}a_Q\frac{\ind_Q}{|Q|}\lesssim_n \avgint_R \sum_{\mathcal Q\ni Q\leq R} a_Q \frac{\ind_{\widehat{Q}}}{|\widehat Q|}\lesssim_n \avgint_{\widehat R} \sum_{\mathcal Q\ni Q\leq R} a_Q \frac{\ind_{\widehat{Q}}}{|\widehat Q|}
\]
since $R\subseteq \widehat R$ and $|R|\simeq_n \big|\widehat R\big|$. For a collection of plates $\mathcal L\subseteq \mathcal D _V$ we define
\[
B_R ^{\mathcal L} \coloneqq \avgint_{\widehat R} \sum_{\substack{Q\in\mathcal L\\Q\leq R}} a_Q \frac{\ind_{\widehat Q}}{|\widehat Q|}.
\]
Then the considerations above imply that
\begin{equation}\label{eq:hatQ}
\mathcal Q_{v,k} \subseteq \widehat {\mathcal Q}_{v,k}\coloneqq \{Q\in\mathcal Q_v:\, B_R ^{\mathcal Q} >c_n \mu k\}
\end{equation}
for some dimensional constant $c_n>0$ and $\mu$ to be chosen.

The proof of Theorem~\ref{thm:carleson} relies upon the estimate for $\sh(\widehat{\mathcal Q}_{v,k})$ contained in the following lemma.
\begin{lemma}\label{lem:expdecay} Let $\delta>0$, $V\subset\mathbb S^d$ be a finite set of directions, and $\mathcal Q \subseteq \mathcal D_{V}$ satisfy the assumptions of Theorem~\ref{thm:carleson}. For $v\in V$ and $k\geq 1$ we define $\widehat{\mathcal Q}_{v,k}$ as in \eqref{eq:hatQ} above, with $\mu$ a sufficiently large dimensional constant. Then
\[
\big|\sh(\widehat{\mathcal Q}_{v,k})\big|\lesssim_n 2^{-k} \mass_a (\mathcal Q).
\]	
\end{lemma}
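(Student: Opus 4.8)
\emph{Strategy.} The bound is an exponential-decay (John--Nirenberg-type) estimate for the shadows of the super-level sets of the local balayage density $R\mapsto B_R^{\mathcal Q}$, and my plan is to prove it by induction on the level $k$ with the direction $v$ held \emph{fixed} throughout: this rigidity is essential, since the single logarithmic loss in $\#V$ has already been extracted in \eqref{eq:mainsplit} and no further $\#V$-dependence may enter the recursion. After a rotation I assume $v=e_n$, and after splitting $V$ into $O_n(1)$ pieces that $V$ lies in a fixed small cap about $e_n$. Using the dyadic structure of $\mathcal D_v$ and the partial order $\le$, I select the family $\mathcal R_k\subset\mathcal Q_v$ of plates that are maximal (for $\le$) subject to $B_R^{\mathcal Q}>c_n\mu k$; standard disjointness properties of maximal families give that $\{R:R\in\mathcal R_k\}$ has bounded overlap and that $\sh(\widehat{\mathcal Q}_{v,k})$ is contained in the union of bounded dilates of the plates of $\mathcal R_k$ --- here one uses that $Q\le R$ with $v_Q=v_R=v$ confines $Q$ to a bounded dilate of $R$. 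It therefore suffices to prove $\sum_{R\in\mathcal R_k}|R|\lesssim_n 2^{-k}\mass_a(\mathcal Q)$, which I will get from the base estimate $\sum_{R\in\mathcal R_1}|R|\lesssim_n\mu^{-1}\mass_a(\mathcal Q)$ and the recursion $\sum_{R\in\mathcal R_{k+1}}|R|\le\tfrac12\,|\sh(\widehat{\mathcal Q}_{v,k})|$. The base case is immediate: $c_n\mu|R|<B_R^{\mathcal Q}|\widehat R|=\sum_{Q\le R}a_Q|\widehat R\cap\widehat Q||\widehat Q|^{-1}\le\sum_{Q\le R}a_Q$, and since each plate is $\le$ at most $O_n(1)$ members of $\mathcal R_1$, summing over $R\in\mathcal R_1$ yields $c_n\mu\sum_{R\in\mathcal R_1}|R|\lesssim_n\mass_a(\mathcal Q)$.

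\emph{The recursion.} Fix $R\in\mathcal R_{k+1}$. I would decompose $B_R^{\mathcal Q}=\avgint_{\widehat R}\sum_{Q\le R}a_Q|\widehat Q|^{-1}\ind_{\widehat Q}$ into three parts: the diagonal term $Q=R$, contributing $\lesssim_n a_R/|\widehat R|\lesssim_n 1$ by the Carleson bound for the singleton $\{R\}$; the transverse plates $Q<R$ with $v_Q\ne v$; and the parallel plates $Q<R$ with $v_Q=v$. The transverse plates are handled with the slicing Lemma~\ref{lem:slice}: dominating the average over $\widehat R$ by a supremum over slices perpendicular to $v$, Lemma~\ref{lem:slice} bounds each slice density of a significantly transverse $\widehat Q$ by the density of $\widehat Q$ over a fattened slab, which re-encodes the transverse contribution as (a dimensional multiple of) a parallel balayage over a bounded dilate of $\widehat R$; the mildly transverse plates, whose extent in $e_n$ is $\lesssim\delta$, are simply absorbed into parallel ones. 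This step is precisely why the hypothesis is imposed on $M_{\widehat{\mathcal Q}_v}$, the fattened collection, rather than on $M_{\mathcal Q_v}$. Having reduced everything to parallel plates of orientation $v$, I split them into shallow ($B_Q^{\mathcal Q}\le c_n\mu k$) and deep ($B_Q^{\mathcal Q}>c_n\mu k$, i.e.\ $Q\in\widehat{\mathcal Q}_{v,k}$). For the shallow part I pass to the $\le$-maximal shallow $v$-plates $\mathcal S$ below $R$ --- a stopping-time selection forced by the non-monotonicity of $B_\cdot^{\mathcal Q}$ with respect to $\le$ --- so that every shallow $Q<R$ is $\le$ some member of $\mathcal S$; arranging the auxiliary dilated plates of Remark~\ref{r:geom} to nest, the shallow average over $\widehat R$ is then bounded by $|\widehat R|^{-1}\sum_{Q\in\mathcal S}|\widehat Q|\,B_Q^{\mathcal Q}\le c_n\mu k$, using that the $\widehat Q$ ($Q\in\mathcal S$) have bounded overlap inside a dilate of $\widehat R$ and that the members of $\mathcal S$ are shallow. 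Consequently the deep part has average $>c_n\mu(k+1)-c_n\mu k-O_n(1)\ge\tfrac12 c_n\mu$ over $\widehat R$ once $\mu$ is a large dimensional constant, whence $\int_{\widehat R}(\text{deep part})\gtrsim_n c_n\mu\,|R|$.

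\emph{Conclusion and main obstacle.} Summing the last display over $R\in\mathcal R_{k+1}$ and using once more that every plate is $\le$ at most $O_n(1)$ members of $\mathcal R_{k+1}$, the left side telescopes to $\lesssim_n\sum_{Q\in\widehat{\mathcal Q}_{v,k}}a_Q\le|\sh(\widehat{\mathcal Q}_{v,k})|$ by the Carleson condition applied with the single orientation $v$; thus $c_n\mu\sum_{R\in\mathcal R_{k+1}}|R|\lesssim_n|\sh(\widehat{\mathcal Q}_{v,k})|$, which for $\mu$ sufficiently large is the desired recursion $\sum_{R\in\mathcal R_{k+1}}|R|\le\tfrac12|\sh(\widehat{\mathcal Q}_{v,k})|$, and iterating from the base case gives $|\sh(\widehat{\mathcal Q}_{v,k})|\lesssim_n 2^{-k}\mass_a(\mathcal Q)$. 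The main obstacle will be the transverse plates: the crude alternative of passing from direction $v$ at level $k$ to \emph{all} directions at level $k-1$ would cost a factor $\#V$ and destroy the estimate, and the slicing Lemma~\ref{lem:slice} is exactly the device that lets a transverse plate through a point of $\widehat R$ be re-expressed within the parallel geometry, so that one never leaves the single maximal operator $M_{\widehat{\mathcal Q}_v}$. A secondary difficulty is keeping the constant on the main term of the shallow bound equal to one --- this is why the dilations in Remark~\ref{r:geom} must be set up to nest --- together with the non-monotonicity of $B_\cdot^{\mathcal Q}$, which dictates the Calderón--Zygmund-type stopping time used to define $\mathcal S$.
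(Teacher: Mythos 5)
Your proposal and the paper's proof share the common ancestry of Katz--Bateman-type arguments, but the routes are genuinely different. The paper reduces Lemma~\ref{lem:expdecay} to the ``mass-halving'' Lemma~\ref{lem:recurse}: starting from $\mathcal L_0=\mathcal Q$ one extracts $\mathcal L_1\subset\mathcal L_0$ with $\mass_a(\mathcal L_1)\leq\frac12\mass_a(\mathcal L_0)$ while losing only an additive $\mu$ from $B_R^{\mathcal L_0}$ for heavy $R$, and then iterates. You instead run a direct induction on the level $k$ by selecting $\le$-maximal plates $\mathcal R_{k}$, splitting the balayage of each $R\in\mathcal R_{k+1}$ into diagonal/transverse/parallel contributions, and aiming for the recursion $\sum_{\mathcal R_{k+1}}|R|\lesssim\mu^{-1}|\sh(\widehat{\mathcal Q}_{v,k})|$. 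As written, the proposal has gaps at three crucial junctures, all of which the paper's halving lemma is engineered to avoid.

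First, your decomposition by orientation does not match the hypothesis of the slicing Lemma~\ref{lem:slice}, and the claim that ``mildly transverse plates\ldots are simply absorbed into parallel ones'' has no basis. Lemma~\ref{lem:slice} requires $\pr_{e_n}(\widehat Q)\nsubseteq 3K$: it says nothing about transverse $Q$ whose $e_n$-extent fits inside $3K$. This is exactly why the paper splits $\{Q\leq R\}$ into the \emph{in}-collection $\mathcal B^{\mathrm{in}}_{R,K}$ and the \emph{out}-collection $\mathcal B^{\mathrm{out}}_{R,K}$ rather than by orientation; a transverse plate which is ``in'' is not rendered parallel, it simply contributes to the next stage of the iteration. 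Second, and closely related, you never let the interval $K$ grow past $K_R$. The slicing lemma applied with $K=K_R$ gives $I_{\widehat R}\times 3K_R$, which is merely a bounded dilate of $\widehat R$ and yields no gain; the entire content of the paper's ``main case'' is the stopping time $J_R$, the maximal $3^\ell K_R$ with $B^{\mathrm{out}}_{R,J_R}>\mu$, together with the companion bound $B^{\mathrm{out}}_{R,3J_R}\leq\mu$ which drives the estimate. Without this device the transverse contribution is not controlled.

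Third, the bounded-overlap assertions --- that the $\widehat S$ for $S\in\mathcal S$ have bounded overlap inside a dilate of $\widehat R$, and that each plate is $\le$ at most $O_n(1)$ members of $\mathcal R_{k+1}$ --- are unjustified. Two $\le$-maximal plates $R,R'$ of orientation $v$ that intersect must have $I_R=I_{R'}$ (dyadic nesting precludes strict inclusion), but the intervals $K_R, K_{R'}\subset\R$ have no dyadic structure and may be shifted continuously, so no bounded-overlap statement holds in general. The paper never relies on such a count: both in the ``easy case'' and the ``main case'' of Lemma~\ref{lem:recurse} the shadows $\bigcup(\widehat I_\rho\times 3K_\rho)$, respectively $\bigcup(\widehat I_\rho\times 3J_\rho)$, are captured inside a superlevel set of the maximal operator $\M_{\widehat{\mathcal R}_v}$ (and of $\M_{v^\perp}$, $\M_{e_n}$ in the main case), and the weak-$(1,1)$ hypothesis then bounds their measure. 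This shows that your reading of that hypothesis as being ``precisely why'' one needs $\M_{\widehat{\mathcal Q}_v}$ rather than $\M_{\mathcal Q_v}$ for the transverse-to-parallel step misattributes its role: the weak-$(1,1)$ bound is what replaces all bounded-overlap arguments for the fattened parallel plates throughout. To make your scheme rigorous you would need to re-introduce the in/out split, the $J_R$ stopping time, and a maximal-function selection in place of the overlap counts --- at which point the argument converges to the paper's own.
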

With Lemma~\ref{lem:expdecay} in our disposal we can complete the proof of the main result of this paragraph.

\begin{proof}[Proof of Theorem~\ref{thm:carleson}] Let $\mathcal Q\subseteq \mathcal D_{V}$ be a collection of plates such that the operators $\{\M_{\mathcal Q_v}:\, v\in V\}$ are of weak-type $(1,1)$, uniformly in $v\in V$, with weak $(1,1)$ bound depending only on the dimension. By \eqref{eq:mainsplit} combined with \eqref{eq:hatQ} and the estimate of Lemma~\ref{lem:expdecay} we have that
\[
\begin{split}
\|T_{\mathcal Q}(a)\|_{L^2(\R^n)} ^2 &\lesssim \mu \Big((\log \#V) +( \#V)\sum_{k\geq \mu (\log\#V)} k 2^{-k}\Big) \mass_a(\mathcal L) \lesssim \mu (\log\#V)\mass_a(\mathcal L)
\end{split}
\]
so that 
\begin{equation}\label{eq:recurse}
\|T_{\mathcal Q}(a)\|_{L^2(\R^n)}  \leq C_n \mu^{\frac 12} (\log\#V)^{\frac12} \mass_a(\mathcal Q) ^{\frac12}
\end{equation}
with $\mu$ as in the assumption of Lemma~\ref{lem:expdecay}. Note that this means that $\mu$ can be chosen to be a dimensional constant and this completes the proof of the theorem.
\end{proof}
It remains to prove Lemma~\ref{lem:expdecay} which follows by an iterative application of the lemma below, as in the proof of \cite{APHPR}*{Lemma 2.21}.

\begin{lemma}{\label{lem:recurse}} Let $V\subset \mathbb S^d$ be a finite set of directions and $\mathcal L \subseteq \mathcal R \subseteq \mathcal D_{V}$ such that for every $L\in\mathcal L$ there exists $R\in\mathcal R$ with $L\leq R$. Furthermore we assume that
	\[
	\sup_{v\in V} \|\M_{\widehat{\mathcal R}_v}:\, L^1(\R^n)\to L^{1,\infty}(\R^n)\|\lesssim_n 1.
	\]
 Fix some $v\in V$ and let $\mu$ be a sufficiently large dimensional constant. There exists $\mathcal L_1 \subset \mathcal L $ such that 
\begin{itemize}
	\item [(i)] $\displaystyle\mass_a (\mathcal L_1) \leq \frac12 \mass_a(\mathcal L)$;
	\item [(ii)] For all plates $R\in\mathcal R_v$ such that $B_R ^{\mathcal L}>\mu$ we have
	\[
	B_R ^{\mathcal L} \leq \mu + B_R ^{\mathcal L_1}.
	\]
\end{itemize}
\end{lemma}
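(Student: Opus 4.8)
The plan is to run a stopping-time / Calderón--Zygmund-type selection inside the collection $\mathcal L_v$, using the weak $(1,1)$ bound for the maximal operator $\M_{\widehat{\mathcal R}_v}$ as the engine. Fix $v\in V$ as in the statement. The natural object to stop on is the balayage-type function built from the plates $Q\in\mathcal L_v$, but one has to be careful because $B_R^{\mathcal L}$ involves the hatted plates $\widehat Q$ (which depend on $R$, via Remark~\ref{r:geom}) and averages over $\widehat R$ rather than $R$. The first step is therefore to replace, up to dimensional constants, the quantity $B_R^{\mathcal L}$ by something intrinsic to $\mathcal L_v$: since $\widehat Q$ is comparable to $Q$ and $\widehat R\supseteq R$ with $|\widehat R|\simeq_n|R|$, and since one of the $(d-1)$-dimensional edges of $\widehat Q$ lies on an affine copy of $\pl_Q\cap\pl_R$, the slicing Lemma~\ref{lem:slice} lets one control $\avgint_{\widehat R}\ind_{\widehat Q}$ by an average of $\ind_{\widehat Q}$ over a slab $I_{\widehat R}\times 3K_R$, i.e.\ essentially by $\M_{\widehat{\mathcal R}_v}$ applied to $\sum_{Q\in\mathcal L_v,\,Q\le R} a_Q\ind_{\widehat Q}/|\widehat Q|$ evaluated on $R$. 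This is the step where the fixed-orientation, one-parameter nature of $\mathcal R_v$ enters and where the weak $(1,1)$ hypothesis becomes usable.

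Second, with $g\coloneqq \sum_{Q\in\mathcal L_v} a_Q\,\ind_{\widehat Q}/|\widehat Q|$ one has $\|g\|_{L^1(\R^n)}\le \sum_{Q\in\mathcal L_v}a_Q\le \mass_a(\mathcal L)$ (using $|\widehat Q|\gtrsim_n|Q|$; absorb the dimensional constant into the choice of $\mu$). Let $\Omega\coloneqq\{\M_{\widehat{\mathcal R}_v} g > c_n\mu\}$ for a suitable dimensional $c_n$ so that the weak $(1,1)$ bound gives $|\Omega|\le C_n\,\mass_a(\mathcal L)/(c_n\mu)$. The selected subcollection is
\[
\mathcal L_1\coloneqq \{L\in\mathcal L:\ \widehat L\subseteq \Omega\ \text{(for the relevant }R\text{)}\}\cup\{L\in\mathcal L:\ v_L\neq v\ \text{and certain slices of }L\text{ lie in }\Omega\},
\]
but the clean way to state it is: $\mathcal L_1$ consists of those $L\in\mathcal L$ whose plate $L$ is contained in $\Omega$. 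For property (i): since the sequence $a$ restricted to plates contained in $\Omega$ is still Carleson, and $\Omega$ is open, $\mass_a(\mathcal L_1)\le\sum_{L\subset\Omega}a_L\le|\Omega|\le (C_n/(c_n\mu))\mass_a(\mathcal L)\le\frac12\mass_a(\mathcal L)$ once $\mu$ is chosen $\ge 2C_n/c_n$. This is exactly why $\mu$ must be a sufficiently large dimensional constant.

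For property (ii): fix $R\in\mathcal R_v$ with $B_R^{\mathcal L}>\mu$. Split the sum defining $B_R^{\mathcal L}$ over $Q\le R$ according to whether $Q\in\mathcal L_1$ (i.e.\ $Q\subset\Omega$) or not. The $\mathcal L_1$-part is by definition $B_R^{\mathcal L_1}$. For the complementary part, $Q\not\subset\Omega$ means $\widehat Q\cap\Omega^c\neq\varnothing$ in the relevant sense; one then uses the slicing lemma to see that on $\widehat R\setminus\Omega$ — more precisely, on the portion of $\widehat R$ over which $\M_{\widehat{\mathcal R}_v}g\le c_n\mu$ — the partial sum $\avgint_{\widehat R}\sum_{Q\le R,\,Q\not\subset\Omega} a_Q\ind_{\widehat Q}/|\widehat Q|$ is bounded by a constant multiple of $c_n\mu$, hence by $\mu$ after adjusting constants. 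The geometric content here is that for a plate $Q\le R$ not contained in $\Omega$, every slice $p(I_{\widehat R},a,v_{\widehat R})$ of $\widehat R$ meeting $\widehat Q$ is already ``seen'' at a point where the maximal function is small, because the slab $I_{\widehat R}\times 3K_R$ controlling $\widehat Q$ (Lemma~\ref{lem:slice}) reaches down to $\Omega^c$; this is precisely the place where Lemma~\ref{lem:slice}'s hypothesis $\pr_{e_n}(\widehat Q)\nsubseteq 3K$ is arranged by choosing the comparison interval $K\supseteq K_R$ appropriately. Combining, $B_R^{\mathcal L}\le \mu+B_R^{\mathcal L_1}$, which is (ii).

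\emph{Main obstacle.} The delicate point is the passage from $B_R^{\mathcal L}$ (an $\widehat R$-average of hatted plates) to an honest maximal function of the fixed-orientation one-parameter operator $\M_{\widehat{\mathcal R}_v}$, so that the weak $(1,1)$ hypothesis can be invoked: the plates $\widehat Q$ are sheared relative to $\widehat R$ and depend on $R$, and one must show via Lemma~\ref{lem:slice} that the worst slice of $\widehat Q$ inside $\widehat R$ is comparable to a genuine average over a slab of fixed orientation $v_{\widehat R}$. Once that comparison is in place, the stopping-time argument is the standard good-$\lambda$/selection scheme. The iterated version, Lemma~\ref{lem:expdecay} with its $2^{-k}$ decay, then follows by applying Lemma~\ref{lem:recurse} $k$ times as in \cite{APHPR}*{Lemma 2.21}, each application halving the mass and peeling off an additive $\mu$ from the level $B_R$.
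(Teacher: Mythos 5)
Your high-level intuition (stopping time, weak $(1,1)$ bound as the engine, the slicing lemma as the geometric input) matches the spirit of the paper, but the concrete implementation has genuine gaps. The most serious one is in step (i): you define $\mathcal L_1\coloneqq\{L\in\mathcal L:\,L\subset\Omega\}$ with $\Omega\coloneqq\{\M_{\widehat{\mathcal R}_v}g>c_n\mu\}$ and then assert $\mass_a(\mathcal L_1)\leq|\Omega|$ ``since the sequence $a$ restricted to plates contained in $\Omega$ is still Carleson, and $\Omega$ is open.'' That is not what Definition~\ref{def:carleson} says. The Carleson property only bounds $\sum_{L\in\mathcal L'}a_L$ when $\mathcal L'$ is \emph{subordinate to a collection $\mathcal T\subset\mathcal P_w$ of fixed orientation $w$}, by $|\sh(\mathcal T)|$; it does not give $\sum_{L\subset U}a_L\leq|U|$ for an arbitrary open set $U$. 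Even though $\Omega$ is a union of $v$-oriented plates, a plate $L\subset\Omega$ with $v_L\neq v$ may straddle many of them, so there is no single $T\in\widehat{\mathcal R}_v$ with $L\subseteq T$. The paper obtains the needed subordination precisely by restricting the extracted family to the \emph{in}-plates $\mathcal B^{\mathrm{in}}_{\rho,K}$, i.e.\ those $Q\leq\rho$ with $\pr_{e_n}(\widehat Q)\subseteq 3K$: these all sit inside the single $v$-oriented plate $\widehat I_\rho\times 3K$, which in turn is contained in the exceptional set. Your proposal has no in/out decomposition, so this step cannot be closed.

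This same omission also undermines (ii). For $Q\leq R$ with $Q\not\subset\Omega$ you want to invoke Lemma~\ref{lem:slice}, but that lemma only applies to plates with $\pr_{e_n}(\widehat Q)\nsubseteq 3K$ for the comparison interval $K\supseteq K_R$; an ``in'' plate that is not contained in $\Omega$ is outside the lemma's scope, and you do not explain why its contribution is small (in the paper such plates are simply \emph{put into} the extracted set, not bounded away). Moreover, the paper's main case requires a secondary stopping procedure along $e_n$ — the expanding intervals $J=3^\ell K_R$ with maximal $J_R$, followed by a combination of $\M_{v^\perp}$ and $\M_{e_n}$ — rather than a single level set of $\M_{\widehat{\mathcal R}_v}$. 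This two-directional, two-case structure (easy case when $B^{\mathrm{out}}_{R,K_R}\leq\mu$, main case when not) is where the argument actually lives, and it is absent from your plan. Finally, a smaller but real slip: $B_R^{\mathcal L}$ sums over all $Q\in\mathcal L$ with $Q\leq R$, of \emph{every} orientation, so $g$ must be $\sum_{Q\in\mathcal L}a_Q\ind_{\widehat Q}/|\widehat Q|$ (or the $R$-independent dominant $\sum_{Q\in\mathcal L}a_Q\ind_{dQ}/|dQ|$), not a sum restricted to $\mathcal L_v$.
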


\begin{proof}  Without loss of generality we can assume that $v=e_n$. Let $\mathcal R' _v$ denote the collection of $R\in\mathcal R_v$ with $B_R ^{\mathcal L}>\mu$. For the plates $R\in\mathcal R_v '$ and plates $Q\in\mathcal L$ with $Q\leq R$ we define $\widehat{R},\widehat{Q}$ as in Remark~\ref{r:geom}. We remember also the notation $\widehat R=P(\widehat{I}_R,K_R,v_R)$ and $Q=P(\widehat {I}_Q,K_Q,v_Q)$; since we will always consider the case $Q\leq R$ we have that $\widehat{I}_Q\subseteq \widehat {I}_R$.
	
Given some interval $K\subset \R$ and $R\in\mathcal R' _v$ we define the collections
	\[
	\mathcal B_{R,K} ^{\mathrm{in}}\coloneqq \{Q\in\mathcal L:\, Q\leq R,\quad \pi_{e_n}(\widehat{Q})\subseteq  3K \},\qquad 	\mathcal B ^{\mathrm{out}} _{R,K}\coloneqq \{Q\in\mathcal L:\, Q\leq R,\quad \pi_{e_n}(\widehat{Q})\nsubseteq 3 K \}.
	\]
We define
\[
B^{\mathrm{in}} _{R, K} \coloneqq \sum_{Q\in \mathcal B_{R,K} ^{\mathrm{in}}} a_Q\frac{|Q'\cap (\widehat{I}_{R}\times K)|}{\big|\widehat{Q}\big|\big|\widehat{I}_{R}\times K\big|},\qquad B_{R, K} ^{\mathrm{out}}\coloneqq  \sum_{Q\in \mathcal B_{R,K} ^{\mathrm{out}}} a_Q \frac{\big|\widehat{Q}\cap (\widehat{I}_{R}\times K)\big|}{\big|\widehat{Q}\big|\big|\widehat{I}_{R}\times K\big|},
\]
and note that for any $K$ we have the splitting
\[
B_R ^{\mathcal L} = \avgint_{\widehat R} \sum_{Q\in\mathcal B_{R,K} ^{\mathrm{in}}}a_Q \frac{\ind_{\widehat{Q}}}{\big|\widehat{Q}\big|}+\avgint_{\widehat{R}}\sum_{Q\in\mathcal B_{R,K} ^{\mathrm{out}}}a_Q\frac{\ind_{\widehat{Q}}}{\big|\widehat{Q}\big|}.
\]

\subsubsection*{Easy case:} Let $\mathcal R_1$ be the collection of those $R\in\mathcal R' _v$ such that $B_{R,K_R}  ^{\mathrm {out}}\leq \mu$. Then we have for $R\in\mathcal R_1$ that
\[
B_R ^{\mathcal L} \leq \mu + B_{R,K_R} ^{\mathrm{in}} =\mu +B_R ^{\mathcal L' _1},\qquad \mathcal L_1 ' \coloneqq \bigcup_{\rho\in\mathcal R_1}\mathcal{B}_{\rho,K_\rho} ^{\mathrm{in}}.
\]
Noting that for all $Q\in \mathcal L' _1 $ we have that $Q\subseteq \widehat{Q}  \subseteq \widehat I_{\rho}\times 3K_\rho$ for some $\rho\in\mathcal R_1$, we get
\[
\begin{split}
\mathrm{mass}_{a,1}(\mathcal L' _1)\coloneqq\sum_{Q\in\mathcal L' _1 } a_Q \leq  \bigg|\bigcup_{\rho\in\mathcal R_1}\bigcup_{Q\in \mathcal B_{\rho, K_\rho} ^{\mathrm{in}} }Q\bigg|\leq  \bigg|\bigcup_{\rho\in\mathcal R_1} (\widehat I_{\rho}\times 3K_\rho)\bigg|.
\end{split}
\]
Furthermore for every $\rho\in\mathcal R_1$ we have
\[
\mu<B_\rho ^{\mathcal L}= \avgint_{\widehat \rho }\sum_{\substack{Q\leq \rho\\Q\in\mathcal L}} a_Q \frac{\ind_{\widehat Q}}{\big|\widehat Q\big|}.
\]
As all the $\rho\in \mathcal R_1$ have fixed orientation our assumption entails that the maximal operator $\M_{\widehat{\mathcal R}_{v}}=\M_{\widehat{\mathcal R}_{e_n}}$ is of weak type $(1,1)$ with constant depending only on the dimension. Since
\[
\bigcup_{\rho\in\mathcal R_1}( \widehat I_\rho\times 3K_\rho) \subseteq \Big\{\M_{\widehat{\mathcal R}_{v}} \Big(\sum_{Q\in\mathcal L}a_Q\frac{\ind_{\widehat{Q}}}{\big|\widehat Q\big|}\Big)\gtrsim_n \mu \Big\},
\]
we conclude that $\mass_{a}(\mathcal L' _1)\leq \mass_{a}(\mathcal L)/4$ if $\mu>0$ is chosen to be a sufficiently large dimensional constant.

\subsubsection*{The main case:} Here we consider $R\in\mathcal R_2\coloneqq \mathcal R_v '\setminus \mathcal R_1$. Let us write again $\widehat R=\widehat{I}_R\times K_R$ and consider the intervals $J$ of the form $J=3^\ell K_R$ for $\ell\geq 0$ such that $B_{R,J} ^{\mathrm{out}}>\mu$. Since $B_{R, K_R} ^{\mathrm{out}}>\mu$ for $R\in\mathcal R_2$ the maximal such $J$ which we call $J_R$ will contain $K_R$ and $B_{R, 3J_R} ^{\mathrm{out}}\leq\mu$. 

By Lemma~\ref{lem:slice} we have for each $a\in 3J_R$ and $Q\in \mathcal B_{R,J_R} ^{\mathrm{out}}$
\[
\big|\widehat{Q}\cap (\widehat I_{R}\times \{a\})\big|\lesssim \frac{1}{|3J_R|}\int _{\widehat I_{R}\times 3J_R} \ind_{\widehat Q}.
\]
We can then calculate
\[
\begin{split}
\sum_{Q\in\mathcal B_{R,J_R} ^{\mathrm{out}}} a_Q \frac{\big|\widehat{Q}\cap  ( \widehat{I}_{R} \times \{a\})\big|}{\big|\widehat{Q}\big||\widehat {I}_{R}\times \{a\}|}& \lesssim \sum_{Q\in\mathcal B_{R,J_R} ^{\mathrm{out}}} a_Q  \frac{\big|\widehat{Q} \cap ( \widehat{I}_{R} \times 3J_R)\big|}{\big|\widehat{Q}\big|\big|\widehat I_{R}\times 3J_R\big|}
\\
&\lesssim B^{\mathrm{out}} _{R, 3J_R }+\avgint_{(\widehat{ I}_R\times 3 J_R)}\sum_{Q\in\mathcal B_{R,J_R} ^\mathrm{out} \setminus \mathcal B^{\mathrm{out}} _{R,3J_R} } a_Q \frac{\ind_{\widehat Q}}{|\widehat Q|}
\\
& \leq \mu +  \sum_{Q\in\mathcal B_{R,J_R} ^\mathrm{out} \setminus \mathcal B^{\mathrm{out}} _{R,3J_R} } a_Q\frac{\big|\widehat{Q} \cap (\widehat{I}_R\times 3J_R)\big|}{\big|\widehat{Q}\big|\big|\widehat{I}_R\times 3J_R\big|};
\end{split}
\]
in passing to the last line we used the maximality of $J_R$. Now all the plates $\widehat Q$ appearing in the sum of the right hand side in the estimate above are contained in $\widehat{I}_R \times 9J_R$ and so the sum of the second summand above is estimated by a dimensional constant $c_n>1$ so that
\[
\sum_{Q\in\mathcal B_{R,J_R} ^{\mathrm{out}}} a_Q \frac{\big|\widehat{Q}\cap  ( \widehat{I}_R \times \{a\})\big|}{\big|\widehat{Q}\big|\big|(\widehat{I}_R\times \{a\}\big|}\lesssim_n\mu+c_n \lesssim_n \mu
\]
if $\mu$ is sufficiently large depending only upon dimension. Since $J_R\supseteq K_R$ we can integrate for $a\in K_R=\widehat K_{R}$ to conclude that
\[
\sum_{Q\in\mathcal B_{R,J_R} ^{\mathrm{out}}} a_Q \frac{\big|\widehat Q\cap \widehat{R}\big|} {\big|\widehat Q\big|\big|\widehat R\big|}\leq \kappa_n\mu
\]
for some dimensional constant $\kappa_n>1$. This shows that for $\mu$ sufficiently large depending upon dimension we have
\[
B^{\mathcal L} _R\leq\kappa_n\mu+ \sum_{Q\in B_{R,J_R} ^{\mathrm{in}}} a_Q \frac{\big|\widehat{Q}\cap \widehat{R}\big|}{\big|\widehat{Q}\big|\big|\widehat{R}\big|} = 
  \kappa_n \mu+B_R ^{\mathcal L' _2},\qquad  \mathcal L' _2\coloneqq \bigcup_{\rho \in \mathcal R_2} \mathcal B_{\rho,J_\rho} ^{\mathrm{in}}.
\]
with
\begin{equation}\label{eq:massL2}
\sum_{Q\in\mathcal L' _2}a_Q \leq \bigg| \bigcup_{\rho \in\mathcal R_2} \widehat{I}_\rho\times 3J_\rho\bigg|.
\end{equation}
By the previous estimates we have that for each $\rho\in \mathcal R_2$
\[
\begin{split}
\mu<B^{\mathrm{out}} _{ \rho,J_\rho}&=\sum_{Q\in \mathcal B_{\rho,J_\rho} ^{\mathrm{out}}}a_Q\frac{\big|\widehat{Q} \cap (\widehat{I}_{\rho}\times J_\rho)\big|}{\big|\widehat{Q}\big|\big|\widehat{I}_{\rho}\times J_\rho\big|}= 
 \avgint_{J_\rho}  \sum_{{Q\in \mathcal B_{\rho,J_\rho} ^{\mathrm{out}}}}a_Q\frac{\big|\widehat{Q} \cap (\widehat{I}_{\rho}\times \{a\})\big|}{\big|\widehat{Q}\big|\big|\widehat{I}_{\rho}\times \{a\}\big|}\, \d a
\\
&\leq\frac{ \kappa_n \mu}{|J_\rho|} |\{a\in J_\rho:\,\psi_{\rho}(a)>\mu/2\}|+\mu/2
\end{split}
\]
with 
\[
\psi_\rho(a)\coloneqq \sum_{Q\in \mathcal B_{\rho,J_\rho} ^{\mathrm{out}}}a_Q\frac{\big|\widehat{Q}  \cap (\widehat I_{\rho}\times \{a\})\big|}{\big|\widehat Q||\widehat I_{\rho}\times\{a\}\big|}.
\]
Thus there exists a set $J' _\rho \subseteq J_\rho$ with $|J' _\rho|\gtrsim_n |J_\rho|$ so that $\psi_\rho(a)>\mu/2$ for $a\in J' _\rho$. Now note that for $a\in J' _\rho$
\[
\frac{\mu}{2}\leq \psi_\rho(\alpha)\leq  \avgint_{\widehat{I}_\rho\times\{a\}}\sum_{Q\in \mathcal B_{\rho,K_\rho} ^{\mathrm{out}}} a_Q\frac{\ind_{\widehat Q}}{\big|\widehat Q\big|} \leq \inf_{x\in \widehat I_\rho} \M_{v^\perp}\Big(\sum_{\substack{Q\in \mathcal L\\Q\leq \rho}}a_Q\frac{\ind_{\widehat Q}}{\big|\widehat Q\big|}\Big)(x,a).
\]
In the estimate above we write $\M_{v^\perp}$ for the maximal  function
\[
\M_{v ^\perp} f(x)\coloneqq \sup_{s>0} \avgint_{Q_{v^\perp} (0,s)} |f(x+t)|\,\d t,\qquad x\in\R^n,
\]
where $Q_{v^\perp}(0,s)$ denotes the cube in $v^\perp\simeq \R^d$ with sidelength $s>0$ and centered at $0\in v^\perp$. Note that for $v\in\mathbb S^d$ the operator $\M_{v^\perp}$ is of weak-type $(1,1)$, uniformly in $v$. Thus
\[
\bigcup_{\rho\in\mathcal R_2} (\widehat  I_\rho \times J' _\rho) \subseteq S\coloneqq \Big\{z\in\R^n:\,  \M_{v^\perp}\Big(\sum_{Q\in \mathcal R}a_Q \frac{\ind_{c_2Q}}{|c_2Q|}\Big)(z)>\mu/2 \Big\}
\]
On the other hand since $J_\rho ' \subseteq J_R$ with $|J_{\rho} '|\gtrsim |J_\rho|$ we readily see that 
\[
\bigcup_{\rho\in\mathcal R_2} (\widehat I_\rho \times J_\rho) \subseteq \Big\{z\in \R^n:\, \M_{e_n}\big(\ind_{\cup_{\rho\in\mathcal R_2} \widehat I_\rho \times J' _\rho}\big)(z)\gtrsim_n 1   \Big\}\subseteq \big\{z\in\R^n:\, \M_{e_n}(\ind_S )(z)\gtrsim_n 1\big\}.
\]
Combining \eqref{eq:massL2} with the weak type $(1,1)$ inequalities for $\M_{e_n}$ and $\M_{v^\perp}=\M_{e_n ^\perp}$ (since we assume that $v=e_n$) and choosing $\mu$ to be a sufficiently large dimensional constant 
\[
\mathrm{mass}_{a,1}(\mathcal L' _2)=\sum_{Q\in\mathcal L' _2}a_Q \leq \frac{C}{\lambda} \sum_{Q\in\mathcal L }\leq  \frac14 \sum_{Q\in\mathcal L}a_Q = \mathrm{mass}_{a,1}(\mathcal L)/4.
\]
Now we set $\mathcal L '\coloneqq \mathcal L' _1 \cup \mathcal L' _2$ and the proof is complete.
\end{proof}

\subsection{Application to a maximal function estimate} As an immediate application of the directional Carleson embedding theorem for plates we describe below a sharp theorem for maximal averages along codimension $1$ plates. Let $d=n-1$ and consider $\sigma\in\mathrm{Gr}(d,n)=\mathrm{Gr}(d,d+1)$. We remember that the codimension $1$ averages at scale $s>0$ of a function $f\in\mathcal S(\R^{d+1})$ can be given in the form
\[
\langle f \rangle_{s,\sigma} (x) \coloneqq \int\displaylimits_{B_{d+1}(s)\cap \sigma} f(x-y) \frac{\d y}{s^{d}}, \qquad x\in \R^{d+1},
\]
where $B_{d+1}(s)$ denotes the ball of radius $s$ and centered at $0\in\R^{d+1}$. Given a finite subset $\Sigma\subset \mathrm{Gr}(d,d+1)$ we are interested in the corresponding maximal averaging operator along codimension $1$ plates given by $\Sigma$
\[
\M_{\Sigma} f(x)\coloneqq \sup_{s>0}\sup_{\sigma\in\Sigma} \l |f|\r_{\sigma,s}(x),\qquad x\in\R^{d+1}.
\]
As a consequence of the directional Carleson embedding theorem we obtain the sharp bounds for $\M_\Sigma$ for arbitrary finite $\Sigma\subset\mathrm{Gr}(d,d+1)$.

\begin{proof}[Proof of Theorem~\ref{thm:codim1_intro}] We write $n=d+1$ throughout the proof. It suffices to prove the weak-type $(2,2)$ estimate. Indeed the $L^p$-estimate will then follow by interpolation between the $L^2(\R^n)\to L^{2,\infty}(\R^n)$ and $L^\infty(\R^n)\to L^\infty(\R^n)$ bounds. Furthermore the strong-type $(2,2)$ estimate follows by the corresponding weak-type estimate with an additional $\sqrt{\log\#\Sigma}$-loss by the well known interpolation argument of Str\"omberg, \cite{Stromberg} as in the proof of Theorem~\ref{thm:L2nik}.

For the $L^2(\R^n)\to L^{2,\infty}(\R^n)$ note that is suffices to prove the weak-type $(2,2)$ estimate with the same dependence on $\#\Sigma$ for the closely related dyadic maximal operator
\[
\M_{V,\delta}f(x)\coloneqq \sup_{\substack{ Q\in\mathcal D_{V,\delta} \\ Q\ni x}}\, \avgint_Q |f|,\qquad x\in\R^n,
\]
for $V\subset \mathbb S^d$ finite and fixed $\delta>0$, and with a bound independent of $\delta$. This operator can be linearized as in the proof of Theorem~\ref{thm:L2nik} in the form
\[
T_{\mathcal Q}f(x)\coloneqq \sum_{Q\in\mathcal Q} \Big(\,\avgint_Q |f|\Big)\ind_{F_Q}(x),\qquad x\in \R^n
\]
where $\mathcal Q\subset \mathcal D_{V,\delta}$ a finite collection of $(d,\delta)$ plates and $\{F_Q\}_{Q\in\mathcal Q}$ a pairwise disjoint collection of measurable sets with $F_Q\subseteq Q$ for every $Q\in\mathcal Q$. Denoting by $T_{\mathcal Q} ^*$ the adjoint of $T_{\mathcal Q}$ we have that 
\[
\|\M_{V,\delta}\|_{L^2(\R^n)\to L^{2,\infty}(\R^n)} =\sup_{\mathcal Q\subset \mathcal D_{V,\delta}}\sup_{0<|E|<\infty}\frac{\|T^* _{\mathcal Q}(\ind_E)\|_{L^2(\R^n)}}{|E|^{\frac12}}
\]
where 
\[
T_{\mathcal Q} ^* (\ind_E)(x) = \sum_{Q\in\mathcal Q} |F_Q\cap E| \frac{\ind_Q}{|Q|}.
\]
Clearly $a=\{a_Q\}_{Q\in\mathcal Q}=\{|F_Q\cap E|\}_{Q\in\mathcal Q}$ is a Carleson sequence in the sense of Definition~\ref{def:carleson} so the required estimate for $T^* _{\mathcal Q}(\ind_E)$ follows by a straightforward application of Theorem~\ref{thm:carleson}.

The fact that these estimates are best possible follows by considering $A$ to be a Kakeya collection of $\delta \times 1$-tubes in $\R^2$ and taking $
A'\coloneqq A \times [-1,1]^{n-2}$. Now for each tube in $A$ we can consider a  $1 \times \delta ^{n-2}$ plate that contains the tube and is perpendicular to the copy of $\R^2$ that contains $A$. Calculating the averages of $\ind_{A'}$ with respect to these plates yields the  sharpness of the weak-type $(2,2)$ estimate and the sharpness of the strong $(p,p)$ estimate for $p>2$. Note that the numerology here is $\#\Sigma=\#V\simeq 1/\delta$. The optimality of the strong $(2,2)$-estimate follows similarly by considering a function in $\R^2$ that yields the sharpness of the $2$-dimensional results and extending them in $\R^n$ by taking a tensor product with a smooth bump in $\R^{n-2}$; see also Remark~\ref{rmrk:2dsharp}.
\end{proof}

\begin{remark} We stress an important switch in our point of view when proving estimates for the maximal operator $\mathrm{M}_{V,\delta}$ above, compared to say the corresponding estimates for the Nikodym operator $\mathcal N_\delta$ in \S\ref{s:kakeya}. Indeed although these two operators appear to be quite similar, in the case of $\mathrm{M}_{V,\delta}$ we are interested in proving estimates for \emph{arbitrary finite subsets} $V\subset \mathbb S^d\eqsim \mathrm Gr(d,d+1)$. Thus our $\delta$-fattening of the thin plates $B_n(s)\cap \sigma$ for $\sigma\in \mathrm Gr(d,d+1)$ is purely qualitative, it is there just to allow us to use $\delta$-plates which have positive measure in $\R^{d+1}$ and are more amenable to geometric arguments. These estimates are to be $\delta$-free as we use a limiting argument in order to recover thin plates as $\delta\to 0^+$. Necessarily, for this argument the cardinality $\#V$ and $\delta$ are completely independent of each other. This is in contrast to the geometric setup underlying the definition of the Nikodym operator $\mathcal N_\delta$ where the implicit subset of $\mathrm Gr(d,d+1)$ is a $\delta$-net and thus has cardinality $\sim \delta^{-d}$, namely the thickness of the plates and the  set of essentially directions present in $\mathcal N_\delta$ are intimately connected.
\end{remark}

\subsection{Application to a conical frequency square function}\label{sec:sf} We describe below a  square function estimate in the spirit of Rubio de Francia, given with respect to conical frequency projections in $\R^{d+1}$. Let $\{B_k\}_{k=1} ^N$ be a collection of open balls   in $\mathbb R^{d+1}$ whose centers $v_k$ lie on $\mathbb S^d$, and is \emph{well-distributed} in the sense that
\[
\sum_{k=1}^N \ind_{3B_k} \lesssim 1
\]
Here, as usual,  $3B_k$ is a threefold dilation of the Euclidean ball $B_k$ with respect to its center. For each $k$, let 
\[
\phi_k\in\mathcal S(\mathbb R^{d+1}), \qquad \cic{1}_{B_k} \leq    \phi_{k} \leq  \cic{1}_{3B_k}\]
and define the conical frequency projection 
\[
S_{k}f(x)\coloneqq  \int_{\mathbb S^d } \int_0 ^\infty \hat f (r \xi') \phi_{k}(\xi') e^{ix\cdot r \xi'} r^d \,\d r\, \d\sigma_{d}(\xi'),\qquad x\in  \R^{d+1}.
\]
The  $d$-plate Carleson embedding Theorem \ref{thm:carleson} may be used to deduce a  square function estimate  for the projections $S_k$  with sharp dependence on the parameter $N$. 

\begin{theorem} \label{t:sfe} For $2\leq p<4$,
	\[
	\Big\|\Big(\sum_k |S_k f|^2\Big)^{\frac12}\Big\|_{L^p(\R^{d+1})}\lesssim_{p,d} (\log N)^{\frac{1}{2}-\frac{1}{p}} \|f\|_{L^p(\R^{d+1})}.
	\]
Furthermore, the  restricted $L^4(\R^n)$-type estimate  \[
	\Big\|\Big(\sum_k |S_{k}(f\ind_E)|^2\Big)^{\frac12}\Big\|_{L^4(\R^{d+1})}\lesssim_{d}  (\log N)^{\frac 14} \|f\|_{L^\infty(\R^{d+1})}|E|^{\frac14}
	\]
	holds for all bounded measurable sets $E\subset \R^{d+1}$. These bounds are best possible up to the implicit numerical constants.
\end{theorem}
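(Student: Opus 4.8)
The strategy is to reduce the square function estimate to the directional Carleson embedding theorem of Theorem~\ref{thm:carleson} via a linearization and duality argument, following the template by which Theorem~\ref{thm:codim1_intro} was derived. The key observation is that each conical projection $S_k$ is, up to acceptable errors, an average over $d$-plates in $\R^{d+1}$ oriented perpendicular to $v_k$. Precisely, for a Schwartz function whose Fourier transform lives on a single dyadic annulus $\mathrm{Ann}(\delta)$, the projection $S_k$ at frequency scale $\delta^{-1}$ is morally the smooth average $A_{v_k,\delta^{-1}}^{>\delta}$ in the sense of \eqref{e:fourieravg}--\eqref{e:LPsplit}, whose kernel is $L^1$-dominated by averages over the $(d,\delta)$-plates $P(I,K,v_k^\perp)$ of scale $\delta^{-1}$ and thickness $1$ (rescaled). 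First I would handle the full square function $\big(\sum_k|S_kf|^2\big)^{1/2}$ by a standard Littlewood--Paley/Chang--Wilson--Wolff reduction (as in Lemma~\ref{l:cww}) to a single-annulus square function $\big(\sum_k|S_k f|^2\big)^{1/2}$ with $\widehat{f}$ supported in $\mathrm{Ann}(\delta)$; here the well-distributedness $\sum_k\ind_{3B_k}\lesssim1$ is what controls the overlap of the conical sectors and produces the benign $\sum_k\|O_{v_k,\delta}f\|_2^2\lesssim\|f\|_2^2$ type orthogonality at the $L^2$ endpoint.

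**The core $L^2$-to-$L^\infty$ linearization.** For the restricted strong $L^4$-type bound I would dualize: by the standard argument, $\big\|\big(\sum_k|S_k(f\ind_E)|^2\big)^{1/2}\big\|_{L^4}^2$ equals a bilinear form that after Cauchy--Schwarz is controlled by $\|f\|_\infty^2$ times $\big\|\sum_k S_k^* g_k\big\|_{L^2}$ for a suitable normalized vector $(g_k)$ supported on $E$, which in turn, replacing $S_k^*$ by its plate-average model and discretizing the translation and scale of the plate greedily, becomes exactly a balayage $T_{\mathcal Q}(a)$ for a collection $\mathcal Q\subset\mathcal D_{V,\delta}$ with $V=\{v_k^\perp\}$ and a Carleson sequence $a=\{a_Q\}$ with $\mass_a(\mathcal Q)\lesssim|E|$. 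The hypothesis of Theorem~\ref{thm:carleson} — uniform weak $(1,1)$ bounds for $\M_{\widehat{\mathcal Q}_v}$ — holds because each $\mathcal Q_v$ consists of plates of a single fixed orientation and fixed eccentricity $\delta$, so $\M_{\mathcal Q_v}$ is dominated by a one-parameter (hence weak $(1,1)$) maximal operator uniformly in $v$ and $\delta$, as noted right after the definition of $\M_{\mathcal L}$. Theorem~\ref{thm:carleson} then yields $\|T_{\mathcal Q}(a)\|_{L^2}\lesssim(\log\#V)^{1/2}\mass_a(\mathcal Q)^{1/2}\lesssim(\log N)^{1/2}|E|^{1/2}$, giving the restricted $L^4$-type bound with the stated $(\log N)^{1/4}$. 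The range $2\le p<4$ and the exponent $(\log N)^{1/2-1/p}$ then follow by interpolating the trivial $L^2$ orthogonality bound (exponent $0$) against the restricted $L^4$ estimate, together with a vector-valued/restricted-weak-type interpolation scheme as in Rubio de Francia theory.

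**The main obstacle.** The hard part will be the passage from the smooth conical projection $S_k$ to the discrete plate-average model in a form clean enough that the errors are genuinely Carleson-summable and the Carleson condition of Definition~\ref{def:carleson} (subordination to single-orientation families $\mathcal T\subset\mathcal P_v$) is literally verified — not merely the $L^\infty$-normalized mass bound. Two points need care: first, the Schwartz tails of $\phi_k$ and of the plate kernels contribute infinitely many dilated plates $2^jP$, which must be absorbed either into the $\widehat{\mathcal Q}$-enlargement already built into Theorem~\ref{thm:carleson} or summed geometrically as in \eqref{e:switch}; second, the greedy discretization of the continuum of heights/scales must be arranged so that the resulting $\{a_Q\}$ genuinely satisfies the packing inequality $\sum_{L\in\mathcal L}a_L\le|\sh(\mathcal T)|$ for every subordinate $\mathcal L$, which is where the disjointness of the sets $F_Q$ (or $E_Q$) in the linearization is essential. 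Sharpness of all three bounds is checked by testing on the tensor product of a two-dimensional Kakeya-type extremizer (as in Remark~\ref{rmrk:2dsharp} and the proof of Proposition~\ref{prop:cex}) with a bump in $\R^{d-1}$, exactly matching the Besicovitch construction that forces the $(\log N)$ powers.
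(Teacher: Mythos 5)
Your broad template is correct and matches the paper: reduce to a single-scale restricted $L^4$-type estimate, verify that the resulting coefficients form a directional Carleson sequence with mass $\lesssim |E|$, invoke Theorem~\ref{thm:carleson}, and interpolate with the trivial $L^2$ bound to cover $2\le p<4$. The weak $(1,1)$ hypothesis for $\M_{\widehat{\mathcal Q}_v}$ indeed holds because each $\mathcal Q_v$ consists of plates of fixed orientation and fixed eccentricity, and sharpness follows from the two-dimensional Kakeya example tensored with a bump, as you say.

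However, the specific reduction you propose for the restricted $L^4$ estimate does not close. Dualizing $\big\|\sum_k|S_k(f\ind_E)|^2\big\|_{L^2}$ against $g\in L^2$ produces $\sum_k\langle f\ind_E,\,S_k^*(g\,S_k(f\ind_E))\rangle$; the resulting $g_k = g\cdot S_k(f\ind_E)$ are supported where $g$ lives, not on $E$, and they carry no useful frequency localization, so the claimed control by $\|f\|_\infty^2\,|E|^{1/2}\,\|\sum_k S_k^* g_k\|_{L^2}$ with $(g_k)$ supported on $E$ is not correct as stated, and the passage to a balayage with mass $\lesssim |E|$ does not follow from it. The paper avoids this by \emph{not} dualizing: after a radial Littlewood--Paley decoupling (a vector-valued weighted inequality \cite{APHPR}*{Lemma 5.6}, not the Chang--Wilson--Wolff inequality), it discretizes into tiles $t=(R_t,\omega_t)$ with intrinsic wave-packet coefficients $a_t(f)=\sup_{\phi\in\Phi_t^M}|\langle f,\phi\rangle|^2$, and shows the \emph{direct pointwise} bound $\Delta(f\ind_E)^2\lesssim T_{\mathcal Q}(a)$ with $a_Q=\sum_{t\in\mathbf T(Q)}a_t(f\ind_E)$. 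The mass bound $\mass_a(\mathcal Q)\lesssim|E|$ then follows from $L^2$ orthogonality of the tiles, and --- this is where the real work lies, which your proposal gestures at but does not carry out --- the full directional Carleson condition for subordinate families $\mathcal L\subset\mathcal T\subset\mathcal P_{v_k}$ is verified by a local/non-local split around the enlargement $U$ of $\sh(\mathcal T)$, with the non-local part controlled by a Journ\'e-type lemma and geometrically summable tail estimates using the rapidly decaying majorant $\chi$ adapted to each $T$. Without this pointwise balayage bound and the non-trivial Carleson-condition verification, the $L^4$ estimate is not established.
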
 
For the optimality in the estimate of Theorem~\ref{t:sfe} we send to \cite[Section 8]{APHPR}, noting that the two-dimension bound becomes a lower bound in $\R^{d+1}$ for general $d$ by taking balls $\{B_k\}_{k=1} ^N$ have centers lying on a copy of $\mathbb S^1$ and functions $f$ which are suitable tensor products.

The remainder of this section contains the proof of the upper bounds in Theorem  \ref{t:sfe}. Below,   $\ell^2_N$ stands for the    Euclidean norm on $\mathbb C^N$. The first step consists of the radial decoupling 
\begin{equation}
\label{e:LPdec} 	\left\| S_{k} f\right\|_{L^p(\R^{d+1}; \ell^2_k)}\lesssim_{p,d}  \left\| S_{k} Q_{m} f\right\|_{L^p\left(\R^{d+1}; \ell^2_{N}\otimes \ell^2_{  \mathbb Z} \right)}
  \end{equation}
 where $p\in [2,\infty)$ and $\{Q_m:m\in \mathbb Z\}$ are  Fourier multiplier operators whose associated multipliers are radial and a partition of unity of $\R^{d+1}\setminus \{0\}$ subordinated to the finitely overlapping cover $A_{m}=\{\xi \in \R^{d+1}:2^{-m-1}< |\xi|<2^{-m+1}\}, m\in \mathbb Z$. The proof is a simple application of the weighted norm inequality
 \[
 \left\|g \right\|_{L^2(w)} \lesssim   \left\|   Q_{m} g\right\|_{L^2\left(\widetilde{\mathrm{M}}w;   \ell^2_{  \mathbb Z} \right)}
 \]
 where $\widetilde{\mathrm{M}}$ stands for the third iterate of the standard $(d+1)$-dimensional Hardy-Littlewood maximal operator
 see \cite{APHPR}*{Lemma 5.6} for details.

  Note that $S_{k} Q_{m}$ is supported in the frequency tube 
  \[
  \omega_{k,m}\coloneqq \left\{\xi\in A_m:\, \frac{\xi}{|\xi|}\in 3B_k \right\}
  \]
  whose center line is through the center $v_k $ of $B_k$ and whose spatial dual is the \emph{plate}
  \[
  R_{m,k}^0\coloneqq\{x\in \R^{d+1}:\,\, | \pr_{v_k} x| < 2^{m} \delta_k,\,  \pr_{v_k^\perp} x < 2^{m}  \}
  \]
  of \emph{eccentricity} $\delta_k$, the radius of $B_k$ and sidelength $2^{m}$. Let
   $\mathcal{R}_{m,k}$ be a $\lesssim_d 1$-overlapping cover of $\R^{d+1}$ by translates $R$ of $R_{m,k}^0$ and   $t\in \mathbf{T}_{m,k}$ be the collection of \emph{tiles} $t=(R_t, \omega_{t})$  with $\omega_{t}= \omega_{m,k}$ and $R\in\mathcal{R}_{m,k} $. The space-frequency projection on $t$ is represented by the intrinsic coefficient 
\[
a_t (f) = \sup_{\phi \in \Phi_t^M} |\langle f, \phi \rangle|^2
\]
where $\Phi_t^M\subset \mathcal{S}(\R^{d+1})$ is the class of functions whose frequency support is contained in $\omega_t$ and are uniformly spatially adapted to $R_t$ in the sense that
\[
\sqrt{|R_t|}|\phi| \leq \cic{1}_{R_t}  + \sum_{k=0}^\infty 2^{-Mk} \cic{1}_{2^{k+1}R_t\setminus 2^{k} R_t} 
\]   uniformly over $\phi \in \Phi_t^M$.
A standard space-frequency discretization, see e.g.\ \cite{APHPR}*{Sect.\ 5}  the right-hand side of \eqref{e:LPdec} is pointwise bounded by the discretized square function associated to the coefficients $a_{t}$, namely
\begin{equation}
\label{e:dsf}  \left\|S_{k} Q_m f \right\|_{\ell^2_k\otimes \ell^2_{\mathbb Z}} \lesssim \Delta f \coloneqq \left(\sum_{t\in \mathbf{T}} a_t(f) \frac{\cic{1}_{R_t}}{|R_t|}  \right)^{\frac12}
\end{equation}   
where $\mathbf{T}_k=\bigcup_{m\in \mathbb Z}\mathbf{T}_{m,k}$ and $\mathbf{T}=\bigcup_{1\leq k\leq N}\mathbf{T}_{k}.$ Theorem \ref{t:sfe} is thus reduced to the corresponding bounds for the discretized square function $\Delta f$. In fact, by standard restricted-type interpolation, it suffices to prove the  restricted type estimate  that follows.

\begin{proposition} \label{p:sfe} $\displaystyle
	\Big\|\Delta (f\ind_E)^2\Big\|_{L^2(\R^{d+1})}\lesssim_{d}  (\log N)^{\frac 12} \|f\|_{L^\infty(\R^{d+1})}|E|^{\frac12}.$
\end{proposition}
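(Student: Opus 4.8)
The aim is to bound the discretized square function $\Delta(f\ind_E)$ by reducing to the $d$-plate Carleson embedding theorem, Theorem~\ref{thm:carleson}. The first move is to linearize: by dualizing the $\ell^2$ norm and using the restricted-type hypothesis, it suffices to produce, for each bounded measurable set $E$ and each $\|f\|_\infty\le 1$, a Carleson sequence indexed by the tiles $t\in\mathbf T$ whose mass is controlled by $|E|$ and whose balayage reproduces $\Delta(f\ind_E)^2$ (up to the relevant constants). Concretely, to each tile $t=(R_t,\omega_t)$ associate the rectangle $R_t$, which after the shearing identification of Section~\ref{s:cod1} is a $(d,\delta_k)$-plate for the direction $v_k=v(\omega_t)\in\mathbb S^d$ with $\#V\lesssim N$ (the well-distributedness hypothesis $\sum_k\ind_{3B_k}\lesssim 1$ is what pins the number of distinct orientations to $N$). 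Then set
\[
a_t \coloneqq a_t(f\ind_E),\qquad \mathcal Q\coloneqq \{R_t : t\in\mathbf T\},
\]
and observe $\Delta(f\ind_E)^2 = T_{\mathcal Q}(a)$ in the notation of \eqref{eq:TQ}, so that Theorem~\ref{thm:carleson} gives
\[
\big\|\Delta(f\ind_E)^2\big\|_{L^2(\R^{d+1})}=\|T_{\mathcal Q}(a)\|_{L^2(\R^{d+1})}\lesssim_d (\log N)^{\frac12}\,\mass_a(\mathcal Q)^{\frac12},
\]
which is exactly the desired bound \emph{provided} $\mass_a(\mathcal Q)\lesssim_d |E|$ when $\|f\|_\infty\le1$, and provided the maximal operators $\M_{\widehat{\mathcal Q}_v}$ are of weak type $(1,1)$ uniformly in $v$.

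\textbf{Verifying the hypotheses of Theorem~\ref{thm:carleson}.} Two things must be checked. First, the uniform weak-type $(1,1)$ bound for $\M_{\widehat{\mathcal Q}_v}$: for fixed $v$, the plates in $\mathcal Q_v$ all share the orientation $v$ and the fixed eccentricity $\delta_k$ determined by $v$, so after the shearing $\M_{\widehat{\mathcal Q}_v}$ is dominated by a one-parameter maximal operator along rectangles of fixed eccentricity, hence of weak type $(1,1)$ uniformly in $v$ and in the eccentricity, as recorded in the discussion following the definition of $\M_{\mathcal L}$ in Section~\ref{s:cod1}. Second, and this is the crux, the Carleson condition: one must show that for any subcollection $\mathcal L\subset\mathbf T$ subordinate to a family $\mathcal T\subset\mathcal P_v$ of single-orientation plates,
\[
\sum_{t\in\mathcal L} a_t(f\ind_E)\lesssim_d |\sh(\mathcal T)\cap E|\le |\sh(\mathcal T)|,
\]
and that the total mass $\sum_{t\in\mathbf T} a_t(f\ind_E)\lesssim_d |E|$. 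The second (total mass) bound is the classical Rubio de Francia / Carleson-coefficient estimate: since the tiles $t\in\mathbf T_k$ with fixed $k$ have mutually disjoint frequency supports $\omega_{m,k}$ (disjoint in the radial annulus $m$ and, by the $\lesssim_d 1$-overlap of the $3B_k$, essentially disjoint in angle), Bessel/Plancherel together with the spatial near-orthogonality built into $\Phi_t^M$ gives $\sum_{t\in\mathbf T_k} a_t(g)\lesssim_d \|g\|_2^2$ for each $k$, and summing over the $\lesssim_d 1$-overlapping $k$ yields $\sum_{t\in\mathbf T} a_t(g)\lesssim_d \|g\|_2^2$; taking $g=f\ind_E$ with $\|f\|_\infty\le1$ gives $\lesssim_d|E|$. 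The subordinated version is the localized form of the same estimate: if every $R_t$, $t\in\mathcal L$, is contained in some $T\in\mathcal T$ with $v_T=v$ fixed, then all these tiles have the single orientation $v$, one restricts the Bessel argument to the region $\sh(\mathcal T)$, and the spatial localization of $\Phi_t^M$ to $R_t\subset\sh(\mathcal T)$ together with $g=f\ind_E$ produces $\sum_{t\in\mathcal L}a_t(f\ind_E)\lesssim_d\|f\ind_E\ind_{\sh(\mathcal T)}\|_2^2\le|E\cap\sh(\mathcal T)|\le|\sh(\mathcal T)|$. This is the standard single-direction tile-Carleson estimate (cf.\ the model in \cite{APHPR}*{Sect.\ 5}); the only point requiring care is that "subordinate" in Definition~\ref{def:carleson} allows the covering plates $T$ to be larger than any $R_t$, so one should dominate the localized Bessel sum not by $\|g\|_2^2$ on all of $\R^{d+1}$ but by its restriction to $\widehat{\sh(\mathcal T)}$, absorbing the $\lesssim_d1$-overlap and the Schwartz tails of $\Phi_t^M$ into the dimensional constant, which is harmless.

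\textbf{Main obstacle.} The genuine work is establishing the subordinated Carleson condition with the sharp right-hand side $|\sh(\mathcal T)|$ rather than a dilate thereof: one needs the spatial localization of the wave packets to cooperate with the plate geometry so that tiles whose rectangles live inside a union of single-orientation plates contribute a Bessel sum that is genuinely localized to (a controlled neighborhood of) that union. This is a matter of combining the frequency-disjointness within a fixed direction $k$ with the rapid spatial decay encoded in $\Phi_t^M$, and it is exactly the mechanism that makes the $d$-plate Carleson embedding applicable; once it is in place, the $(\log N)^{1/2}$ comes for free from Theorem~\ref{thm:carleson} and the proof of Proposition~\ref{p:sfe} is complete. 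Everything else — the radial decoupling \eqref{e:LPdec}, the space-frequency discretization \eqref{e:dsf}, the weak-$(1,1)$ verification, and the total-mass Bessel bound — is routine and parallels \cite{APHPR}.
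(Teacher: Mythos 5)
Your high-level plan matches the paper's: reduce to Theorem~\ref{thm:carleson} by interpreting the discretized square function as a balayage of a Carleson sequence, and verify the hypotheses. But there are two concrete gaps, one technical and one substantive.

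\textbf{A technical mismatch.} You take $\mathcal Q = \{R_t : t\in\mathbf T\}$. Theorem~\ref{thm:carleson} is stated for $\mathcal Q\subseteq\mathcal D_V$, i.e.\ for plates whose bases lie in a dyadic grid $\mathcal D$ on $e_n^\perp$ and whose heights are dyadic, because the partial order $\leq$ and the subordination machinery on $\mathcal D_V$ are what the proof of Theorem~\ref{thm:carleson} actually runs on. The rectangles $R_t$ coming from the tile discretization are not dyadic plates. The paper resolves this by invoking the $3^d$-grid lemma together with three shifted grids $\mathcal K^j$ on $\R$ to produce, for each tile $t\in\mathbf T_{m,k}$, a dyadic plate $Q\in\mathcal Q_{m,k}$ with $R_t\subset Q$ and $|Q|\lesssim|R_t|$; then one regroups by setting $a_Q:=\sum_{t\in\mathbf T(Q)}a_t(f\ind_E)$, obtaining only the pointwise inequality $\Delta(f\ind_E)^2\lesssim T_{\mathcal Q}(a)$, not equality. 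Your identity $\Delta(f\ind_E)^2=T_{\mathcal Q}(a)$ sidesteps this regrouping but leaves you with a non-dyadic $\mathcal Q$ to which the theorem doesn't directly apply.

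\textbf{The real error is in the Carleson condition.} You write that if every $R_t$, $t\in\mathcal L$, is contained in some $T\in\mathcal T$ with $v_T=v$ fixed, ``then all these tiles have the single orientation $v$.'' This is false. Subordination in Definition~\ref{def:carleson} is a \emph{spatial} containment: $L\subseteq T$ with $T$ oriented along $v$, while $L$ (equivalently $R_t$) can have \emph{any} orientation in $V$. Since all directions in $V$ lie in a small neighborhood of $e_n$, a plate oriented along $v'\ne v$ can easily sit inside a plate oriented along $v$. Consequently the ``standard single-direction tile-Carleson estimate'' you invoke does not apply to $\mathcal L$; the collection is genuinely multi-directional, and one cannot localize a Bessel sum over mixed orientations to $\sh(\mathcal T)$ by a restriction of the test function alone. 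The paper's proof of this step is where the actual work lies: it introduces $U=\{\mathrm M\ind_{\sh(\mathcal T)}>2^{-10}\}$, splits $f\ind_E=f\ind_{E\cap U}+f\ind_{E\cap U^c}$, handles the local part by the $L^2$ estimate \eqref{e:L2est} together with $|U|\lesssim|\sh(\mathcal T)|$, and handles the nonlocal part by a Journ\'e-type lemma stratifying $\mathcal T$ by how far the plates sit from $U^c$ combined with the rapid spatial decay built into $\Phi_t^M$ (the conjugation $\widetilde\phi_t=c\phi_t\chi^{-1}$ staying in $\Phi_t^{M/2}$). None of this is contained in your sketch, and your acknowledgment in the final paragraph that this is ``the genuine work'' rests on the mistaken premise that the tiles all share the orientation $v$, which would make the localization easy when in fact it is the heart of the matter.
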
 

\begin{proof} By a limiting argument, it suffices to replace the  universe of tiles $\mathbf T$ in the definition of $\Delta $ by a finite subcollection,  which we still call $\mathbf T$. We still denote by $  \mathbf T_{m,k}, \mathbf T_{k}$ the subcollections of $\mathbf T$ with sidelength and frequency parameters $m,k$.
 Recall that $n=d+1$. By linearity, we may restrict to the case $\|f\|_{\infty}=1$.  By finite splitting of the cones, we can assume all $v_k$ lie in a small neighborhood of $e_n$ as specified in Theorem \ref{thm:carleson}. Using the well known $3^d$-grid lemma and finite splitting of $\mathbf T$, we may find a dyadic grid $\mathcal D$ on $e_n^\perp$ such that for all $t\in \mathbf T_{m,k}$, 
  $\pr_{e_n^\perp} R_t\subset I_t$ for some $I_t\in \mathcal D$ with $\ell_I =2^{m+3} $. Let also $\mathcal K^j$, $j=0,1,2$ be a system of three shifted  dyadic grids on $\R$     and define
  \[
  \begin{split} &
  \mathcal Q_{m,k}\coloneqq \left\{P(I, K , v_k):\,\,   I \in \mathcal D, \ell_I =2^{m+3}, \, K \in \mathcal K^0 \cup \mathcal K^1 \cup \mathcal K^2 ,\, \ell_K\in [2^{m+3}\delta_k, 2^{m+4}\delta_k) \right\}, \\  &\mathcal Q_{k}\coloneqq \bigcup_{m\in \mathbb Z}   \mathcal Q_{m,k}, \qquad \mathcal Q \coloneqq \bigcup_{1\leq k \leq N}   \mathcal Q_{k}. 
  \end{split}
  \]
Notice that for each $t\in \mathbf{T}_{m,k}$ there exists at least $1$ and at most 3 elements $Q\in \mathcal Q_{m,k}$ with $R_t\subset Q$ and $|Q|\lesssim |R_t|$. Thus, setting
\[
\mathbf{T}(Q)\coloneqq \left\{t\in \mathbf{T}_{m,k}:\, R_t\subset Q\right\}, \qquad
a_{Q}\coloneqq \sum_{ t\in \mathbf{T}(Q)} a_t (f\cic{1}_E), \qquad Q\in \mathcal Q_{m,k}
\]
leads to the pointwise estimate $\Delta (f\cic{1}_E)^2 \lesssim   T_{\mathcal Q} (a)  $, with $T_{\mathcal Q}(a)$ in the form \eqref{eq:TQ}.

Proposition \ref{p:sfe} may then be obtained by an application of Theorem \ref{thm:carleson} to the union $\mathcal Q$  of the $N$ collections $\mathcal Q_k$. Notice that the plates of $\mathcal Q_k$ have fixed eccentricity $\delta_k$ and thus obey the weak (1,1) assumption of that theorem. We must then compute the directional Carleson norm of the sequence $\{a_Q: Q\in \mathcal Q\}$.  Firstly, from the finite frequency overlap of the Fourier supports of any collection $\{\phi_t\in \Phi_t^{M}: t\in \mathbf{T} \}$ and the spatial localization of the collection $\{\phi_t\in \Phi_t: t\in \mathbf{T}: \omega_{t}=\omega_{m,k} \}$ to a finitely overlapping collection $\mathcal R_{m,k}$, we gather that
\begin{equation}
\label{e:L2est}
\sum_{  t\in \mathbf{T} } |\langle g,\phi_t\rangle|^2\lesssim \|g\|_2^2
\end{equation}
whence
\[
\mathsf{mass}_a(\mathcal Q) \leq 2 \sum_{t\in \mathbf{T} }  a_t (f\cic{1}_E) \lesssim |E|.
\]
The details of this estimate are similar to those of \cite[Lemma 4.4]{APHPR}. 

We turn to the verification of the directional Carleson sequence property. That is for each  fixed $k$, let $\mathcal T\subset \mathcal P_{v_k}$, we need to prove that
\[
\sum_{L\in \mathcal L} a_L \lesssim |\mathsf{sh}(\mathcal T)|
\]
whenever $\mathcal L$ is a collection subordinated to $\mathcal T$.  If $\mathrm{M}$ stands locally for the maximal averaging operator over all plates $\mathcal P_{v_k}$, which is of type $(2,2)$ say,  define the enlargement of $\mathsf{sh}(\mathcal T)$
\[
U\coloneqq \left\{\mathrm{M} \cic{1}_{ \mathsf{sh}(\mathcal T)} > 2^{-10} \right\}
\]
so that $|U|\lesssim |\mathsf{sh}(\mathcal T)|$.  Then
\[
\sum_{L\in \mathcal L} a_L \leq \sum_{L\in \mathcal L}\sum_{t\in \mathbf{T}(Q) }   a_t (f\cic{1}_{E\cap U}) +  \sum_{L\in \mathcal L}\sum_{t\in \mathbf{T}(L) }   a_t (f\cic{1}_{E\cap U^c}). 
\]
The local part $f\cic{1}_{E\cap U}  $ is then dealt with  using \eqref{e:L2est} as follows:
\[
\sum_{t\in \mathbf{T}(L) }  a_t (f\cic{1}_{E\cap U}) \lesssim \|f\cic{1}_{E\cap U}\|_2^2 \lesssim |U| \lesssim |\mathsf{sh}(\mathcal T)|.
\]
To estimate the nonlocal part split $T\in \mathcal T$ into the union of collections $\mathsf T(u)$, saying $T\in  \mathsf{T}(u)$ if $u$ is the least integer such that $2^{u+1}T\cap U^c\neq \varnothing.$ A suitable version of Journ\'e's lemma \cite[Lemma 4.7]{APHPR} yields
\[
\sum_{T\in \mathsf T(u)} |T| \lesssim  2^{  u} |\mathsf{sh}(\mathcal T)|
\]
therefore the estimate for the non-local part follows from bounding uniformly in $T\in \mathcal T(u)$
\begin{equation}
\label{e:tails}
\sum_{\substack{ L \in \mathcal L \\ L\subset T}
\\  }
\sum_{t\in \mathbf{T}(L) }  a_t (f\cic{1}_{E\cap U^c}) \lesssim 2^{-10d u}|T|.
\end{equation} To prove the latter estimate, write
\[
\chi(x) \coloneqq \left( 1+ \frac{|\pr_{v_k^\perp} x |^2}{\ell_{I_T}} +   \frac{|\pr_{v_k} x |^2}{\ell_{K_T}}  \right)^{-100d} 
\]
for the rapidly decaying function adapted to the plate $T$. Note that $\phi_t\in \Phi_t^{M}$ with $t\in \mathbf{T}(L)$ is adapted to $R_t$ and thus to the slight enlargement $L$. When $L\subset  T$, $\widetilde{\phi_t}\coloneqq c  \phi_t \chi^{-1}\in \Phi_t^{M/2}$ if $M> 2^{200d} $ and $c>0$ is suitably chosen; in particular we have used that the frequency support of $\widetilde{\phi_t}$ is the same as that of ${\phi_t}$ as $\chi$ is the inverse of a polynomial.

Let $h=f\cic{1}_{E\cap U^c}$ and  $T^{r}= 2^{u+r+1} T\setminus 2^{u+r} T $. As $2^uT\cap U^c=\varnothing,$ we  may write $h= \sum_{r=0}^\infty h \cic{1}_{T^r}.$ Then for suitable choice of $\phi_t\in  \Phi_t^{M}$ and applying \eqref{e:L2est} to pass to the second line
\[
\begin{split}
\sum_{\substack{ L \in \mathcal L \\ L\subset T}} \sum_{t\in \mathbf{T}(L) }  a_t (h \cic{1}_{T^r}) & \leq \sum_{\substack{ L \in \mathcal L \\ L\subset T}}
\sum_{t\in \mathbf{T}(L) }  |\langle h \cic{1}_{T^r},  {\phi_t}\rangle| ^2 
\lesssim
\sum_{\substack{ L \in \mathcal L \\ L\subset T}}
\sum_{t\in \mathbf{T}(L) }  |\langle h \cic{1}_{T^r}\chi,  {\widetilde{\phi}_t}\rangle| ^2 
\\ & \lesssim \|h \cic{1}_{T^r} \chi\|_2^2  \leq  \|\chi \cic{1}_{T^r} \|_\infty^2  |{T^r}| \lesssim  2^{-100d(u+r)} \times (2^{rd} |T|) \lesssim 2^{-99d(u+r)}|T|
\end{split}
\] Summing up over $r\geq 0$ yields the bound \eqref{e:tails}, and completes the proof of the Proposition.
\end{proof}

\bibliographystyle{amsplain}

\begin{bibdiv}
\begin{biblist}

\bib{APHPR}{article}{
      author={Accomazzo, Natalia},
      author={Plinio, Francesco~Di},
      author={Hagelstein, Paul},
      author={Parissis, Ioannis},
      author={Roncal, Luz},
       title={Directional square functions},
        date={2020},
      eprint={2004.06509},
         url={https://arxiv.org/pdf/2004.06509.pdf},
}

\bib{ASV}{article}{
      author={Alfonseca, Angeles},
      author={Soria, Fernando},
      author={Vargas, Ana},
       title={A remark on maximal operators along directions in {${\Bbb
  R}^2$}},
        date={2003},
        ISSN={1073-2780},
     journal={Math. Res. Lett.},
      volume={10},
      number={1},
       pages={41\ndash 49},
         url={https://doi.org/10.4310/MRL.2003.v10.n1.a5},
      review={\MR{1960122}},
}

\bib{BarBas}{article}{
      author={Barone, Sal},
      author={Basu, Saugata},
       title={Refined bounds on the number of connected components of sign
  conditions on a variety},
        date={2012},
        ISSN={0179-5376},
     journal={Discrete Comput. Geom.},
      volume={47},
      number={3},
       pages={577\ndash 597},
         url={https://doi.org/10.1007/s00454-011-9391-3},
      review={\MR{2891249}},
}

\bib{Bat1v}{article}{
      author={Bateman, Michael},
       title={Maximal averages along a planar vector field depending on one
  variable},
        date={2013},
        ISSN={0002-9947},
     journal={Trans. Amer. Math. Soc.},
      volume={365},
      number={8},
       pages={4063\ndash 4079},
         url={http://dx.doi.org/10.1090/S0002-9947-2013-05673-5},
      review={\MR{3055689}},
}

\bib{bourgain_gafa}{article}{
      author={Bourgain, J.},
       title={Besicovitch type maximal operators and applications to {F}ourier
  analysis},
        date={1991},
        ISSN={1016-443X},
     journal={Geom. Funct. Anal.},
      volume={1},
      number={2},
       pages={147\ndash 187},
         url={https://doi.org/10.1007/BF01896376},
      review={\MR{1097257}},
}

\bib{corpoly}{article}{
      author={C\'{o}rdoba, A.},
       title={The multiplier problem for the polygon},
        date={1977},
        ISSN={0003-486X},
     journal={Ann. of Math. (2)},
      volume={105},
      number={3},
       pages={581\ndash 588},
         url={https://doi.org/10.2307/1970926},
      review={\MR{0438022}},
}

\bib{CorFeflac}{article}{
      author={C\'ordoba, A.},
      author={Fefferman, R.},
       title={On differentiation of integrals},
        date={1977},
        ISSN={0027-8424},
     journal={Proc. Nat. Acad. Sci. U.S.A.},
      volume={74},
      number={6},
       pages={2211\ndash 2213},
      review={\MR{0476977}},
}

\bib{Cor77}{article}{
      author={Cordoba, Antonio},
       title={The {K}akeya maximal function and the spherical summation
  multipliers},
        date={1977},
        ISSN={0002-9327},
     journal={Amer. J. Math.},
      volume={99},
      number={1},
       pages={1\ndash 22},
         url={https://doi.org/10.2307/2374006},
      review={\MR{0447949}},
}

\bib{Dem}{article}{
      author={Demeter, Ciprian},
       title={Singular integrals along {$N$} directions in {$\Bbb R^2$}},
        date={2010},
        ISSN={0002-9939},
     journal={Proc. Amer. Math. Soc.},
      volume={138},
      number={12},
       pages={4433\ndash 4442},
         url={http://dx.doi.org/10.1090/S0002-9939-2010-10442-2},
      review={\MR{2680067}},
}

\bib{Dem12}{article}{
      author={Demeter, Ciprian},
       title={{$L^2$} bounds for a {K}akeya-type maximal operator in {$\Bbb
  R^3$}},
        date={2012},
        ISSN={0024-6093},
     journal={Bull. Lond. Math. Soc.},
      volume={44},
      number={4},
       pages={716\ndash 728},
         url={https://doi.org/10.1112/blms/bds004},
      review={\MR{2967239}},
}

\bib{DPGTZK}{article}{
      author={Di~Plinio, Francesco},
      author={Guo, Shaoming},
      author={Thiele, Christoph},
      author={Zorin-Kranich, Pavel},
       title={Square functions for bi-{L}ipschitz maps and directional
  operators},
        date={2018},
     journal={J. Funct. Anal., in press, available at
  https://doi.org/10.1016/j.jfa.2018.07.00},
}

\bib{DPPalg}{article}{
      author={{Di Plinio}, Francesco},
      author={{Parissis}, Ioannis},
       title={{Maximal directional operators along algebraic varieties}},
        date={2018-07},
     journal={Amer.\ J.\ Math., to appear, arXiv:1807.08255},
       pages={arXiv:1807.08255},
      eprint={1807.08255},
}

\bib{Falc}{article}{
      author={Falconer, K.~J.},
       title={Continuity properties of {$k$}-plane integrals and {B}esicovitch
  sets},
        date={1980},
        ISSN={0305-0041},
     journal={Math. Proc. Cambridge Philos. Soc.},
      volume={87},
      number={2},
       pages={221\ndash 226},
         url={https://doi.org/10.1017/S0305004100056681},
      review={\MR{553579}},
}

\bib{FalcPLMS}{article}{
      author={Falconer, K.~J.},
       title={Sets with prescribed projections and {N}ikod\'{y}m sets},
        date={1986},
        ISSN={0024-6115},
     journal={Proc. London Math. Soc. (3)},
      volume={53},
      number={1},
       pages={48\ndash 64},
         url={https://doi-org.libproxy.wustl.edu/10.1112/plms/s3-53.1.48},
      review={\MR{842156}},
}

\bib{FeffBall}{article}{
      author={Fefferman, Charles},
       title={The multiplier problem for the ball},
        date={1971},
        ISSN={0003-486X},
     journal={Ann. of Math. (2)},
      volume={94},
       pages={330\ndash 336},
         url={https://doi.org/10.2307/1970864},
      review={\MR{0296602}},
}

\bib{HRZ}{misc}{
      author={Hickman, Jonathan},
      author={Rogers, Keith~M.},
      author={Zhang, Ruixiang},
       title={Improved bounds for the kakeya maximal conjecture in higher
  dimensions},
        date={2019},
}

\bib{Katz}{article}{
      author={Katz, Nets~Hawk},
       title={Maximal operators over arbitrary sets of directions},
        date={1999},
        ISSN={0012-7094},
     journal={Duke Math. J.},
      volume={97},
      number={1},
       pages={67\ndash 79},
         url={http://dx.doi.org/10.1215/S0012-7094-99-09702-8},
      review={\MR{1681088}},
}

\bib{KB}{article}{
      author={Katz, Nets~Hawk},
       title={Remarks on maximal operators over arbitrary sets of directions},
        date={1999},
        ISSN={0024-6093},
     journal={Bull. London Math. Soc.},
      volume={31},
      number={6},
       pages={700\ndash 710},
         url={https://doi.org/10.1112/S0024609399005949},
      review={\MR{1711029}},
}

\bib{Kim}{article}{
      author={Kim, Jongchon},
       title={Almost-orthogonality principles for certain directional maximal
  functions},
        date={2020},
     journal={The Journal of Geometric Analysis},
         url={https://doi.org/10.1007/s12220-020-00502-2},
}

\bib{Mattila}{book}{
      author={Mattila, Pertti},
       title={Fourier analysis and {H}ausdorff dimension},
      series={Cambridge Studies in Advanced Mathematics},
   publisher={Cambridge University Press, Cambridge},
        date={2015},
      volume={150},
        ISBN={978-1-107-10735-9},
         url={https://doi.org/10.1017/CBO9781316227619},
      review={\MR{3617376}},
}

\bib{NSW}{article}{
      author={Nagel, A.},
      author={Stein, E.~M.},
      author={Wainger, S.},
       title={Differentiation in lacunary directions},
        date={1978},
        ISSN={0027-8424},
     journal={Proc. Nat. Acad. Sci. U.S.A.},
      volume={75},
      number={3},
       pages={1060\ndash 1062},
      review={\MR{0466470}},
}

\bib{Oberlin}{article}{
      author={Oberlin, Richard},
       title={Bounds for {K}akeya-type maximal operators associated with
  {$k$}-planes},
        date={2007},
        ISSN={1073-2780},
     journal={Math. Res. Lett.},
      volume={14},
      number={1},
       pages={87\ndash 97},
         url={https://doi.org/10.4310/MRL.2007.v14.n1.a7},
      review={\MR{2289622}},
}

\bib{Oberlin_thesis}{book}{
      author={Oberlin, Richard},
       title={The (d,k) {K}akeya problem and estimates for the {X}-ray
  transform},
   publisher={ProQuest LLC, Ann Arbor, MI},
        date={2007},
        ISBN={978-1109-98954-0},
  url={http://gateway.proquest.com/openurl?url_ver=Z39.88-2004&rft_val_fmt=info:ofi/fmt:kev:mtx:dissertation&res_dat=xri:pqdiss&rft_dat=xri:pqdiss:3261385},
        note={Thesis (Ph.D.)--The University of Wisconsin - Madison},
      review={\MR{2710135}},
}

\bib{ParRog}{article}{
      author={Parcet, Javier},
      author={Rogers, Keith~M.},
       title={Directional maximal operators and lacunarity in higher
  dimensions},
        date={2015},
        ISSN={0002-9327},
     journal={Amer. J. Math.},
      volume={137},
      number={6},
       pages={1535\ndash 1557},
         url={http://dx.doi.org/10.1353/ajm.2015.0038},
      review={\MR{3432267}},
}

\bib{Rog}{article}{
      author={Rogers, Keith~M.},
       title={On a planar variant of the {K}akeya problem},
        date={2006},
        ISSN={1073-2780},
     journal={Math. Res. Lett.},
      volume={13},
      number={2-3},
       pages={199\ndash 213},
         url={https://doi.org/10.4310/MRL.2006.v13.n2.a3},
      review={\MR{2231112}},
}

\bib{Sheffer}{book}{
      author={Sheffer, Adam},
       title={Incidence theory with a focus on the polynomial method},
      series={http://faculty.baruch.cuny.edu/ASheffer/},
        date={2018},
}

\bib{SS}{article}{
      author={Sj\"ogren, P.},
      author={Sj\"olin, P.},
       title={Littlewood-{P}aley decompositions and {F}ourier multipliers with
  singularities on certain sets},
        date={1981},
        ISSN={0373-0956},
     journal={Ann. Inst. Fourier (Grenoble)},
      volume={31},
      number={1},
       pages={vii, 157\ndash 175},
         url={http://www.numdam.org/item?id=AIF_1981__31_1_157_0},
      review={\MR{613033}},
}

\bib{Strlac}{article}{
      author={Str\"omberg, Jan-Olov},
       title={Weak estimates on maximal functions with rectangles in certain
  directions},
        date={1977},
        ISSN={0004-2080},
     journal={Ark. Mat.},
      volume={15},
      number={2},
       pages={229\ndash 240},
         url={https://doi.org/10.1007/BF02386043},
      review={\MR{0487260}},
}

\bib{Stromberg}{article}{
      author={Str\"omberg, Jan-Olov},
       title={Maximal functions associated to rectangles with uniformly
  distributed directions},
        date={1978},
     journal={Ann. Math. (2)},
      volume={107},
      number={2},
       pages={399\ndash 402},
      review={\MR{0481883}},
}

\bib{Tao}{article}{
      author={Tao, Terence},
       title={The {B}ochner-{R}iesz conjecture implies the restriction
  conjecture},
        date={1999},
        ISSN={0012-7094},
     journal={Duke Math. J.},
      volume={96},
      number={2},
       pages={363\ndash 375},
         url={https://doi.org/10.1215/S0012-7094-99-09610-2},
      review={\MR{1666558}},
}

\end{biblist}
\end{bibdiv}

\end{document}